\documentclass{amsart}
\usepackage[T1]{fontenc}

\usepackage{color}
\usepackage{verbatim}
\usepackage{array}
\usepackage{graphicx}
\usepackage[colorlinks=true,linkcolor=blue,citecolor=blue, urlcolor=blue, pdfhighlight =/O]{hyperref}
\usepackage{tikz-cd}
\usepackage{amsmath}
\usepackage{amssymb}
\usepackage{mathtools}
\usepackage{stmaryrd}

\DeclareMathOperator{\Frac}{\normalfont{Frac}}

\DeclareMathOperator{\ord}{\normalfont{ord}}

\DeclareMathOperator{\type}{\normalfont{type}}
\DeclareMathOperator{\GL}{GL}
\DeclareMathOperator{\SL}{SL}
\DeclareMathOperator{\PGL}{PGL}

\newcommand{\slashbar}[2]{\big|_{#1,#2}}

\newtheorem{theorem}{Theorem}[subsection]
\newtheorem{proposition}[theorem]{Proposition}
\newtheorem{lemma}[theorem]{Lemma}
\newtheorem{corollary}[theorem]{Corollary}
\newtheorem{conjecture}[theorem]{Conjecture}
\newtheorem*{theorem*}{Theorem}
\newtheorem*{proposition*}{Proposition}

\theoremstyle{remark}
\newtheorem*{remark*}{Remark}

\theoremstyle{definition}
\newtheorem{definition}[theorem]{Definition}
\newtheorem{example}[theorem]{Example}
\newtheorem{remark}[theorem]{Remark}

\title[Non-Archimedean Poincaré series]
      {Non-Archimedean Poincaré series and geodesics on the Bruhat--Tits tree}

\author{Milan Berger-Guesneau}
\address{Department of Mathematics, Pennsylvania State University, University Park, Pennsylvania, United States of America}
\email{mbb6409@psu.edu}

\author{Mihran Papikian}
\address{Department of Mathematics, Pennsylvania State University, University Park, Pennsylvania, United States of America}
\email{papikian@psu.edu}

\thanks{The second author was supported in part by the Simons Foundation, award number MPS-TSM-00008093.} 

\date{}

\subjclass[2020]{11F52, 14G22, 20E08}
\keywords{Drinfeld modular forms, Poincaré series, Bruhat--Tits tree,
geodesics, quaternion algebras}

\begin{document}

\begin{abstract}
Adapting a construction of Kurihara in the setting of Drinfeld modular forms, we define Poincaré series on the Drinfeld half-plane $\Omega$. These series are built from products of meromorphic $1$-forms that are naturally associated to geodesics on the Bruhat--Tits tree. We establish convergence under a finiteness condition on the geodesics, verify this condition in several cases, and give sufficient conditions for the resulting cusp forms to be nonzero. For the principal congruence subgroup $\Gamma(\mathfrak{n})$ of $\GL_2(\mathbb{F}_q[T])$, we construct explicit linearly independent families of Poincaré series by lifting certain $k$-forms from the components of the analytic reduction of $\Gamma(\mathfrak{n})\backslash\Omega$. We formulate conjectures
on the vanishing orders at cusps, and we prove the first of them for an explicit family of Poincaré series by computing the corresponding expansions at the cusps; as an application, we obtain the Drinfeld modular forms $h$ and $\Delta$ as Poincaré series (up to a sign). Finally, for cocompact groups attached to quaternion algebras over $\mathbb{F}_q(T)$ that split at $\infty$, we show that the Poincaré series span the whole space of modular forms of given weight and type.
\end{abstract}

\maketitle

\section{Introduction}

\subsection*{Motivation and background}

Let $\mathcal{H}\coloneqq \{z \in \mathbb{C} : \operatorname{Im}(z) > 0\}$
denote the complex upper half-plane. The group $\SL_2(\mathbb{R})$
acts on $\mathcal{H}$ by linear fractional transformations. For
$\gamma=\begin{pmatrix} a & b \\ c & d \end{pmatrix}
\in\SL_2(\mathbb{R})$, let $j_\gamma(z)\coloneqq cz+d$. Let $\Gamma$ be a
finite-index subgroup of $\SL_2(\mathbb{Z})$. A standard way to
produce modular forms for $\Gamma$ is to average a function
$g\colon\mathcal{H}\to\mathbb{C}$ over the $\Gamma$-orbit of $z$,
with a correction factor ensuring the correct transformation law.
For integers $k\geqslant 3$ and $m\geqslant 1$, the
\textit{classical Poincaré series}
\begin{equation*}
    \sum_{[\gamma]\in\Gamma_\infty\backslash\Gamma}
    \frac{e^{2\pi im\gamma z/h}}{j_\gamma(z)^{k}}
\end{equation*}
converges uniformly on compact subsets of $\mathcal{H}$ and defines
a cusp form of weight $k$, where $\Gamma_\infty$ is the stabilizer
of $\infty$ in $\Gamma$ and $h$ is the width of the cusp $\infty$.
Using the Petersson inner product, one shows that these series for
$m\geqslant 1$ span the entire space of cusp forms of weight $k$ \cite[Chapter 3]{Gunning}. 

\noindent One can also associate Poincaré-style modular forms to geodesics in
$\mathcal{H}$: for two distinct points
$a,b\in\mathbb{P}^1(\mathbb{R})$, there exists a unique geodesic in
$\mathcal{H}$ joining them. Katok \cite{Katok1985} associates to
each closed geodesic fixed by a hyperbolic element $\gamma_0$ of
$\Gamma$ a \textit{relative Poincaré series} $\theta_{k,\gamma_0}$
for $k\geqslant 4$ even, and shows that these span the space
$S_k(\Gamma)$ of cusp forms. As for classical Poincaré series, the proof
that these series span relies crucially on the Petersson inner
product.

\noindent An analogous result holds in the function field setting.  Using the notation defined below, let $\Gamma$ be a subgroup of $\GL_2(F_\infty)$
commensurable with $G$ (i.e., $\Gamma\cap G$ has finite index in both
$\Gamma$ and $G$), and let $\widetilde{\Gamma}\coloneqq \Gamma/Z(\Gamma)$. For $\alpha\in\Gamma$ and $a\in\mathbb{P}^1(F)\cup\Omega$, Gekeler \cite[Proposition 2.3]{Gekeler1997} showed that the
theta function
\begin{equation*}
    u_\alpha(z)\coloneqq \prod_{[\gamma]\in\widetilde{\Gamma}}
    \frac{z-\gamma a}{z-\gamma(\alpha a)}
\end{equation*}
converges locally uniformly on $\Omega$ and defines a rigid analytic
function there, invertible and nonvanishing at the cusps
\cite[Proposition 2.6]{Gekeler1997}. This function does not depend on the choice of
$a$ \cite[Corollary 2.7]{Gekeler1997}, and depends only on the class of
$\alpha$ in $\overline{\Gamma}\coloneqq \Gamma^{\mathrm{ab}}/
(\Gamma^{\mathrm{ab}})_{\mathrm{tors}}$
\cite[Corollary 2.11]{Gekeler1997}. Products of this shape appear in
\cite[p.~47]{GP1980} and \cite[(5.2)]{GekelerReversat1996}, where the
parameter $a$ is taken in $\Omega$. Since $u_\alpha$ does not depend on $a\in\mathbb{P}^1(F)\cup\Omega$, we may take $a\in\mathbb{P}^1(F)$ while still
appealing to the results of \cite{GekelerReversat1996}. The logarithmic
derivative of $u_\alpha$ can be written as a series over $\widetilde{\Gamma}$:
\begin{equation}\label{equationDlogUalpha}
    \mathrm{dlog}(u_\alpha)\coloneqq \frac{u_\alpha'}{u_\alpha}=
    \sum_{[\gamma]\in\widetilde{\Gamma}}
    \left(\frac{1}{z-\gamma a}-\frac{1}{z-\gamma(\alpha a)}\right)
\end{equation}
Since $u_\alpha$ is invertible at the cusps, $\mathrm{dlog}(u_\alpha)$
vanishes there to order at least $2$. The group $\overline{\Gamma}$ is
free of rank $g$, where $g$ is the genus of the Drinfeld modular curve
$X_\Gamma$; choosing generators
$\overline{\alpha_1},\ldots,\overline{\alpha_g}$, the
$\mathrm{dlog}(u_{\alpha_i})$ for $1\leqslant i\leqslant g$ form a basis
of the space $M_{2,1}^2(\Gamma)$ of doubly cuspidal forms of weight $2$
and type $1$ for $\Gamma$ (see Definition \ref{defDMF} for the definition of the order of vanishing of a modular form, and \cite[(6.5.4)]{GekelerReversat1996} for the result).

\noindent Poincaré series built from \emph{several} geodesics were introduced
in the $p$-adic setting by Kurihara \cite{Kurihara1994}, for
discrete torsion-free cocompact subgroups $\Gamma$ of
$\PGL_2(\mathbb{Q}_p)$. These series are holomorphic $k$-forms on
the $p$-adic Drinfeld half-plane
$\Omega_{\mathbb{C}_p}\coloneqq \mathbb{P}^1(\mathbb{C}_p)\setminus
\mathbb{P}^1(\mathbb{Q}_p)$. Kurihara proved that the space of
$\Gamma$-invariant holomorphic $k$-forms on $\Omega_{\mathbb{C}_p}$
is spanned by such Poincaré
series \cite[Proposition 2.4]{Kurihara1994}. Our goal in this paper
is to adapt and generalize Kurihara's construction to the function
field setting. Some proofs in Kurihara's paper are quite terse; we
take the opportunity to present the construction in full detail.
The crucial difference in our setting is that the group $\Gamma$ we consider need not be cocompact and may have
cusps.
\medskip

\subsection*{Notation}
Let $q=p^e$ be an odd prime power. Let $A\coloneqq \mathbb{F}_q[T]$, 
$F\coloneqq \mathbb{F}_q(T)$, and 
$G\coloneqq \GL_2(A)$. Let $\ord$ be the valuation on $F$ at
$\infty$ normalized by $\ord(a/b)=\deg(b)-\deg(a)$, let
$\pi\coloneqq 1/T$ be the uniformizer, and let
$|\cdot|\coloneqq q^{-\ord(\cdot)}$. We denote by
$F_\infty\coloneqq \mathbb{F}_q(\!(\pi)\!)$ the completion of $F$, by
$\mathcal{O}_\infty\coloneqq \mathbb{F}_q \llbracket\pi
\rrbracket$ the ring of integers of
$F_\infty$, and by $\mathbb{C}_\infty$ the completion of an
algebraic closure of $F_\infty$. The Drinfeld half-plane $\Omega\coloneqq  \mathbb{P}^1(\mathbb{C}_\infty)\setminus \mathbb{P}^1(F_\infty)=\mathbb{C}_\infty\setminus F_\infty$
carries a natural structure of rigid analytic space, and the group
$\GL_2(F_\infty)$ acts on $\Omega$ by linear fractional
transformations. For $\gamma=\begin{pmatrix}a&b\\c&d\end{pmatrix}
\in\GL_2(F_\infty)$, we write $j_\gamma(z)\coloneqq cz+d$ for the
automorphy factor.

\noindent Let $\mathcal{T}$ denote the \textit{Bruhat--Tits tree} of
$\PGL_2(F_\infty)$, and let $\mathcal{T}(\mathbb{R})$ denote its
geometric realization \cite[p.~14]{Serre1980}. The set of ends of
$\mathcal{T}$ is canonically identified with
$\mathbb{P}^1(F_\infty)$, and the \textit{building map}
$\lambda\colon\Omega\to\mathcal{T}(\mathbb{R})$ connects the rigid
analytic geometry of $\Omega$ to the combinatorics of $\mathcal{T}$
(see \S\ref{sectionTree}). For two distinct points
$a,b\in\mathbb{P}^1(F_\infty)$, we denote by $[a\to b]$ the
geodesic in $\mathcal{T}$ connecting $a$ to $b$, and we define the
holomorphic $1$-form
\begin{equation*}
    s(z;a,b)\coloneqq \left(\frac{1}{z-b}-\frac{1}{z-a}\right)\mathrm{d}z
\end{equation*}
on $\Omega$. Up to scaling, $s(z;a,b)$ is the unique meromorphic
$1$-form on $\mathbb{P}^1$ with simple poles at $a$ and $b$. One
can define the valuation of any $k$-form $\omega$ on $\Omega$ at
$z\in\Omega$ \eqref{equationOrdKForm}; we prove that the valuation
of $s(z;a,b)$ at $z$ equals the distance from $\lambda(z)$ to the
geodesic $[a\to b]$ in $\mathcal{T}(\mathbb{R})$
(Proposition \ref{propositionValuationS}). A similar formula
appeared in Kurihara \cite[Proposition 2.1]{Kurihara1994}.

\noindent Let $\Gamma$ be a discrete subgroup of $\GL_2(F_\infty)$ with
$\det(\Gamma)\subset\mathcal{O}_\infty^\times$. Given
$r\geqslant 1$ pairs of distinct points
$a_i,b_i\in\mathbb{P}^1(F_\infty)$, exponents $k_i\geqslant 1$
with $\sum_{i=1}^r k_i=k$, a subgroup $H\subset\Gamma$ fixing each
$a_i$ and $b_i$, and $l\in\mathbb{Z}$, we define the
\emph{Poincaré series} (Definition \ref{defPS})
\begin{equation*}
    P(z)\coloneqq \sum_{[\gamma]\in H\backslash\Gamma}(\det\gamma)^l
    \prod_{i=1}^r\gamma^*s(z;a_i,b_i)^{k_i},
\end{equation*}
where $\gamma^*s(z;a,b)\coloneqq \left(\frac{1}{\gamma z-b}-\frac{1}
{\gamma z-a}\right)\mathrm{d}(\gamma z)$ is the pullback of
$s(z;a,b)$ by $\gamma$. Writing $P(z)=f(z)\,(\mathrm{d}z)^k$ and
$u(z;a,b)\coloneqq (z-b)^{-1}-(z-a)^{-1}$, we also refer to the function
\begin{equation*}
    f(z)=\sum_{[\gamma]\in H\backslash\Gamma}(\det\gamma)^l
    \prod_{i=1}^r u(z;\gamma^{-1}a_i,\gamma^{-1}b_i)^{k_i}
\end{equation*}
as a Poincaré series. Note that \eqref{equationDlogUalpha} is a Poincaré series with $r=k=1$, $l=0$ and $H=Z(\Gamma)$ for the geodesic $[\alpha a\to a]$.

\subsection*{Main results}

Our first result establishes convergence under a natural finiteness
condition \textbf{(P)} on the geodesics $[a_i\to b_i]$ (see
Theorem \ref{thmConvergence} for the precise statement);  
we verify condition \textbf{(P)} in several natural situations.

\begin{theorem*}[Theorem \ref{theorem(P)}]
    The Poincaré series $f$ converges locally uniformly on
    $\Omega$, provided one of the following holds:
    \begin{enumerate}
        \item[\textnormal{(i)}] $r=1$,
            $H=\langle\delta\rangle$ with $\delta\in\Gamma$
            hyperbolic and $\{a,b\}$ its fixed points;
        \item[\textnormal{(ii)}] $r\geqslant 2$, $H=\{I\}$,
            $\bigcap_{i=1}^r\{a_i,b_i\}=\emptyset$;
        \item[\textnormal{(iii)}] $r=1$, $H=\{I\}$,
            $a,b\in\mathbb{P}^1(F)$, and $\Gamma$ commensurable
            with $G=\GL_2(A)$;
        \item[\textnormal{(iv)}] $r\geqslant 2$, $H=\{I\}$,
            $\bigcap_{i=1}^r\{a_i,b_i\}=\{c\}$ with
            $c\in\mathbb{P}^1(F)$, and $\Gamma$ commensurable
            with $G$.
    \end{enumerate}
\end{theorem*}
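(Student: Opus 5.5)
The plan is to deduce every case from the convergence criterion in Theorem~\ref{thmConvergence}, i.e.\ to verify condition \textbf{(P)} in each situation. By Proposition~\ref{propositionValuationS}, the size of the $[\gamma]$-term of $f$ at $z$ is governed by $\sum_i k_i\, d(\lambda(z),\gamma^{-1}[a_i\to b_i])$. Because the setting is non-Archimedean, locally uniform convergence amounts to the following finiteness statement. For every vertex $v$ of $\mathcal{T}$ and every $N\geqslant 0$, only finitely many classes $[\gamma]\in H\backslash\Gamma$ satisfy $d(\gamma v,[a_i\to b_i])\leqslant N$ for all $i$. Here I replace $\gamma^{-1}$ of the geodesic by $\gamma v$ and the geodesic; the distance is invariant, and a bound on the weighted sum bounds each term since $k_i\geqslant 1$.

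The basic tool is discreteness. Vertex stabilizers in $\GL_2(F_\infty)$ are compact modulo the centre, and $\det\Gamma\subset\mathcal{O}_\infty^\times$. Hence for any finite set $S$ of vertices, $\{\gamma\in\Gamma:\gamma v\in S\}$ is finite (modulo the finite part of $Z(\Gamma)$, which is harmless). So it suffices in each case to show that the relevant orbit points $\gamma v$ lie in a finite set of vertices, possibly after adjusting $\gamma$ by $H$.

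\textbf{Case (i).} The geodesic $[a\to b]$ is the axis of $\delta$, on which $\delta$ acts by a nontrivial translation. Choose a finite fundamental segment $I$ of the axis for $\langle\delta\rangle$. If $\gamma v$ lies within $N$ of the axis, then some $\delta^m\gamma v$ lies within $N$ of $I$. The $N$-neighbourhood of $I$ is finite, so discreteness gives finitely many cosets $\langle\delta\rangle\gamma$.

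\textbf{Case (ii).} I claim the intersection $\bigcap_i U_N([a_i\to b_i])$ of $N$-neighbourhoods is bounded. An unbounded subset of this intersection would contain vertices converging to an end that is an end of every geodesic $[a_i\to b_i]$. That end would lie in $\bigcap_i\{a_i,b_i\}=\emptyset$, which is impossible. So the intersection is a finite set of vertices and discreteness finishes.

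\textbf{Case (iii).} This is where cusps enter, and I expect it to be the main obstacle. First reduce to $\Gamma\subset G$: write $\Gamma$ as a finite union of cosets of $\Gamma\cap G$, and note that condition \textbf{(P)} for a finite-index subgroup with finitely many translated geodesics implies it for $\Gamma$.
\begin{itemize}
\item Using that $G$ acts transitively on $\mathbb{P}^1(F)$, move $a$ to $\infty$, conjugating $\Gamma$ into another group commensurable with $G$.
\item The geodesic $[\infty\to b]$ is the union of a finite segment and the ray $v_n=[\mathcal{O}_\infty\oplus\pi^{-n}\mathcal{O}_\infty]$, $n\geqslant n_0$, toward $\infty$.
\item By Serre's description of $G\backslash\mathcal{T}$ as the half-line with vertices $\Lambda_n=G v_n$, a vertex within $N$ of $v_n$ maps to some $\Lambda_m$ with $m\geqslant n-N$.
\item The same holds, with finite fibres, for the finite-index subgroup $\Gamma\cap G$, whose quotient maps finitely onto $G\backslash\mathcal{T}$.
\item Therefore $\Gamma v$ meets the $N$-neighbourhood of the ray only for $n$ bounded in terms of $v$ and $N$. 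Hence only finitely many vertices of $U_N([a\to b])$ lie in $\Gamma v$, and discreteness concludes.
\end{itemize}
The delicate point is making this compatibility between the ray in $\mathcal{T}$ and the cuspidal half-line in $\Gamma\backslash\mathcal{T}$ precise. In particular one must check that vertices near the ray but off it also project deep into the cusp; this is where I would use the explicit stabilizers $G_{v_n}$.

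\textbf{Case (iv).} Combine the previous two arguments. As in (ii), any unbounded part of $\bigcap_i U_N([a_i\to b_i])$ can only escape toward the unique common end $c$. Hence this intersection is contained in a finite set together with the $N$-neighbourhood of a ray toward the rational point $c$. Then the cusp argument of (iii), applied with $c$ moved to $\infty$, gives finiteness.
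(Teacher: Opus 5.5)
Your cases (i), (ii) and (iv) essentially follow the paper's argument. In (i) you use a fundamental segment of the axis for $\langle\delta\rangle$. In (ii) you use that an infinite set of vertices near all geodesics must accumulate at a common end. In (iv) you use that the $G$-type is $1$-Lipschitz and tends to infinity along a ray toward a rational end. The point you flag as delicate is already settled by your third bullet: $|\type(w)-\type(v_m)|\leqslant d(w,v_m)$, so no stabilizer computation is needed.

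There is, however, a genuine error in case (iii). The geodesic $[\infty\to b]$ is not ``a finite segment plus the ray toward $\infty$''. It is doubly infinite, with a second ray toward $b$, and your argument never controls the $N$-neighbourhood of that ray. As a symptom, the hypothesis $b\in\mathbb{P}^1(F)$ is never used, and without it the conclusion is false. Suppose $b\in F_\infty\setminus F$ is the attracting fixed point of a hyperbolic $\delta\in\Gamma$; for instance, take a root of $z^2-Tz-1$ with $\delta=\begin{psmallmatrix}T&1\\1&0\end{psmallmatrix}\in G$. Then $[\infty\to b]$ shares a ray toward $b$ with the axis of $\delta$. For $w$ on that common ray, all $\delta^m w$ with $m\geqslant 0$ lie on $[\infty\to b]$, so $S_0(w)$ is infinite.

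The repair is to treat the ray toward $b$ exactly like the ray toward $a$. Move $b$ to $\infty$ by an element of $G$, which is transitive on $\mathbb{P}^1(F)$, and rerun your one-ray argument, just as you do in (iv). Equivalently, as in the paper's treatment of $[\infty\to 0]$ (where $\type(v_m)=|m|$), note that both ends are rational. Hence the type tends to infinity along both rays of $[a\to b]$, so only finitely many vertices $x$ of $[a\to b]$ satisfy $|\type(x)-\type(v)|\leqslant N$. Every $\gamma v$ counted in $S_N(v)$ then lies within $N$ of this finite set.

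A minor point: in the reduction to $\Gamma\cap G$, translate the base vertex rather than the geodesic. For $\Gamma=\coprod_i(\Gamma\cap G)\sigma_i$ one has $S_N^\Gamma(v)=\coprod_i S_N^{\Gamma\cap G}(\sigma_i v)\,\sigma_i$. Translating the geodesics by $\sigma_i\in\Gamma$ would instead require knowing that their ends stay in $\mathbb{P}^1(F)$.
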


\begin{remark*}
    The analogous construction in the classical setting, i.e., averaging products of
$s(z;a,b)=\left(\frac{1}{z-b}-\frac{1}{z-a}\right)\mathrm{d}z$
over $\SL_2(\mathbb{Z})$, does not appear to converge in general:
the convergence proof in the non-Archimedean setting relies
essentially on the ultrametric inequality (see the proof of Theorem \ref{thmConvergence}).
\end{remark*}

\noindent We then give sufficient conditions for $f$ to be nonzero
(Theorems \ref{theoremIntersectionVertex}
and \ref{theoremIntersectionEdge}). For example, if the geodesics
$[a_i\to b_i]$ meet at a single vertex $v$ whose stabilizer
$\Gamma_v\coloneqq \{\gamma\in\Gamma:\gamma v=v\}$ is contained in $H$,
then the identity term in the Poincaré series dominates on
$\Omega_v\coloneqq \lambda^{-1}(v)$, and $f\neq 0$.

\medskip

\noindent In \S\ref{sectionModularForms}, we restrict to groups $\Gamma$ that
are either commensurable with $G$, or discrete cocompact in
$\GL_2(F_\infty)$; for example, such groups arise as
unit groups of orders in quaternion algebras over $F$ that split
at $\infty$. In this context, $f$ is
a Drinfeld cusp form (see Definition \ref{defDMF}) of weight $2k$
and type $k+l$ for the group $\Gamma$
(Corollary \ref{coroCuspForm}).

\noindent We then apply these results to the principal congruence subgroup
\begin{equation*}
    \Gamma(\mathfrak{n})\coloneqq \ker(\GL_2(A)\to\GL_2(A/\mathfrak{n}A)),
\end{equation*}
where $\mathfrak{n}\in A$ is monic with
$d\coloneqq \deg(\mathfrak{n})\geqslant 1$. The quotient graph
$\Gamma(\mathfrak{n})\backslash\mathcal{T}$ consists of a finite
graph $(\Gamma(\mathfrak{n})\backslash\mathcal{T})^0$ together with
$c$ half-lines corresponding to cusps, where $c$ is the number of
cusps of the Drinfeld modular curve $X_{\Gamma(\mathfrak{n})}$, the
compactification of the rigid analytic curve
$\Gamma(\mathfrak{n})\backslash\Omega$.

\begin{theorem*}[Theorem \ref{theoremPSspan}]
    Let $k\geqslant 2$ and $\mathfrak{n}\in A$ monic with
    $\deg(\mathfrak{n})\geqslant 1$. The Poincaré series with
    $r\geqslant 2$ span a subspace of
    $M_{2k}^1(\Gamma(\mathfrak{n}))$ of dimension at least
    $(2k-1)(g-1)+kc$, where $g$ denotes the genus of
    $X_{\Gamma(\mathfrak{n})}$.
\end{theorem*}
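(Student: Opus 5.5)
We obtain the lower bound by following the abstract: we lift $k$-forms from the components of the analytic reduction of $\Gamma(\mathfrak{n})\backslash\Omega$. The quotient $\Gamma(\mathfrak{n})\backslash\mathcal{T}$ has the finite part $(\Gamma(\mathfrak{n})\backslash\mathcal{T})^0$ plus $c$ cusp half-lines. For $\Gamma(\mathfrak{n})$ with $\deg\mathfrak{n}\geqslant 1$, stabilizers of vertices in the finite part act trivially on the neighbourhood (the group is $p'$-torsion free modulo center away from cusps), so every vertex $v$ of the finite part has $\Gamma_v\subset Z(\Gamma)$. The reduction components are copies of $\mathbb{P}^1$ indexed by these vertices, with $q+1$ marked points (edge directions). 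Intersection points are the edges of the finite graph, and the cusp half-lines attach at boundary points. The graph has first Betti number $g$, so $E-V=g-1$, where $V$ and $E$ count the vertices and edges of the finite part.

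The first step is, for each vertex $v$, to take pairs $(a_i,b_i)$ among the ends in $\mathbb{P}^1(F)\cup\mathbb{P}^1(F_\infty)$ whose geodesics pass through $v$ and exit along prescribed edges. Products $\prod s(z;a_i,b_i)^{k_i}$ then restrict on $\Omega_v$ to $k$-forms on $\mathbb{P}^1$ with poles only at the marked points. These are the ``$k$-forms on the component'' with at most simple-type poles (order $\leq k$) at the edge points. The space of such $k$-differentials on $\mathbb{P}^1$ with poles of order at most $k$ at the $m_v$ relevant points has dimension $k m_v-2k+1$, namely $\deg\geqslant -2k+km_v$ plus one. Each basis element is realized as a linear combination of products of $s$'s with $r\geqslant 2$ and no common endpoint, so Theorem \ref{theorem(P)}(ii) gives convergence. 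Where the geodesics share a cusp, we use (iv) instead.

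The second step is to show that the resulting Poincaré series are cusp forms in $M_{2k}^1$ (Corollary \ref{coroCuspForm}) and are linearly independent. We argue as in Theorem \ref{theoremIntersectionVertex}: on $\Omega_v$ the identity term dominates, so the reduction of $P$ to the component $v$ is exactly the chosen $k$-form. If a linear combination vanished, reducing to each component would force its coefficients there to vanish. Forms attached to different vertices have to be separated, so we order vertices and use a triangularity argument based on dominance.

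For the count, we sum $k m_v-2k+1$ over $v$, where $m_v$ is the number of edge directions at $v$ in $\Gamma\backslash\mathcal{T}$, including cusp directions. Since $\sum_v m_v=2E+c$, the total is
\[k(2E+c)-(2k-1)V=(2k-1)(E-V)+E+kc\geqslant (2k-1)(g-1)+kc.\]
Residue compatibility along edges might cut this down, and that loss should be absorbed by the $+E$ slack. The main obstacle is this gluing and independence step. Forms attached to adjacent vertices interact along the shared edge annulus, where neither identity term dominates, and we must verify that the identity term really controls all other $\Gamma$-translates on $\Omega_v$, which needs $\Gamma_v\subset H$. I would first try to choose geodesics whose intersection is exactly one vertex, so that Theorem \ref{theoremIntersectionVertex} applies directly, and then handle the edge contributions with Theorem \ref{theoremIntersectionEdge}.
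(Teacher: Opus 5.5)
There is a genuine gap, and it sits where you suspected. It is not a matter of ``slack'': the family you describe is not linearly independent, so the per-component reduction argument cannot work as stated.

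At each stable vertex you use all $k$-forms on $\mathbb{P}^1_{\mathbb{F}_q}$ with poles of order $\leqslant k$ at the $q+1$ rational points. That space has dimension $k(q-1)+1$, giving $V(k(q-1)+1)=(2k-1)(g-1)+kc+E$ series in total. Take $q=3$, $\mathfrak{n}=T^2$, $k=2$: then $c=36$, $g=46$, $V=63$, $E=108$. Your family would consist of $315$ independent elements of $M_4^1(\Gamma(T^2))$, but that space has dimension $3\cdot 45+3\cdot 36=243$ by \eqref{eqdim}.

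The overcount comes from the order-$k$ poles. Suppose the reduction on the component $E_v$ has a pole of order $k$ at the point corresponding to an edge $e=\{v,w\}$. Then every factor has a pole there, so all the geodesics leave $v$ along $e$ and their intersection contains $e$. Theorem \ref{theoremIntersectionVertex} no longer applies. Such a series has valuation zero on $U(\bar v)\cup U(\bar e)$, and also on $U(\bar w)$ when $\bar w$ is stable, so it is seen on both components at once. Attaching these ``order-$k$'' forms to each (vertex, direction) pair therefore counts every finite edge twice. Nothing in your sketch shows that the resulting loss is at most $E$, and ``absorbed by the slack'' is precisely the unproved step.

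The missing idea is to separate the two kinds of contribution.

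\begin{itemize}
\item At each stable vertex, use only $H^0(\mathbb{P}^1_{\mathbb{F}_q},\Omega^{k}((k-1)D))$, of dimension $N=k(q-1)-q$. This is exactly the space of reductions of products whose reduced pairs have empty common intersection. It is spanned by Kurihara differentials (Lemma \ref{lemmaKuriharaSpan}), and these can be lifted so that the geodesics meet exactly at $v$ (Lemma \ref{lemmaLiftParameters}). The resulting series have valuation zero on $U(\bar v)$ and positive valuation elsewhere.
\item Attach exactly one series to each stable edge, with geodesics meeting in that edge and its endpoints (Theorem \ref{theoremIntersectionEdge}).
\item With $E=(V(q+1)-c)/2$ and $g-1=E-V$, the count is $VN+E+c=(2k-1)(g-1)+kc$.
\end{itemize}

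For independence, your worry about the annuli is backwards. On $U(\bar e)$ the identity term of $P^{\bar e}$ does dominate, because there are no inversions and the stable endpoint has trivial stabilizer. Meanwhile every vertex series and every other edge series has positive valuation there. Normalize a relation so all coefficients lie in $R$ with one a unit. Reducing modulo $\mathfrak{m}$ on each $U(\bar e)$ forces every edge coefficient into $\mathfrak{m}$. Reducing on $U(\bar v)$ then leaves a relation among $N$ independent Kurihara differentials, a contradiction. No triangular ordering is needed.

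A minor point: triviality of $\Gamma(\mathfrak{n})_v$ for $v$ of type $<d$ comes from the explicit stabilizer of $v_n$ in $G$, not from a torsion argument.
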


\noindent Since there is no analog of the Petersson inner product in this
setting, the proof proceeds differently from the classical case.
Following Kurihara, we associate linearly independent Poincaré
series to vertices and edges of the quotient graph
$\Gamma(\mathfrak{n})\backslash\mathcal{T}$. Each stable vertex
$\bar{v}$ of the finite graph gives rise to an irreducible component
$E_{\bar{v}}\cong\mathbb{P}^1_{\mathbb{F}_q}$ of the analytic
reduction of $\Gamma(\mathfrak{n})\backslash\Omega$. We define \emph{Kurihara differentials}
(Definition \ref{defKuriharaDiff}): $k$-forms on
$\mathbb{P}^1_{\mathbb{F}_q}$ built from products of
$s(z;\alpha,\beta)$ for $\alpha,\beta\in\mathbb{P}^1(\mathbb{F}_q)$,
and show that they span the space of $k$-forms on
$\mathbb{P}^1_{\mathbb{F}_q}$ whose poles all lie in
$\mathbb{P}^1(\mathbb{F}_q)$ and have order at most $k-1$, of
dimension $N\coloneqq k(q-1)-q$ (Lemma \ref{lemmaKuriharaSpan}). For
each stable vertex, we construct $N$ linearly independent Poincaré
series whose reductions to the corresponding component are Kurihara
differentials, and for each stable edge, one additional Poincaré
series. Linear independence is established by reduction to the residue field $\overline{\mathbb{F}_q}$ of $\mathbb{C}_\infty$.

\medskip

\noindent Numerical computations with SageMath suggest precise vanishing
orders for the Poincaré series at the cusps of
$\Gamma(\mathfrak{n})$. We formulate two conjectures.
Conjecture \ref{conjectureVanishingExact} predicts that a vertex
Poincaré series with endpoints
$a_i,b_i\in\mathbb{P}^1(\mathbb{F}_q)$ vanishes to order at least
$2k-p_\alpha$ at a cusp represented by
$\alpha\in\mathbb{P}^1(\mathbb{F}_q)$, where $p_\alpha$ is the pole
order of the underlying Kurihara differential at $\alpha$; if
$p_\alpha\leqslant q-1$ for all $\alpha$, the bound is expected to be exact. Conjecture \ref{conjectureVanishingOrders} predicts that the  Poincaré series constructed in the proof of Theorem \ref{theoremPSspan} lie in $M_{2k}^{k}(\Gamma(\mathfrak{n}))$.

\begin{proposition*}[Proposition \ref{propConditionalBasis}]
    Assume Conjecture \ref{conjectureVanishingOrders}. Then the  Poincaré series constructed in the proof of Theorem \ref{theoremPSspan} form a basis of $M_{2k}^{k}(\Gamma(\mathfrak{n}))$.
\end{proposition*}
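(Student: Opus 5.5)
The plan is to combine the linear independence from Theorem \ref{theoremPSspan} with a dimension count for $M_{2k}^{k}(\Gamma(\mathfrak{n}))$.

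By Theorem \ref{theoremPSspan}, the Poincaré series constructed there are linearly independent. They consist of $N=k(q-1)-q$ vertex series for each stable vertex and one series for each stable edge of $(\Gamma(\mathfrak{n})\backslash\mathcal{T})^0$. Their number is $(2k-1)(g-1)+kc$; if the construction produces more than this, I use the exact count from the proof. Conjecture \ref{conjectureVanishingOrders} places all of them in $M_{2k}^{k}(\Gamma(\mathfrak{n}))$. It therefore suffices to show
\[
\dim M_{2k}^{k}(\Gamma(\mathfrak{n}))\leqslant (2k-1)(g-1)+kc .
\]

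To compute this dimension I use the standard dictionary between modular forms and differentials. Since $\Gamma(\mathfrak{n})$ has $p'$-torsion-free image in $\PGL_2$ and its determinants lie in $\mathbb{F}_q^\times$, a form of weight $2k$ and type $k$ has $f(z)(\mathrm{d}z)^k$ invariant under $\Gamma(\mathfrak{n})$, so it descends to a $k$-differential on $Y=\Gamma(\mathfrak{n})\backslash\Omega$. At a cusp, the local parameter $t$ satisfies $\mathrm{d}z=-t^{-2}\,\mathrm{d}t$ up to a unit, because $\Gamma(\mathfrak{n})$ has cusp stabilizers of unipotent type with no extra ramification. Hence vanishing of $f$ to order $m$ in $t$ corresponds to a $k$-differential on $X=X_{\Gamma(\mathfrak{n})}$ with order $m-2k$ at the cusp. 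The condition "order $\geqslant k$" then means the differential has poles of order at most $k$ at each cusp. This gives
\[
M_{2k}^{k}(\Gamma(\mathfrak{n}))\cong H^0\big(X,\Omega^{\otimes k}(kD)\big),
\]
where $D$ is the reduced cusp divisor of degree $c$.

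The divisor $\Omega^{\otimes k}(kD)$ has degree $k(2g-2)+kc$. When $g\geqslant 2$, or when $c>0$ makes the degree exceed $2g-2$ (which holds here since $k\geqslant 2$ and $c\geqslant 1$), Riemann--Roch gives dimension $k(2g-2)+kc-g+1=(2k-1)(g-1)+kc$. This matches the number of Poincaré series, so the linearly independent family is a basis.

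The main obstacle is getting the cusp normalization exactly right. One must check that the cusp parameter $t$ for $\Gamma(\mathfrak{n})$ gives $\mathrm{d}z\sim t^{-2}\mathrm{d}t$ with no extra factor from the width. It is also necessary to confirm that the paper's definition of vanishing order (Definition \ref{defDMF}) is measured in this same $t$. A mismatch would shift the allowed pole order and break the dimension equality. I would first verify this at the cusp $\infty$, where the stabilizer is $\begin{pmatrix}1&\mathfrak{n}A\\0&1\end{pmatrix}$ and $t$ is the usual Carlitz-exponential parameter, and then transport the result to the other cusps using $\GL_2(A)$.
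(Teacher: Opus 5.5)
Your proof is correct and is essentially the paper's argument. Theorem \ref{theoremPSspan} gives linear independence, Conjecture \ref{conjectureVanishingOrders} places all $(2k-1)(g-1)+kc$ series in $M_{2k}^{k}(\Gamma(\mathfrak{n}))$, and this number is exactly $\dim M_{2k}^{k}(\Gamma(\mathfrak{n}))$ by \eqref{eqdim}. The paper already proves that dimension formula by your Riemann--Roch computation (Proposition \ref{propDim}), using $t'=-t^2$ exactly, so your normalization worry is already settled. Two small slips, neither affecting the conclusion: the $c$ cuspidal stable edges are not edges of $(\Gamma(\mathfrak{n})\backslash\mathcal{T})^0$, and for $g=0$ the Riemann--Roch hypothesis needs $c\geqslant 2$ rather than $c\geqslant 1$, which holds since then $c=q+1$.
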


\medskip

\noindent In \S\ref{subsectionQuaternionic} we treat cocompact groups arising from quaternion algebras. Let $B$ be a division quaternion algebra over $F$ that splits at
$\infty$, let $\mathcal{O}$ be a maximal $A$-order in $B$, and let $\Gamma$ be a
subgroup of finite index of the image of $\mathcal{O}^\times$ in
$\GL_2(F_\infty)$. Such a $\Gamma$ is discrete and cocompact, so that
$\Gamma\backslash\mathcal{T}$ is a finite graph and $\Gamma$ has no cusps. Here
the Poincaré series span the \emph{whole} space of modular forms.

\begin{theorem*}[Theorem \ref{theoremPSspanQuat}]
    Let $k\geqslant 2$, let $l$ be an integer such that $2l\equiv 0\bmod\#Z(\Gamma)$, and put $m\coloneqq k+l$. Then $M_{2k,m}(\Gamma)$ is
    spanned by Poincaré series.
\end{theorem*}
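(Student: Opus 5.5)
The plan is a dimension count: I will produce as many linearly independent Poincaré series as $\dim M_{2k,m}(\Gamma)$, adapting the vertex/edge construction from the proof of Theorem \ref{theoremPSspan} to the finite graph $\Gamma\backslash\mathcal{T}$. The upper bound comes from Riemann--Roch on the Mumford curve $X_\Gamma=\Gamma\backslash\Omega$. When $\Gamma$ acts freely on $\mathcal{T}$ (modulo the centre), a form of weight $2k$ and type $m$ is a $\Gamma$-invariant $k$-form, i.e. a section of $\Omega^{\otimes k}$ on a curve of genus $g$. Hence $\dim=(2k-1)(g-1)$ for $k\geq2$. The condition $2l\equiv0 \bmod \#Z(\Gamma)$ is exactly what makes the factor $(\det\gamma)^l$ compatible with the centre, so that the series of Definition \ref{defPS} is a nonzero candidate of the right type. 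If $\Gamma$ has torsion (finite vertex stabilizers), I would pass to a torsion-free normal subgroup $\Gamma'$ of finite index. I would then use the trace from $M_{2k,m}(\Gamma')$ to $M_{2k,m}(\Gamma)$, which is surjective because it is multiplication by the index on $\Gamma$-invariants. The key point is that the trace of a Poincaré series for $\Gamma'$ (with $H=\{I\}$) is a Poincaré series for $\Gamma$, since summing over $\Gamma'$ and then over $\Gamma'\backslash\Gamma$ gives a sum over $\Gamma$. So it suffices to treat $\Gamma$ torsion-free modulo its centre.

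In the torsion-free case, let $V,E$ be the numbers of vertices and unoriented edges of the finite graph $\Gamma\backslash\mathcal{T}$. Then $g=1-V+E$ is the first Betti number, and the stable reduction of $X_\Gamma$ has $V$ components, each isomorphic to $\mathbb{P}^1_{\mathbb{F}_q}$, with $q+1$ double points; the graph is $(q+1)$-regular, so $2E=(q+1)V$.

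For each vertex $\bar v$ I take $N=k(q-1)-q$ Poincaré series built from geodesics through a lift $v$ with endpoints reducing to $\mathbb{P}^1(\mathbb{F}_q)$ on $\Omega_v$, chosen so that their reductions are a basis of the Kurihara differentials (Lemma \ref{lemmaKuriharaSpan}). Convergence is given by Theorem \ref{theorem(P)}(ii). For each edge I take one extra series from Theorem \ref{theoremIntersectionEdge}. Counting,
\[
VN+E = V\bigl(k(q-1)-q\bigr)+\tfrac{(q+1)V}{2} = (2k-1)\,\tfrac{(q-1)V}{2} = (2k-1)(g-1),
\]
using $g-1=E-V=(q-1)V/2$. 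So it remains only to show these series are linearly independent.

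Linear independence follows the proof of Theorem \ref{theoremPSspan}. I normalize each series and reduce to the residue field. On $\Omega_v$ the identity term dominates, because all other translates of the geodesics are at positive distance from $v$ (Proposition \ref{propositionValuationS}), and the stabilizer is contained in $H$ up to the centre. Hence the reduction to $E_{\bar v}$ is the chosen Kurihara differential, while the series attached to other vertices reduce to $0$ there. Edge series are detected on annuli $\lambda^{-1}(e)$ by their residue-type leading coefficient, where the vertex series have strictly larger valuation.

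The main obstacle I expect is this separation of vertex and edge contributions. It requires uniformly controlling the valuations of the non-identity terms on $\Omega_v$ and on edge annuli. A second difficulty is making sure that the reductions of the vertex series on $E_{\bar v}$ genuinely have poles of order at most $k-1$ only at the $q+1$ double points. That pole bound is what places them in the space of dimension $N$ and makes the count sharp.
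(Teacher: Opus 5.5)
Your overall architecture matches the paper's. In the torsion-free case you produce $VN+E=(2k-1)(g-1)$ independent vertex and edge series and compare with Proposition \ref{propDim}; the general case is a trace argument. The torsion-free part is fine, up to two small corrections:
\begin{itemize}
\item For $l\not\equiv 0$, a form of type $m$ is not a $\Gamma$-invariant $k$-form. It is a $\chi$-equivariant one with $\chi(\gamma)=(\det\gamma)^{-l}$, i.e.\ a section of $\Omega^{\otimes k}\otimes\mathcal{L}_\chi$ with $\deg\mathcal{L}_\chi=0$. The dimension is unchanged.
\item The pole bound you list as a ``second difficulty'' is automatic. The reductions are the chosen Kurihara differentials, and the sharpness of the count comes from Riemann--Roch, not from the pole bound.
\end{itemize}

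\textbf{The gap is in the reduction step.} You justify surjectivity of $\mathrm{Tr}\colon M_{2k,m}(\Gamma')\to M_{2k,m}(\Gamma)$ by saying it is multiplication by $n=[\Gamma:\Gamma']$ on $\Gamma$-invariants. That gives surjectivity only when $n$ is invertible in $\mathbb{C}_\infty$, i.e.\ when $p\nmid n$. If $p\mid n$, the trace kills $M_{2k,m}(\Gamma)$ and your argument yields nothing. An arbitrary torsion-free (modulo centre) normal subgroup of finite index need not have index prime to $p$. The natural candidates, principal congruence subgroups at a place where $B$ splits, typically have index divisible by $p$: by strong approximation the quotient contains $\SL_2$ of a finite field of characteristic $p$. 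So you need the additional fact that a subgroup of index prime to $p$ with $\Gamma'/Z(\Gamma')$ torsion-free exists. This is the real content of the reduction.

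The paper obtains it (Lemma \ref{lemmaExistsGamma0}) from a place $\mathfrak{p}$ where $B$ ramifies. Reduction gives $\rho\colon\mathcal{O}\to\mathcal{O}_\mathfrak{p}/\mathfrak{P}\cong\mathbb{F}_{q^{2d}}$, and one sets $\Gamma_0\coloneqq\Gamma\cap\iota(\rho^{-1}(\mathbb{F}_q^\times))$.
\begin{itemize}
\item $\Gamma_0$ is the kernel of $\Gamma\to\mathbb{F}_{q^{2d}}^\times/\mathbb{F}_q^\times$, hence normal, with index dividing $(q^{2d}-1)/(q-1)$, which is prime to $p$.
\item It contains all scalar matrices of $\Gamma$, so $Z(\Gamma_0)=Z(\Gamma)$.
\item It is torsion-free modulo the centre. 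A torsion element $x$ generates a finite field $\mathbb{F}_q[x]\subset B$, on which $\rho$ is injective, so $\rho(x)\in\mathbb{F}_q^\times$ forces $x\in\mathbb{F}_q^\times$.
\end{itemize}
With this $\Gamma_0$ in place of your $\Gamma'$, the rest of your trace argument goes through. That includes the observation that the twisted trace of a Poincaré series for $\Gamma_0$ is the corresponding Poincaré series for $\Gamma$.
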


\noindent The reason we obtain generation, and not only a lower bound as in the case of $\Gamma(\mathfrak{n})$, is that $\Gamma$ has no cusps. For
$\Gamma(\mathfrak{n})$, identifying the span would require knowing the order of vanishing at the cusps of \emph{all} the Poincaré series of Theorem \ref{theoremPSspan}, which is the content of Conjectures
\ref{conjectureVanishingExact} and \ref{conjectureVanishingOrders}.

\medskip

\noindent In \S\ref{sectionExisting} we show that several modular forms
occurring in the literature are Poincaré series.
First, the \textit{Drinfeld--Poincaré series} $P_{2k,k}$ introduced
by Petrov \cite{Petrov2015} coincides, up to sign, with the cusp form associated to the Poincaré series $\sum_{\gamma\in G}\gamma^*s(z;\infty,0)^k$. Second, we compute
explicitly the expansion at the cusps of a family of
Poincaré series for $\Gamma(\mathfrak{n})$: writing
$r\coloneqq (q+1)/2$ and fixing a decomposition
$\mathbb{P}^1(\mathbb{F}_q)=\coprod_{i=1}^{r}\{a_i,b_i\}$ into
pairs, we consider
\begin{equation*}
    f_\kappa^{\mathfrak{n}}
    \coloneqq\sum_{\gamma\in\Gamma(\mathfrak{n})}
    \prod_{i=1}^{r}u(z;\gamma a_i,\gamma b_i)^{\kappa}
    \in M_{\kappa(q+1)}^{1}(\Gamma(\mathfrak{n})).
\end{equation*}

\begin{theorem*}[Theorem \ref{theoremFisC0h}]
    Let $1\leqslant\kappa\leqslant q-1$. Then $f_\kappa^{\mathfrak{n}}$
    vanishes to order exactly $\kappa q$ at every cusp of
    $\Gamma(\mathfrak{n})$ represented by an element of
    $\mathbb{P}^1(\mathbb{F}_q)$.
\end{theorem*}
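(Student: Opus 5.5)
We need to show that $f_\kappa^{\mathfrak{n}}$ vanishes to order exactly $\kappa q$ at every cusp of $\Gamma(\mathfrak{n})$ represented by an element of $\mathbb{P}^1(\mathbb{F}_q)$, for $1\leqslant\kappa\leqslant q-1$. The plan has three steps: reduce to the cusp $\infty$, collapse the sum over $\Gamma(\mathfrak{n})$ to a sum over its unipotent part, and compute that sum explicitly as a function of the uniformizer at $\infty$.

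\emph{Step 1: reduction to $\infty$.} Any $\alpha\in\mathbb{P}^1(\mathbb{F}_q)$ equals $\sigma\infty$ for some $\sigma\in\GL_2(\mathbb{F}_q)$. Since $\Gamma(\mathfrak{n})$ is normal in $G$, the slash action of $\sigma$ sends $f_\kappa^{\mathfrak{n}}$ to a Poincaré series of the same shape, up to a constant. Concretely, $\sigma$ permutes $\mathbb{P}^1(\mathbb{F}_q)$, so it replaces the pairing $\{a_i,b_i\}$ by another pairing of $\mathbb{P}^1(\mathbb{F}_q)$. It also introduces the sign changes coming from swapping $a_i$ and $b_i$; these are harmless because of $(-1)^\kappa$ factors and determinants in $\mathbb{F}_q^\times$. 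So it suffices to treat the cusp $\infty$ for an arbitrary pairing. We may then order the pairs so that $a_1=\infty$ and $b_1=0$, say, using the affine subgroup of $\GL_2(\mathbb{F}_q)$ fixing $\infty$.

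\emph{Step 2: collapse to the unipotent part.} Let $U=\{\begin{pmatrix}1&\mathfrak{n}b\\0&1\end{pmatrix}: b\in A\}$ be the stabilizer of $\infty$ in $\Gamma(\mathfrak{n})$, up to center, and let $t=t_{\mathfrak{n}}(z)$ be the parameter at $\infty$, namely the reciprocal of the Carlitz exponential of $\bar\pi z/\mathfrak{n}$, suitably normalized. Write
\[
f_\kappa^{\mathfrak{n}}=\sum_{[\gamma]\in U\backslash\Gamma(\mathfrak{n})}\ \sum_{b\in A}\prod_{i}u(z+\mathfrak{n}b;\gamma a_i,\gamma b_i)^\kappa.
\]
I would show that only the coset of the identity contributes terms of $t$-order at most $\kappa q$. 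For $\gamma\notin U$, every endpoint $\gamma a_i,\gamma b_i$ is a finite element of $F$, because $\gamma$ moves $\infty$ off $\infty$ modulo $\mathfrak{n}$. For such $\gamma$, the inner sum over $b$ is a sum of products of $(z-c)^{-1}$-type terms of total degree $\kappa(q+1)$. Such sums are polynomials in the Goss polynomials $G_j(t)$ evaluated at $t$-expansions of the form $t_c=t(z-c)$, each of which is divisible by $t$. Each of the $q+1$ factors $u^\kappa$ then contributes order at least $\kappa$, giving order at least $\kappa(q+1)>\kappa q$. This needs care: a single $u(z;c,c')=(z-c')^{-1}-(z-c)^{-1}$ differs from the pure $t$-expansion. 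I would expand using the ultrametric estimates from the proof of Theorem \ref{thmConvergence}, together with the fact that for $|z|_i$ large each finite-endpoint factor is $O(|z|_i^{-2})$ rather than $O(|z|_i^{-1})$, since $u(z;c,c')=(c'-c)/((z-c)(z-c'))$.

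\emph{Step 3: the identity term.} For the pair $\{\infty,0\}$, the factor $u(z;\infty,0)=1/z$. Each other pair, with finite endpoints $a_i,b_i$, gives $u=(b_i-a_i)/((z-a_i)(z-b_i))$, of degree $-2$. So the identity-coset contribution is
\[
\sum_{b\in A}\frac{1}{(z+\mathfrak{n}b)^{\kappa}}\prod_{i\geqslant 2}\frac{(b_i-a_i)^\kappa}{(z+\mathfrak{n}b-a_i)^\kappa(z+\mathfrak{n}b-b_i)^\kappa},
\]
a partial-fraction sum in the $q$ points $\mathbb{F}_q$. I would decompose it into partial fractions $\sum_{c\in\mathbb{F}_q}\sum_{j\leqslant\kappa}C_{c,j}(z-c)^{-j}$ and sum over $b$ to obtain $\sum_{c,j}C_{c,j}G_{j,\mathfrak{n}}(t_c)$, with Goss polynomials $G_j$. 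Since $\mathbb{F}_q\subset A$ but the translation lattice is $\mathfrak{n}A$, the $t_c$ are distinct. Here is the main obstacle: the expected answer $\kappa q$ suggests that this combination collapses to a multiple of $\prod_{c\in\mathbb{F}_q}t_c^{\kappa}$, or more naturally to $t^{\kappa q}$ after using $\sum_{c}$ and the $\mathbb{F}_q$-linearity of the Carlitz exponential. Indeed $e(z-c)=e(z)-e(c)$ with $e(c)=\bar\pi c/\mathfrak{n}$-type constants, and $\prod_{c\in\mathbb{F}_q}(e(z)-e_c)$ is an $\mathbb{F}_q$-linear polynomial of degree $q$ in $e(z)$. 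I would try first to recognize the whole identity sum as $\kappa$-th power data of $1/\prod_c e(z-c)$, which equals $t^q/(1-\epsilon t^{q-1})$ for a constant $\epsilon$. Its $\kappa$-th power has order exactly $\kappa q$, and $\kappa\leqslant q-1$ guarantees that the Goss-polynomial reduction $G_j(X)=X^j$ holds for $j\leqslant q$, with no cancellation. Combined with Step 2, this yields order exactly $\kappa q$.
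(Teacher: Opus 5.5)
Your Step 2 is where the argument fails. The problem is not a lack of care: the claim that every nontrivial coset $U\gamma$ contributes only in $t$-degree $\geqslant\kappa(q+1)$ is false. Decompose $\Psi_\gamma=\prod_i u(z;\gamma a_i,\gamma b_i)^\kappa$ exactly. Only top-order poles appear: $\Psi_\gamma=\sum_{x\in\mathbb{P}^1(\mathbb{F}_q)}\rho_x(\gamma)(z-\gamma x)^{-\kappa}$ with $\rho_x(\gamma)=(-1)^\kappa C_0^\kappa j_\gamma(x)^{\kappa(q-1)}$. Using $G_{\kappa,\mathfrak{n}A}(X)=X^\kappa$ and $t(z-y)=t/(1-e_{\mathfrak{n}A}(y)t)$, the coset $U\gamma$ contributes $\sum_n\binom{n-1}{\kappa-1}\bigl(\sum_x\rho_x(\gamma)e_{\mathfrak{n}A}(\gamma x)^{n-\kappa}\bigr)t^n$. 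Only the $t^\kappa$-coefficient is forced to vanish, because $\sum_x j_\gamma(x)^{\kappa(q-1)}=0$.

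For a concrete case, take $q=3$, $\mathfrak{n}=T$, $\kappa=1$, pairs $\{\infty,0\},\{1,-1\}$ (so $C_0=1$), and $\gamma=\begin{psmallmatrix}1&0\\T&1\end{psmallmatrix}$. The $t^2$-coefficient of this single coset is
\[
-\Bigl(T^2e_{TA}(\tfrac1T)+(T+1)^2e_{TA}(\tfrac1{T+1})+(T-1)^2e_{TA}(\tfrac1{T-1})\Bigr).
\]
This is nonzero. Up to the common factor $T\widetilde{\pi}^{-1}$, the first value is a primitive $T^2$-torsion point of the Carlitz module, which generates an extension with ramification index $6$ at $T$. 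The other two values lie in the Carlitz cyclotomic field of conductor $T(T^2-1)$, whose ramification index at $T$ is $2$. Since $\kappa q=3$, this coset already contributes below the order you need.

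Your heuristic could not have detected this. An estimate $O(|z|_i^{-N})$ is already satisfied by $t$ itself, because $|t(z)|$ decays faster than any power of $|z|_i^{-1}$. So decay in $|z|_i$ carries no information about $t$-adic order. Also, $t$-order is not multiplicative over the $r=(q+1)/2$ factors once a product is summed over the lattice $\mathfrak{n}A$.

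What is missing is a cancellation \emph{between} cosets. Parametrize the nontrivial cosets by bottom rows $(c,d)$ and group them into the free $\mathbb{F}_q$-orbits $d\mapsto d+cx_0$. These are realized by $\gamma_{x_0}=\begin{psmallmatrix}1&-x_0\\0&1\end{psmallmatrix}\gamma\begin{psmallmatrix}1&x_0\\0&1\end{psmallmatrix}$, which reindexes the factors $j_\gamma(x)^{\kappa(q-1)}$ and replaces $e_{\mathfrak{n}A}(\gamma y)$ by $e_{\mathfrak{n}A}(\gamma y)-x_0\varepsilon$. Summing over $x_0$ using $\sum_{x_0\in\mathbb{F}_q}x_0^{j}$ leaves only terms with $\binom{n-1}{\kappa-1}\binom{n-\kappa}{\ell(q-1)}$, $\ell\geqslant 1$. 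For $n\leqslant\kappa(q+1)-1$, Lucas' theorem kills these unless $\ell(q-1)=n-\kappa$, and that last term vanishes again by $\sum_y j_\gamma(y)^{\kappa(q-1)}=0$. You must also justify exchanging the sum over cosets with the $t$-expansion; the paper does this with a Gauss-norm bound on each coset contribution.

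Your Steps 1 and 3 are sound in substance. The same partial-fraction identity gives the identity coset as $C_0^\kappa\varepsilon^{\kappa(q-1)}t^{\kappa q}(1-\varepsilon^{q-1}t^{q-1})^{-\kappa}$, which is exactly the $\kappa$-th power you guessed. So the step you flagged as the main obstacle is the easy one; the real work lies in the nontrivial cosets.
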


\noindent This proves Conjecture \ref{conjectureVanishingExact} for this family (Remark \ref{remarkConjectureCase}). For
$\mathfrak{n}=T$ every cusp is represented by an element of
$\mathbb{P}^1(\mathbb{F}_q)$, and comparing with the dimension formula
\eqref{eqdim} we obtain that $f_\kappa^{T}$ is a
constant multiple of $h^{\kappa}$, where $h=P_{q+1,1}$ is
one of the two generators of the graded algebra of Drinfeld modular forms for $G$ (Theorem \ref{theoremFisC0h2}); in particular $h$ and $-\Delta$ are Poincaré series.

\medskip

\begin{remark*}
We work throughout with $A=\mathbb{F}_q[T]$ and the completion
$F_\infty$ of $F=\Frac(A)$ at $\infty$, but Section \ref{sectionTree},
Theorem \ref{thmConvergence}, cases (i) and (ii) of
Theorem \ref{theorem(P)}, and the nonvanishing results of
\S\ref{sectionPS} use nothing about $A$: they remain valid when
$F_\infty$ is replaced by an arbitrary non-Archimedean local field
$K$,
with $q$ replaced by the cardinality of the residue field. The same is true
of Theorem \ref{theoremPSspanQuat} for any discrete cocompact
$\Gamma\subset\GL_2(K)$ with $\widetilde{\Gamma}$ torsion-free (with some minor modifications).
\end{remark*}

\begin{remark*}
Poincaré series have a natural generalization to higher rank: the
Drinfeld half-plane is replaced by the Drinfeld symmetric space
$\Omega^{n-1}$, the geodesics of $\mathcal{T}$ by apartments of
the Bruhat--Tits building of $\GL_n(K)$, and $s(z;a,b)$ by
the logarithmic $(n-1)$-form attached to a configuration of $n$
hyperplanes. In the forthcoming work
\cite{BGP2} we construct the corresponding Poincaré series and
prove their convergence, the analogue of
Theorem \ref{thmConvergence}, based on geometric results on
intersections of apartments and their divergence; in particular,
the analogue of the condition
$\bigcap_{i=1}^r\{a_i,b_i\}=\emptyset$ in case (ii) of
Theorem \ref{theorem(P)} is that the apartments have no common
face in the building at infinity.
\end{remark*}

\subsection*{Acknowledgments}

The work on this project began during the second author's visit to
the Max Planck Institute for Mathematics in Bonn in 2023, and
continued while the two authors were visiting the National Center for
Theoretical Sciences in Taiwan in the summer of 2025; we thank
both institutes for their hospitality and excellent working
conditions. We are grateful to Fu-Tsun Wei for his hospitality
and for many helpful conversations; the computation in the proof
of Proposition \ref{propDrinfeldPoincare} is due to him.

\section{Geodesics and valuation}\label{sectionTree}

\subsection{The Bruhat--Tits tree}

An \textit{$\mathcal{O}_\infty$-lattice} in $F_\infty^2$ is a rank-two $\mathcal{O}_\infty$-submodule. We denote by $[L]$ the homothety class of a lattice $L$. 
The \textit{Bruhat--Tits tree} $\mathcal{T}$ of $\PGL_2(F_\infty)$ is the graph whose vertex set $X(\mathcal{T})$ consists of these homothety classes. 
Two vertices are adjacent if they admit representatives $L$ and $L'$ such that $\pi L\subset L'\subset L$. One shows that $\mathcal{T}$ is an undirected $(q+1)$-regular tree \cite[p. 70-72]{Serre1980}. 

Let $\{f_1,f_2\}$ be the canonical basis of the space $F_\infty^2$ of row vectors, and let $v_n$ be the vertex $[\pi^n\mathcal{O}_\infty f_1+\mathcal{O}_\infty f_2]$ for $n\in\mathbb{Z}$. The group $\GL_2(F_\infty)$ acts on $X(\mathcal{T})$ via $\gamma \cdot [L] = [L\gamma^{-1}]$, and the map $\gamma\mapsto \gamma  v_0$ induces a bijection
\begin{equation*}
    \GL_2(F_\infty)/F_\infty^\times\GL_2(\mathcal{O}_\infty)\xrightarrow{\sim} X(\mathcal{T}).
\end{equation*}
Every coset in $\GL_2(F_\infty)/F_\infty^\times\GL_2(\mathcal{O}_\infty)$ has a unique representative of the form $\begin{pmatrix}
\pi^n & u \\
0 & 1
\end{pmatrix}$ with $n\in \mathbb{Z}$ and $u\in F_\infty/\pi^n \mathcal{O}_\infty$. We choose as a representative either $u=0$ or an element $u\in F$ with $\ord(u)<n$. For example, the vertex $v_n$ is represented by the matrix 
$\begin{pmatrix} \pi^{-n} & 0 \\ 0 & 1 \end{pmatrix}$. 

Recall that an \textit{end} of $\mathcal{T}$ is an equivalence class of half-lines in $\mathcal{T}$, where two half-lines are equivalent if they eventually coincide. There exists a $\GL_2(F_\infty)$-equivariant bijection between the set of ends of $\mathcal{T}$ and the projective line $\mathbb{P}^1(F_\infty)$ \cite[(1.3.6)]{GekelerReversat1996}. 
In this paper, we follow the convention in \cite[(1.6.3)]{GekelerReversat1996}: we label the end corresponding to $(x:y)$ by $(-y:x)$. We also implicitly identify $z\in F_\infty$ with $(z:1)\in \mathbb{P}^1(F_\infty)$. Under these identifications, (the equivalence class of) the half-line $\{v_n\}_{n\geqslant 0}$ is labeled $(1:0)=\infty$, while $\{v_{-n}\}_{n\geqslant 0}$ is labeled $0$. 

We root $\mathcal{T}$ at the end $\infty\in\mathbb{P}^1(F_\infty)$,
which induces a natural orientation on the edges: for adjacent
vertices $v$ and $w$, exactly one of them, say $w$, lies on the
unique half-line starting at the other one and representing the
end $\infty$; we declare $w$ to be the initial vertex of the edge
$\{v,w\}$.
Let $v \in X(\mathcal{T})$ be represented by $\begin{pmatrix} \pi^n & u \\ 0 & 1 \end{pmatrix}$, and let $w$ be a vertex adjacent to $v$. 
If $w$ lies in the direction of $\infty$, then $w$ is represented by $\begin{pmatrix} \pi^{n-1} & \overline{u} \\ 0 & 1 \end{pmatrix}$, 
where $\overline{u}$ is the class of $u$ modulo $\pi^{n-1}\mathcal{O}_\infty$. 
Otherwise, $w$ is represented by $\begin{pmatrix} \pi^{n+1} & u + \alpha\pi^n \\ 0 & 1 \end{pmatrix}$ for a unique $\alpha \in \mathbb{F}_q$. 

For any two distinct points $a,b\in \mathbb{P}^1(F_\infty)$, there exists a unique oriented doubly infinite path in $\mathcal{T}$ connecting them, which we denote by $[a\to b]$ and refer to as the \textit{oriented geodesic} from $a$ to $b$. The oriented geodesic $[\infty\to 0]$ is called the \textit{principal geodesic}; its vertices are $v_n$ for $n\in\mathbb{Z}$.

A vertex $v$ represented by $\begin{pmatrix} \pi^n & u \\ 0 & 1 \end{pmatrix}$ lies on the principal geodesic if and only if $u=0$ (in which case $v$ coincides with $v_{-n}$). 
The combinatorial distance from $v$ to the principal geodesic is given by
\begin{equation}\label{eq:distance-geodesic}
    d(v,[\infty\to 0]) = \begin{cases}
        n-\ord(u) &\text{if } u\neq 0,\\
        0 &\text{if } u=0.
    \end{cases}
\end{equation}
Indeed, if $u\neq 0$, moving from $v$ toward $\infty$ at each step yields the unique path without backtracking from $v$ to the geodesic:
\begin{equation*}
    \begin{pmatrix}\pi^n & u \\ 0 & 1\end{pmatrix}
    \to \begin{pmatrix}\pi^{n-1} & \bar{u} \\ 0 & 1\end{pmatrix}
    \to \cdots
    \to \begin{pmatrix}\pi^{\ord(u)} & 0 \\ 0 & 1\end{pmatrix}.
\end{equation*}
The terminal vertex $v_{-\ord(u)}$ is the nearest vertex on the principal geodesic to $v$, and the path has length $n - \ord(u)$.
The same computation gives the distance to the base vertex: since the path from $v$ to $v_0$ passes through $v_{-\ord(u)}$, we have $d(v,v_0) = n-\ord(u)+|\ord(u)|$ if $u\neq 0$, and $d(v,v_0) =       |n|$ otherwise.

We denote by $\mathcal{T}(\mathbb{R})$ the geometric realization of $\mathcal{T}$, equipped with the path metric extending the combinatorial distance $d$ on $X(\mathcal{T})$.
A (non-oriented) \textit{open edge} is a subset of $\mathcal{T}(\mathbb{R})$ of the form $\{(1-t)v+tw:0<t<1\}$ where $v,w\in X(\mathcal{T})$ are adjacent. We denote by $Y^\circ(\mathcal{T})$ the set of open edges.

Let $p$ be a point in $\mathcal{T}(\mathbb{R})$. If $p$ lies on an open
edge, let $v$ and $w$ denote its initial and terminal vertices, and let
$t \in (0,1)$ be the distance from $v$ to $p$. Then $v$ and $w$ are
represented respectively by $\begin{pmatrix} \pi^{n} & u \\ 0 & 1
\end{pmatrix}$ and $\begin{pmatrix} \pi^{n+1} & u+\alpha\pi^n \\ 0 & 1
\end{pmatrix}$ for a unique $\alpha \in \mathbb{F}_q$. If $p$ is a vertex
represented by $\begin{pmatrix} \pi^{n} & u \\ 0 & 1 \end{pmatrix}$, we
set $v = p$, $t = 0$, $\alpha = 0$, and let $w$ be the adjacent vertex
represented by $\begin{pmatrix} \pi^{n+1} & u \\ 0 & 1 \end{pmatrix}$.
In both cases, $p = (1-t)v + tw$ and $t \in [0,1)$ is the distance from $v$ to $p$.

\begin{lemma}
    The distance from $p$ to the principal geodesic on $\mathcal{T}(\mathbb{R})$ is
    \begin{equation}\label{equationDistanceGeodesic}
        d(p,[\infty\to 0])=
        \begin{cases}
            n-\ord(u)+t &\text{if } u\neq 0,\\
            t &\text{if } u=0,\ \alpha\neq 0,\\
            0 &\text{if } u=0,\ \alpha=0.
        \end{cases}
    \end{equation}
\end{lemma}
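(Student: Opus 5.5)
We reduce to the vertex formula \eqref{eq:distance-geodesic} together with elementary facts about distances from points of an edge to a subtree. The principal geodesic $[\infty\to 0]$ is a convex subtree of $\mathcal{T}(\mathbb{R})$. For a point $p=(1-t)v+tw$ on the segment $[v,w]$, the function $x\mapsto d(x,[\infty\to 0])$ is therefore affine along the edge unless the edge meets the geodesic. More precisely, there are three configurations, and they correspond exactly to the three cases in \eqref{equationDistanceGeodesic}:
\begin{enumerate}
\item[(a)] the edge lies on the geodesic;
\item[(b)] the edge meets the geodesic only at $v$;
\item[(c)] the edge is disjoint from the geodesic, with $w$ farther from it than $v$.
\end{enumerate}
Since $v$ is the initial vertex of the edge, it lies in the direction of $\infty$ from $w$. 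So $w$ can never be the closer endpoint unless both endpoints are on the geodesic.

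First, if $p$ is a vertex ($t=0$, $\alpha=0$), the formula is exactly \eqref{eq:distance-geodesic}. The case $u=0$, $\alpha=0$, $t=0$ gives $0$; the case $u\neq 0$ gives $n-\ord(u)$. So we may assume $p$ lies on an open edge, with $w$ represented by $\begin{pmatrix}\pi^{n+1} & u+\alpha\pi^n\\ 0&1\end{pmatrix}$.

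If $u=0$, then $v=v_{-n}$ lies on the principal geodesic.
\begin{itemize}
\item If $\alpha=0$, then $w=v_{-n-1}$ is also on it, so the whole edge lies on the geodesic and the distance is $0$.
\item If $\alpha\neq 0$, then $w$ has $u'=\alpha\pi^n$ with $\ord(u')=n$. By \eqref{eq:distance-geodesic}, $d(w,[\infty\to 0])=n+1-n=1$. Since $v$ is on the geodesic and $w$ is at distance $1$, the path from $p$ to the geodesic runs through $v$ (the unique nearest point, by tree convexity). The distance is therefore $d(p,v)=t$.
\end{itemize}

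If $u\neq 0$, then $\ord(u)<n$ by our choice of representative, and $v$ is at distance $n-\ord(u)\geq 1$ from the geodesic. Next we compute $d(w,\cdot)$. Because $\ord(\alpha\pi^n)\geq n>\ord(u)$, we have $\ord(u+\alpha\pi^n)=\ord(u)$. Hence $d(w,[\infty\to 0])=n+1-\ord(u)$, which is one more than the distance from $v$.

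In a tree, if two adjacent vertices are at distances $D$ and $D+1$ from a convex subtree, then the geodesic from $w$ to the subtree passes through $v$. For $p$ on the open edge, the nearest-point path thus goes $p\to v\to$ geodesic. This gives distance $t+n-\ord(u)$.

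The only point needing care is justifying this "passes through $v$" claim for points in the interior of an edge. The justification is the nearest-point retraction onto a convex subtree of an $\mathbb{R}$-tree: it is constant on the connected component of the complement containing the edge, and distance to it is additive along the path. I would state this briefly using uniqueness of geodesics in $\mathcal{T}(\mathbb{R})$, which is where any subtlety lies. The rest is bookkeeping with $\ord$.
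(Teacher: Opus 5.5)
Your proof is correct and follows the paper's argument. It uses the same case split ($u=0$ with $\alpha=0$ or $\alpha\neq 0$, and $u\neq 0$), and in the last case the key point is again that $v$ lies between $p$ and the principal geodesic, so $d(p,[\infty\to 0])=d(p,v)+d(v,[\infty\to 0])$; the paper simply asserts this, whereas you justify it by computing $d(w,[\infty\to 0])=d(v,[\infty\to 0])+1$ from the vertex formula. That computation alone already finishes both open-edge cases without the retraction argument: since $d(\cdot,[\infty\to 0])$ is $1$-Lipschitz, it gives $t+d(v,[\infty\to 0])\geqslant d(p,[\infty\to 0])\geqslant d(w,[\infty\to 0])-(1-t)$, and the two bounds coincide.
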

\begin{proof}
    First assume $u=0$, so the vertex $v$ lies on the geodesic. In this case, $p$ lies on the principal geodesic if and only if $\alpha=0$. If $\alpha\neq 0$, then $d(p,[\infty\to 0])=d(p,v)=t$.

    \noindent If $u\neq 0$, then $v$ lies between the principal geodesic and $p$; therefore $d(p,[\infty\to 0])=d(p,v)+d(v,[\infty\to 0])=n-\ord(u)+t$.
\end{proof}

 Let $\lambda \colon \Omega \to \mathcal{T}(\mathbb{R})$ denote the \textit{building map}, whose image is the set of rational points $\mathcal{T}(\mathbb{Q}) \subset \mathcal{T}(\mathbb{R})$ \cite[(1.5.2)]{GekelerReversat1996}. 
 For any point $p\in\mathcal{T}(\mathbb{Q})$ and any open edge $e\in Y^\circ(\mathcal{T})$, define $\Omega_p\coloneqq \lambda^{-1}(p)\subset\Omega$ and $\Omega_e\coloneqq \lambda^{-1}(e)\subset\Omega$. The Drinfeld half-plane $\Omega$ decomposes as a disjoint union:
\begin{equation*}
\Omega=\left(\coprod_{v\in X(\mathcal{T})}\Omega_{v}\right)\coprod\left(\coprod_{e\in Y^\circ(\mathcal{T})}\Omega_{e}\right).
\end{equation*}
 Let $|z|_i=\inf\{|z-x|\,:\,x\in F_\infty\}$ be the \textit{imaginary absolute value} of $z\in\Omega$. Recall that for all $\gamma\in\GL_2(F_\infty)$
\cite[(1.1.5)]{GekelerReversat1996},
 \begin{equation}\label{equationImaginaryPart}
     |\gamma z|_i=\frac{|\det\gamma|}{|j_\gamma(z)|^2}|z|_i.
 \end{equation}
 One has 
\begin{equation*}
    \Omega_{v_0}=\{z\in\mathbb{C}_\infty\,:\,|z|=|z|_i=1\}.
\end{equation*}
The action of $\GL_2(F_\infty)$ on $\mathcal{T}$ induces a natural action on $\mathcal{T}(\mathbb{R})$:
\begin{equation*}
    \gamma\cdot  [(1-t)v+tw]\coloneqq (1-t)(\gamma\cdot  v)+t(\gamma\cdot  w)
\end{equation*}
This action preserves the distance, maps vertices to vertices, open edges to open edges, and satisfies $\gamma\Omega_p=\Omega_{\gamma p}$ for all $p\in \mathcal{T}(\mathbb{R})$. 

Let $p = (1-t)v + tw\in\mathcal{T}(\mathbb{Q})$ with $t \in [0,1)$ as above. Fix an element $\pi_t\in\mathbb{C}_\infty^\times$ with $\ord(\pi_t)=t$ (if $t=0$, we choose $\pi_0=1$). 

\begin{lemma}\label{lemmaPhiP}
    The map $\zeta\mapsto \pi^n\pi_t \zeta+u+\alpha\pi^n$ defines a bijection $\phi_p\colon\Omega_{v_0}\xrightarrow{\sim}\Omega_{p}$. Moreover, $|z|_i=q^{-(n+t)}$ for all $z\in \Omega_{p}$.
\end{lemma}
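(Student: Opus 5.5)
The plan is to reduce everything to the base vertex by an explicit element of $\GL_2(F_\infty)$ and use equivariance of the building map. Let
\begin{equation*}
    g_p\coloneqq \begin{pmatrix}\pi^n\pi_t & u+\alpha\pi^n\\ 0 & 1\end{pmatrix},
\end{equation*}
so that $\phi_p(\zeta)=g_p\zeta$. When $t\in\mathbb{Q}$ is nonzero, $\pi_t\notin F_\infty$, so $g_p$ is not in $\GL_2(F_\infty)$. I would therefore factor $\phi_p=h\circ\mu_t$. Here $h\coloneqq \begin{pmatrix}\pi^n & u+\alpha\pi^n\\ 0&1\end{pmatrix}\in\GL_2(F_\infty)$, and $\mu_t(\zeta)=\pi_t\zeta$.

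\textbf{Step 1: the vertex $h v_0$.} The matrix $h$ represents the vertex $\begin{pmatrix}\pi^n & u+\alpha\pi^n\\0&1\end{pmatrix}v_0$. Since $u+\alpha\pi^n\equiv u \bmod \pi^n\mathcal{O}_\infty$, this vertex equals $v$. The action preserves $\lambda$-fibres ($\gamma\Omega_q=\Omega_{\gamma q}$), so $h$ restricts to a bijection $\Omega_{h^{-1}p}\to\Omega_p$. It then suffices to show that $\mu_t$ maps $\Omega_{v_0}$ bijectively onto $\Omega_{p'}$, where $p'\coloneqq h^{-1}p=(1-t)v_0+t\,h^{-1}w$.

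\textbf{Step 2: identifying $h^{-1}w$.} The vertex $w$ is represented by $\begin{pmatrix}\pi^{n+1}&u+\alpha\pi^n\\0&1\end{pmatrix}=h\begin{pmatrix}\pi&0\\0&1\end{pmatrix}$. Hence $h^{-1}w$ is represented by $\mathrm{diag}(\pi,1)$, and that vertex is $v_{-1}$ (in the convention $v_n\leftrightarrow \mathrm{diag}(\pi^{-n},1)$). So $p'$ is the point at distance $t$ from $v_0$ along the edge $[v_0,v_{-1}]$, pointing toward the end $0$.

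\textbf{Step 3: the edge fibre.} I need to know that $\Omega_{p'}=\{z:|z|=|z|_i=q^{-t}\}$. This is the standard description of $\lambda$ on the principal geodesic \cite[(1.5.2)]{GekelerReversat1996}: for $|z|=|z|_i$, $\lambda(z)$ lies on $[\infty\to 0]$ at position $-\log_q|z|$. That identification is the main point to check carefully against the conventions, in particular that decreasing $|z|$ moves toward the end $0$. Given it, $\mu_t$ multiplies both $|z|$ and $|z|_i$ by $|\pi_t|=q^{-t}$. The second factor is valid because $|\pi_t z - x|=|\pi_t|\,|z-x/\pi_t|$, and as $x$ runs over $F_\infty$, the infimum over $x/\pi_t$ is compared using the ultrametric property and the fact that $|z|_i=|z|$ on these sets. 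Consequently $\mu_t$ carries $\{|z|=|z|_i=1\}=\Omega_{v_0}$ bijectively onto $\{|z|=|z|_i=q^{-t}\}$, with inverse given by division by $\pi_t$.

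\textbf{Step 4: the imaginary absolute value.} For $z=h(\pi_t\zeta)$, equation \eqref{equationImaginaryPart} applied to $h$ gives $j_h=1$ and $|\det h|=q^{-n}$. Therefore $|z|_i=q^{-n}|\pi_t\zeta|_i=q^{-n}q^{-t}$. Alternatively, this follows directly, since translation by an element of $F_\infty$ preserves $|\cdot|_i$ and scaling multiplies it by $|\pi^n\pi_t|$.
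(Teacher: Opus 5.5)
Your route is the same as the paper's: you factor $\phi_p=h\circ\mu_t$, where $h$ (the paper's $\delta$) sends $v_0,v_{-1}$ to $v,w$, and you reduce to the fibre over the edge $[v_0,v_{-1}]$. Steps~1 and~2 are fine, and so is the forward half of Step~3: for $\zeta\in\Omega_{v_0}$ one has $|\pi_t\zeta|_i=q^{-t}$, so $\phi_p$ maps $\Omega_{v_0}$ injectively into $\Omega_p$.

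The gap is the converse, ``with inverse given by division by $\pi_t$''. Take $0<t<1$. Every $y$ with $|y|=q^{-t}$ satisfies $|y-x|=\max(q^{-t},|x|)$ for all $x\in F_\infty$, because $q^{-t}\notin|F_\infty^\times|$. So $|y|_i=|y|$ holds automatically, and $\Omega_{p'}$ is the whole circle $\{|y|=q^{-t}\}$. For $|\zeta|=1$, by contrast, the condition $|\zeta|_i=1$ is a genuine restriction: it removes the discs $\{|\zeta-x|<1\}$, $x\in\mathbb{F}_q^\times$. Your ultrametric comparison cannot produce it, because the points $x/\pi_t$ with $x\in F_\infty$ meet $F_\infty$ only in $0$.

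Here is a concrete counterexample. Take $n=0$, $u=\alpha=0$, so that $\phi_p(\zeta)=\pi_t\zeta$. The point $z=\pi_t$ lies in $\Omega$ and has $|z|=|z|_i=q^{-t}$, so $z\in\Omega_p$. Its only preimage is $\zeta=1\notin\Omega$. In general, $\phi_p(\Omega_{v_0})$ is $\Omega_p$ minus the $q-1$ open discs $\phi_p(\{|\zeta-x|<1\})$, $x\in\mathbb{F}_q^\times$.

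So the bijectivity claim is false whenever $p$ lies on an open edge, and no argument can close this step. The paper's own proof makes the identical converse step, namely $\pi_t^{-1}(\delta^{-1}z)\in\Omega_{v_0}$. The defect is therefore in the statement, not in your choice of route.

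Your argument does prove the following. For $t=0$ the lemma holds as stated, since $\mu_0=\mathrm{id}$ and Step~1 alone suffices. For $0<t<1$, the same formula defines a bijection from the full circle $\{\zeta:|\zeta|=1\}$ onto $\Omega_p$, which restricts to an injection on $\Omega_{v_0}$.

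The assertion $|z|_i=q^{-(n+t)}$ is true on all of $\Omega_p$. You should deduce it from $h^{-1}z\in\Omega_{p'}$ and \eqref{equationImaginaryPart}, not by writing $z=h(\pi_t\zeta)$ with $\zeta\in\Omega_{v_0}$, since that misses points. Likewise, your alternative claim that scaling by $\pi_t$ multiplies $|\cdot|_i$ by $|\pi_t|$ holds for $\zeta\in\Omega_{v_0}$ but not for general $\zeta$ with $|\zeta|=1$.

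The later uses of $\phi_p$ in the paper only need $|\zeta|=1$: the definition of $\ord_z$, Proposition~\ref{propositionValuationS}, and Lemma~\ref{lemmaImageUm}. They all go through with this corrected version.
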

\begin{proof}
    We first treat the special case $v=v_0$ and $w=v_{-1}$. The matrix $\gamma\coloneqq \begin{pmatrix} \pi & 0 \\ 0 & 1 \end{pmatrix}$ maps $v_0$ to $v_{-1}$, so $\Omega_{v_{-1}}=\gamma\Omega_{v_{0}}=\{z\in\mathbb{C}_\infty\,:\,|z|=|z|_i=q^{-1}\}$.

    \noindent Since $\log |\cdot|$ and $\log |\cdot|_i$ are linear on edges \cite[(1.5.10)]{GekelerReversat1996}, we have 
    \begin{equation*}
        \Omega_{p}=\{z\in\mathbb{C}_\infty\,:\,|z|=|z|_i=q^{-t}\}.
    \end{equation*}
    We now prove the general case. Let $\delta\coloneqq \begin{pmatrix} \pi^{n} & u+\alpha\pi^n \\ 0 & 1 \end{pmatrix}$. We have $\delta v_0=v$ and $\delta v_{-1}=w$, so $\delta^{-1}p=(1-t)v_0+tv_{-1}$. By the special case above, $\Omega_{\delta^{-1}p}=\{z\in\mathbb{C}_\infty\,:\,|z|=|z|_i=q^{-t}\}$. Let $\zeta\in \Omega_{v_{0}}$; then $\pi_t \zeta\in \Omega_{\delta^{-1}p}$, and so $\phi_p(\zeta)=\delta(\pi_t \zeta)\in\Omega_{p}$. Conversely, if $z\in\Omega_{p}$, then $\delta^{-1}z\in \Omega_{\delta^{-1}p}$, so $\pi_t^{-1}(\delta^{-1}z)\in\Omega_{v_0}$. The map $z\mapsto \pi_t^{-1}(\delta^{-1}z)$ is the inverse of $\phi_p$.

    \noindent Finally, if $z=\phi_p(\zeta)=\delta(\pi_t \zeta)\in\Omega_{p}$, then 
    \begin{equation*}
        |z|_i=\frac{|\det\delta|}{|j_\delta(\pi_t\zeta)|^2}|\pi_t\zeta|_i=q^{-(n+t)}.
    \end{equation*}
\end{proof}

\subsection{Valuation of $k$-forms}

We now define the valuation of a $k$-form $\omega(z)=f(z)\,(\mathrm{d}z)^k$ on $\Omega$ at a point $z\in\Omega_p$. Set $\zeta\coloneqq \phi_p^{-1}(z)\in\Omega_{v_0}$; the pullback $\phi_p^*\omega(\zeta)=f(\phi_p(\zeta))\,(\mathrm{d}\phi_p(\zeta))^k$ can be written $g_p(\zeta)\,(\mathrm{d}\zeta)^k$ with 
\begin{equation*}
g_p(\zeta)=\phi_p^*\omega(\zeta)/(\mathrm{d}\zeta)^k=f(\phi_p(\zeta))\,\pi^{kn}\pi_t^k.
\end{equation*}
We define the \textit{valuation of $\omega$ at} $z\in\Omega_p$ to be $\ord_z(\omega)\coloneqq \ord g_p(\zeta)$. By Lemma \ref{lemmaPhiP}, one has
\begin{equation}\label{equationOrdKForm}
    \ord_z(\omega)=\ord f(z)+k(n+t)=\ord f(z)-k\log_q|z|_i,
\end{equation}
which shows that $\ord_z(\omega)$ is independent of the choice of $\pi_t$. For all $\gamma\in\GL_2(F_\infty)$, the valuation of $\gamma^*\omega(z)=f(\gamma z)\,\mathrm{d}(\gamma z)^k$ at $z$ is the valuation of $\omega(z)$ at $\gamma z$, i.e. $\ord_z(\gamma^*\omega)=\ord_{\gamma z}(\omega)$.

\begin{definition}
    Let $K$ be any field, and let $a,b$ be two distinct points in $\mathbb{P}^1(K)$. We define the following $1$-form on $\mathbb{P}^1_{K}$:
\begin{equation*}
    s(z;a,b)\coloneqq \left(\frac{1}{z-b}-\frac{1}{z-a}\right)\, \mathrm{d}z
\end{equation*}
\end{definition}
\noindent The differential form $s(z;a,b)$ is, up to scaling, the unique $1$-form on $\mathbb{P}^1_{K}$ with simple poles at $a$ and $b$ and no other poles (with the convention $1/(z-a)=0$ if $a=\infty$). Let 
\begin{equation*}
    \gamma^*s(z;a,b)=\left(\frac{1}{\gamma z-b}-\frac{1}{\gamma z-a}\right)\, \mathrm{d}(\gamma z)
\end{equation*}
be the pullback of $s(z;a,b)$ by $\gamma\in \GL_2(K)$; it is not hard to show that
\begin{equation}\label{equationS}
    \gamma^*s(z;a,b)=s(z;\gamma^{-1}a,\gamma^{-1}b)
\end{equation}
for any $\gamma\in \GL_2(K)$.
To see \eqref{equationS}, observe that $s(z;a,b)$ is the unique
rational $1$-form on $\mathbb{P}^1$ with at most simple poles, all
contained in $\{a,b\}$, and residues $-1$ at $a$ and $+1$ at $b$:
the difference of two such forms is a holomorphic $1$-form on
$\mathbb{P}^1$, hence zero. (If $a$ or $b$ equals $\infty$, this
is checked in the coordinate $w=1/z$; for example,
$s(z;\infty,b)=\frac{\mathrm{d}z}{z-b}=\frac{-\mathrm{d}w}{w(1-bw)}$
has residue $-1$ at $w=0$.) Since $z\mapsto\gamma z$ is an
automorphism of $\mathbb{P}^1$ and the pullback of a $1$-form
under an automorphism preserves residues,
$\gamma^\ast s(z;a,b)$ has simple poles exactly at
$\gamma^{-1}a$ and $\gamma^{-1}b$, with residues $-1$ and $+1$
respectively, and \eqref{equationS} follows.

We now specialize to the case $K=F_\infty$. Let $a,b$ be two distinct points in $\mathbb{P}^1(F_\infty)$; then $s(z;a,b)$ is a holomorphic differential form on $\Omega$.

\begin{proposition}\label{propositionValuationS}
    For all $z\in\Omega_p$, the valuation $\ord_z s(z;a,b)$ is the distance from $p$ to the geodesic $[a\to b]$ on $\mathcal{T}(\mathbb{R})$.
\end{proposition}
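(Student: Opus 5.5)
Both sides of the identity are $\GL_2(F_\infty)$-equivariant, so the plan is to reduce to the principal geodesic and then compute directly there.

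\emph{Reduction.} The group $\GL_2(F_\infty)$ acts transitively on ordered pairs of distinct points of $\mathbb{P}^1(F_\infty)$, so we can choose $\gamma$ with $\gamma^{-1}\infty=a$ and $\gamma^{-1}0=b$. By \eqref{equationS},
\[
s(z;a,b)=\gamma^*s(z;\infty,0),
\]
and the earlier remark then gives
\[
\ord_z s(z;a,b)=\ord_{\gamma z}s(\,\cdot\,;\infty,0).
\]
On the tree side, $\gamma$ is an isometry of $\mathcal{T}(\mathbb{R})$ and maps the geodesic $[a\to b]$ onto $[\infty\to 0]$, because the identification of ends with $\mathbb{P}^1(F_\infty)$ is equivariant. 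Since $\gamma z\in\Omega_{\gamma p}$, this yields
\[
d(p,[a\to b])=d(\gamma p,[\infty\to 0]).
\]
It therefore suffices to treat $a=\infty$ and $b=0$.

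\emph{Direct computation.} For $(a,b)=(\infty,0)$ we have $s(z;\infty,0)=z^{-1}\,\mathrm{d}z$, so $f(z)=1/z$. By \eqref{equationOrdKForm} with $k=1$,
\[
\ord_z s=-\ord z-\log_q|z|_i=\log_q\frac{|z|}{|z|_i}.
\]
Now write $z=\phi_p(\zeta)=\pi^n\pi_t\zeta+u+\alpha\pi^n$ with $|\zeta|=|\zeta|_i=1$, using Lemma \ref{lemmaPhiP}, which also gives $|z|_i=q^{-(n+t)}$. The remaining task is to compute $|z|$ in the three cases of \eqref{equationDistanceGeodesic}.

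\begin{itemize}
\item If $u=0$ and $\alpha=0$, then $|z|=|\pi^n\pi_t\zeta|=q^{-(n+t)}$. The ratio $|z|/|z|_i$ equals $1$, so the valuation is $0$.
\item If $u=0$ and $\alpha\neq 0$, then $|\pi^n\pi_t\zeta|=q^{-n-t}<q^{-n}=|\alpha\pi^n|$ because $t>0$. By the ultrametric inequality $|z|=q^{-n}$, and the valuation is $t$.
\item If $u\neq 0$, the normalization of representatives gives $\ord(u)<n$. Hence $|u|=q^{-\ord u}$ strictly exceeds both $|\alpha\pi^n|\le q^{-n}$ and $|\pi^n\pi_t\zeta|$. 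So $|z|=q^{-\ord(u)}$, and the valuation is $n+t-\ord(u)$.
\end{itemize}

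In every case the result matches \eqref{equationDistanceGeodesic}, which completes the argument.

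\emph{Main obstacle.} The most delicate point is the reduction step: one must check that the labeling convention $(x:y)\mapsto(-y:x)$ for ends is compatible with the action $\gamma\cdot[L]=[L\gamma^{-1}]$. This compatibility is exactly what ensures that $\gamma$ sends the geodesic with ends $a,b$ to the one with ends $\gamma a,\gamma b$. I would take this from the cited equivariance in \cite[(1.3.6), (1.6.3)]{GekelerReversat1996}. Orientation is irrelevant here, since distance to a geodesic does not depend on it.

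A secondary check is that in the case $u=0$, $\alpha\neq 0$, the point $p$ genuinely has $t>0$. This holds because we set $\alpha=0$ whenever $t=0$, so the strict inequality used above is valid.
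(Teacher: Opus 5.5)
Your proposal is correct and follows essentially the same route as the paper: reduce by $\GL_2(F_\infty)$-equivariance to $(a,b)=(\infty,0)$, then compute case by case and compare with \eqref{equationDistanceGeodesic}. Your expression $\ord_z s=\log_q(|z|/|z|_i)$ is the same computation as the paper's $\ord_z(\omega)=-\ord\bigl(\zeta+(u+\alpha\pi^n)/(\pi^n\pi_t)\bigr)$, since both equal $-\ord z+n+t$; your more explicit handling of the reduction step, including the end-labeling convention, is a welcome addition to the paper's one-line reduction.
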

\begin{proof}
    Since $\lambda$ is $\GL_2(F_\infty)$-equivariant, acting by a suitable
$\gamma\in\GL_2(F_\infty)$ we may assume $a=\infty$, $b=0$. For convenience, we write $\omega(z)$ for $s(z;\infty,0)=\mathrm{d}z/z$.

\noindent Set $\zeta=\phi_p^{-1}(z)\in\Omega_{v_0}$. We have 
\begin{align*}
    \phi_p^*\omega(\zeta)&=\frac{\mathrm{d}\phi_p(\zeta)}{\phi_p(\zeta)}\\
    &=\frac{\pi^n\pi_t}{\pi^n\pi_t \zeta+u+\alpha\pi^n}\,\mathrm{d}\zeta\\
    &=\frac{1}{\zeta+\pi^{-n}\pi_t^{-1}(u+\alpha\pi^n)}\,\mathrm{d}\zeta
\end{align*}
and thus
\begin{equation*}
    \ord_z(\omega)=-\ord\left( \zeta+ \frac{u+\alpha\pi^n}{\pi^n\pi_t}\right).
\end{equation*}
\begin{itemize}
\item If $u\neq 0$, then $\ord(u)<n$ and
\begin{equation*}
    \ord \left(\frac{u+\alpha\pi^n}{\pi^n\pi_t}\right)=\ord(u)-n-t<0=\ord \zeta.
\end{equation*}
Therefore
\begin{equation*}
    \ord_z(\omega)=-\ord \left(\frac{u+\alpha\pi^n}{\pi^n\pi_t}\right)
    =n-\ord(u)+t.
\end{equation*}
    \item If $u=0$ and $\alpha\neq 0$, then $t>0$ and $\ord_z(\omega)=-\ord(\alpha/\pi_t)=t$.
    \item If $u=0$ and $\alpha=0$, then $\ord_z(\omega)=-\ord \zeta=0$.
\end{itemize}
Comparing with \eqref{equationDistanceGeodesic}, we see that $\ord_z(\omega)=d(p,[\infty\to 0])$.
\end{proof}

Let $r\geqslant 1$, $k_1,\ldots,k_r\geqslant 1$, and $a_1,b_1,\ldots,a_r,b_r\in\mathbb{P}^1(F_\infty)$ be such that $a_i\neq b_i$ and $\{a_i,b_i\}\neq\{a_j,b_j\}$ for $i\neq j$. Let $k\coloneqq \sum_{i=1}^r k_i$.

\noindent For any $p\in\mathcal{T}(\mathbb{R})$, let
$d(p,X(\mathcal{T}))\coloneqq\min_{v\in X(\mathcal{T})}d(p,v)$;
the minimum is attained, and equals $\min(t,1-t)$ if
$p=(1-t)v+tw$ with $0\leqslant t<1$.

\begin{corollary}\label{corollaryValuationS}
     Let $s(z)$ be the $k$-form $\prod_{i=1}^r s(z;a_i,b_i)^{k_i}$. For any $\gamma\in \GL_2(F_\infty)$ and $z\in\Omega_p$, 
    \begin{equation*}
        \ord_z(\gamma^*s)=\sum_{i=1}^r k_i\cdot d(\gamma p,[a_i\to b_i]).
    \end{equation*}
    In particular, $\ord_z(\gamma^*s)=0$ if $\gamma p\in \bigcap_{i=1}^r [a_i \to b_i]$ and $\ord_z(\gamma^*s)\geqslant t_p > 0$ otherwise, where
         \begin{equation*}
             t_p\coloneqq \begin{cases}
                 1 &\text{if } p\in X(\mathcal{T}),\\
                 d(p,X(\mathcal{T})) &\text{otherwise},
             \end{cases}
         \end{equation*}
         depends only on $p$.
\end{corollary}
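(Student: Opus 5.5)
The proof reduces to Proposition \ref{propositionValuationS} plus additivity of the valuation. First, I will check that $\ord_z$ is additive on products of forms. If $\omega_1=f_1(\mathrm{d}z)^{k_1}$ and $\omega_2=f_2(\mathrm{d}z)^{k_2}$, then by \eqref{equationOrdKForm}
\[
\ord_z(\omega_1\omega_2)=\ord(f_1f_2)+(k_1+k_2)(n+t)=\ord_z(\omega_1)+\ord_z(\omega_2),
\]
since $\ord$ on $\mathbb{C}_\infty$ is a valuation. Next, pullback commutes with products, and \eqref{equationS} gives
\[
\gamma^*s=\prod_{i=1}^r\bigl(\gamma^*s(z;a_i,b_i)\bigr)^{k_i}.
\]
Using $\ord_z(\gamma^*\omega)=\ord_{\gamma z}(\omega)$, additivity, and $\gamma z\in\Omega_{\gamma p}$, we get $\ord_z(\gamma^*s)=\sum_i k_i\,\ord_{\gamma z}s(\cdot;a_i,b_i)$. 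Proposition \ref{propositionValuationS} identifies each term with $d(\gamma p,[a_i\to b_i])$, which is the displayed formula.

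For the ``in particular'' part, each summand is a nonnegative multiple of a distance, since $k_i\geqslant 1$. So the sum is $0$ exactly when $\gamma p$ lies on every geodesic $[a_i\to b_i]$.

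Otherwise, some $i$ has $\gamma p\notin[a_i\to b_i]$, and then $\ord_z(\gamma^*s)\geqslant d(\gamma p,[a_i\to b_i])$. It therefore suffices to show that for any point $p'\in\mathcal{T}(\mathbb{R})$ off a geodesic $L$, we have $d(p',L)\geqslant t_{p'}$, and that $t_{\gamma p}=t_p$. The second claim holds because $\gamma$ is an isometry mapping vertices to vertices, so $\gamma p$ is a vertex iff $p$ is, and $d(\gamma p,X(\mathcal{T}))=d(p,X(\mathcal{T}))$.

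For the first claim, I use that a geodesic in $\mathcal{T}(\mathbb{R})$ is a union of closed edges; the distance is then handled in two cases.

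\emph{Vertex case.} If $p'$ is a vertex not on $L$, the nearest point of $L$ is reached along a path of whole edges. The distance is therefore a positive integer, hence at least $1$.

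\emph{Edge case.} If $p'$ lies in an open edge $e$ and $e\not\subset L$, the path from $p'$ to $L$ must first leave $e$ through one of its endpoints. Its length is therefore at least $\min(t,1-t)=d(p',X(\mathcal{T}))$. If instead $e\subset L$, then $p'\in L$, a contradiction.

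This last point, that a geodesic contains an open edge's interior point only if it contains the whole edge, is the only mildly delicate step. It follows because geodesics in the realization are unions of closed edges between consecutive vertices. Finally, $t_p>0$ because a non-vertex point has $0<t<1$.
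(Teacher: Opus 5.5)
Your proof is correct and follows the paper's argument. The formula comes from additivity of the valuation together with Proposition \ref{propositionValuationS}, and the lower bound from the fact that geodesics are unions of closed edges, so a point off a geodesic is at distance at least $1$ if it is a vertex and at least $d(p,X(\mathcal{T}))$ otherwise. The only cosmetic difference is that you evaluate at $\gamma z\in\Omega_{\gamma p}$ via $\ord_z(\gamma^*\omega)=\ord_{\gamma z}(\omega)$, whereas the paper rewrites $\gamma^*s(z;a_i,b_i)=s(z;\gamma^{-1}a_i,\gamma^{-1}b_i)$ using \eqref{equationS}, evaluates at $z\in\Omega_p$, and then uses that $\gamma$ is an isometry of $\mathcal{T}(\mathbb{R})$. (The product formula for $\gamma^*s$ that you attribute to \eqref{equationS} is really just multiplicativity of pullback.)
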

\begin{proof}
    By \eqref{equationS} and Proposition \ref{propositionValuationS},
    \begin{align*}
        \ord_z(\gamma^*s)&=\sum_{i=1}^r k_i\cdot \ord_z s(z;\gamma^{-1}a_i,\gamma^{-1} b_i) \\
        &=\sum_{i=1}^r k_i\cdot d(p,[\gamma^{-1}a_i\to\gamma^{-1} b_i])\\
        &=\sum_{i=1}^r k_i\cdot d(\gamma p,[a_i\to b_i])
    \end{align*}
    where the last equality holds because $\GL_2(F_\infty)$ preserves the distance on $\mathcal{T}(\mathbb{R})$. This sum vanishes if and only if $\gamma p\in [a_i\to b_i]$ for all $1\leqslant i\leqslant r$, which proves the first assertion.

    Now assume $\gamma p\not\in \bigcap_{i=1}^r [a_i\to b_i]$, and let $j$ be such that $\gamma p\not\in[a_j\to b_j]$. Since $k_j\geqslant 1$, one has
    \begin{equation*}
        \ord_z(\gamma^*s)\geqslant k_j\cdot d(\gamma p,[a_j\to b_j])\geqslant d(\gamma p,[a_j\to b_j]).
    \end{equation*}
    If $p$ is a vertex, then so is $\gamma p$, and since geodesics pass through vertices, $d(\gamma p,[a_j\to b_j])\geqslant 1$. If $p$ lies on an open edge, then $d(\gamma p,[a_j\to b_j])\geqslant d(\gamma p,X(\mathcal{T}))=d(p,X(\mathcal{T}))$, again because geodesics pass through vertices.
\end{proof}

\section{Poincaré series}\label{sectionPS}

Let $\Gamma$ be a discrete subgroup of $\GL_2(F_\infty)$ with
$\det(\Gamma)\subset\mathcal{O}_\infty^\times$. The group $\Gamma$
acts on $X(\mathcal{T})$ with finite stabilizers. Indeed, 
$\GL_2(F_\infty)$ acts transitively on $X(\mathcal{T})$, so it suffices to consider the stabilizer of $v_0$; if $\gamma=cg$ with $c\in
F_\infty^\times$ and $g\in\GL_2(\mathcal{O}_\infty)$ stabilizes
$v_0$, then $|c|^2=|\det\gamma|=1$, so
$\gamma\in\GL_2(\mathcal{O}_\infty)$, and
$\Gamma\cap\GL_2(\mathcal{O}_\infty)$ is finite, since $\GL_2(\mathcal{O}_\infty)$ is compact and $\Gamma$ is discrete.

\noindent The combinatorial distance $d(v,\gamma v)$ is even for $\gamma\in \Gamma$ and $v\in X(\mathcal{T})$ \cite[p.75]{Serre1980}. It follows that two adjacent vertices never lie in the same $\Gamma$-orbit, and that $\Gamma$ acts without inversion on $\mathcal{T}$.

Let $r\geqslant 1$, $k_1,\ldots,k_r\geqslant 1$, $l\in \mathbb{Z}$, and
let $H$ be a subgroup of $\Gamma$ such that $\det(h)^l=1$ for all $h\in H$.
Set $k=\sum_{i=1}^r k_i$. Let $a_1,b_1,\ldots,a_r,b_r\in\mathbb{P}^1(F_\infty)$
satisfying the following conditions:
\begin{enumerate}
    \item[(a)] $a_i\neq b_i$ for all $i$,
    \item[(b)] $\{a_i,b_i\}\neq \{a_j,b_j\}$ for $i\neq j$,
    \item[(c)] $h a_i=a_i$ and $h b_i=b_i$ for all $h\in H$ and $1\leqslant i\leqslant r$.
\end{enumerate}
Condition (a) ensures that the geodesic $[a_i\to b_i]$ is well-defined,
condition (b) guarantees that distinct pairs determine distinct geodesics, and condition (c) implies that $\gamma^*s(z)$ depends only on the right coset of $\gamma$ in $H\backslash\Gamma$.

\noindent We set $s(z)\coloneqq \prod_{i=1}^rs(z;a_i,b_i)^{k_i}$. 

\begin{definition}\label{defPS}
    We define the \textit{Poincaré series} associated to this data by
    \begin{equation*}
        P(z)=\sum_{[\gamma]\in H\backslash\Gamma}(\det\gamma)^l \cdot\gamma^*s(z).
    \end{equation*}
\end{definition}
\noindent The condition $\det(h)^l=1$ for all $h\in H$ ensures that $(\det\gamma)^l$ depends only on the coset of $\gamma$ in $H\backslash\Gamma$. Assuming convergence (see Theorems \ref{thmConvergence} and \ref{theorem(P)}), $P(z)$ is a $k$-form on $\Omega$. By \eqref{equationS}, we have 
\begin{align*}
    P(z)&=\sum_{[\gamma]\in H\backslash\Gamma}(\det\gamma)^l\prod_{i=1}^rs(z;\gamma^{-1}a_i,\gamma^{-1}b_i)^{k_i}\\
    &=\sum_{[\gamma]\in \Gamma/H}(\det\gamma)^{-l}\prod_{i=1}^rs(z;\gamma a_i,\gamma b_i)^{k_i},
\end{align*}
where the second equality follows from the substitution
$\gamma\mapsto\gamma^{-1}$, which induces a bijection
$H\backslash\Gamma\to\Gamma/H$.
Set
\begin{equation*}
    u(z;a,b)\coloneqq\frac{1}{z-b}-\frac{1}{z-a},
\end{equation*}
so that $s(z;a,b)=u(z;a,b)\, \mathrm{d}z$. Then $P(z)=f(z)\,(\mathrm{d}z)^k$ with 
\begin{align*}
    f(z)&=\sum_{[\gamma]\in H\backslash\Gamma}(\det\gamma)^l\prod_{i=1}^r u(z;\gamma^{-1}a_i,\gamma^{-1}b_i)^{k_i}\\
    &=\sum_{[\gamma]\in \Gamma/H}(\det\gamma)^{-l}\prod_{i=1}^ru(z;\gamma a_i,\gamma b_i)^{k_i}.
\end{align*}
By abuse of terminology, we also refer to $f\colon\Omega\to\mathbb{C}_\infty$ as a Poincaré series.
\subsection{Convergence}

For $v\in X(\mathcal{T})$ and $N\geqslant 0$, define
\begin{equation*}
    S_N(v)=S_N^{H\backslash\Gamma}(v)\coloneqq \{[\gamma]\in H\backslash\Gamma:
    d(\gamma v,[a_i\to b_i])\leqslant N
    \text{ for all }1\leqslant i\leqslant r\}.
\end{equation*}
Note that condition (c) implies that the distance $d(\gamma v,[a_i\to b_i])$
depends only on the coset of $\gamma$ in $H\backslash\Gamma$.

\noindent Suppose that $S_N(v)$ is finite for some $v\in X(\mathcal{T})$
and every $N\geqslant 0$. Then $S_N(w)$ is finite for every
$w\in X(\mathcal{T})$ and every $N\geqslant 0$: indeed, the
triangle inequality gives $S_N(w)\subseteq S_{N+D}(v)$, where $D\coloneqq d(v,w)$.

\begin{remark}\label{remarkSNVIntersection}
    If $\bigcap_{i=1}^r[a_i\to b_i]$ is nonempty, then
    Lemma \ref{lemmaIntersectionConvex} gives 
    \begin{equation*}
        d\left(\gamma v,\bigcap_{i=1}^r[a_i\to b_i]\right)=\max_{1\leqslant i\leqslant r}d(\gamma v,[a_i\to b_i]).
    \end{equation*}
    Therefore in this case,
    \begin{equation*}
        S_N(v)=\left\{[\gamma]\in H\backslash\Gamma:
        d\left(\gamma v,\bigcap_{i=1}^r[a_i\to b_i]\right)
        \leqslant N\right\}.
    \end{equation*}
\end{remark}

\begin{lemma}\label{lemmaIntersectionConvex}
    Let $C_1,\ldots,C_r$ be convex subsets of $\mathcal{T}(\mathbb{R})$ with nonempty
    intersection $X\coloneqq \bigcap_{i=1}^r C_i$. Then for any point $p\in\mathcal{T}(\mathbb{R})$,
    \begin{equation*}
        d(p,X)=\max_{1\leqslant i\leqslant r}d(p,C_i).
    \end{equation*}
\end{lemma}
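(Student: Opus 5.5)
The inequality $d(p,X)\geqslant\max_i d(p,C_i)$ is immediate from $X\subseteq C_i$ for every $i$. The content of the lemma is the reverse inequality. I would prove it using the tree structure of $\mathcal{T}(\mathbb{R})$ through nearest-point projections. Here a convex subset means one containing the unique geodesic segment between any two of its points. I would also assume the $C_i$ are closed, which covers geodesics and their intersections, so that nearest points exist.

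The first step is a projection fact. For a nonempty closed convex $C$ and a point $p$, let $\pi_C(p)$ be the point of $C$ nearest to $p$. I claim that for every $x\in C$ the segment $[p,x]$ passes through $\pi_C(p)$. To see this, consider the tripod formed by $p$, $x$ and $\pi_C(p)$, with center $m$. Then $m$ lies on $[x,\pi_C(p)]\subseteq C$, so $m\in C$. Since $d(p,m)\leqslant d(p,\pi_C(p))$, minimality forces $m=\pi_C(p)$. Consequently $d(p,x)=d(p,C)+d(\pi_C(p),x)$ for all $x\in C$.

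Next, fix $x\in X$ and look at the segment $\sigma=[p,x]$, parametrized by arclength from $p$. For each $i$, since $x\in C_i$, the projection fact says that $\sigma$ meets $C_i$ at the point $\sigma(d(p,C_i))$. By convexity, $\sigma$ then stays inside $C_i$ all the way to $x$. So $\sigma([d(p,C_i),d(p,x)])\subseteq C_i$.

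Now set $M=\max_i d(p,C_i)$. The point $\sigma(M)$ lies in every $C_i$, hence in $X$. Therefore $d(p,X)\leqslant d(p,\sigma(M))=M$, which is the reverse inequality and completes the proof.

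The only delicate point is the projection fact, specifically that the tripod center lies in $C$. This is exactly where the tree (uniquely geodesic, $0$-hyperbolic) property is used. If closedness is not assumed, I would run the same argument with infima and approximate nearest points; the tripod argument still shows that every point of $C$ is reached through a common point of the closure, and the conclusion is unchanged.
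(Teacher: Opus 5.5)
Your proof is correct and uses the same idea as the paper: the nearest-point projections of $p$ onto the $C_i$ all lie on the segment from $p$ to a point of $X$, and by convexity the projection farthest from $p$ lies in every $C_i$. The differences are cosmetic. You treat all $r$ sets at once with an arbitrary $x\in X$, whereas the paper inducts down to $r=2$ and takes $x$ to be the nearest point of $X$; you also supply the tripod justification of the projection property, which the paper only asserts.
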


\begin{proof}
    Since $X\subseteq C_i$, we have
    $d(p,X)\geqslant d(p,C_i)$ for all $i$, so
    $d(p,X)\geqslant\max_i d(p,C_i)$.
    For the reverse inequality, it suffices by induction on $r$ to treat the case $r=2$.
    Let $c_i\in C_i$ be the nearest point
    of $C_i$ to $p$, and let $x\in X$ be the nearest point of $X$ to $p$. 
    Every path from $p$ to a point in $C_i$ passes through $c_i$, so both $c_1$ and $c_2$ lie on the segment from $p$ to $x$. 
    Without loss of generality,
    $d(p,c_1)\leqslant d(p,c_2)$; then $c_1$ lies on the segment from $c_2$ to $x$. 
    Since $C_1$ is convex and $c_1,x\in C_1$, we have $c_2\in C_1$, hence $c_2\in C_1\cap C_2=X$. 
    Therefore $d(p,X)\leqslant d(p,c_2)=\max_i d(p,C_i)$.
\end{proof}

By Corollary \ref{corollaryValuationS}, the valuation of the term
$\gamma^*s$ at $z\in\Omega_p$ equals
$\sum_{i=1}^r k_i\cdot d(\gamma p,[a_i\to b_i])$. In the
non-Archimedean setting, a series converges if and only if its
terms tend to $0$. The terms of the Poincaré series with small
valuation at $z$ are thus indexed by the cosets $[\gamma]$ for
which all the translates $\gamma\lambda(z)$ remain close to every
geodesic $[a_i\to b_i]$, that is, by the elements of the sets $S_N(v)$ defined
above. This motivates the following finiteness condition:
\begin{equation*}
\textnormal{\textbf{(P)}} \qquad \textit{$S_N(v)$ is finite for
some (equivalently, all) $v\in X(\mathcal{T})$ and all
$N\geqslant 0$.}
\end{equation*}

\begin{theorem}\label{thmConvergence}
    The Poincaré series 
    \begin{equation*}
        f(z)=\sum_{[\gamma]\in H\backslash\Gamma}(\det \gamma)^l\prod_{i=1}^r u(z;\gamma^{-1}a_i,\gamma^{-1}b_i)^{k_i}
    \end{equation*}
    converges locally uniformly on $\Omega$ if and only if \textnormal{\textbf{(P)}} holds. In this case, $f\colon\Omega\to\mathbb{C}_\infty$ is holomorphic, and $P(z)=f(z)\,(\mathrm{d}z)^k$ is a holomorphic $k$-form on $\Omega$. Furthermore, $|f(z)|\leqslant |z|_i^{-k}$ for all $z\in\Omega$.
\end{theorem}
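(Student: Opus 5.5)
The key translation is this: by \eqref{equationOrdKForm} and Corollary \ref{corollaryValuationS}, for $z\in\Omega_p$ the $[\gamma]$-term of $f$ has valuation
\begin{equation*}
\ord\Bigl((\det\gamma)^l\prod_{i=1}^r u(z;\gamma^{-1}a_i,\gamma^{-1}b_i)^{k_i}\Bigr)=\sum_{i=1}^r k_i\, d(\gamma p,[a_i\to b_i]) + k\log_q|z|_i ,
\end{equation*}
since $|\det\gamma|=1$. Write $e_\gamma(z)$ for this quantity. Convergence, holomorphy and the bound will all be read off this formula.

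\textbf{Sufficiency.} Assume \textbf{(P)}. Every term has $\ord\geqslant k\log_q|z|_i$, and the ultrametric inequality then gives $|f(z)|\leqslant|z|_i^{-k}$ wherever the series converges. For local uniform convergence I would use affinoids of the form $\lambda^{-1}(K)$ with $K$ a finite subtree (for instance a closed ball of radius $R$ around $v_0$); these exhaust $\Omega$, and $|z|_i$ is bounded above and below on each of them. For $z\in\lambda^{-1}(K)$ and a fixed vertex $v$, the triangle inequality gives $d(\gamma p,[a_i\to b_i])\geqslant d(\gamma v,[a_i\to b_i])-R'$ with $R'$ depending only on $K$. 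So if $[\gamma]\notin S_N(v)$, some $i$ has $d(\gamma v,[a_i\to b_i])>N$, and hence $e_\gamma(z)\geqslant N-R'+k\log_q|z|_i$ uniformly on $\lambda^{-1}(K)$. Since each $S_N(v)$ is finite, only finitely many terms fail to be small uniformly, so the series converges uniformly on $\lambda^{-1}(K)$.

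Each term is a rational function with poles in $\mathbb{P}^1(F_\infty)$, hence holomorphic on $\Omega$. A uniform limit on affinoids is therefore holomorphic, and $P=f\,(\mathrm{d}z)^k$ is a holomorphic $k$-form.

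\textbf{Necessity.} Suppose \textbf{(P)} fails, so $S_N(v_0)$ is infinite for some $N$. Fix any $z\in\Omega_{v_0}$ and take $[\gamma]\in S_N(v_0)$. Then $\sum_i k_i\,d(\gamma v_0,[a_i\to b_i])\leqslant kN$, and since $|z|_i=1$ this gives $e_\gamma(z)\leqslant kN$. So infinitely many terms have absolute value at least $q^{-kN}$. In a non-Archimedean field a convergent series must have terms tending to $0$ along the countable index set $H\backslash\Gamma$, so the series does not converge at $z$.

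\textbf{Main obstacle.} I expect the only delicate step to be making the uniformity precise in the sufficiency argument: choosing the affinoid exhaustion and controlling $d(\gamma p,\cdot)$ by $d(\gamma v,\cdot)$ uniformly for $p\in K$, including points $p$ on edges. The isometry of the $\Gamma$-action and the triangle inequality should be enough for this.
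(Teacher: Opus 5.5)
Your proposal is correct and follows essentially the paper's argument: the valuation formula from Corollary~\ref{corollaryValuationS} and \eqref{equationOrdKForm}, finiteness of the sets $S_N(v)$ for sufficiency, the ultrametric inequality for $|f(z)|\leqslant|z|_i^{-k}$, and evaluation on $\Omega_{v_0}$ (where $|z|_i=1$) for necessity. The only difference is cosmetic. You exhaust $\Omega$ by the affinoids $\lambda^{-1}(K)$ with $K$ a finite subtree, and you pass from points on edges to a fixed vertex via $d(\gamma p,[a_i\to b_i])\geqslant d(\gamma v,[a_i\to b_i])-d(p,v)$. The paper instead uses the covering by the $U_m$ together with Lemma~\ref{lemmaImageUm} and the edge-affineness Lemma~\ref{lemmaPhiEdge}.
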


\noindent The proof of Theorem \ref{thmConvergence} requires two preliminary
lemmas: Lemma \ref{lemmaPhiEdge} reduces lower bounds for the
valuations of the terms $\gamma^*s$ on $\mathcal{T}(\mathbb{R})$
to their values at vertices, and Lemma \ref{lemmaImageUm} shows
that each affinoid of a suitable admissible covering of $\Omega$
only meets the fibers of $\lambda$ over a finite subtree. The proof of the theorem is given after Lemma \ref{lemmaImageUm}.

For $\gamma\in\Gamma$, define
$\Phi_\gamma\colon\mathcal{T}(\mathbb{R})\to\mathbb{R}_{\geqslant 0}$ by
\begin{equation*}
    \Phi_\gamma(p)
    \coloneqq \sum_{i=1}^r k_i\cdot d(\gamma p,[a_i\to b_i]).
\end{equation*}
Note that $\Phi_\gamma$ depends only on the right coset of $\gamma$ in $H\backslash\Gamma$. For $z\in\Omega$ and $p=\lambda(z)$, we have $\ord_z(\gamma^*s)=\Phi_\gamma(p)$ by Corollary \ref{corollaryValuationS}.

\begin{lemma}\label{lemmaPhiEdge}
    Let $p\in\mathcal{T}(\mathbb{R})$. Write $p=(1-t)v+tw$ with $v,w\in X(\mathcal{T})$ adjacent; then
    \begin{equation*}
        \Phi_\gamma(p)\geqslant \min\{\Phi_\gamma(v),\Phi_\gamma(w)\}
    \end{equation*}
    for all $[\gamma]\in H\backslash\Gamma$.
\end{lemma}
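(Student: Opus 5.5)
The plan is to show that each summand $p\mapsto k_i\,d(\gamma p,[a_i\to b_i])$ is a convex function of $t\in[0,1]$ along the edge. A nonnegative combination of convex functions is convex, and a convex function on a segment is at least the minimum of its endpoint values on that segment. This gives $\Phi_\gamma(p)\geqslant\min\{\Phi_\gamma(v),\Phi_\gamma(w)\}$.

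Since $\gamma$ is an isometry of $\mathcal{T}(\mathbb{R})$ mapping the edge $\{v,w\}$ to the edge $\{\gamma v,\gamma w\}$ affinely, it suffices to treat a single geodesic $L=[a\to b]$ and an edge $e=[v,w]$, and to show that $g(t)\coloneqq d((1-t)v+tw,L)$ is convex in $t$.

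\textbf{Case 1: $e\subset L$.} Since $L$ is a subcomplex passing through vertices and both endpoints lie on $L$, $g\equiv 0$.

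\textbf{Case 2: $e\not\subset L$.} In a tree, at most one endpoint of $e$ lies on $L$; if both $v,w\in L$, the edge joining them is the unique path between them, so it lies in $L$. Let $x$ be the nearest point of $L$ to $e$; it is a vertex. Every path from $e$ to $L$ exits through one endpoint, say $v$, which is the one closer to $L$, possibly lying on $L$. Then $d(w,L)=d(v,L)+1$, and for points of $e$ the geodesic to $L$ passes through $v$. Hence $g(t)=d(v,L)+t$, which is affine.

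To justify the last claim cleanly, I would use the following fact: the closest-point projection of the connected set $e$ onto $L$ is constant whenever $e\cap L$ contains at most one point. Then the distance from $p$ to $L$ equals $d(p,v)+d(v,L)$ for all $p\in e$ when $v$ is the endpoint nearer $L$.

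In all cases $g$ is affine, so $\Phi_\gamma$ restricted to the edge is affine in $t$. Therefore $\Phi_\gamma(p)=(1-t)\Phi_\gamma(v)+t\Phi_\gamma(w)\geqslant\min\{\Phi_\gamma(v),\Phi_\gamma(w)\}$.

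The only delicate point is the tree-geometry claim in Case 2 that the nearest-point retraction onto a geodesic is constant on an edge not contained in it. I would prove it by the uniqueness of paths without backtracking: if two points of $e$ had different projections $x\neq x'$ on $L$, the path from $x$ to $e$, then along $e$, then to $x'$, together with the segment $[x,x']\subset L$, would produce a cycle unless $e$ meets $L$ in a segment, which is impossible since $e\not\subset L$ and $L$ passes through vertices.
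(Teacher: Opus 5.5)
Your proof is correct and takes the same route as the paper: the paper simply asserts that each $t\mapsto d(\gamma p,[a_i\to b_i])$ is affine on the closed edge, so $\Phi_\gamma$ is affine and attains its minimum at $v$ or $w$. Your case analysis supplies the justification for that affineness which the paper omits. One small point: since the parametrization $p=(1-t)v+tw$ fixes which endpoint is $v$, when $w$ is the endpoint nearer to $L$ the formula reads $d(w,L)+1-t$ rather than $d(v,L)+t$, which is still affine.
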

\begin{proof}
    The map $t\mapsto d(\gamma p,[a_i\to b_i])=d(p,[\gamma^{-1}a_i\to \gamma^{-1}b_i])$ is an affine map $[0,1]\to \mathbb{R}_{\geqslant 0}$; thus
    $t\mapsto \Phi_\gamma(p)$ is also an affine map $[0,1]\to \mathbb{R}_{\geqslant 0}$. Any such map  attains its minimum at $t=0$ or $t=1$.
\end{proof}

For $m\geqslant 0$, define the affinoid subset $U_m\coloneqq \{z\in\Omega\,:\,|z|\leqslant q^{m}, |z|_i\geqslant q^{-m}\}$; then $\Omega=\bigcup_{m\geqslant 1} U_m$ is an admissible covering. 

\begin{lemma}\label{lemmaImageUm}
    $\lambda(U_m)$ is contained in the geometric realization of a finite subtree.
\end{lemma}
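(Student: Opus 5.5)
To prove Lemma \ref{lemmaImageUm}, I would bound the coordinates of $\lambda(z)$ for $z\in U_m$ using the explicit parametrization of the fibers $\Omega_p$ from Lemma \ref{lemmaPhiP}. Given $z\in U_m$, write $p=\lambda(z)=(1-t)v+tw$, where $v$ is represented by $\begin{pmatrix}\pi^n & u\\ 0 & 1\end{pmatrix}$ (normalized so that $u=0$ or $u\in F$ with $\ord(u)<n$), and set $z=\phi_p(\zeta)=\pi^n\pi_t\zeta+u+\alpha\pi^n$ with $\zeta\in\Omega_{v_0}$. The goal is to show that $d(v,v_0)$ is bounded in terms of $m$ alone. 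Then $\lambda(U_m)$ lies in the union of the closed edges meeting the finite ball of vertices of radius $R(m)+1$ around $v_0$. This union is the realization of a finite subtree, since $\mathcal{T}$ is locally finite.

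The first step is to bound $n$. By Lemma \ref{lemmaPhiP}, $|z|_i=q^{-(n+t)}$. The hypothesis $|z|_i\geqslant q^{-m}$ then gives $n+t\leqslant m$, so $n\leqslant m$. For a lower bound, I would use $|z|_i\leqslant |z|$, since $0\in F_\infty$. This gives $q^{-(n+t)}\leqslant q^m$, hence $n\geqslant -m-1$.

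The second step is to bound $\ord(u)$ when $u\neq 0$. Since $\ord(u)<n$ and $\ord(\alpha\pi^n)\geqslant n$, we have $|u+\alpha\pi^n|=|u|$. Also $|\pi^n\pi_t\zeta|=q^{-(n+t)}<|u|$, because $\ord(u)<n\leqslant n+t$. The ultrametric inequality then gives $|z|=|u|$, so $|u|\leqslant q^m$ and $\ord(u)\geqslant -m$.

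Finally, the distance formula gives $d(v,v_0)=n-\ord(u)+|\ord(u)|$ if $u\neq0$, and $|n|$ otherwise. Combining the two bounds, $d(v,v_0)\leqslant m+2m+\ldots$, so in all cases $d(v,v_0)\leqslant 4m+1$. This completes the argument.

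The only delicate point is the strictness in the second step: we need $|\pi^n\pi_t\zeta|<|u|$ so that $|z|=|u|$ exactly. This follows from $\ord(u)<n$ together with $t\geqslant 0$.
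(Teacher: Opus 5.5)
Your proof is correct and follows essentially the paper's argument: you bound $n$ via $q^{-m}\leqslant|z|_i=q^{-(n+t)}\leqslant|z|\leqslant q^m$ and use $|z|=|u|$ to get $\ord(u)\geqslant -m$. The only difference is the final step, where you bound $d(v,v_0)$ with the distance formula and local finiteness, while the paper directly counts the finitely many admissible representatives $\begin{pmatrix}\pi^n & u\\ 0 & 1\end{pmatrix}$. Two cosmetic points: integrality of $n$ actually gives $n\geqslant -m$, and the line ``$m+2m+\ldots$'' should simply read $d(v,v_0)\leqslant 3m$ if $u\neq 0$ and $d(v,v_0)\leqslant m+1$ otherwise.
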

\begin{proof}
    Let $z\in U_m$. We denote by $p=(1-t)v+tw$ the image of $z$ in $\mathcal{T}(\mathbb{Q})$, where $v$ is represented by $\begin{pmatrix} \pi^{n} & u \\ 0 & 1
\end{pmatrix}$ and $t\in [0,1)\cap\mathbb{Q}$. We have 
    \begin{equation*}
        q^{-m}\leqslant |z|_i=q^{-(n+t)}
    \end{equation*}
    and
    \begin{equation*}
        q^{-(n+t)}=|z|_i\leqslant |z|\leqslant q^m
    \end{equation*}
    so $-m\leqslant n+t\leqslant m$. As $t\in [0,1)$, it implies $-m\leqslant n\leqslant m$. 

    \noindent By Lemma \ref{lemmaPhiP}, we may write
     $z=\pi^n\pi_t\zeta+u+\alpha\pi^n$ with $\zeta\in\Omega_{v_0}$
     and $\alpha\in\mathbb{F}_q$. Suppose $u\neq 0$; then
     $\ord u<n$. Since $\ord(\pi^n\pi_t\zeta)=n+t\geqslant n$ and
     $\ord(\alpha\pi^n)\geqslant n$, while $\ord u\leqslant n-1$,
     we get $\ord u=\ord z\geqslant -m$.
    
      \noindent  There are only finitely many vertices $v$ in $X(\mathcal{T})$ represented by a matrix $\begin{pmatrix} \pi^{n} & u \\ 0 & 1 \end{pmatrix}$ with $-m\leqslant n\leqslant m$ and ($-m\leqslant\ord u<n$ or $u=0)$. Since $p$ lies on the closed edge joining such a vertex to one of its finitely many neighbors, $\lambda(U_m)$ is contained in the geometric realization of the finite subtree spanned by these vertices and their neighbors.
\end{proof}

We are now ready to prove Theorem \ref{thmConvergence}.

\begin{proof}[Proof of Theorem \ref{thmConvergence}]
    For $[\gamma]\in H\backslash\Gamma$, define 
    \begin{equation*}
        f_\gamma(z)\coloneqq (\det \gamma)^l\prod_{i=1}^r u(z;\gamma^{-1}a_i,\gamma^{-1}b_i)^{k_i}
    \end{equation*}
    so that $f(z)=\sum_{[\gamma]\in H\backslash\Gamma}f_\gamma(z)$. 
    
    \noindent We first prove the forward implication: assume that \textnormal{\textbf{(P)}} is satisfied. Let $M\geqslant 0$; we want to show $\ord f_\gamma(z)> M$ uniformly for $z\in U_m$, for all but finitely many $[\gamma]\in H\backslash\Gamma$.
     Let $p\coloneqq \lambda(z)$. By definition of the valuation of a $k$-form and since $\det \gamma\in\mathcal{O}_\infty^\times$, we have $\Phi_\gamma(p)=\ord_z(\gamma^*s)=\ord f_\gamma(z)-k\log_q|z|_i$. Therefore
    \begin{equation}\label{equationOrdfgamma}
        \ord f_\gamma(z)=\Phi_\gamma(p)+k\log_q|z|_i.
    \end{equation}
     For $z\in U_m$, we have $\log_q|z|_i\geqslant -m$. The point $p\in\lambda(U_m)$ lives in the geometric realization of a finite subtree $\mathcal{T}_m$ (Lemma \ref{lemmaImageUm}). By Lemma  \ref{lemmaPhiEdge}, for all $p\in\lambda(U_m)$, we have $\Phi_\gamma(p)\geqslant \min_{v\in X(\mathcal{T}_m)} \Phi_\gamma(v)$. Thus
    \begin{equation*}
        \ord f_\gamma(z)\geqslant \min_{v\in X(\mathcal{T}_m)} \Phi_\gamma(v)-km
    \end{equation*}
    for all $z\in U_m$. For $v\in X(\mathcal{T}_m)$, consider the set
    \begin{equation*}
        \{[\gamma]\in H\backslash\Gamma: \Phi_\gamma(v)\leqslant M+km\}.
    \end{equation*}
    Any $[\gamma]$ in this set satisfies $d(\gamma v,[a_i\to b_i])\leqslant \Phi_\gamma(v)\leqslant M+km$ for all $1\leqslant i\leqslant r$; thus this set is contained in $S_{M+km}(v)$, which is finite since \textnormal{\textbf{(P)}} holds. Since $X(\mathcal{T}_m)$ is finite, we have
    \begin{equation*}
        \ord f_\gamma(z)>  (M+km)-km=M
    \end{equation*}
    for all $z\in U_m$, for all but finitely many $ [\gamma]\in H\backslash\Gamma$.

    \noindent Conversely, suppose that $f$ converges locally uniformly on $\Omega$; then $\sup_{z\in U_m}|f_\gamma(z)|$ goes to $0$ as $[\gamma]$ ranges over $H\backslash\Gamma$.
    Since $|f_\gamma(z)| \to 0$ for $z \in \Omega_{v_0} \subseteq U_m$, we have $\Phi_\gamma(v_0) = \ord f_\gamma(z) \to \infty$, so $\{[\gamma]\in H\backslash\Gamma:\Phi_\gamma(v_0)\leqslant N\}$ is finite for all $N\geqslant 0$. 
    As $S_N(v_0)\subseteq \{[\gamma]\in H\backslash\Gamma:\Phi_\gamma(v_0)\leqslant kN\}$, condition \textnormal{\textbf{(P)}} for $v=v_0$ follows.

    \noindent It remains to show the last statement. Since $\Phi_\gamma(p)\geqslant 0$, \eqref{equationOrdfgamma} gives $\ord f_\gamma(z)\geqslant k\log_q|z|_i$, i.e., 
    $|f_\gamma(z)|\leqslant |z|_i^{-k}$. Thus $|f(z)|\leqslant |z|_i^{-k}$ by the ultrametric inequality.
    \end{proof}

\subsection{When is condition \textnormal{\textbf{(P)}} satisfied?}

Recall that a matrix $\delta\in \GL_2(F_\infty)$ is \textit{hyperbolic} if its eigenvalues $\lambda_1$ and $\lambda_2$ satisfy $|\lambda_1|\neq |\lambda_2|$. 

\begin{lemma}\label{lemmaHyperbolic}
    Let $\delta\in\GL_2(F_\infty)$ be hyperbolic with eigenvalues
    $\lambda_1,\lambda_2$. Then $\lambda_1,\lambda_2\in F_\infty$,
    the element $\delta$ has exactly two fixed points $a\neq b$ on
    $\mathbb{P}^1(F_\infty)$, and $\delta$ translates the geodesic
    $[a\to b]$ by
    $\ell(\delta)=|\ord\lambda_1-\ord\lambda_2|\geqslant 1$.
    Moreover, \[d(v,\delta v)=\ell(\delta)+2\,d(v,[a\to b])\] for
    every $v\in X(\mathcal{T})$; in particular, $\delta$ fixes
    no vertex.
\end{lemma}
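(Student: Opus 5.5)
First I show the eigenvalues are rational and then reduce to a diagonal matrix. The characteristic polynomial $X^2-\operatorname{tr}(\delta)X+\det\delta$ has roots $\lambda_1,\lambda_2$ in an algebraic closure, and the valuation on $F_\infty$ extends uniquely to it. If the polynomial were irreducible over $F_\infty$, the roots would be Galois conjugate and hence $|\lambda_1|=|\lambda_2|$, contradicting hyperbolicity. So $\lambda_1,\lambda_2\in F_\infty$, and they are distinct because their absolute values differ. Therefore $\delta$ is diagonalizable over $F_\infty$: there is $g\in\GL_2(F_\infty)$ with $g^{-1}\delta g=\operatorname{diag}(\lambda_1,\lambda_2)$. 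A diagonal matrix with distinct eigenvalues fixes exactly $0$ and $\infty$ in $\mathbb{P}^1(F_\infty)$, since $z\mapsto(\lambda_1/\lambda_2)z$ and $\lambda_1\neq\lambda_2$. Hence $\delta$ fixes exactly the two points $a=g\infty$ and $b=g0$ (in some order). All remaining claims are invariant under conjugation, because $\GL_2(F_\infty)$ acts by isometries and carries geodesics to geodesics. We may therefore assume $\delta=\operatorname{diag}(\lambda_1,\lambda_2)$ and $[a\to b]=[\infty\to 0]$. Multiplying by a scalar changes neither the action on $\mathcal{T}$ nor $\ord\lambda_1-\ord\lambda_2$, so we may further write $\delta=\operatorname{diag}(\pi^{m}\epsilon,1)$ with $\epsilon\in\mathcal{O}_\infty^\times$ and $m=\ord\lambda_1-\ord\lambda_2\neq0$.

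Next I compute the action of this $\delta$ on the principal geodesic. The vertex $v_n$ is $[\pi^n\mathcal{O}_\infty f_1+\mathcal{O}_\infty f_2]$, and the action is $L\mapsto L\delta^{-1}$. Applying $\delta^{-1}$ sends $v_n$ to $[\pi^{n-m}\epsilon^{-1}\mathcal{O}_\infty f_1+\mathcal{O}_\infty f_2]=v_{n-m}$. So $\delta$ translates the principal geodesic by $|m|=\ell(\delta)\geqslant1$; this is at least $1$ because $m$ is a nonzero integer.

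Finally I derive the distance formula by a tree argument. Let $v$ be any vertex, $v'$ its nearest point on the geodesic, and $D=d(v,[a\to b])$. Since $\delta$ is an isometry preserving the geodesic, $\delta v'$ is the nearest point of $\delta v$ on the geodesic, and $d(\delta v,\delta v')=D$. Because $\ell(\delta)\geqslant1$, we have $v'\neq\delta v'$. The path from $v$ to $v'$ meets the geodesic only at $v'$, and likewise the path from $\delta v'$ to $\delta v$ meets it only at $\delta v'$. Concatenating $v\to v'$, then the geodesic segment from $v'$ to $\delta v'$, then $\delta v'\to\delta v$ therefore gives a path without backtracking. In a tree such a path is the unique geodesic between its endpoints, so $d(v,\delta v)=D+\ell(\delta)+D$. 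In particular $d(v,\delta v)\geqslant1$ for every $v$, so $\delta$ fixes no vertex.

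The only delicate point is the no-backtracking check: it relies on the two branches being attached at distinct points $v'\neq\delta v'$, which is exactly where $\ell(\delta)\geqslant1$ is used.
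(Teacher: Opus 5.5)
Your proof is correct and follows essentially the same route as the paper. You show the eigenvalues lie in $F_\infty$ (via Galois conjugates having equal absolute value, where the paper uses the Newton polygon; the two arguments are equivalent), diagonalize, compute the translation $v_n\mapsto v_{n\mp\ell(\delta)}$ along the principal geodesic, and note that the geodesic from $v$ to $\delta v$ passes through the projections $v'$ and $\delta v'$. Your no-backtracking justification using $v'\neq\delta v'$ spells out a step the paper states without comment.
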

\begin{proof}
    Since $|\lambda_1|\neq|\lambda_2|$, the Newton polygon of the
    characteristic polynomial of $\delta$ has two distinct slopes,
    so $\lambda_1,\lambda_2\in F_\infty$ and
    $\lambda_1\neq\lambda_2$. Thus
    $\delta=g\,\mathrm{diag}(\lambda_1,\lambda_2)\,g^{-1}$ for
    some $g\in\GL_2(F_\infty)$, the fixed points of $\delta$ on
    $\mathbb{P}^1(F_\infty)$ are $a=g\infty$ and $b=g0$, and $g$
    maps $[\infty\to 0]$ to $[a\to b]$. We may therefore assume
    $\delta=\mathrm{diag}(\lambda_1,\lambda_2)$, which maps $v_n$
    to $v_{n+\ell(\delta)}$ (after possibly interchanging
    $\lambda_1,\lambda_2$), a translation of $[\infty\to 0]$ by
    $\ell(\delta)$. For arbitrary $v\in X(\mathcal{T})$, let $x$ be the vertex of $[\infty\to 0]$
    nearest to $v$. The geodesic from $v$ to $\delta v$ passes
    through $x$ and $\delta x$, whence
    $d(v,\delta v)=d(v,x)+\ell(\delta)+d(\delta x,\delta v)
    =\ell(\delta)+2\,d(v,[\infty\to 0])$.
\end{proof}

\begin{theorem}\label{theorem(P)}
    Condition \textnormal{\textbf{(P)}} is satisfied in the following cases:
    \begin{itemize}
    \item[(i)] $r=1$, $H=\langle\delta\rangle$ with $\delta\in \Gamma$ hyperbolic,
    $\{a,b\}=\{\text{fixed points of $\delta$ in $\mathbb{P}^1(F_\infty)$}\}$,
    \item[(ii)] $r\geqslant 2$, $H=\{I\}$, $\bigcap_{i=1}^r \{a_i,b_i\}=\emptyset$. 
    \end{itemize}
Moreover if $\Gamma$ is commensurable with $G\coloneqq \GL_2(A)$, then \textnormal{\textbf{(P)}} is also satisfied in the following cases:
    \begin{itemize}
    \item[(iii)] $r=1$, $H=\{I\}$, $a,b\in \mathbb{P}^1(F)$,
    \item[(iv)] $r\geqslant 2$, $H=\{I\}$, $\bigcap_{i=1}^r \{a_i,b_i\}=\{c\}$ with $c\in \mathbb{P}^1(F)$.
    \end{itemize}
\end{theorem}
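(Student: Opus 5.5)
We prove each case by showing that $S_N(v)$ is finite for one convenient vertex $v$ and every $N\geqslant 0$. Throughout we use that $\Gamma$ acts on $X(\mathcal{T})$ with finite stabilizers. Hence for any finite set $V$ of vertices, the set $\{\gamma\in\Gamma:\gamma v\in V\}$ is finite.

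\emph{Case (i).} Take $v$ on $[a\to b]$. If $[\gamma]\in S_N(v)$, then $\gamma v$ lies within distance $N$ of some vertex $x$ of $[a\to b]$. By Lemma \ref{lemmaHyperbolic}, $\delta$ translates $[a\to b]$ by $\ell(\delta)\geqslant 1$, so a fundamental domain for $\langle\delta\rangle$ on $[a\to b]$ is a segment $I$ of $\ell(\delta)$ consecutive vertices. Replacing $\gamma$ by $\delta^m\gamma$, which stays in the same coset of $H\backslash\Gamma$ and preserves distances to $[a\to b]$, we may assume $x\in I$. Then $\gamma v$ lies in the finite ball of radius $N$ around $I$. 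Hence only finitely many $\gamma$ occur, and a fortiori finitely many cosets.

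\emph{Case (ii).} The key geometric fact is that the set $Y_N:=\{p: d(p,[a_i\to b_i])\leqslant N\ \forall i\}$ is bounded. We plan to prove this as follows. Suppose there were vertices $p_n\in Y_N$ with $p_n\to\infty$. Passing to a subsequence, $p_n$ converges to an end $e\in\mathbb{P}^1(F_\infty)$. A vertex far out toward $e$ stays within bounded distance of a geodesic $[a_i\to b_i]$ only if $e\in\{a_i,b_i\}$: the distance from such a vertex to the geodesic grows with the distance travelled along the ray to $e$ once that ray leaves the geodesic. This forces $e\in\bigcap_i\{a_i,b_i\}=\emptyset$, a contradiction. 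Concretely, for $r\geqslant 2$ one can pick two geodesics and argue directly. If their intersection is nonempty, it is a finite segment or a ray toward a shared endpoint; by the empty-intersection hypothesis, some third geodesic, or the second itself, excludes that endpoint. Lemma \ref{lemmaIntersectionConvex} then gives $Y_N=$ the $N$-neighbourhood of the intersection. If the intersection is empty, the $N$-neighbourhoods of the two geodesics meet in a bounded set. Either way $Y_N$ is finite, and $S_N(v)=\{\gamma:\gamma v\in Y_N\}$ is finite.

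\emph{Cases (iii), (iv).} These are the main obstacle, since cusps are present and the intersection of the geodesics contains a ray toward $c\in\mathbb{P}^1(F)$. We reduce to $\Gamma\cap G$ of finite index: finiteness of $S_N$ for a finite index subgroup $\Gamma'$ implies it for $\Gamma$, since each coset $\gamma_j\Gamma'$ contributes $\{\gamma'\}$ with $\gamma'v\in\gamma_j^{-1}(\cdot)$ and the relevant sets transform accordingly. By transitivity of $G$ on $\mathbb{P}^1(F)$, conjugating by an element of $G$ (which is commensurable-preserving), we may assume $c=\infty$. In case (iv), the intersection of the geodesics is then a ray $R$ from a vertex toward $\infty$ (using (ii)-type reasoning on the other endpoints). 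In case (iii), $[a\to b]$ with both $a,b\in\mathbb{P}^1(F)$ is a union of two such rays toward cusps plus a finite segment, and we handle each ray separately after moving its endpoint to $\infty$. Thus it suffices to show that $\{\gamma\in\Gamma':d(\gamma v,R)\leqslant N\}$ is finite for a ray $R=\{v_n\}_{n\geqslant n_0}$ toward $\infty$. We will use the structure of $G\backslash\mathcal{T}$, a half-line $v_0,v_1,\dots$ with $\mathrm{Stab}_G(v_n)$ growing. The vertex $v_0$ has $G$-orbit meeting the $N$-neighbourhood of $R$ only in vertices $w$ with $\mathrm{ord}$-data bounded, namely $w$ represented by $\begin{pmatrix}\pi^n&u\\0&1\end{pmatrix}$ with $n\leqslant 0$ side of bounded depth. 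Indeed, $G$-translates of $v_0$ all project to the origin of the quotient half-line, and vertices near $v_m$ for $m$ large project near $v_m$ in the quotient, at distance about $m$ from $v_0$. Hence $\gamma v_0$ ranges over a finite set of vertices (those within $N$ of $R$ and projecting to $v_0$ lie within $N$ of the initial part of $R$, roughly at $m\leqslant N$). So $\gamma$ lies in a finite union of cosets of the finite stabilizer, and we are done. The delicate point to check is that the projection $\mathcal{T}\to G\backslash\mathcal{T}$ sends $v_m$, $m\geqslant 0$, to the $m$-th vertex of the half-line and is $1$-Lipschitz; this follows from Serre's description \cite{Serre1980}.
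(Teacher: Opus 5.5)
Your proof is correct. In cases (i), (ii) and (iv) it is essentially the paper's argument:
\begin{itemize}
\item finite vertex stabilizers throughout;
\item in (i), a fundamental segment for $\langle\delta\rangle$ on $[a\to b]$, which the paper phrases as the map $S_N(v)\to\langle\delta\rangle\backslash T_N$ having finite fibres;
\item in (ii), boundedness of the common $N$-neighbourhood when no end is shared;
\item for cusps, the type function on $G\backslash\mathcal{T}$, which is $1$-Lipschitz with $\type(v_m)=|m|$. Your choice $v=v_0$ is just the case $n=0$ of the paper's set $T^n_N$.
\end{itemize}

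The one genuine difference is in (iii). You cut $[a\to b]$ into the two rays toward $a$ and $b$ and move each endpoint to $\infty$ separately. The paper instead says one may move the pair $(a,b)$ to $(\infty,0)$ by an element of $G$.

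Your route is the safer one, because $\GL_2(A)$ is not transitive on pairs of distinct points of $\mathbb{P}^1(F)$. For primitive representatives $(x_1,y_1),(x_2,y_2)\in A^2$, the ideal generated by $x_1y_2-x_2y_1$ is a $\GL_2(A)$-invariant. So, for example, $(\infty,1/T)$ (determinant $T$) is not equivalent to $(\infty,0)$ (determinant $1$). What is really needed is only that the type tends to infinity along each ray toward a cusp, and your splitting makes exactly this explicit.

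Two small repairs are needed.
\begin{itemize}
\item In the reduction from $\Gamma$ to $\Gamma\cap G$, use right cosets $\Gamma=\coprod_j(\Gamma\cap G)\sigma_j$, as the paper does. Then only the base vertex changes, via $\gamma' \sigma_j v=\gamma'(\sigma_j v)$. With your left cosets $\gamma_j(\Gamma\cap G)$, the geodesic becomes $[\gamma_j^{-1}a\to\gamma_j^{-1}b]$. You would then need to know that $\Gamma$ preserves $\mathbb{P}^1(F)$, which you did not justify.
\item A ray toward $\infty$ is in general a finite segment followed by a tail $\{v_n\}_{n\geqslant n_0}$ of the principal geodesic, not the tail itself. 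The finite segment contributes only finitely many $\gamma$, so the argument survives, but this should be said.
\end{itemize}
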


\begin{remark}
    If $r\geqslant 2$, the set $\bigcap_{i=1}^r \{a_i,b_i\}$ contains at most one element. It is empty if and only if $\bigcap_{i=1}^r [a_i\to b_i]$ contains only finitely many vertices, and consists of a single element $c\in\mathbb{P}^1(F_\infty)$ if and only if $\bigcap_{i=1}^r [a_i\to b_i]$ is a half-line toward $c$.
\end{remark}

\begin{remark}
    Assume that $Z(\Gamma)$ consists of the scalar matrices in $\Gamma$. Since $Z(\Gamma)$ acts trivially on $\mathbb{P}^1(F_\infty)$, it fixes each of the parameters $a_1,b_1,\ldots,a_r,b_r$. Therefore one can take $H=Z(\Gamma)$ whenever $(\det h)^l=1$ for all $h\in Z(\Gamma)$; this holds in particular if $l=0$. If moreover $Z(\Gamma)$ is finite, then the map $\Gamma\to Z(\Gamma)\backslash\Gamma$ induces a surjection $S_N^{\Gamma}(v)\to S_N^{Z(\Gamma)\backslash\Gamma}(v)$ with fibers of size $\#Z(\Gamma)$; it follows that \textnormal{\textbf{(P)}} holds for $H=\{I\}$ if and only if it holds for $H=Z(\Gamma)$.
\end{remark}

\noindent We now check that $\textnormal{\textbf{(P)}}$ is satisfied in cases (i)-(iv).

    \subsubsection{Verification of $\textnormal{\textbf{(P)}}$ in case \textnormal{(i)}}

    Suppose $r=1$, $H=\langle\delta\rangle$ with $\delta\in \Gamma$ hyperbolic,
    $\{a,b\}=\{\text{fixed points of $\delta$ in $\mathbb{P}^1(F_\infty)$}\}$. Fix $v\in X(\mathcal{T})$ and $N\geqslant 0$, and let
\begin{equation*}
    T_N\coloneqq \{w\in X(\mathcal{T}): d(w,[a\to b])\leqslant N\}.
\end{equation*}
We want to show that the set 
\begin{equation*}
    S_N(v)=\{[\gamma]\in \langle\delta\rangle\backslash\Gamma: \gamma v\in T_N\}
\end{equation*}
is finite. By Lemma \ref{lemmaHyperbolic}, $\delta$ translates the geodesic $[a\to b]$ by $\ell(\delta)\geqslant 1$. Hence $\langle\delta\rangle$ preserves $[a\to b]$ and acts on $T_N$. Moreover, $\langle\delta\rangle\backslash T_N$ is finite: the vertices of $[a\to b]$ fall into $\ell(\delta)$ orbits under $\langle\delta\rangle$, and every $w\in T_N$ lies within distance $N$ of $[a\to b]$. We now show that the fibers of the map $\psi\colon S_N(v)\to \langle\delta\rangle\backslash T_N$ that maps $[\gamma]$ to $[\gamma v]$ are finite.

    \begin{lemma}\label{lemmacoolcool}
        Let $[w]\in\langle\delta\rangle\backslash T_N$. Assume that the fiber $\psi^{-1}([w])$ is nonempty; then the map $\psi^{-1}([w]) \to \{\gamma_0\in\Gamma:\gamma_0 v = w\}$ that maps $[\gamma]$ to the unique representative $\gamma_0$ of $[\gamma]$ sending $v$ to $w$ is a bijection.
    \end{lemma}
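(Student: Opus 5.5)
The plan is to check three things in turn: that the map is well defined, that it is injective, and that it is surjective.

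\emph{Well-definedness.} Let $[\gamma]\in\psi^{-1}([w])$, so $\gamma v\in\langle\delta\rangle w$. Write $\gamma v=\delta^j w$ for some $j\in\mathbb{Z}$. Then $\gamma_0\coloneqq\delta^{-j}\gamma$ is a representative of $[\gamma]$ with $\gamma_0 v=w$. To see that this representative is unique, suppose $\delta^{i}\gamma v=w=\delta^{j'}\gamma v$. Then $\delta^{i-j'}$ fixes the vertex $w$. By Lemma~\ref{lemmaHyperbolic}, every nontrivial power $\delta^m$ with $m\neq 0$ is hyperbolic: its eigenvalues $\lambda_1^m,\lambda_2^m$ still have distinct absolute values. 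Hence $\delta^m$ fixes no vertex, which forces $i=j'$. (If one is worried that $\langle\delta\rangle$ might contain elements written differently, note that $\delta$ has infinite order, since it translates $[a\to b]$ by $\ell(\delta)\geqslant 1$, so $\delta^i=\delta^{j'}$ already gives $i=j'$.) So the map $[\gamma]\mapsto\gamma_0$ is well defined, and it lands in $\{\gamma_0\in\Gamma:\gamma_0v=w\}$.

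\emph{Injectivity.} This is immediate, since $\gamma_0$ is a representative of $[\gamma]$ and therefore determines the coset.

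\emph{Surjectivity.} Take any $\gamma_0\in\Gamma$ with $\gamma_0v=w$. Since $w\in T_N$, the coset $[\gamma_0]$ lies in $S_N(v)$, and $\psi([\gamma_0])=[w]$. By the uniqueness above, its distinguished representative is $\gamma_0$ itself. Nonemptiness of the fiber is not actually needed for this argument; it only guarantees that the target set is nonempty.

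The only real content is the uniqueness step, which rests on nontrivial powers of $\delta$ acting freely on vertices (Lemma~\ref{lemmaHyperbolic}). Everything else is unwinding definitions.

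For context, this is how the lemma feeds into condition~\textbf{(P)}. The set $\{\gamma_0:\gamma_0v=w\}$ is a coset $\gamma_1\Gamma_v$ of the stabilizer $\Gamma_v$, and $\Gamma_v$ is finite as shown at the start of the section. So each fiber of $\psi$ is finite, and since $\langle\delta\rangle\backslash T_N$ is finite, $S_N(v)$ is finite.
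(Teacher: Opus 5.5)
Your proof is correct and follows the paper's argument: the representative $\delta^{-j}\gamma$ gives existence, and uniqueness holds because every nonzero power $\delta^m$ is again hyperbolic and so fixes no vertex by Lemma~\ref{lemmaHyperbolic}. You also write out injectivity and surjectivity, which the paper leaves as easy, and your remark that the nonemptiness hypothesis is not needed is right.
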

    \begin{proof}
        We only show the existence and uniqueness of the representative; both injectivity and surjectivity follow easily. If $[\gamma]\in\psi^{-1}([w])$,
then $\gamma v = \delta^k w$ for some $k\in\mathbb{Z}$. The element $\gamma_0 \coloneqq  \delta^{-k}\gamma$ is a representative of $[\gamma]$
satisfying $\gamma_0 v = \delta^{-k}\gamma v = w$. Suppose $\gamma_0, \gamma_0'$
are two representatives of the same coset $[\gamma]$ satisfying
$\gamma_0 v = \gamma_0' v = w$. Then $\gamma_0' = \delta^j\gamma_0$ for some $j\in\mathbb{Z}$, and
\[
w = \gamma_0' v = \delta^j\gamma_0 v = \delta^j w.
\]
Thus $\delta^j$ fixes $w$. If $j\neq 0$, then $\delta^j$ is hyperbolic (its eigenvalues $\lambda_1^j,\lambda_2^j$ have distinct absolute values), so it fixes no vertex of $\mathcal{T}$ by Lemma \ref{lemmaHyperbolic}. Thus $j = 0$, giving $\gamma_0' = \gamma_0$.
    \end{proof}

\noindent Lemma \ref{lemmacoolcool} shows that the nonempty fibers of the map $\psi\colon S_N(v)\to \langle\delta\rangle\backslash T_N$ have size $\#\Gamma_v$. Since $\langle\delta\rangle\backslash T_N$ is finite, $S_N(v)$ is finite as well. 

\subsubsection{Verification of $\textnormal{\textbf{(P)}}$ in case \textnormal{(ii)}}

Suppose $r\geqslant 2$, $H=\{I\}$, $\bigcap_{i=1}^r \{a_i,b_i\}=\emptyset$. Fix $v\in X(\mathcal{T})$ and $N\geqslant 0$, and let
\begin{equation*}
    T_N\coloneqq \{w\in X(\mathcal{T}): d(w,[a_i\to b_i])\leqslant N \text{ for all $1\leqslant i\leqslant r$}\}.
\end{equation*}
We want to show that the set 
\begin{equation*}
    S_N(v)=\{\gamma\in \Gamma: \gamma v\in T_N\}
\end{equation*}
is finite. If $T_N$ were infinite, it would contain a half-line going to some $c\in\mathbb{P}^1(F_\infty)$. For each $i$, every vertex on this half-line is within distance $N$ of $[a_i\to b_i]$, which implies $c\in\{a_i,b_i\}$. So $c\in\bigcap_{i=1}^r \{a_i,b_i\}=\emptyset$, contradiction. Thus $T_N$ is finite. For $w\in T_N$, the set $\{\gamma\in \Gamma: \gamma v=w\}$ is either empty or a coset of $\Gamma_v$, depending on whether $w$ is in the $\Gamma$-orbit of $v$ or not. Thus $\#S_N(v)\leqslant \#T_N\cdot \#\Gamma_v$.

\subsubsection{Verification of $\textnormal{\textbf{(P)}}$ in case \textnormal{(iii)}}\label{subsubsection(P)(iii)}

Suppose $r=1$, $\Gamma$ commensurable with $G$, $H=\{I\}$, $a,b\in \mathbb{P}^1(F)$. Fix $N\geqslant 0$. We show that the set
\begin{equation*}
    S_N^\Gamma(v)=\{\gamma\in \Gamma: d(\gamma v,[a\to b])\leqslant N\}
\end{equation*}
is finite for all $v\in X(\mathcal{T})$. 

\textit{Reduction to the case $\Gamma=G$.} Suppose that $S_N^G(v)$ is finite for all $v\in X(\mathcal{T})$. The group $\Gamma_0\coloneqq \Gamma\cap G$ has finite index in $\Gamma$, so $\Gamma=\coprod_{i=1}^m\Gamma_0\sigma_i$ for some $\sigma_1,\ldots,\sigma_m\in \Gamma$. Then $S_N^\Gamma(v)=\coprod_{i=1}^mS_N^{\Gamma_0}(\sigma_iv)$ is finite since $S_N^{\Gamma_0}(\sigma_iv)\subseteq S_N^{G}(\sigma_iv)$.

\textit{Proof for $\Gamma=G$.} Acting by an element of $G$ if necessary, we may assume $a=\infty$, $b=0$. The canonical projection $\mathcal{T}\to G\backslash\mathcal{T}$ identifies the quotient $G\backslash\mathcal{T}$ with the half-line in $\mathcal{T}$ with vertices $\{v_{n}\}_{n\geqslant 0}$ \cite[2.4.1]{Serre1980}. We define the \textit{type} of $v\in X(\mathcal{T})$ as the unique $n\geqslant 0$ such that $v$ lies in the $G$-orbit of $v_n$. It satisfies $\type(v_n)=|n|$. 
If $v$ and $w$ are adjacent, then their images in $G\backslash\mathcal{T}$ are adjacent as well (since $G$ acts without inversion), and $|\type(v)-\type(w)|=1$. It follows that $|\type(v)-\type(w)|\leqslant d(v,w)$ for any $v,w\in X(\mathcal{T})$. One checks that any vertex of type $n\geqslant 1$ has $q$ neighbors of type $n-1$, and a single neighbor of type $n+1$. Moreover, the $q+1$ neighbors of a type $0$ vertex have type $1$ \cite[p.112]{Serre1980}. 

\noindent Fix $v\in X(\mathcal{T})$, and let $n\coloneqq \type(v)$. Let 
\begin{equation*}
    T_N^n\coloneqq \{w\in X(\mathcal{T}): \type(w)=n,\, d(w,[\infty\to 0])\leqslant N \}.
\end{equation*}
For $\gamma\in S_N^G(v)$, we have $\type(\gamma v)=\type(v)=n$; thus $\gamma\mapsto \gamma v$ is a well-defined map $S_N^G(v)\to T_N^n$. One shows as in case (ii) that the fibers of this map are finite: it thus suffices to show that $T_N^n$ is finite. Let $w\in T_N^n$; then $d(w,v_m)\leqslant N$ for some $m\in\mathbb{Z}$. In particular, $|n-|m||\leqslant N$, i.e., $|m|\in [n-N,n+N]$. Each vertex $w\in T_N^n$ is at distance at most $N$ from the finite set $\{v_m\}_{n-N\leqslant m\leqslant n+N}$; thus $T_N^n$ is finite.

\subsubsection{Verification of $\textnormal{\textbf{(P)}}$ in case \textnormal{(iv)}} Suppose $r\geqslant 2$, $\Gamma$ commensurable with $G$, $H=\{I\}$, $\bigcap_{i=1}^r \{a_i,b_i\}=\{c\}$ with $c\in \mathbb{P}^1(F)$. By commensurability, we reduce to $\Gamma=G$ as in case (iii). Acting by an element of $G$ if necessary, we may assume $c=\infty$. Swapping $a_i$ and $b_i$ if necessary, we can furthermore assume $a_i=\infty$ for all $1\leqslant i\leqslant r$.

\noindent Fix $N\geqslant 0$ and let $v\in X(\mathcal{T})$. Since $r\geqslant 2$ and the $b_i$ are pairwise distinct, the set $X\coloneqq \bigcap_{i=1}^r[\infty\to b_i]$ is a half-line toward $\infty$. By Remark \ref{remarkSNVIntersection}, we need to show that the set
\begin{equation*}
    S_N(v)=\{\gamma\in G: d(\gamma v,X)\leqslant N\}
\end{equation*}
is finite. Let $n\coloneqq \type(v)$ and
\begin{equation*}
    T_N^n\coloneqq \{w\in X(\mathcal{T}): \type(w)=n,\, d(w,X)\leqslant N\}.
\end{equation*}
As in case (iii), the fibers of the map $S_N(v)\to T_N^n$ are finite, so it suffices to show that $T_N^n$ is finite.
Let $w\in T_N^n$; there exists a vertex $x$ in $X$ such that $d(w,x)\leqslant N$. In particular, $|n-\type(x)|\leqslant N$, i.e., $\type(x)\in [n-N,n+N]$. The type along $X$ is eventually strictly increasing, so the set $\{x \in X : \type(x) \in [n-N, n+N]\}$ is finite. Any $w\in T_N^n$ is at distance at most $N$ from a finite set, so $T_N^n$ is finite.

\subsection{Valuation of Poincaré series}

Let $P(z)=\sum_{[\gamma]\in H\backslash\Gamma}(\det\gamma)^l \cdot\gamma^*s(z)=f(z)\,(\mathrm{d}z)^k$ be a Poincaré series with $r\geqslant 2$ satisfying \textnormal{\textbf{(P)}}. We now give sufficient conditions on the intersection of the geodesics $[a_i\to b_i]$ for $P$ (and thus $f$) not to vanish identically.

\begin{theorem}\label{theoremIntersectionVertex}
     Assume that the geodesics $[a_i\to b_i]$ for $1\leqslant i\leqslant r$ intersect at a single vertex $v\in X(\mathcal{T})$ with $\Gamma_v\subseteq H$. Then $\ord_z(P)=0$ if $z$ lies on the $\Gamma$-orbit of $\Omega_{v}$, and $\ord_z(P)\geqslant t_p>0$ otherwise, where $p\coloneqq \lambda(z)\in\mathcal{T}(\mathbb{Q})$ and $t_p$ was defined in Corollary \ref{corollaryValuationS}. In particular, $P$ is not identically zero.
\end{theorem}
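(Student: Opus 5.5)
The plan is to compare valuations term by term using Corollary \ref{corollaryValuationS} and the ultrametric inequality. Since $P$ is $\Gamma$-invariant up to the factor $(\det\gamma)^l$, which is a unit, and $\ord_z(\gamma^*\omega)=\ord_{\gamma z}(\omega)$, it suffices to treat $z\in\Omega_v$ itself for the first assertion. Indeed, if $z=\gamma_1 z'$ with $z'\in\Omega_v$, then $\gamma_1^*P=(\det\gamma_1)^{-l}P$. This follows by reindexing the sum, using that $H\backslash\Gamma$ is stable under right multiplication by $\gamma_1$. Hence $\ord_{z}(P)=\ord_{z'}(P)$.

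Fix $z\in\Omega_v$, so that $p=v$. For each coset, Corollary \ref{corollaryValuationS} gives $\ord_z(\gamma^*s)=0$ exactly when $\gamma v\in\bigcap_i[a_i\to b_i]=\{v\}$, i.e.\ when $\gamma\in\Gamma_v\subseteq H$, i.e.\ $[\gamma]=[I]$. For every other coset, $\ord_z(\gamma^*s)\geqslant t_v=1$. Since the coefficients $(\det\gamma)^l$ are units, the identity term has valuation $0$ and all others have valuation $\geqslant 1$. The series converges by \textbf{(P)} and Theorem \ref{thmConvergence}; concretely, $\ord f_\gamma(z)\to\infty$ by \eqref{equationOrdfgamma}. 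Therefore the strict ultrametric inequality yields $\ord_z(P)=0$. To make this rigorous, I will work with $f$ and the formula $\ord_z(\omega)=\ord f(z)+k(n+t)$: the shift is the same for every term, so the term valuations translate uniformly, and the infinite sum is handled because the sum of the finitely many non-identity terms of valuation below any bound, together with the tail, all have valuation $\geqslant 1$ after the shift.

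For $z$ not in the $\Gamma$-orbit of $\Omega_v$, I claim that $\gamma p\neq v$ for every $\gamma$, where $p=\lambda(z)$. Indeed, if $\gamma p=v$ then $z\in\gamma^{-1}\Omega_v$, which is in the orbit. Hence $\gamma p\notin\bigcap_i[a_i\to b_i]$ for all $\gamma$, and Corollary \ref{corollaryValuationS} gives $\ord_z(\gamma^*s)\geqslant t_p$ for every term. The ultrametric inequality, applied to the convergent sum, gives $\ord_z(P)\geqslant t_p$. Positivity of $t_p$ was already noted in the corollary.

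Nonvanishing follows because $\ord_z(P)=0$ is finite on $\Omega_v\neq\emptyset$. The only delicate step is the careful passage from termwise valuations to the valuation of the infinite sum, which is routine given locally uniform convergence. The hypothesis $\Gamma_v\subseteq H$ is exactly what makes the valuation-zero term unique among cosets.
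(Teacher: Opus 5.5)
Your proposal is correct and takes essentially the same route as the paper. You reduce to $z\in\Omega_v$ via $\gamma^*P=(\det\gamma)^{-l}P$, then use Corollary~\ref{corollaryValuationS} together with $\Gamma_v\subseteq H$ to show that only the identity coset gives a term of valuation $0$ while all others have valuation $\geqslant t_p$; the ultrametric inequality then finishes both cases. Your extra care in passing valuations through the convergent infinite sum spells out a step the paper leaves implicit.
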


\begin{proof}
    First assume that $z$ does not lie in the $\Gamma$-orbit of $\Omega_{v}$. Then for all $\gamma\in\Gamma$, we have $\gamma p\neq v$. By Corollary \ref{corollaryValuationS}, one has $\ord_z(\gamma^*s)\geqslant t_p$ for all $[\gamma]\in H\backslash\Gamma$. As $\det(\Gamma)\subset\mathcal{O}_\infty^\times$, it implies $\ord_z(P)\geqslant t_p$.

    \noindent Now assume that $z$ lies in the $\Gamma$-orbit of $\Omega_{v}$. Since $P$ satisfies $\gamma^*P=(\det\gamma)^{-l}P$, we can assume $z\in\Omega_{v}$ without loss of generality. Then $\ord_z(\gamma^*s)=0$ if and only if $\gamma$ fixes $v$, i.e. $[\gamma]=[I]$ since $\Gamma_v\subseteq H$. We can therefore write
    \begin{equation}\label{equationValuationRPS}
        P(z)=s(z)+\sum_{[\gamma]\neq [I]}(\det\gamma)^l \cdot\gamma^*s(z)
    \end{equation}
    with $\ord_z(s)=0$ and $\ord_z(\gamma^*s)\geqslant t_p>0$ for $[\gamma]\neq [I]$. Thus $\ord_z(P)=0$.
\end{proof}

\begin{theorem}\label{theoremIntersectionEdge}
     Assume that the intersection of the geodesics $[a_i\to b_i]$ for $1\leqslant i\leqslant r$ consists of an open edge $e$ and its two endpoints $v,w\in X(\mathcal{T})$. Assume further that the stabilizer of one of the endpoints (say $v$) in $\Gamma$ is contained in $H$. Then $\ord_z(P)=0$
     if $z$ lies on the $\Gamma$-orbit of $\Omega_{v}\cup\Omega_{e}$, and $\ord_z(P)\geqslant t_p>0$ if $z$ does not lie on the $\Gamma$-orbit of $\Omega_{v}\cup\Omega_{e}\cup\Omega_{w}$, where $p\coloneqq \lambda(z)\in\mathcal{T}(\mathbb{Q})$ 
     and $t_p$ was defined in Corollary \ref{corollaryValuationS}. In particular, $P$ is not identically zero. If moreover $\Gamma_w\subseteq H$, then also $\ord_z(P)=0$ on the $\Gamma$-orbit of $\Omega_{w}$.
\end{theorem}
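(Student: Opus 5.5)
The argument follows the proof of Theorem \ref{theoremIntersectionVertex}, with Corollary \ref{corollaryValuationS} used to isolate the terms of valuation zero. Write $X\coloneqq\bigcap_{i=1}^r[a_i\to b_i]$, which by hypothesis is the closed edge $\{v\}\cup e\cup\{w\}$. For $z\in\Omega_p$ and $[\gamma]\in H\backslash\Gamma$, the corollary gives $\ord_z(\gamma^*s)=0$ exactly when $\gamma p\in X$, and $\ord_z(\gamma^*s)\geqslant t_p$ otherwise. Since $\det(\Gamma)\subset\mathcal{O}_\infty^\times$, the factors $(\det\gamma)^l$ do not affect valuations.

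First, suppose $z$ is not in the $\Gamma$-orbit of $\Omega_v\cup\Omega_e\cup\Omega_w$. Then no $\gamma p$ lies in $X$, so every term has valuation at least $t_p$. The ultrametric inequality then gives $\ord_z(P)\geqslant t_p>0$.

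Next, suppose $z$ is in the $\Gamma$-orbit of $\Omega_v\cup\Omega_e$. Since $\gamma^*P=(\det\gamma)^{-l}P$ and $\ord_z(\gamma^*P)=\ord_{\gamma z}(P)$, we may assume $z\in\Omega_v$ or $z\in\Omega_e$.

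If $z\in\Omega_v$, a term has valuation zero iff $\gamma v\in X$. As $\gamma v$ is a vertex, this means $\gamma v\in\{v,w\}$. But $\Gamma$ acts without inversion and $d(v,\gamma v)$ is even, so $\gamma v\neq w$. Hence $\gamma\in\Gamma_v\subseteq H$, and the only valuation-zero term is $[\gamma]=[I]$. The decomposition \eqref{equationValuationRPS} then gives $\ord_z(P)=0$.

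If $z\in\Omega_e$, with $p$ in the open edge $e$, then $\gamma p\in X$ forces $\gamma p\in e$, since $\gamma$ sends open edges to open edges. So $\gamma e=e$. Because $\Gamma$ acts without inversion, $\gamma$ fixes both $v$ and $w$, hence $\gamma\in\Gamma_v\subseteq H$. Again only the identity coset contributes valuation zero, and every other term has valuation at least $t_p>0$. Therefore $\ord_z(P)=0$.

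In particular, taking any $z\in\Omega_v$ shows that $P$ is not identically zero.

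For the final claim, assume also $\Gamma_w\subseteq H$. The argument for $\Omega_v$ applies verbatim with $v$ and $w$ swapped: valuation-zero terms at $z\in\Omega_w$ need $\gamma w\in\{v,w\}$, parity excludes $\gamma w=v$, so $\gamma\in\Gamma_w\subseteq H$. Equivariance then extends the result to the $\Gamma$-orbit of $\Omega_w$.

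The key step is showing that the identity coset is the unique term of valuation zero. Without the hypothesis $\Gamma_w\subseteq H$, the remaining orbit of $\Omega_w$ is exactly where cancellation among several valuation-zero terms cannot be ruled out, which is why it is excluded from the main statement. The parity/no-inversion fact stated earlier in the paper is the tool that rules out $\gamma v=w$ and edge flips.
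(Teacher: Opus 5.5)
Your proposal is correct and follows essentially the same route as the paper's proof. Both reduce by equivariance to $z\in\Omega_v\cup\Omega_e$, use Corollary~\ref{corollaryValuationS} to locate the valuation-zero terms, and rule out $\gamma v=w$ by parity and edge flips by the absence of inversions, so that only $[\gamma]=[I]$ survives (as in Theorem~\ref{theoremIntersectionVertex}). The $\Omega_w$ case is handled symmetrically, exactly as you do.
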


\begin{proof} The proof is similar to that of Theorem \ref{theoremIntersectionVertex}. If $z$ does not lie in the $\Gamma$-orbit of $\Omega_{v}\cup\Omega_{e}\cup\Omega_{w}$, then $\gamma p$ does not lie in the intersection of the geodesics. By Corollary \ref{corollaryValuationS}, one has $\ord_z(\gamma^*s)\geqslant t_p$ for all $[\gamma]\in H\backslash\Gamma$, and thus $\ord_z(P)\geqslant t_p$.

    \noindent Now assume that $z$ lies in the $\Gamma$-orbit of $\Omega_{v}\cup\Omega_{e}$. Since $P$ satisfies $\gamma^*P=(\det\gamma)^{-l}P$, we can assume $z\in\Omega_{v}\cup\Omega_{e}$ without loss of generality. If $z\in\Omega_{v}$, then $\ord_z(\gamma^*s)=0$ if and only if $\gamma$ fixes $v$ or $\gamma v=w$. The second case does not occur, since the distance from $\gamma v$ to $v$ is even. If $z\in\Omega_{e}$, then $\ord_z(\gamma^*s)=0$ if and only if $\gamma$ fixes both $v$ and $w$ (since $\Gamma$ acts without inversion). Since $\Gamma_v\subseteq H$, we have $\ord_z(\gamma^*s)=0$ if and only if $[\gamma]=[I]$, and one can conclude as in the proof of Theorem \ref{theoremIntersectionVertex}. The case $z\in\Omega_w$ with $\Gamma_w\subseteq H$ is analogous.
\end{proof}

\begin{remark}
    A similar argument to the proof of Theorem \ref{theoremIntersectionVertex} shows that if the intersection of the geodesics consists of two edges sharing a common endpoint $v$ with $\Gamma_v\subseteq H$, then $\ord_z(P)=0$ on the $\Gamma$-orbit of $\Omega_{v}$. In particular, $P$ is not identically zero.
\end{remark}

\begin{remark}
    Poincaré series can be nonzero even when the geodesics do not intersect. Take $r=2$, and suppose that $[a_1\to b_1]$ and $[a_2\to b_2]$ are at distance $1$ (i.e. they do not intersect, and there exist adjacent vertices $v_1,v_2\in X(\mathcal{T})$ with $v_i\in [a_i\to b_i]$). A similar argument to the proof of Theorem \ref{theoremIntersectionVertex} shows that if $\Gamma_{v_1}\subseteq H$ and $2k_1>k_2$, then $\ord_z(P)=k_2$ on the $\Gamma$-orbit of $\Omega_{v_1}$. In particular, $P$ is not identically zero.
\end{remark}

\section{Poincaré series as modular forms}\label{sectionModularForms}

In this section, $\Gamma$ is either commensurable with $G=\GL_2(A)$, or a discrete cocompact subgroup of $\GL_2(F_\infty)$. In the latter case, we also assume that $Z(\Gamma)$ consists of the scalar matrices in $\Gamma$, and that $\det(\Gamma)\subseteq \mathbb{F}_q^\times$ (these two conditions are always satisfied if $\Gamma$ is commensurable with $G$). Let $d(\Gamma)\coloneqq \#\det(\Gamma)$.

\noindent If $\Gamma$ is commensurable with $G$, the quotient
$\Gamma\backslash\mathcal{T}$ is the union of a finite graph
$(\Gamma\backslash\mathcal{T})^0$ and finitely many half-lines
(for finite-index subgroups of $G$ this is \cite[p.106]{Serre1980}; the general case follows by commensurability). Each half-line corresponds to a $\Gamma$-orbit in $\mathbb{P}^1(F)$; we call this
orbit a \emph{cusp} of $\Gamma$. For instance, the $\Gamma$-orbit of $\infty$ is the cusp of $\Gamma$ corresponding to the image in
$\Gamma\backslash\mathcal{T}$ of the half-line
$\{v_n\}_{n \geqslant N}$ for $N$ sufficiently large. If $\Gamma$ is
cocompact, the quotient $\Gamma\backslash\mathcal{T}$ is a finite
graph, and $\Gamma$ has no cusps. The genus of the compactified modular curve $X_\Gamma$ equals the first Betti number of $\Gamma\backslash\mathcal{T}$ (see \cite[(2.6)]{GekelerReversat1996} if $\Gamma$ is a finite-index subgroup of $G$, and \cite[Proposition 3.2]{Kurihara1979} if $\Gamma$ is cocompact).

\noindent For $k\geqslant 0$, $l\in \mathbb{Z}/d(\Gamma)\mathbb{Z}$, $\gamma\in \GL_2(F_\infty)$, and $f\colon\Omega\to \mathbb{C}_\infty$, define
\begin{equation*}
    (f\slashbar{k}{l}\,\gamma)(z)\coloneqq \frac{(\det\gamma)^l}{j_\gamma(z)^k}f(\gamma z).
\end{equation*}

\begin{definition}\label{defDMF}
    A \textit{Drinfeld modular form of weight $k\geqslant 0$ and type $l\in \mathbb{Z}/d(\Gamma)\mathbb{Z}$ for the group} $\Gamma$ is a rigid holomorphic function $f\colon\Omega\to \mathbb{C}_\infty$ that satisfies $f\slashbar{k}{l}\,\gamma=f$ for all $\gamma\in\Gamma$, and is holomorphic at all cusps. We write $M_{k,l}(\Gamma)$ for the $\mathbb{C}_\infty$-vector space of modular forms of weight $k$ and type $l$ for $\Gamma$.
\end{definition}

We now make precise the condition of holomorphicity at the cusps. If $\Gamma$ is cocompact, this condition is vacuous. Suppose that $\Gamma$ is commensurable with $G$. The subgroup of upper unipotent matrices in $\Gamma$ has the form $\left\{\begin{pmatrix} 1 & b \\ 0 & 1 \end{pmatrix} : b \in \mathfrak{b}\right\}$, where $\mathfrak{b}$ is an additive subgroup of $F$ commensurable with $A$. Define
\[
e_{\mathfrak{b}}(z) \coloneqq  z \sideset{}{'}\prod_{b \in \mathfrak{b}} \left(1 - \frac{z}{b}\right)
\]
and
\[
t(z)=t^\Gamma(z) \coloneqq \frac{1}{e_{\mathfrak{b}}(z)}.
\]
As $e_{\mathfrak{b}}'(z)=1$, we have 
\[
t'(z)=-\frac{e_{\mathfrak{b}}'(z)}{e_{\mathfrak{b}}(z)^2}=-t(z)^2
\]
and 
\[
t(z)=\frac{e_{\mathfrak{b}}'(z)}{e_{\mathfrak{b}}(z)}=\sum_{b \in \mathfrak{b}}\frac{1}{z+b}.
\]
We say that $f$ is \textit{holomorphic at} $\infty$ (with respect to $\Gamma$) if $f$ has a $t^\Gamma$-expansion
\begin{equation}\label{equationTExpansion}
    f(z)=\sum_{n\geqslant 0}c_n(f)\,t^\Gamma(z)^{n}
\end{equation}
with $c_n(f)\in\mathbb{C}_\infty$ and a positive radius of convergence as a power series in $t^\Gamma$.
Let $\mathfrak{c}=\nu \infty$ be a cusp of $\Gamma$ with $\nu\in \GL_2(F)$; then $f_\nu\coloneqq f\slashbar{k}{l}\,\nu$ satisfies $f_\nu\slashbar{k}{l}\,\gamma'=f_\nu$ for all $\gamma'\in \Gamma_\nu\coloneqq \nu^{-1}\Gamma\nu$. 
We say that $f$ is \textit{holomorphic at} $\mathfrak{c}$ if $f_\nu$ is holomorphic at $\infty$ (with respect to $\Gamma_\nu$).

Suppose that $\Gamma$ is commensurable with $G$. We say that a modular form $f$ \textit{vanishes to order $\geqslant n$ at the cusp} $\mathfrak{c}=\nu \infty$ if $c_0(f_\nu)=\ldots=c_{n-1}(f_\nu)=0$ in the  $t^{\Gamma_\nu}$-expansion of $f_\nu$. The order of vanishing does not depend on the representative of $\mathfrak{c}$ in $\mathbb{P}^1(F)$, nor on the matrix $\nu\in\GL_2(F)$ \cite[(2.7.7)]{GekelerReversat1996}. We write $M_{k,l}^n(\Gamma)$ for the subspace of $M_{k,l}(\Gamma)$ consisting of forms vanishing to order $\geqslant n$ at all cusps. A \textit{cusp form} is a form that vanishes to order $\geqslant 1$ at all cusps. If $\det(\Gamma)$ is trivial, then every Drinfeld modular form has type $0$, and we therefore omit the subscript $l$. If $\Gamma$ is cocompact, we set $M_{k,l}^n(\Gamma)=M_{k,l}(\Gamma)$ for all $n$.

Taking $\gamma\in Z(\Gamma)$ in the condition $f\slashbar{k}{l}\,\gamma=f$, one checks easily that $M_{k,l}(\Gamma)$ is zero unless $k\equiv 2l\mod \#Z(\Gamma)$. We now compute the dimension of $M_{2k,l}^n(\Gamma)$ when this condition is satisfied and $\widetilde{\Gamma}=\Gamma/Z(\Gamma)$ has no torsion of order prime to $p$.

\begin{proposition}\label{propDim}
    Assume that $\widetilde{\Gamma}$ has
    no prime-to-$p$ torsion. Let $g$ be the genus of
    $X_\Gamma$ and $c$ its number of cusps (so $c=0$ if $\Gamma$ is
    cocompact). Let $k\geqslant 1$, $n\geqslant 0$, and let
    $l$ be such that $2(k-l)\equiv 0\mod \#Z(\Gamma)$. If $(k-1)(2g-2)+(2k-n)c>0$, then
    \begin{equation*}
        \dim M_{2k,l}^n(\Gamma)=(2k-1)(g-1)+(2k-n)c .
    \end{equation*}
\end{proposition}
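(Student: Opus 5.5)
The plan is to identify $M_{2k,l}^n(\Gamma)$ with a space of global sections of a line bundle on the compactified curve $X_\Gamma$ and compute its dimension with Riemann--Roch. The usual dictionary between modular forms and differentials applies here. A Drinfeld modular form $f$ of weight $2k$ and type $l$ corresponds to the $k$-form $\omega=f(z)\,(\mathrm{d}z)^k$ on $\Omega$. Because $\mathrm{d}(\gamma z)=\det\gamma\, j_\gamma(z)^{-2}\,\mathrm{d}z$, the condition $f\slashbar{2k}{l}\gamma=f$ becomes $\gamma^*\omega=(\det\gamma)^{k-l}\omega$. We first pass to the finite-index subgroup $\Gamma_1=\{\gamma\in\Gamma:(\det\gamma)^{k-l}=1\}$, or more precisely track the character. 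The cleanest route is to treat the determinant twist as a character $\chi=\det^{k-l}$ of $\Gamma$. The hypothesis $2(k-l)\equiv 0 \bmod \#Z(\Gamma)$ ensures that $\chi$ is trivial on $Z(\Gamma)$, since scalars $c$ have $\det=c^2$. So $\chi$ factors through $\widetilde{\Gamma}$.

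The first step is to show that, since $\widetilde{\Gamma}$ has no prime-to-$p$ torsion, we may reduce to $\chi$ trivial. Every element of $\widetilde{\Gamma}$ of finite order has $p$-power order, while $\chi$ takes values in $\mathbb{F}_q^\times$, which has order prime to $p$. Hence $\chi$ is trivial on torsion. The finite stabilizers $\widetilde{\Gamma}_v$ are $p$-groups (up to their prime-to-$p$ parts, which are trivial), so $\chi$ is trivial on all vertex stabilizers. This gives a descent: $\omega$ defines a section of $\Omega_{X}^{\otimes k}\otimes\mathcal{L}_\chi$ on the analytic curve $\widetilde\Gamma\backslash\Omega$. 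Here $\mathcal{L}_\chi$ is the flat line bundle attached to $\chi$, which has degree $0$. Forms of type $l$ with $\chi$ nontrivial then contribute a space of the same dimension, because Riemann--Roch only sees the degree.

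The second step is to handle the stabilizers. Since the elliptic/stabilizer elements are $p$-elements, the map $\Omega\to\widetilde\Gamma\backslash\Omega$ is étale in the relevant sense; there is no ramification of prime-to-$p$ type, and wild stabilizers act freely on $\Omega$ because they fix no point of $\Omega$ for $p$-groups. This needs checking, and the standard argument is that elliptic points correspond to elements of $\mathbb{F}_{q^2}^\times\setminus\mathbb{F}_q^\times$ of prime-to-$p$ order. So there is no elliptic correction term. At each cusp, the expansion in the parameter $t$ gives $\mathrm{d}z=-t^{-2}\,\mathrm{d}t$, so $(\mathrm{d}z)^k$ has a pole of order $2k$ at each cusp. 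Vanishing to order $\geqslant n$ at a cusp then corresponds to allowing a pole of order $\leqslant 2k-n$ of the $k$-differential there. Thus
\[
M_{2k,l}^n(\Gamma)\cong H^0\bigl(X_\Gamma,\ \Omega^{\otimes k}\otimes\mathcal{L}_\chi((2k-n)D)\bigr),
\]
where $D$ is the reduced cusp divisor, via rigid GAGA. The delicate points are verifying that the cusp-parameter $t^\Gamma$ is a uniformizer in the presence of unipotent-times-scalar stabilizers, and dealing with $p$-torsion in the cusp stabilizers. I expect this to be the main obstacle, and I would rely on Gekeler--Reversat's description of the cusp neighbourhoods.

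Finally, the line bundle has degree $k(2g-2)+(2k-n)c$. Degree exceeds $2g-2$ exactly when $(k-1)(2g-2)+(2k-n)c>0$, and in that range $H^1$ vanishes. Riemann--Roch then gives
\[
\dim = k(2g-2)+(2k-n)c+1-g=(2k-1)(g-1)+(2k-n)c .
\]
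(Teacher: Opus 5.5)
Your proposal follows essentially the same route as the paper: encode the type by the character $\chi=\det^{k-l}$, which is trivial on $Z(\Gamma)$; use the absence of prime-to-$p$ torsion to get a free action of $\widetilde{\Gamma}$ on $\Omega$, so that $\chi$-equivariant functions form a degree-$0$ line bundle $\mathcal{L}_\chi$; translate vanishing to order $\geqslant n$ into pole order $\leqslant 2k-n$ via $\mathrm{d}z=-t^{-2}\mathrm{d}t$; and conclude by Riemann--Roch. The cusp issue you single out as the main obstacle is handled in the paper in one line: the stabilizer of the cusp in $\Gamma_\nu$ is generated by $Z(\Gamma)$ and its unipotent subgroup, since the quotient has prime-to-$p$ order and is therefore trivial under the torsion hypothesis, so $\chi$ is trivial there and the $\chi$-equivariant functions near the cusp are exactly the power series in $t^{\Gamma_\nu}$.
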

\begin{proof}
    Write $\chi(\gamma)\coloneqq(\det\gamma)^{k-l}$. Since
    $\mathrm{d}(\gamma z)=(\det\gamma)j_\gamma(z)^{-2}\mathrm{d}z$, the
    transformation law $f\slashbar{2k}{l}\gamma=f$ is equivalent to
    $\gamma^*\omega_f=\chi(\gamma)\,\omega_f$ for the $k$-form
    $\omega_f\coloneqq f(z)(\mathrm{d}z)^k$. By the assumption on $l$, the
    character $\chi$ is trivial on $Z(\Gamma)$, so it factors through
    $\widetilde{\Gamma}$; moreover $\widetilde{\Gamma}$ acts freely on
    $\Omega$, since a non-scalar $\gamma\in\Gamma$ fixing a point of
    $\Omega$ has finite order prime to $p$.

    \noindent Let $\mathcal{L}_\chi$ be the sheaf on
    $\Gamma\backslash\Omega$ whose sections over an admissible open $V$ are
    the holomorphic functions $u$ on the preimage of $V$ satisfying
    $u(\gamma z)=\chi(\gamma)u(z)$ for all $\gamma\in\Gamma$. It is
    invertible: as $\widetilde{\Gamma}$ acts freely, every point has an
    admissible neighbourhood $V$ whose preimage is a disjoint union
    $\coprod_{[\gamma]}\gamma V_0$ with $V_0\xrightarrow{\sim}V$, and the
    function equal to $\chi(\gamma)$ on $\gamma V_0$ is a nowhere vanishing
    section over $V$. Dividing by such a section identifies the $k$-forms
    $\omega$ on $\Omega$ with $\gamma^*\omega=\chi(\gamma)\,\omega$ with the
    sections of $\Omega_{X_\Gamma}^{\otimes k}\otimes\mathcal{L}_\chi$ over
    $\Gamma\backslash\Omega$. 
    
    \noindent Suppose $c>0$, and let $\mathfrak{c}=\nu\infty$ be a cusp. The stabilizer of $\infty$ in $\Gamma_\nu$ is generated by $Z(\Gamma)$ and its unipotent subgroup, since the quotient by these has order prime to $p$; as $\chi$ is trivial on $Z(\Gamma)$ and on unipotent matrices, the $\chi$-equivariant functions near $\mathfrak{c}$ are the power series in $t=t^{\Gamma_\nu}$. We extend $\mathcal{L}_\chi$ across $\mathfrak{c}$ by taking as sections near $\mathfrak{c}$ the $\chi$-equivariant functions whose expansion in $t$ is a power series.
    Finally, the relation $\mathrm{d}z=-t^{-2}\mathrm{d}t$ shows that $f$
    vanishes to order $\geqslant n$ at $\mathfrak{c}$ if and only if
    $\omega_f$ has a pole of order at most $2k-n$ there. Altogether,
    \begin{equation*}
        M_{2k,l}^n(\Gamma)\cong
        H^0\big(X_\Gamma,\ \Omega_{X_\Gamma}^{\otimes k}
        \otimes\mathcal{L}_\chi\,((2k-n)\Sigma)\big),
    \end{equation*}
    where $\Sigma\coloneqq\sum_{\mathfrak{c}}(\mathfrak{c})$ is the divisor
    of cusps.

    \noindent Since $\det(\Gamma)$ is finite, $\chi$ has finite order $m$,
    so $\mathcal{L}_\chi^{\otimes m}\cong\mathcal{O}_{X_\Gamma}$ and
    $\deg\mathcal{L}_\chi=0$. The line bundle above therefore has degree
    $k(2g-2)+(2k-n)c$, which exceeds $2g-2$ exactly when
    $(k-1)(2g-2)+(2k-n)c>0$. By the Riemann-Roch theorem,
    \begin{align*}
        \dim M_{2k,l}^n(\Gamma)&=k(2g-2)+(2k-n)c+1-g\\
        &=(2k-1)(g-1)+(2k-n)c
    \end{align*}
    as claimed.
\end{proof}

\subsection{Vanishing at cusps}\label{subsectionVanishing}

Let
\begin{equation}\label{equationPoincarSeriesf}
        f(z)=\sum_{[\gamma]\in H\backslash\Gamma}(\det \gamma)^l\prod_{i=1}^r u(z;\gamma^{-1}a_i,\gamma^{-1}b_i)^{k_i}
    \end{equation}
be a Poincaré series that satisfies \textnormal{\textbf{(P)}}. Since $\det(\Gamma)$ is finite, the value of $(\det\gamma)^l$
 depends only on $l \mod d(\Gamma)$. Let
\begin{equation*}
        P(z)=f(z)(\mathrm{d}z)^k=\sum_{[\gamma]\in H\backslash\Gamma}(\det \gamma)^l\prod_{i=1}^r \gamma^*s(z;a_i,b_i)^{k_i}.
\end{equation*}
By \eqref{equationS}, one has $\gamma^*P=(\det\gamma)^{-l}P$ for all $\gamma\in\Gamma$. 

\begin{corollary}\label{coroCuspForm}
    $f$ is a cusp form of weight $2k$ and type $k+l$ for the group $\Gamma$.
\end{corollary}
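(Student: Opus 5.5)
Three things need checking: that $f$ is holomorphic on $\Omega$, that it satisfies the transformation law $f\slashbar{2k}{k+l}\,\gamma=f$ for every $\gamma\in\Gamma$, and that it vanishes at every cusp. Holomorphy on $\Omega$ is Theorem \ref{thmConvergence}, since \textbf{(P)} is assumed.

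\emph{Transformation law.} Start from the identity $\gamma^*P=(\det\gamma)^{-l}P$ recorded just before the statement. Writing $\gamma^*P=f(\gamma z)\,\mathrm{d}(\gamma z)^k$ and using $\mathrm{d}(\gamma z)=(\det\gamma)\,j_\gamma(z)^{-2}\,\mathrm{d}z$, this becomes
\[
f(\gamma z)\,(\det\gamma)^k j_\gamma(z)^{-2k}=(\det\gamma)^{-l}f(z).
\]
Rearranging gives $(\det\gamma)^{k+l}j_\gamma(z)^{-2k}f(\gamma z)=f(z)$, which is $f\slashbar{2k}{k+l}\gamma=f$. Since $\det(\Gamma)$ is finite, the type only matters modulo $d(\Gamma)$.

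\emph{Cusps, reduction.} If $\Gamma$ is cocompact there are no cusps and we are done. Otherwise let $\mathfrak{c}=\nu\infty$ with $\nu\in\GL_2(F)$. The function $f_\nu=f\slashbar{2k}{k+l}\nu$ corresponds to the $k$-form $\nu^*P$, up to a nonzero scalar. By \eqref{equationS}, $\nu^*P$ is again a Poincaré series for $\Gamma_\nu=\nu^{-1}\Gamma\nu$, with parameters $\nu^{-1}a_i,\nu^{-1}b_i$ and subgroup $\nu^{-1}H\nu$. This group is still commensurable with $G$. The finiteness condition \textbf{(P)} transfers, since $d(\nu^{-1}\gamma\nu v,[\nu^{-1}a_i\to\nu^{-1}b_i])=d(\gamma\nu v,[a_i\to b_i])$. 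So it suffices to treat the cusp $\infty$.

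\emph{Cusps at $\infty$.} Here I would use the bound $|f(z)|\leqslant|z|_i^{-k}$ from Theorem \ref{thmConvergence}. The function $f$ is invariant under $z\mapsto z+b$ for $b\in\mathfrak{b}$: such translations lie in $\Gamma$ with determinant $1$ and $j=1$. Hence on a region $|z|_i\geqslant C$, which is stable under $\mathfrak{b}$, $f$ is a holomorphic function of $t=t^\Gamma$ on a punctured disc $0<|t|<\varepsilon$. It therefore has a Laurent expansion $\sum_{n\in\mathbb{Z}}c_nt^n$.

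The standard estimate for $e_{\mathfrak{b}}$ shows that, as $|z|_i\to\infty$, $|t(z)|\to0$ and $|t(z)|$ is bounded above by roughly $|z|_i^{-1}$ on suitable subregions. Combining this with $|f|\leqslant|z|_i^{-k}\to0$ kills every $c_n$ with $n\leqslant 0$. The cleanest way to argue this is to compare maximum moduli of the Laurent series on circles $|t|=\rho$ as $\rho\to0$, using that $f$ tends to $0$ uniformly there. This gives holomorphy at $\infty$ and $c_0=0$, so $f$ vanishes to order $\geqslant1$.

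The main obstacle is making this Laurent/decay argument precise. One must check that the region $|z|_i\geqslant C$ maps onto a full punctured disc in $t$, or at least an annulus family reaching $0$. One also needs $|f|\to0$ uniformly on these circles. The available tool is the ultrametric maximum principle for Laurent series on annuli, together with the relation between $|t|$ and $|z|_i$ from Gekeler--Reversat \cite{GekelerReversat1996}, which I would cite rather than reprove.
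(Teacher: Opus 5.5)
Your proposal is correct and follows the paper's proof essentially step by step. The transformation law comes from $\gamma^*P=(\det\gamma)^{-l}P$ and $\mathrm{d}(\gamma z)=(\det\gamma)j_\gamma(z)^{-2}\mathrm{d}z$; the cusp $\nu\infty$ is reduced to $\infty$ by recognising $f_\nu$ as a scalar multiple of a Poincaré series for $\Gamma_\nu$, with \textbf{(P)} transported by the isometry $\nu^{-1}$; and the bound $|f_\nu(z)|\leqslant|\det\nu|^l|z|_i^{-k}$ from Theorem \ref{thmConvergence} kills the $c_n$ with $n\leqslant 0$.

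Your care with the Laurent/maximum-modulus step is more than the paper gives, since it just says ``therefore''. Note, though, that your remark $|t(z)|\lesssim|z|_i^{-1}$ points the wrong way: what is needed is that small $|t|$ forces large $|z|_i$. This holds because, for $|z|_i$ large, $|t(z)|$ depends only on $|z|_i$ and decreases strictly to $0$. Together with \cite[(2.7.3)]{GekelerReversat1996} and the ultrametric maximum principle (cf.\ Lemma \ref{lemmaGaussNorm}), this is how the paper handles the analogous point in Proposition \ref{propC2Decomposition}.
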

\begin{proof}
    Since $\mathrm{d}(\gamma z)=(\det\gamma)j_\gamma(z)^{-2}\mathrm{d}z$, the transformation law $\gamma^*P=(\det\gamma)^{-l}P$ translates to $f\slashbar{2k}{k+l}\gamma=f$ for all $\gamma\in\Gamma$. 

    \noindent If $\Gamma$ is cocompact, then $f$ is automatically a cusp form. Suppose that $\Gamma$ is commensurable with $\GL_2(A)$. Let $\mathfrak{c}=\nu\infty$ be a cusp. Since $f$ is holomorphic
    on $\Omega$ and $j_\nu(z)$ is nonvanishing on $\Omega$, the
    function $f_\nu = f\slashbar{2k}{k+l}\nu$ is holomorphic on
    $\Omega$. 
    As $f_\nu$ is periodic under the unipotent subgroup of
    $\Gamma_\nu$, it admits a Laurent expansion $f_\nu(z)=\sum_{n\in\mathbb{Z}}c_n(f_\nu)\,t^{\Gamma_\nu}(z)^{n}$.
    To show that $f_\nu$ vanishes at $\infty$, we check that $f_\nu$ is, up to a multiplicative constant, a Poincaré series for $\Gamma_\nu$. 
    Set $H_\nu\coloneqq \nu^{-1}H\nu$; the map $\gamma'\coloneqq \nu^{-1}\gamma\nu\mapsto\gamma$ induces a bijection $H_\nu\backslash\Gamma_\nu\to H\backslash\Gamma$. As
    $u(\nu z;a,b)=j_\nu(z)^2(\det\nu)^{-1}u(z;\nu^{-1}a,\nu^{-1}b)$, which follows from \eqref{equationS} and $\mathrm{d}(\nu z) = \det(\nu)\,j_\nu(z)^{-2}\,\mathrm{d}z$, we have $f_\nu(z)=(\det\nu)^l\cdot g(z)$ where
        \begin{equation*}
        g(z)\coloneqq \sum_{[\gamma']\in H_\nu\backslash\Gamma_\nu}(\det\gamma')^l\prod_{i=1}^r u(z;(\gamma')^{-1}\nu^{-1}a_i,(\gamma')^{-1}\nu^{-1}b_i)^{k_i}
    \end{equation*}
    is a Poincaré series. One shows easily that condition \textbf{(P)} for $g$ follows from condition \textbf{(P)} for $f$. By Theorem \ref{thmConvergence}, we have $|f_\nu(z)|\leqslant |\det\nu\,|^l\cdot |z|_i^{-k}$ for all $z\in\Omega$, so $f_\nu(z)\to 0$ as $|z|_i\to\infty$. Therefore $c_n(f_\nu)=0$ for $n\leqslant 0$, and $f_\nu$ vanishes at $\infty$.
\end{proof}

\begin{example} Take $\Gamma=\Gamma(\mathfrak{n})$ with $\deg(\mathfrak{n})\geqslant 1$. The geodesics $[\infty\to 0]$ and $[1\to -1]$ intersect at $v_0$, and the stabilizer of $v_0$ in $\Gamma(\mathfrak{n})$ is trivial (see the proof of Lemma \ref{LemmaQuotientGraphGammaN}). Choose $k_1,k_2\geqslant 1$ with $k_1+k_2=k$. By Theorem \ref{theoremIntersectionVertex}, the cusp form associated to
    \begin{equation*}
         P(z)\coloneqq \sum_{\gamma\in\Gamma(\mathfrak{n})}\gamma^* s(z;\infty,0)^{k_1}s(z;1,-1)^{k_2}
    \end{equation*}
        is a nonzero element of $M_{2k}^1(\Gamma(\mathfrak{n}))$.
\end{example}

\begin{remark}
    Assume $k=0\mod q-1$ and $2k<q^2-1$. Then $M_{2k,0}(G)$ is one-dimensional \cite[Proposition 4.3]{Cornelissen1997}, spanned by the Eisenstein series $E_{2k}(z)=\sum_{c,d\in A}'(cz+d)^{-2k}$. In particular, there are no nontrivial cusp forms of weight $2k$ and type $0$ for $G$. Let $k_1,k_2\geqslant 1$ such that $k_1+k_2=k$; then the cusp form associated to
    \begin{align*}
        Q(z)&\coloneqq \sum_{\gamma\in G}\gamma^* s(z;\infty,0)^{k_1}s(z;1,-1)^{k_2}\\
        &=-\sum_{[\gamma]\in Z(G)\backslash G}\gamma^* s(z;\infty,0)^{k_1}s(z;1,-1)^{k_2}
    \end{align*}
    vanishes identically. 
    This example shows that the condition on $\Gamma_v$ in Theorem \ref{theoremIntersectionVertex} cannot be removed: the geodesics $[\infty\to 0]$ and $[1\to -1]$ intersect at $v_0$, whose stabilizer in $G$ is $\GL_2(\mathbb{F}_q)$, which is not contained in $Z(G)=\mathbb{F}_q^\times\cdot I$.
\end{remark}

\section{The space spanned by Poincaré series}\label{SectionSpan}

In this section, we study the space spanned by Poincaré series, first for the principal congruence
subgroups $\Gamma(\mathfrak{n})$ of $G$, then for the cocompact groups
attached to quaternion algebras over $F$ split at $\infty$.

\subsection{The case $\Gamma(\mathfrak{n})$}

Let $k\geqslant 2$, and let $\mathfrak{n}\in A$ be monic with $d\coloneqq \deg(\mathfrak{n})\geqslant 1$. We now construct an explicit linearly independent family of Poincaré series of weight $2k$ for $\Gamma(\mathfrak{n})$. Following Kurihara \cite{Kurihara1994}, we associate linearly independent Poincaré series to vertices and edges of the quotient graph $\Gamma(\mathfrak{n})\backslash\mathcal{T}$.

For the remainder of this subsection, we only consider Poincaré series for the group $\Gamma(\mathfrak{n})$ with $\sum k_i=k$, $r\geqslant 2$, $H$ trivial, and
$\bigcap_{i=1}^r \{a_i,b_i\} =\emptyset$:
\begin{equation}\label{equationPSforGamman}
f(z)=\sum_{\gamma\in\Gamma(\mathfrak{n})}\prod_{i=1}^r u(z;\gamma a_i,\gamma b_i)^{k_i}
\end{equation}
Condition \textnormal{\textbf{(P)}} holds by Theorem \ref{theorem(P)}(ii), and $f\in M_{2k}^1(\Gamma(\mathfrak{n}))$ by Corollary \ref{coroCuspForm}.

\noindent The number of cusps $c=c(X_{\Gamma(\mathfrak{n})})$ and the genus $g=g(X_{\Gamma(\mathfrak{n})})$ are given by \cite[p.89-90]{Gekeler1980}
    \begin{equation}\label{equationCuspGannaN}
        c=\frac{q^{2d}}{q-1}\prod_{\mathfrak{p}|\mathfrak{n}}\left(1-\frac{1}{q^{2\deg(\mathfrak{p})}}\right)
    \end{equation}
    and 
    \begin{align}
    g&=1+\frac{q^{3d}}{q^2-1}\prod_{\mathfrak{p}|\mathfrak{n}}\left(1-\frac{1}{q^{2\deg(\mathfrak{p})}}\right)-c\label{equationGenusGammaN}\\
        &=1+\frac{q^{3d}}{q-1}\left(\frac{1}{q+1}-\frac{1}{q^d}\right)\prod_{\mathfrak{p}|\mathfrak{n}}\left(1-\frac{1}{q^{2\deg(\mathfrak{p})}}\right)\nonumber
    \end{align}
    where the products are over the monic irreducible polynomials $\mathfrak{p}$ dividing $\mathfrak{n}$. Equations \eqref{equationCuspGannaN} and \eqref{equationGenusGammaN} imply that $g=0$ if and only if $d=1$, in which case $c=q+1$.

    \noindent The group $\Gamma(\mathfrak{n})$ has no prime-to-$p$ torsion, and all its elements have determinant $1$. By Proposition \ref{propDim},
    we have 
    \begin{equation}\label{eqdim}
        \dim M_{2k}^n(\Gamma(\mathfrak{n}))=(2k-1)(g-1)+(2k-n)c
    \end{equation}
    as long as $n<2k$ and $d\geqslant 2$. If
    $d=1$, then $g=0$ and $c=q+1$, and the hypothesis
    of Proposition \ref{propDim} reads
    $(2k-n)(q+1)>2(k-1)$.

\begin{theorem}\label{theoremPSspan}
    The Poincaré series
    \eqref{equationPSforGamman} span a subspace of dimension at least $(2k-1)(g-1)+kc$ in $M_{2k}^1(\Gamma(\mathfrak{n}))$.
\end{theorem}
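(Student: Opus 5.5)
We follow Kurihara's strategy: we produce, for each stable vertex and stable edge of the finite graph $(\Gamma(\mathfrak{n})\backslash\mathcal{T})^0$, explicit Poincaré series of the form \eqref{equationPSforGamman}, and then prove linear independence by reducing to the residue field $\overline{\mathbb{F}_q}$. First we describe the quotient graph: the stabilizers of vertices in $\Gamma(\mathfrak{n})$ are trivial on the relevant part of the tree, the quotient has $V$ vertices and $E$ edges in its finite part, and the first Betti number is $g=E-V+1$. We call a vertex of the finite part \emph{stable} if all of its $q+1$ neighbours in $\mathcal{T}$ map to distinct points of the quotient, and we treat the boundary vertices attached to the $c$ cusp half-lines separately. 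Counting gives roughly $\#\{\text{stable vertices}\}\cdot N+\#\{\text{stable edges}\}$ with $N=k(q-1)-q$. We then check, using $g$ and $c$ from \eqref{equationCuspGannaN} and \eqref{equationGenusGammaN}, that this count is at least $(2k-1)(g-1)+kc$. This is an Euler-characteristic computation: each stable vertex contributes $k(q+1)-(2k-1)$ terms after subtracting edge contributions.

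\textbf{Vertex series.} Fix a stable vertex $v$, translated by $G$ to $v_0$. The set of ends through neighbours of $v_0$ is identified with $\mathbb{P}^1(\mathbb{F}_q)$. For pairs $\alpha_i,\beta_i\in\mathbb{P}^1(\mathbb{F}_q)$ with $\bigcap\{\alpha_i,\beta_i\}=\emptyset$ and $r\ge 2$, the geodesics $[\alpha_i\to\beta_i]$ meet exactly at $v$. Since $\Gamma(\mathfrak{n})_v$ is trivial, Theorem \ref{theoremIntersectionVertex} applies and the series is nonzero. More precisely, on $\Omega_v$ the identity term dominates, and every other term has valuation $\geq 1$. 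Hence the reduction of $P$ on $\Omega_{v_0}$, which is the component $E_{\bar v}$ of the analytic reduction, equals the reduction of $\prod s(\zeta;\alpha_i,\beta_i)^{k_i}$: a Kurihara differential. By Lemma \ref{lemmaKuriharaSpan}, these differentials span an $N$-dimensional space of $k$-forms on $\mathbb{P}^1_{\mathbb{F}_q}$ with poles of order at most $k-1$ in $\mathbb{P}^1(\mathbb{F}_q)$. We therefore pick $N$ such series whose reductions form a basis.

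\textbf{Edge series.} For a stable edge $e=\{v,w\}$ we take geodesics whose intersection is exactly $e$ and its endpoints, for example two geodesics sharing that edge. Theorem \ref{theoremIntersectionEdge} then gives a series with valuation $0$ on $\Omega_e$ (an annulus) and positive valuation away from the orbit of $\Omega_v\cup\Omega_e\cup\Omega_w$. Its reduction on the double point, or the residue along the annulus, is nonzero.

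\textbf{Linear independence.} Suppose $\sum c_j P_j=0$ is a nontrivial relation, and normalize so that $\max|c_j|=1$. Reduce modulo the maximal ideal on each component $E_{\bar v}$. Only the vertex series attached to $\bar v$ can reduce nontrivially there: series attached to other vertices have all terms of positive valuation on $\Omega_v$, by Corollary \ref{corollaryValuationS}, since $v$ is not on their intersection. The vertex-series coefficients of unit size therefore vanish, by independence of the chosen Kurihara differentials. Next, restrict to the annuli $\Omega_e$ and compare residues, or the leading terms of the reduction there, to kill the edge-series coefficients. This contradicts $\max|c_j|=1$.

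The main obstacle I expect is twofold. The delicate part of the argument is the edge contribution, which must be made independent of the vertex reductions at its endpoints, since vertex series also have valuation $0$ near $\Omega_v$. The other hard step is the combinatorial count: showing that the stable vertices and edges suffice to reach $(2k-1)(g-1)+kc$ requires the precise structure of $\Gamma(\mathfrak{n})\backslash\mathcal{T}$, i.e.\ where the cusps attach and which vertices fail to be stable.
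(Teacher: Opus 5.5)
Your architecture is the paper's: vertex series lifting a basis of Kurihara differentials via Theorem~\ref{theoremIntersectionVertex}, one edge series per stable edge via Theorem~\ref{theoremIntersectionEdge}, and independence by reduction modulo $\mathfrak{m}$. Your direct choice of ends $\alpha_i,\beta_i\in\mathbb{P}^1(\mathbb{F}_q)$ is also fine, since those geodesics meet exactly at $v_0$. The independence argument, however, fails at its first step. You claim that on $E_{\bar v}$, i.e.\ on $U(\bar v)$, only the vertex series attached to $\bar v$ reduce nontrivially. This is false. By Theorem~\ref{theoremIntersectionEdge}, every edge series $P^{\bar e}$ with $\bar v$ as an endpoint has valuation exactly $0$ on $U(\bar v)$, so its reduction there is nonzero. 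A unit-size vertex coefficient could therefore be cancelled modulo $\mathfrak{m}$ by unit-size coefficients of incident edge series, and independence of the Kurihara differentials alone does not kill the vertex coefficients. Conversely, the obstacle you anticipate on the annuli is not real. If $z\in U(\bar e)$, then $\lambda(z)$ lies on an open edge, so $t_p>0$, and by Corollary~\ref{corollaryValuationS} every term of every vertex series has valuation $\geqslant t_p>0$ at $z$. Every $P^{\bar e'}$ with $\bar e'\neq\bar e$ also has positive valuation there, because the sets $U(\cdot)$ are pairwise disjoint.

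The fix is to reverse your order, as the paper does. Normalize the coefficients into $R$ with one unit. Reducing the relation at a point of $U(\bar e_0)$, where $P^{\bar e_0}$ is the only series of valuation $0$, forces $\mu^{\bar e_0}\in\mathfrak{m}$ for every edge. Since every series has valuation $\geqslant 0$ everywhere, the edge terms then vanish modulo $\mathfrak{m}$ on each $U(\bar v)$. Only at that point does independence of the Kurihara differentials force the vertex coefficients into $\mathfrak{m}$. (To keep your order instead, you would have to show that the reduction of $P^{\bar e}$ on $E_{\bar v}$ has a pole of order exactly $k$ at the point of $\mathbb{P}^1(\mathbb{F}_q)$ corresponding to $e$, hence lies outside $H^0(\mathbb{P}^1_{\mathbb{F}_q},\Omega^k((k-1)D))$.)

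Your count is also short. Attaching series only to edges of $(\Gamma(\mathfrak{n})\backslash\mathcal{T})^0$ gives $VN+E=(2k-1)(g-1)+(k-1)c$ series, which is $c$ too few; for $d=1$ it gives only $N$. You must include the $c$ edges entering the cusp half-lines. These are stable because one endpoint is, and Theorem~\ref{theoremIntersectionEdge} needs only one endpoint with trivial stabilizer. The structural input is Lemma~\ref{LemmaQuotientGraphGammaN}:
\begin{itemize}
\item every vertex of the finite part has trivial stabilizer, so no vertex there ``fails to be stable'' (your neighbour-based definition is not the relevant notion);
\item each such vertex has $q+1$ neighbours in the quotient;
\item for $d\geqslant 2$ the half-lines attach at distinct vertices.
\end{itemize}
Hence $E=(V(q+1)-c)/2$ and $g-1=E-V$, so that $VN+E+c=(2k-1)(g-1)+kc$ exactly. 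Your per-vertex figure $k(q+1)-(2k-1)$ is the correct total only when $d=1$.
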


 Our first task is to describe the quotient graph $\Gamma(\mathfrak{n})\backslash\mathcal{T}$ in detail. We say that a vertex $\bar{v}$ (resp. an edge $\bar{e}$) of $\Gamma(\mathfrak{n})\backslash\mathcal{T}$ is \textit{stable} if any lift $v$ of $\bar{v}$ (resp. $e$ of $\bar{e}$) in $\mathcal{T}$ has trivial stabilizer in $\Gamma(\mathfrak{n})$. 

\noindent Since the type of a vertex (defined in \S\ref{subsubsection(P)(iii)}) is constant along $\Gamma(\mathfrak{n})$-orbits, it descends to the quotient: the type of $\bar{v}\in X(\Gamma(\mathfrak{n})\backslash\mathcal{T})$ is the type of any lift $v\in X(\mathcal{T})$. 

 \begin{lemma}\label{LemmaQuotientGraphGammaN}
    The stable vertices in $\Gamma(\mathfrak{n})\backslash\mathcal{T}$
    are the vertices of type $<d$; they are exactly the vertices of
    $(\Gamma(\mathfrak{n})\backslash\mathcal{T})^0$. Moreover, each
    stable vertex has exactly $q+1$ neighbors in
    $\Gamma(\mathfrak{n})\backslash\mathcal{T}$, and the stable edges
    are the edges with an endpoint in
    $(\Gamma(\mathfrak{n})\backslash\mathcal{T})^0$.
    The $c$ half-lines of $\Gamma(\mathfrak{n})\backslash\mathcal{T}$
    are attached as follows:
     \begin{itemize}
         \item if $d=1$, then $(\Gamma(\mathfrak{n})\backslash\mathcal{T})^0$
            consists of a single vertex, to which all $c=q+1$ half-lines
            are attached;
          \item if $d\geqslant 2$, the $c$ half-lines are attached to
            distinct vertices of
            $(\Gamma(\mathfrak{n})\backslash\mathcal{T})^0$.
     \end{itemize}
 \end{lemma}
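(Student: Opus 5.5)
We will describe the quotient by computing stabilizers of the standard vertices $v_n$ ($n\geqslant 0$) in $G$, then passing to $\Gamma(\mathfrak{n})$. By \cite[II.1.6]{Serre1980}, the stabilizer $G_{v_0}$ is $\GL_2(\mathbb{F}_q)$, and for $n\geqslant 1$ the stabilizer $G_{v_n}$ consists of the matrices $\begin{pmatrix} a & b\\ 0 & d\end{pmatrix}$ with $a,d\in\mathbb{F}_q^\times$ and $b\in A$, $\deg b\leqslant n$. Every vertex $v$ of type $n$ equals $g v_n$ for some $g\in G$, so $\Gamma(\mathfrak{n})_v=g\,(g^{-1}\Gamma(\mathfrak{n})g\cap G_{v_n})\,g^{-1}=g\,(\Gamma(\mathfrak{n})\cap G_{v_n})\,g^{-1}$, using that $\Gamma(\mathfrak{n})$ is normal in $G$.

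First we compute $\Gamma(\mathfrak{n})\cap G_{v_n}$. A matrix $\begin{pmatrix} a&b\\0&d\end{pmatrix}$ with $a,d$ constant lies in $\Gamma(\mathfrak{n})$ if and only if $a=d=1$ (since $d\geqslant 1$, constants are congruent to $1$ mod $\mathfrak{n}$ only if equal to $1$) and $\mathfrak{n}\mid b$. With $\deg b\leqslant n$, a nonzero such $b$ exists exactly when $n\geqslant d$. For $n=0$, the intersection $\GL_2(\mathbb{F}_q)\cap\Gamma(\mathfrak{n})$ is trivial. Hence the stable vertices are precisely those of type $<d$. Edges are handled the same way: an edge has trivial stabilizer iff its stabilizer, being the intersection of the stabilizers of its endpoints (the action is without inversion), is trivial. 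Since the stabilizer of an edge $\{v_n,v_{n+1}\}$ is $G_{v_n}\cap G_{v_{n+1}}=G_{v_n}$ for $n\geqslant 1$ (and the Borel of $\GL_2(\mathbb{F}_q)$ for $n=0$), the stable edges are exactly those with an endpoint of type $<d$. These are the edges with an endpoint in the finite part, once we identify that part.

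Next we count the neighbors and describe the half-lines. If $v$ is stable, the map from the $q+1$ neighbors of $v$ in $\mathcal{T}$ to the neighbors of $\bar v$ is injective. Indeed, if $\gamma w=w'$ with $w,w'$ neighbors of $v$, then $d(w,w')\leqslant 2$ and $d(v,\gamma v)$ is even, so $\gamma v=v$ (both lie within distance $1$ of $w'$) and $\gamma=I$. So $\bar v$ has $q+1$ neighbors. For a vertex of type $n\geqslant d$, the unipotent subgroup $\{\begin{pmatrix}1&b\\0&1\end{pmatrix}:\mathfrak{n}\mid b,\deg b\leqslant n\}$ acts on the $q$ neighbors of $v_n$ of type $n-1$. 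These are the vertices $\begin{pmatrix}\pi^{-n+1}&\alpha\pi^{-n}\\0&1\end{pmatrix}$ in the coordinates above, i.e.\ the ones shifted by $\alpha T^{n}$ modulo lower degree. If $n\geqslant d$, the element $b=\alpha T^{n-d}\mathfrak{n}$ sends one to another, so the action is transitive. Thus a vertex of type $n\geqslant d$ has exactly two neighbors in the quotient, one of type $n-1$ and one of type $n+1$. It follows that the quotient is the finite graph of vertices of type $<d$, together with rays $\bar v_{d-1+?}\to\bar v_d\to\bar v_{d+1}\to\cdots$ attached along type $d-1$ vertices, and the finite part $(\Gamma(\mathfrak{n})\backslash\mathcal{T})^0$ consists exactly of the stable vertices.

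Finally, we determine the attachment. If $d=1$, the only stable vertex is of type $0$, and $G$ acts transitively on type-$0$ vertices. Since $G/\Gamma(\mathfrak{n})\cdot$ (acting on orbits) together with $\Gamma(\mathfrak{n})G_{v_0}=G$ (because $\GL_2(\mathbb{F}_q)\to\GL_2(A/\mathfrak{n})$ is surjective when $\deg\mathfrak{n}=1$), there is a single $\Gamma(\mathfrak{n})$-orbit. All $q+1$ of its neighbors are rays, and $c=q+1$. If $d\geqslant 2$, a type-$(d-1)$ stable vertex $\bar v$ has $q+1$ neighbors: $q$ of type $d-2$ and one of type $d$. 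Hence exactly one half-line leaves $\bar v$, so distinct half-lines attach to distinct vertices. The main obstacle is the transitivity computation for vertices of type $\geqslant d$ and checking that the ray structure is exactly as claimed, i.e.\ each unstable vertex has a unique upward neighbor. I would verify the latter using the fact that a type $n\geqslant1$ vertex has a single type-$(n+1)$ neighbor, which the paper recalled in \S\ref{subsubsection(P)(iii)}.
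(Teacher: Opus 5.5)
\textbf{Overall.} Most of your proof matches the paper's. The following all coincide with the paper and are fine:
\begin{itemize}
\item the computation of $\Gamma(\mathfrak{n})\cap G_{v_n}$: trivial for $n<d$, and the unipotent group with upper-right entry in $\mathfrak{n}A_{\leqslant n-d}$ for $n\geqslant d$;
\item its transport to all vertices and edges by normality;
\item the transitivity of this group on the $q$ lower neighbours of $v_n$;
\item the decomposition $G=\Gamma(\mathfrak{n})\GL_2(\mathbb{F}_q)$ when $d=1$;
\item the uniqueness of the upper neighbour of a type-$(d-1)$ vertex when $d\geqslant 2$.
\end{itemize}
Some cosmetic points need cleaning up: the placeholder $\bar v_{d-1+?}$ and the garbled sentence in the case $d=1$. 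You should also say explicitly that two rays cannot merge, because a vertex of type $\geqslant d$ has only one lower neighbour in the quotient.

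\textbf{The gap.} Your proof that a stable $\bar v$ has $q+1$ neighbours does not work. From $\gamma w=w'$ you only learn that $v$ and $\gamma v$ are both adjacent to $w'$, so $d(v,\gamma v)\in\{0,2\}$. Parity does not exclude $2$, so $\gamma v=v$ does not follow.

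This is not a harmless shortcut; the inference is false in general. In the quaternionic example of \S\ref{subsectionQuaternionic}, vertex stabilizers act trivially on $\mathcal{T}$. Yet $\Gamma\backslash\mathcal{T}$ has two vertices joined by $q+1$ edges. So distinct neighbours of $v_0$ are $\Gamma$-equivalent via elements that move $v_0$ to distance $2$, and your argument would wrongly exclude this.

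\textbf{What the trivial stabilizer does give.} It gives a statement about edges, which is what the paper uses. If $\gamma\{v,w\}=\{v,w'\}$, then $\gamma v\neq w'$, because adjacent vertices are never $\Gamma(\mathfrak{n})$-equivalent. Hence $\gamma v=v$ and $\gamma=I$. So the edges at $\bar v$ correspond to the $\Gamma(\mathfrak{n})_v$-orbits of neighbours of $v$, and there are $q+1$ of them. This is all that the degree count \eqref{equationNumberEdges} and Euler's formula require.

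\textbf{If you want distinct neighbouring vertices.} Ruling out multiple edges needs an extra input.
\begin{itemize}
\item For $v$ of type $m\geqslant 2$, types suffice. Two distinct neighbours in one orbit must both have type $m-1$. The type-$(m-1)$ vertex $w'$ has $v$ as its unique type-$m$ neighbour, which forces $\gamma v=v$.
\item For $m\in\{0,1\}$ you must compute modulo $\mathfrak{n}$. For example, take $m=1$ and $d\geqslant 2$, and set $u_\alpha=\begin{pmatrix}1&\alpha T\\0&1\end{pmatrix}$. The neighbours $u_\alpha v_0$ and $u_\beta v_0$ are $\Gamma(\mathfrak{n})$-equivalent only if $u_\beta^{-1}u_\alpha$ is congruent modulo $\mathfrak{n}$ to an element of $\GL_2(\mathbb{F}_q)$. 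This fails for $\alpha\neq\beta$, since $T\bmod\mathfrak{n}\notin\mathbb{F}_q$.
\end{itemize}
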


\noindent For $d\geqslant 2$, Lemma \ref{LemmaQuotientGraphGammaN} follows from the analysis of the covering $\Gamma(\mathfrak{n})\backslash\mathcal{T}\to G\backslash\mathcal{T}$
in \cite[(1.2), 1.7, 1.8, 1.11]{GekelerNonnengardt1995}; we give a direct proof, which also covers the case $d=1$.
 
 \begin{proof}
     Let $n\geqslant 0$. The stabilizer of $v_n$ in $G$ is $\GL_2(\mathbb{F}_q)$ if $n=0$, and 
     \begin{equation*}
         \begin{pmatrix} \mathbb{F}_q^\times & A_{\leqslant n} \\ 0 & \mathbb{F}_q^\times \end{pmatrix}
     \end{equation*}
     if $n\geqslant 1$, where $A_{\leqslant n}$ is the set of polynomials of degree at most $n$ in $A$ \cite[p.87]{Serre1980}. Intersecting with $\Gamma(\mathfrak{n})$, we see that the stabilizer of $v_n$ in $\Gamma(\mathfrak{n})$ is trivial if $n<d$, and equal to
     \begin{equation*}
         H_n\coloneqq\begin{pmatrix} 1 & \mathfrak{n}A_{\leqslant n-d} \\ 0 & 1 \end{pmatrix}
     \end{equation*}
     if $n\geqslant d$. Therefore the stabilizer in $\Gamma(\mathfrak{n})$ of any lift of $\bar{v}$ is trivial if and only if $\type(\bar{v})<d$. Since $\Gamma(\mathfrak{n})$ acts without inversion, the stabilizer of an edge in $\mathcal{T}$ is the intersection of the stabilizers of its endpoints. Therefore if one of the endpoints of an edge $\bar{e}$ in $\Gamma(\mathfrak{n})\backslash\mathcal{T}$ is stable, then the edge is stable as well. Suppose that the endpoints of $\bar{e}$ have type $n$ and $n+1$ with $n\geqslant d$; then by normality of $\Gamma(\mathfrak{n})$ in $G$, the stabilizer of any lift $e$ is conjugate to $H_n$. Thus $\bar{e}$ is not stable.
     
    \noindent The number of neighbors of a vertex $\bar{v}$ in
    $\Gamma(\mathfrak{n})\backslash\mathcal{T}$ equals the number of orbits of $\Gamma(\mathfrak{n})_v$ on the set of $q+1$ neighbors of any lift $v$ of $\bar{v}$ in $\mathcal{T}$, since $\Gamma(\mathfrak{n})$ acts on $\mathcal{T}$ without inversion.
    It follows that a vertex of type $n<d$ has exactly $q+1$ neighbors. We now show that any vertex $\bar{v}$ of type $n\geqslant d$ has exactly two neighbors: one of type $n-1$, and the other of type $n+1$. It suffices to show it for $\bar{v}=\overline{v_n}$. The only neighbor of $v_n$ of type $n+1$ is $v_{n+1}$, which is represented by $\begin{pmatrix} \pi^{-(n+1)} & 0 \\ 0 & 1 \end{pmatrix}$. For all $\gamma\coloneqq\begin{pmatrix} 1 & \mathfrak{n}a \\ 0 & 1 \end{pmatrix}\in H_n$, we have 
    \begin{equation*}
        \gamma\begin{pmatrix} \pi^{-(n+1)} & 0 \\ 0 & 1 \end{pmatrix}=\begin{pmatrix} \pi^{-(n+1)} & \mathfrak{n}a \\ 0 & 1 \end{pmatrix}
    \end{equation*}
    which also represents $v_{n+1}$ since $\mathfrak{n}a\equiv 0\mod \pi^{-(n+1)}\mathcal{O}_\infty$. Thus $H_n$ fixes $v_{n+1}$.
    The $q$ neighbors of $v_n$ of type $n-1$, denoted by $w_\alpha$  for $\alpha\in\mathbb{F}_q$, are represented by $\begin{pmatrix} \pi^{-(n-1)} & \alpha\pi^{-n} \\ 0 & 1 \end{pmatrix}$. We have 
    \begin{equation*}
        \gamma\begin{pmatrix} \pi^{-(n-1)} & \alpha\pi^{-n} \\ 0 & 1 \end{pmatrix}=\begin{pmatrix} \pi^{-(n-1)} & \alpha\pi^{-n}+\mathfrak{n}a \\ 0 & 1 \end{pmatrix}
    \end{equation*}
    Let $\beta\in\mathbb{F}_q$ denote the coefficient of $T^{n-d}$ in $a$; then $\alpha\pi^{-n}+\mathfrak{n}a\equiv (\alpha+\beta)\pi^{-n}\mod{\pi^{-n+1}\mathcal{O}_\infty}$, and $\gamma$ maps $w_\alpha$ to $w_{\alpha+\beta}$. It follows that $H_n$ acts transitively on the $q$ neighbors of type $n-1$. Thus $\overline{v_n}$ has exactly two neighbors in $\Gamma(\mathfrak{n})\backslash\mathcal{T}$, as claimed.

    \noindent Each neighbor of type $d$ of a vertex of type $d-1$ is the first vertex of a half-line, since every vertex of type $\geqslant d$ has exactly two neighbors. These half-lines cannot intersect at a vertex of type $\geqslant d$: otherwise this vertex would have at least three neighbors. It follows that the vertices of 
$(\Gamma(\mathfrak{n})\backslash\mathcal{T})^0$ are exactly the vertices of type $<d$.

    \noindent Suppose $d=1$. Since $A/\mathfrak{n}A\cong\mathbb{F}_q$, the reduction map $G\twoheadrightarrow  \GL_2(\mathbb{F}_q)$ admits a section, so every $g\in G$ can be written $g=\gamma h$ with $\gamma\in \Gamma(\mathfrak{n})$ and $h\in \GL_2(\mathbb{F}_q)$. Let $v\in X(\mathcal{T})$ be a vertex of type $0$; then $v=g v_0=\gamma h v_0=\gamma v_0$. It follows that
    $(\Gamma(\mathfrak{n})\backslash\mathcal{T})^0$ consists of a single vertex $\overline{v_0}$. 
    Since all neighbors of $v_0$ have type $1$, the $q+1$ half-lines are all attached to $\overline{v_0}$.

    \noindent Suppose $d\geqslant 2$, and that two half-lines are attached to the same vertex $\bar{v}$ of type $d-1\geqslant 1$; then $\bar{v}$ would have two neighbors of type $d$, which is a contradiction.
 \end{proof}
 
    Let $V\coloneqq \#X(\Gamma(\mathfrak{n})\backslash\mathcal{T})^0$ denote the number of stable vertices, and let $E$ denote the number of non-oriented edges in $(\Gamma(\mathfrak{n})\backslash\mathcal{T})^0$. Since the first edge of each half-line is stable, the total number of stable edges is $E+c$. Suppose $d\geqslant 2$. By Lemma \ref{LemmaQuotientGraphGammaN}, any vertex $\bar{v}\in X(\Gamma(\mathfrak{n})\backslash\mathcal{T})^0$ has $q$ or $q+1$ neighbors in  $(\Gamma(\mathfrak{n})\backslash\mathcal{T})^0$, depending on whether a half-line is attached to $\bar{v}$ or not. Therefore
    \begin{equation}\label{equationNumberEdges}
        E=\frac{cq+(V-c)(q+1)}{2}=\frac{V(q+1)-c}{2}.
    \end{equation}
    Euler's formula \cite[p.23]{Serre1980} then gives
    \begin{equation}\label{equationNumberVertices}
        g-1=E-V=\frac{V(q+1)-c}{2}-V=\frac{V(q-1)-c}{2}.
    \end{equation}
    If $d=1$, then \eqref{equationNumberEdges} and \eqref{equationNumberVertices} remain valid since $V=1$, $E=0$, $c=q+1$ and $g=0$.

    \noindent We now explain how to associate linearly independent Poincaré series to vertices and edges of $\Gamma(\mathfrak{n})\backslash\mathcal{T}$.

    \begin{definition}\label{defKuriharaDiff}
    Let $k\geqslant 2$. A \textit{Kurihara differential} is a $k$-form on the projective line $\mathbb{P}^1_{\mathbb{F}_q}$ of the form $\bar{s}=\prod_{i=1}^r s(z;\alpha_i,\beta_i)^{k_i}$ with $r\geqslant 2$, $k_i\geqslant 1$, $\sum_{i=1}^r k_i=k$,  $\alpha_i,\beta_i\in\mathbb{P}^1(\mathbb{F}_q)$ with $\alpha_i\neq\beta_i$, and $\bigcap_{i=1}^r \{\alpha_i,\beta_i\}=\emptyset$.
\end{definition}

\noindent Any Kurihara differential $\bar{s}=\prod_{i=1}^r s(z;\alpha_i,\beta_i)^{k_i}$ has poles of order $\leqslant k-1$ at $\alpha_1,\beta_1,\ldots,\alpha_r,\beta_r\in\mathbb{P}^1(\mathbb{F}_q)$. Let $D\coloneqq \sum_{x\in\mathbb{P}^1(\mathbb{F}_q) } (x)$ be the divisor of rational points on the projective line $\mathbb{P}^1_{\mathbb{F}_q}$; then $H^0(\mathbb{P}^1_{\mathbb{F}_q},\Omega^{k}((k-1)D))$ is the $\overline{\mathbb{F}_q}$-vector space of $k$-forms on $\mathbb{P}^1_{\mathbb{F}_q}$ that have poles of order at most $k-1$ at the points of $\mathbb{P}^1(\mathbb{F}_q)$. We denote by $s$ the $k$-form on $\Omega$ obtained by viewing the same expression $\prod_{i=1}^r s(z;\alpha_i,\beta_i)^{k_i}$ over $\Omega$.

The following result was stated without proof in \cite{Kurihara1994}.

\begin{lemma}\label{lemmaKuriharaSpan}
     The $\overline{\mathbb{F}_q}$-vector space $H^0(\mathbb{P}^1_{\mathbb{F}_q},\Omega^{k}((k-1)D))$ has dimension $\allowbreak N\coloneqq k(q-1)-q$, and is spanned by Kurihara differentials.
\end{lemma}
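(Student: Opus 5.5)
The proof has two parts: first compute the dimension by Riemann--Roch on $\mathbb{P}^1$, then show that Kurihara differentials span by an explicit construction followed by induction.

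\textbf{Dimension.} The canonical bundle has degree $-2$, so $\Omega^k((k-1)D)$ is a line bundle of degree
\[
-2k+(k-1)(q+1)=k(q-1)-q-1 .
\]
This is at least $-1$ for $k\geqslant 2$; it equals $-1$ exactly when $k=2$ and $q=3$, and then $N=0$. Hence $h^0 = k(q-1)-q$.

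\textbf{Concrete description.} Choose the coordinate $z$ so that $\infty\in\mathbb{P}^1(\mathbb{F}_q)$, and write a section as $g(z)(\mathrm{d}z)^k$. The conditions are that the poles of $g$ at the points $x\in\mathbb{F}_q$ have order at most $k-1$, and that the order at $\infty$ is at most $k-1$. Since $(\mathrm{d}z)^k$ has a pole of order $2k$ at $\infty$, the second condition says $g$ vanishes to order at least $k+1$ there. So
\[
g=\frac{Q(z)}{(z^q-z)^{k-1}},\qquad \deg Q\leqslant (k-1)q-k-1=N-1 .
\]
It therefore suffices to show that the numerators $Q$ coming from Kurihara differentials span the polynomials of degree $\leqslant N-1$.

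\textbf{Spanning, base case.} Use the elementary products $s(z;\alpha,\beta)\,s(z;\gamma,\delta)$ with $\{\alpha,\beta\}\cap\{\gamma,\delta\}=\emptyset$ and all four points in $\mathbb{F}_q$. Such a product equals
\[
\frac{(\beta-\alpha)(\delta-\gamma)\,(\mathrm{d}z)^2}{(z-\alpha)(z-\beta)(z-\gamma)(z-\delta)} .
\]
Also allow one of the points to be $\infty$, e.g.
\[
s(z;\infty,\beta)\,s(z;\gamma,\delta)=\frac{(\delta-\gamma)\,(\mathrm{d}z)^2}{(z-\beta)(z-\gamma)(z-\delta)} .
\]
For $k=2$ the target space is $\{Q/(z^q-z):\deg Q\leqslant q-3\}$, which is the span of the partial fractions $\sum_x c_x/(z-x)$ with $\sum_x c_x=0$ and $\sum_x c_x x=0$ (the last two conditions give the required vanishing at $\infty$). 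In partial fractions, the three-point forms $1/((z-\beta)(z-\gamma)(z-\delta))$ with distinct $\beta,\gamma,\delta\in\mathbb{F}_q$ are second divided differences. For $q\geqslant 5$, second divided differences over triples of distinct points span the space of functionals $\sum c_x/(z-x)$ annihilating polynomials of degree $\leqslant 1$, which has dimension $q-2$. This settles $k=2$; for $q=3$ and $k=2$ there is nothing to prove since $N=0$.

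\textbf{Spanning, induction on $k$.} Assuming the result for $k-1$, multiply weight-$(k-1)$ Kurihara differentials by factors $s(z;x,y)$. A product of a Kurihara differential with any $s(z;x,y)$ is again Kurihara: the intersection condition is preserved because it already holds for a subfamily. Then show that these products together generate the whole space, for instance by filtering by the pole order at a fixed point $x_0$ and checking that each graded piece is reached. Concretely, multiplying by $s(z;x_0,y)$ raises the pole at $x_0$ by one, and one needs the leading coefficients at $x_0$ to fill the new graded piece.

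The main obstacle is making this induction clean at every point simultaneously while respecting the bound $k-1$ on pole orders. If the filtration argument gets messy, I would fall back on a direct count: show that the forms
\[
s(z;x,y)^{k-2}\,s(z;u,w)^{2}
\]
and their mixed variants have numerators $Q$ filling a basis of polynomials of degree $\leqslant N-1$, via a triangularity argument in the basis
\[
\prod_{x}(z-x)^{e_x},\qquad 0\leqslant e_x\leqslant k-1 .
\]
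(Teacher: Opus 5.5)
Your Riemann--Roch count and your $k=2$ base case are essentially right, but the spanning statement for $k\geqslant 3$ is not proved: the step from $k-1$ to $k$ is only announced. You correctly note that a weight-$(k-1)$ Kurihara differential times some $s(z;x,y)$ is again a Kurihara differential. You never show that these products fill $V_k\coloneqq H^0(\mathbb{P}^1_{\mathbb{F}_q},\Omega^k((k-1)D))$. The filtration by pole order at $x_0$ is left unchecked, and you flag it yourself as the main obstacle. The fallback via $s(z;x,y)^{k-2}s(z;u,w)^2$ is not carried out either. So, as written, nothing bounds the dimension of the span from below for $k\geqslant 3$. There is also a slip in the base case. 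For $k=2$, $q=3$ the line bundle $\Omega^2(D)$ has degree $q-3=0$, not $-1$, and $N=1$, so that case is not vacuous. Your divided-difference argument does cover $q=3$: the unique triple gives $s(z;\infty,0)\,s(z;1,2)=(\mathrm{d}z)^2/(z^3-z)$.

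The missing step is easy if you multiply only by $s(z;\infty,y)=\mathrm{d}z/(z-y)$ with $y\in\mathbb{F}_q$. Take $\omega=Q'(z)(z^q-z)^{-(k-2)}(\mathrm{d}z)^{k-1}\in V_{k-1}$, where $\deg Q'\leqslant (k-2)q-k$. Then
\[
s(z;\infty,y)\,\omega=\frac{Q'(z)\prod_{x\in\mathbb{F}_q,\,x\neq y}(z-x)}{(z^q-z)^{k-1}}\,(\mathrm{d}z)^k .
\]
The Lagrange polynomials $\prod_{x\neq y}(z-x)$ span all polynomials of degree $\leqslant q-1$. Since $(k-2)q-k\geqslant q-3\geqslant 0$ for $k\geqslant 3$, these numerators span all polynomials of degree $\leqslant (k-1)q-k-1=N-1$. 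Hence $V_k=\sum_{y\in\mathbb{F}_q}s(z;\infty,y)\,V_{k-1}$. Because $s(z;\infty,y)$ times a Kurihara differential is, up to sign, again one, induction from your base case concludes.

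With this fix your route (induction on $k$, with a divided-difference base case) differs from the paper's, which uses no induction on $k$. The paper shows, by induction on $\sum_\alpha n_\alpha$ and the identity $\frac{1}{(z-\alpha_1)(z-\alpha_2)}=\frac{1}{\alpha_2-\alpha_1}\bigl(\frac{1}{z-\alpha_2}-\frac{1}{z-\alpha_1}\bigr)$, that every $\prod_{\alpha\in\mathbb{F}_q}(z-\alpha)^{-n_\alpha}(\mathrm{d}z)^k$ with $0\leqslant n_\alpha\leqslant k-1$ and $\sum n_\alpha\geqslant k+1$ lies in the span. It then reaches the basis $z^j(z^q-z)^{-(k-1)}(\mathrm{d}z)^k$, $0\leqslant j\leqslant N-1$, by a triangularity argument.
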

\begin{proof}
    The dimension formula follows from the Riemann-Roch theorem.
    
    \noindent We first show the following claim: for any $h=\prod_{\alpha\in\mathbb{F}_q}(z-\alpha)^{n_\alpha}$ with $0\leqslant n_\alpha\leqslant k-1$
and $\sigma\coloneqq \sum_{\alpha\in\mathbb{F}_q}n_\alpha\geqslant k+1$, the $k$-form
$h(z)^{-1}(\mathrm{d}z)^k$ lies in the span of Kurihara differentials. We proceed by induction on $\sigma$. Since each $n_\alpha\leqslant k-1$ and
$\sigma\geqslant k+1\geqslant 3$, at least two exponents are nonzero; say
$n_{\alpha_1},n_{\alpha_2}\geqslant 1$.

\noindent If $\sigma=k+1$, then
\begin{equation*}
    \frac{1}{h(z)}
    =\frac{1}{\alpha_2-\alpha_1} \left(\frac{1}{z-\alpha_2}-\frac{1}{z-\alpha_1}\right)
    \frac{1}{(z-\alpha_1)^{n_{\alpha_1}-1}(z-\alpha_2)^{n_{\alpha_2}-1}
    \prod_{\alpha\neq \alpha_1,\alpha_2}(z-\alpha)^{n_\alpha}}
\end{equation*}
and $1+(n_{\alpha_1}-1)+(n_{\alpha_2}-1)+\sum_{\alpha\neq \alpha_1,\alpha_2}n_\alpha = \sigma-1 = k$,
so 
\begin{equation*}
    \frac{1}{h(z)}(\mathrm{d}z)^k
    =\frac{1}{\alpha_2-\alpha_1} s(z;\alpha_1,\alpha_2)s(z;\infty,\alpha_1)^{n_{\alpha_1}-1}s(z;\infty,\alpha_2)^{n_{\alpha_2}-1}\prod_{\alpha\neq \alpha_1,\alpha_2}s(z;\infty,\alpha)^{n_{\alpha}}
\end{equation*}
is an $\mathbb{F}_q^\times$-multiple of a Kurihara differential 
since the pairs occurring with positive exponent have empty common
intersection: if $n_{\alpha_1},n_{\alpha_2}\geqslant 2$, this is
$\{\alpha_1,\alpha_2\}\cap\{\infty,\alpha_1\}\cap\{\infty,\alpha_2\}
=\emptyset$; if $n_{\alpha_i}=1$ for some $i$, then
$n_{\alpha_1}+n_{\alpha_2}<k+1=\sigma$ by the assumption
$n_\alpha\leqslant k-1$, so $n_\alpha\geqslant 1$ for some
$\alpha\notin\{\alpha_1,\alpha_2\}$, and $\{\infty,\alpha\}$ is
disjoint from $\{\alpha_1,\alpha_2\}$. 

\noindent If $\sigma>k+1$, then
\[
    \frac{1}{h(z)}(\mathrm{d}z)^k=\frac{1}{\alpha_2-\alpha_1}
\left(\frac{1}{h_1(z)}(\mathrm{d}z)^k-\frac{1}{h_2(z)}(\mathrm{d}z)^k\right), 
\]
where $h_1$ (resp.\ $h_2$) is obtained from $h$ by replacing $n_{\alpha_1}$
by $n_{\alpha_1}-1$ (resp.\ $n_{\alpha_2}$ by $n_{\alpha_2}-1$). Both $h_1$ and $h_2$
satisfy $0\leqslant n_\alpha\leqslant k-1$ and have exponent sum $\sigma-1$. By induction, $h_1^{-1}(\mathrm{d}z)^k$ and $h_2^{-1}(\mathrm{d}z)^k$ are
in the span, and hence so is $h^{-1}(\mathrm{d}z)^k$.

   \noindent Now define
    \begin{equation*}
        \omega_j\coloneqq  \frac{z^j}{(z^q-z)^{k-1}}\,(\mathrm{d}z)^k
    \end{equation*}
    for $0\leqslant j\leqslant N-1$. Let $\alpha\in \mathbb{P}^1(\mathbb{F}_q)$. A simple calculation shows
    \begin{equation*}
        \ord_\alpha(\omega_j)=\begin{cases}
            1-k &\text{if } \alpha\in\mathbb{F}_q^\times,\\
            1-k+j &\text{if } \alpha=0,\\
            N-k-j &\text{if } \alpha=\infty,
        \end{cases}
    \end{equation*}
    so $\ord_\alpha(\omega_j)\geqslant -(k-1)$ for $0\leqslant j\leqslant N-1$. 
    Since the numerators $z^0, z^1, \ldots, z^{N-1}$ are linearly independent, so are the $\omega_j$; they thus form a basis of $H^0(\mathbb{P}^1_{\mathbb{F}_q},\Omega^{k}((k-1)D))$. We show by induction on $j$ that $\omega_j$ lies in the span of Kurihara differentials. By the claim, it is true for $j=0$ since $\omega_0=h^{-1}(\mathrm{d}z)^k$ with $h=\prod_{\alpha\in\mathbb{F}_q}(z-\alpha)^{k-1}$. Suppose that $\omega_0,\ldots,\omega_{j-1}$ lie in the span. 
    Choose integers
    $0\leqslant n_\alpha\leqslant k-1$ for $\alpha\in\mathbb{F}_q$ with
    $\sum n_\alpha = j$ (this is possible since $j\leqslant N-1 < (k-1)q$).
    Set $h_j\coloneqq \prod_{\alpha\in\mathbb{F}_q}(z-\alpha)^{k-1-n_\alpha}$; then $h_j$ has exponent sum $\sigma=(k-1)q-j\geqslant k+1$, so by the claim $h_j^{-1}(\mathrm{d}z)^k$ is in the span of Kurihara differentials. Write
    $\prod_{\alpha\in\mathbb{F}_q}(z-\alpha)^{n_\alpha} = z^j+\beta_{j-1}z^{j-1}+\cdots+\beta_0$ with $\beta_i\in\mathbb{F}_q$. The equality
\begin{equation*}
    h_j(z)^{-1}(\mathrm{d}z)^k
    = \frac{\prod_{\alpha}(z-\alpha)^{n_\alpha}}{(z^q-z)^{k-1}}(\mathrm{d}z)^k
    = \omega_j + \sum_{i=0}^{j-1}\beta_i\,\omega_i
\end{equation*}
    shows that $\omega_j$ is in the span. This completes the induction.
\end{proof}

 Let $R\coloneqq \{z\in\mathbb{C}_\infty\,:\,\ord z\geqslant 0\}$ be the unique valuation ring of $\mathbb{C}_\infty$, and let $\mathfrak{m}\coloneqq \{z\in\mathbb{C}_\infty\,:\,\ord z> 0\}$ be the maximal ideal of $R$. Each element $a\in\mathbb{P}^1(\mathbb{C}_\infty)$ has a representative $(x:y)$ with one coordinate in $R$ and the other in $R^\times$. Reducing $x$ and $y$ modulo $\mathfrak{m}$ defines an element $\overline{a}$ of $\mathbb{P}^1(\overline{\mathbb{F}_q})$. The resulting map $\mathbb{P}^1(\mathbb{C}_\infty)\to\mathbb{P}^1(\overline{\mathbb{F}_q})$, $a\mapsto\overline{a}$ is the \textit{reduction map}. 

\begin{lemma}\label{lemmaLiftParameters}
    Let $r\geqslant 2$, and let  $\alpha_1,\beta_1,\ldots,\alpha_r,\beta_r\in\mathbb{P}^1(\mathbb{F}_q)$ be such that
    \begin{itemize}
        \item[(a)] $\alpha_i\neq\beta_i$
        \item[(b)] $\{\alpha_i,\beta_i\}\neq\{\alpha_j,\beta_j\}$ for $i\neq j$
        \item[(c)] $\bigcap_{i=1}^r\{\alpha_i,\beta_i\}=\emptyset$.
    \end{itemize}
    Then there exist $a_1,b_1,\ldots,a_r,b_r\in\mathbb{P}^1(F)$ such that
    \begin{itemize}
        \item[(1)] The reductions of $a_i$ and $b_i$ are $\alpha_i$ and $\beta_i$ respectively
        \item[(2)] $\bigcap_{i=1}^r[a_i\to b_i]=\{v_0\}$.
    \end{itemize}
\end{lemma}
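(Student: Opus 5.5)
The plan is to take the naive lift: let $a_i,b_i\in\mathbb{P}^1(\mathbb{F}_q)\subset\mathbb{P}^1(F)$ be the same elements as $\alpha_i,\beta_i$, viewed in $\mathbb{P}^1(F)$ via $\mathbb{F}_q\subset F$. Condition (1) is then immediate, since elements of $\mathbb{F}_q$ and $\infty$ reduce to themselves.

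For (2), the first step is to show that every geodesic $[a\to b]$ with $a\neq b$ in $\mathbb{P}^1(\mathbb{F}_q)$ passes through $v_0$. Since $\GL_2(\mathbb{F}_q)$ fixes $v_0$ and acts doubly transitively on $\mathbb{P}^1(\mathbb{F}_q)$, each such geodesic is a $\GL_2(\mathbb{F}_q)$-translate of the principal geodesic $[\infty\to 0]$, which contains $v_0$.

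The key structural input is the correspondence between directions at $v_0$ and points of $\mathbb{P}^1(\mathbb{F}_q)$. The $q+1$ neighbors of $v_0$ are $v_1$ and the vertices represented by $\begin{pmatrix}\pi^{-1}&\alpha\pi^{-1}\\0&1\end{pmatrix}$; more invariantly, they are permuted simply transitively by $\GL_2(\mathbb{F}_q)/B(\mathbb{F}_q)$, that is, they correspond to points of $\mathbb{P}^1(\mathbb{F}_q)$. An end $x\in\mathbb{P}^1(F_\infty)$ is reached from $v_0$ through the neighbor indexed by its reduction $\bar x$. For $\infty$ and $0$ this is visible from the principal geodesic (the directions $v_1$ and $v_{-1}$), and $\GL_2(\mathbb{F}_q)$-equivariance together with the compatibility of reduction with the $\GL_2(\mathcal{O}_\infty)$-action gives it in general. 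Fixing the paper's labelling convention $(x:y)\leftrightarrow(-y:x)$ may permute the labels by a fixed element of $\GL_2(\mathbb{F}_q)$, but the resulting bijection is still injective, and injectivity is all that is needed.

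With this in hand, $[a_i\to b_i]$ leaves $v_0$ exactly along the two directions indexed by $\alpha_i$ and $\beta_i$. So $[a_i\to b_i]$ consists of $v_0$, the branch toward $a_i$, and the branch toward $b_i$, and these two branches lie in the distinct direction-subtrees at $v_0$ indexed by $\alpha_i$ and $\beta_i$. Any point of $\bigcap_i[a_i\to b_i]$ other than $v_0$ lies in a single direction-subtree $\delta$ at $v_0$. It would then require $\delta\in\{\alpha_i,\beta_i\}$ for every $i$, contradicting (c). Hence the intersection is $\{v_0\}$; it is nonempty because each geodesic contains $v_0$. Hypothesis (b) is not needed for this argument, but it ensures the lifted pairs are distinct, as required later.

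The main obstacle is the direction/reduction correspondence, specifically getting the orientation convention right. I would handle it by reducing to checking $\infty$ and $0$ directly, which follows from the explicit description of $v_n$, and then applying $\GL_2(\mathbb{F}_q)$-equivariance.
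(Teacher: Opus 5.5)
Your proposal is correct, and it takes a different, simpler route than the paper. You lift each $\alpha_i,\beta_i$ to itself in $\mathbb{P}^1(\mathbb{F}_q)\subset\mathbb{P}^1(F)$ and argue structurally. The components of $\mathcal{T}(\mathbb{R})\setminus\{v_0\}$ are indexed by the $q+1$ neighbors of $v_0$. Distinct points of $\mathbb{P}^1(\mathbb{F}_q)$ are reached from $v_0$ through distinct neighbors. This is equivalent to your first step, since $v_0\in[x\to y]$ exactly when the rays from $v_0$ to $x$ and to $y$ start with different edges; it also follows from double transitivity of $\GL_2(\mathbb{F}_q)$ together with $v_1\neq v_{-1}$. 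Hence a point other than $v_0$ lying on every $[\alpha_i\to\beta_i]$ would give a common element of all the $\{\alpha_i,\beta_i\}$, contradicting (c).

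The paper instead normalizes to $\alpha_1=\infty$, $\beta_1=0$, $\alpha_2=1$. It then chooses lifts case by case (some of positive valuation, or in $F\setminus\mathcal{O}$) so that all $2r$ lifted points are pairwise distinct. It computes each $[\infty\to 0]\cap[a_i\to b_i]$ as an explicit segment $[v_0\to v_n]$, and when $\beta_2\in\{0,\infty\}$ it uses (c) to find a further geodesic that cuts the intersection down to $v_0$. The paper's construction buys extra genericity (pairwise distinct points, infinitely many choices), which the lemma as stated does not require. Yours gives a canonical lift with no case analysis. The same direction argument also directly justifies claims the paper makes for endpoints in $\mathbb{P}^1(\mathbb{F}_q)$ itself, such as $\bigcap_i[a_i\to b_i]=\{v_0\}$ in \S\ref{subsectionFamily}.

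Two small corrections, neither affecting the argument. First, the matrices $\begin{pmatrix}\pi^{-1}&\alpha\pi^{-1}\\0&1\end{pmatrix}$ all represent $v_1$, because $\alpha\pi^{-1}\in\pi^{-1}\mathcal{O}_\infty$. The other $q$ neighbors of $v_0$ are represented by $\begin{pmatrix}\pi&\alpha\\0&1\end{pmatrix}=\begin{pmatrix}1&\alpha\\0&1\end{pmatrix}\begin{pmatrix}\pi&0\\0&1\end{pmatrix}$ for $\alpha\in\mathbb{F}_q$. These are the translates of $v_{-1}$, i.e.\ the neighbors toward the ends $\alpha\in\mathbb{F}_q$. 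Second, rather than ``permuted simply transitively by $\GL_2(\mathbb{F}_q)/B(\mathbb{F}_q)$'', say that $\GL_2(\mathbb{F}_q)$ acts transitively on these neighbors and the stabilizer of $v_1$ is $B(\mathbb{F}_q)$. For the naive lift only $\GL_2(\mathbb{F}_q)$-equivariance is used, so you do not need compatibility of reduction with the full $\GL_2(\mathcal{O}_\infty)$-action.
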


\begin{proof}
    Acting by a suitable element of $\GL_2(\mathbb{F}_q)$, we may assume $\alpha_1=\infty,\beta_1=0,\alpha_2=1$. Throughout the proof, we set $a_1=\infty,b_1=0,a_2=1$. Let $\mathcal{O}\coloneqq \mathcal{O}_\infty\cap F$. Swapping $\alpha_i$ and $\beta_i$ if necessary, we can assume $\alpha_i \neq 0$ and $\beta_i \neq \infty$; then any lift of $\alpha_i \neq \infty$ lies in $\mathcal{O}^\times$, and any lift of $\beta_i$ lies in $\mathcal{O}$.

    \noindent We first assume $r=2$. Assumptions (a) and (c) imply $\beta_2\neq \infty,0,1$. Let $b_2$ be a lift of $\beta_2$ in $\mathcal{O}^\times$; then $[\infty\to 0]\cap[1\to b_2]=\{v_0\}$.

    \noindent We now assume $r\geqslant 3$.

    \noindent \textbf{First case:} $\beta_2\neq 0,\infty$. Let $b_2$ be a lift of $\beta_2$ in $\mathcal{O}^\times$. By (a),  $\beta_2\in\mathbb{F}_q^\times\setminus\{1\}$. As before, one has $[\infty\to 0]\cap [1\to b_2]=\{v_0\}$.
    
    \noindent Let $i\geqslant 3$. We want to find lifts $a_i$ and $b_i$ of $\alpha_i$ and $\beta_i$ such that $v_0\in [a_i\to b_i]$.
    \begin{itemize}
        \item If $\alpha_i\neq\infty$, choose $a_i\in \mathcal{O}^\times$ and $b_i\in\mathcal{O}$ such that all lifts $a_i,b_i$ are pairwise distinct. If $n\coloneqq -\ord(b_i)<0$, then $[\infty\to 0]\cap [a_i\to b_i]=[v_0\to v_n]$, since the geodesic $[\infty\to 0]$ meets $[a_i\to b_i]$ exactly between the two branch points $v_0$ and $v_n$. If $b_i\in \mathcal{O}^\times$, then by (a) one has $\alpha_i,\beta_i\in \mathbb{F}_q^\times$ with $\alpha_i\neq\beta_i$, and so $[\infty\to 0]\cap [a_i\to b_i]=\{v_0\}$.
        
        \item If $\alpha_i=\infty$, then $\beta_i\neq 0$ by (b). Choose lifts $a_i\in F\setminus\mathcal{O}$ and $b_i\in \mathcal{O}^\times$ such that all lifts $a_i,b_i$ are pairwise distinct. One has $m\coloneqq -\ord(a_i)>0=\ord(b_i)$ so $[\infty\to 0]\cap[a_i\to b_i]=[v_m\to v_0]$.
    \end{itemize}

    \noindent \textbf{Second case:} $\beta_2$ is equal to $0$ or $\infty$. We only deal with the case $\beta_2=0$, the case $\beta_2=\infty$ being analogous. By (c), there exists some $3\leqslant j \leqslant r$ such that $\alpha_j, \beta_j \neq 0$. Choose a lift $b_2$ in $\mathcal{O}$; then $n\coloneqq -\ord(b_2)<0$, and $[\infty\to 0]\cap[1\to b_2]=[v_0\to v_n]$.
    \begin{itemize}
        \item If $\alpha_j\neq\infty$, choose lifts $a_j,b_j\in \mathcal{O}^\times$ distinct from $a_1,b_1,a_2,b_2$. Since $[\infty\to 0]\cap [a_j\to b_j]=\{v_0\}$, one has
    \begin{equation*}
        [\infty\to 0]\cap[1\to b_2]\cap [a_j\to b_j]=[v_n\to v_0]\cap\{v_0\}=\{v_0\}.
    \end{equation*}
    \item If $\alpha_j=\infty$, choose lifts $a_j\in F\setminus\mathcal{O}$ and $b_j\in\mathcal{O}^\times $ distinct from $a_1,b_1,a_2,b_2$. One has $[\infty\to 0]\cap[a_j\to b_j]=[v_m\to v_0]$, where $m\coloneqq -\ord(a_j)>0$. Therefore
    \begin{equation*}
        [\infty\to 0]\cap[1\to b_2]\cap [a_j\to b_j]=[v_n\to v_0]\cap [v_m\to v_0]=\{v_0\}.
    \end{equation*}
    \end{itemize}
    
    \noindent For $3\leqslant i\leqslant r$, $i\neq j$, one chooses as in the first case $a_i\in \mathcal{O}^\times$ and $b_i\in\mathcal{O}$ (or $a_i\in F\setminus\mathcal{O}$ and $b_i\in \mathcal{O}^\times$ if $\alpha_i=\infty$) such that $a_1,b_1,\ldots,a_r,b_r$ are pairwise distinct and $v_0\in [a_i\to b_i]$. Then $\bigcap_{i=1}^r[a_i\to b_i]=\{v_0\}$.
\end{proof}

In what follows, all Kurihara
differentials and Poincaré series are $k$-forms. Recall that
$N=k(q-1)-q=\dim H^0(\mathbb{P}^1_{\mathbb{F}_q},
\Omega^{k}((k-1)D))\geqslant 1$.

\noindent Let $\bar{v}$ (resp.\ $\bar{e}$) be a vertex (resp.\ an open edge)
of the quotient graph $\Gamma(\mathfrak{n})\backslash\mathcal{T}$.
We define $U(\bar{v})$ as the $\Gamma(\mathfrak{n})$-orbit of
$\Omega_v$ for any lift $v$ of $\bar{v}$, and similarly
$U(\bar{e})$ as the $\Gamma(\mathfrak{n})$-orbit of $\Omega_e$
for any lift $e$ of $\bar{e}$. These subsets of $\Omega$ are
pairwise disjoint for distinct $\bar{v}$ and $\bar{e}$.

\begin{lemma}\label{lemmaHPSVertices}
    Let $\bar{v}$ be a vertex of
    $(\Gamma(\mathfrak{n})\backslash\mathcal{T})^0$, and let
    $v=\delta v_0$ be a lift of $\bar{v}$ with
    $\delta\in\GL_2(F)$.
    Let $\overline{s_1},\ldots,\overline{s_N}$ be a basis of
    $H^0(\mathbb{P}^1_{\mathbb{F}_q},\Omega^{k}((k-1)D))$
    consisting of Kurihara differentials
    $\overline{s_j}=\prod_{i} s(z;\alpha_i^{(j)},\beta_i^{(j)})^{k_i^{(j)}}$.
    For each $1\leqslant j\leqslant N$, lift $\alpha_i^{(j)},\beta_i^{(j)}$ to
    $a_i^{(j)},b_i^{(j)}\in\mathbb{P}^1(F)$ with
    $\bigcap_{i}[a_i^{(j)}\to b_i^{(j)}]=\{v_0\}$
    (Lemma \ref{lemmaLiftParameters}), and define
    \begin{equation*}
        P_j^{\bar{v}}(z)
        \coloneqq \sum_{\gamma\in\Gamma(\mathfrak{n})}
        \prod_{i} \gamma^* s(z;\delta a_i^{(j)},
        \delta b_i^{(j)})^{k_i^{(j)}}.
    \end{equation*}
    Then $P_1^{\bar{v}},\ldots,P_N^{\bar{v}}$ are $N$ nonzero
    linearly independent Poincaré series with zero valuation on
    $U(\bar{v})$ and positive valuation outside $U(\bar{v})$.
\end{lemma}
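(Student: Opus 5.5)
The plan is to reduce everything to the vertex $v_0$ and then read off linear independence from the reduction of the Poincaré series on $\Omega_{v_0}$.

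\textbf{Valuations.} By \eqref{equationS}, $\prod_i s(z;\delta a_i^{(j)},\delta b_i^{(j)})^{k_i^{(j)}}$ is a Poincaré series for $\Gamma(\mathfrak{n})$ with trivial $H$. The geodesics $[\delta a_i^{(j)}\to\delta b_i^{(j)}]=\delta[a_i^{(j)}\to b_i^{(j)}]$ meet exactly in the vertex $v=\delta v_0$. Since $\type(\bar v)<d$, Lemma \ref{LemmaQuotientGraphGammaN} gives $\Gamma(\mathfrak{n})_v=\{I\}\subseteq H$. The conditions of Lemma \ref{lemmaLiftParameters} give $\bigcap_i\{a_i^{(j)},b_i^{(j)}\}=\emptyset$, so condition \textbf{(P)} holds by Theorem \ref{theorem(P)}(ii). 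Theorem \ref{theoremIntersectionVertex} then shows that $\ord_z P_j^{\bar v}=0$ on $U(\bar v)$ and $\ord_z P_j^{\bar v}\geqslant t_p>0$ elsewhere. In particular each $P_j^{\bar v}$ is nonzero.

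\textbf{Linear independence.} I would pull back by $\phi\colon \zeta\mapsto\delta\zeta$, which maps $\Omega_{v_0}$ onto $\Omega_v$. Using $\delta^*P_j^{\bar v}(\zeta)=\sum_{\gamma}\prod_i s(\zeta;\delta^{-1}\gamma\delta a_i^{(j)},\ldots)^{k_i^{(j)}}$, this reduces to the case $\delta=I$ with $\Gamma(\mathfrak{n})$ replaced by its conjugate; the conjugate still has trivial stabilizer at $v_0$. Write $\delta^*P_j^{\bar v}=g_j(\zeta)(\mathrm{d}\zeta)^k$ on $\Omega_{v_0}=\{|\zeta|=|\zeta|_i=1\}$. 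By the proof of Theorem \ref{theoremIntersectionVertex}, every term with $\gamma\neq I$ has valuation $\geqslant 1$ there. Hence $g_j\equiv \prod_i u(\zeta;a_i^{(j)},b_i^{(j)})^{k_i^{(j)}} \pmod{\mathfrak m}$ uniformly on $\Omega_{v_0}$.

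Next, since $a_i^{(j)}$ reduces to $\alpha_i^{(j)}$ and $\zeta\in\Omega_{v_0}$ reduces to a point $\bar\zeta\in\overline{\mathbb{F}_q}\setminus\mathbb{F}_q$, we have $u(\zeta;a,b)\equiv u(\bar\zeta;\alpha,\beta)$ modulo $\mathfrak m$. This uses the ultrametric inequality: $|\zeta-a|=1$ when $a\in\mathcal{O}$, and the term $1/(\zeta-a)$ is small when $|a|>1$, matching the convention $1/(z-\infty)=0$. So the reduction of $g_j$ is the rational function of $\overline{s_j}$ evaluated at $\bar\zeta$.

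Now suppose $\sum c_jP_j^{\bar v}=0$ with $c_j\in\mathbb{C}_\infty$ not all zero. Scale so that $\max|c_j|=1$ and reduce. This gives $\sum\bar c_j\overline{s_j}=0$ as rational functions on the infinite set $\overline{\mathbb{F}_q}\setminus\mathbb{F}_q$ (every such residue class is hit, since the reduction map $\Omega_{v_0}\to\mathbb{P}^1(\overline{\mathbb{F}_q})\setminus\mathbb{P}^1(\mathbb{F}_q)$ is surjective). The $\bar c_j$ are not all zero, which contradicts that the $\overline{s_j}$ form a basis.

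\textbf{Main obstacle.} The main point to check is the uniform congruence modulo $\mathfrak m$ of the whole infinite sum. The bound $\ord\geqslant t_p=1$ for each $\gamma\neq I$ and $z\in\Omega_{v}$ is uniform, but it must be transported carefully through the normalization $g_p=f(\phi_p(\zeta))\pi^{kn}$. Once that is done, the ultrametric inequality on the convergent sum finishes the argument.
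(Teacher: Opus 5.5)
Your proof is correct and follows the paper's argument: Theorem~\ref{theoremIntersectionVertex}, applied with $\Gamma(\mathfrak{n})_v$ trivial, gives the valuations and the nonvanishing, and reducing a normalized linear relation modulo $\mathfrak{m}$ on $\Omega_{v_0}$, where only the identity term survives, contradicts the linear independence of the $\overline{s_j}$. Your one departure is harmless and slightly cleaner. Pulling back by $\delta$ instead of the paper's $\phi_v$ makes the reduction literally $\overline{u_j}$; with $\phi_v$ it is the pullback of $\overline{u_j}$ by a fixed element of $\PGL_2(\mathbb{F}_q)$, which serves equally well. Also, the ``uniformity'' you flag as the main obstacle is not actually needed, since the congruence is only used pointwise in $\zeta$.
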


\begin{proof}
    The geodesics $[\delta a_i^{(j)}\to\delta b_i^{(j)}]$
    intersect at $v=\delta v_0$, whose stabilizer in
    $\Gamma(\mathfrak{n})$ is trivial since $\bar{v}$ is a
    stable vertex (Lemma \ref{LemmaQuotientGraphGammaN}).
    By Theorem \ref{theoremIntersectionVertex}, each
    $P_j^{\bar{v}}$ is nonzero, has zero valuation on
    $U(\bar{v})$, and positive valuation outside $U(\bar{v})$.
    Write $f_j\coloneqq P_j^{\bar{v}}/(\mathrm{d}z)^k$ and
    $g_j(\zeta)\coloneqq f_j(\phi_v(\zeta))\cdot(\phi_v'(\zeta))^k$
    for $\zeta\in\Omega_{v_0}$, so that
    $\ord(g_j(\zeta))=\ord_z(P_j^{\bar{v}})=0$.
    By \eqref{equationValuationRPS}, the reduction
    of $g_j$ modulo $\mathfrak{m}$ coincides with $\overline{u_j}$, where $\overline{s_j}=\overline{u_j}(z)(\mathrm{d}z)^k$.
    Suppose $\sum_{j=1}^N c_j P_j^{\bar{v}}=0$ with
    $c_j\in\mathbb{C}_\infty$; after rescaling we may assume
    $c_j\in R$ with at least one $c_j\in R^\times$.
    Then $\sum_j c_j g_j(\zeta)=0$ on $\Omega_{v_0}$, and
    reducing modulo $\mathfrak{m}$ gives
    $\sum_j \overline{c_j}\,\overline{u_j}=0$, contradicting the
    linear independence of
    $\overline{s_1},\ldots,\overline{s_N}$.
\end{proof}

\begin{lemma}\label{lemmaHPSEdges}
    Let $\bar{e}$ be a stable edge of $\Gamma(\mathfrak{n})\backslash\mathcal{T}$ with an endpoint $\bar{v}$ in $(\Gamma(\mathfrak{n})\backslash\mathcal{T})^0$. 
There exists a nonzero   Poincaré series $P^{\bar{e}}$ with zero valuation on $U(\bar{v})\cup U(\bar{e})$, and positive valuation on $\Omega\backslash(U(\bar{v})\cup U(\bar{e})\cup U(\bar{w}))$,
  where $\bar{w}$ is the second endpoint of $\bar{e}$.
\end{lemma}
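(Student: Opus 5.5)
The plan is to apply Theorem \ref{theoremIntersectionEdge}. To do so we need a family of geodesics, with endpoints in $\mathbb{P}^1(F)$ and no common end, whose intersection is exactly a lift $e$ of $\bar{e}$ together with its two endpoints. Let $v$ be a lift of $\bar{v}$ and $w$ the endpoint of $e$ lifting $\bar{w}$. Since $\GL_2(F)\supset G$ acts transitively on the edges of $\mathcal{T}$, choose $\delta\in\GL_2(F)$ with $\delta v_0=v$ and $\delta v_{-1}=w$. Such a $\delta$ exists because $G$ acts transitively on vertices of type $0$, and $\GL_2(\mathbb{F}_q)$ acts transitively on the $q+1$ neighbours of $v_0$; the neighbour $v_{-1}$ is among them. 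It therefore suffices to construct points $a_i,b_i\in\mathbb{P}^1(F)$, $1\leqslant i\leqslant r$ with $r\geqslant 2$, such that
\[
\bigcap_i[a_i\to b_i]=[v_0,v_{-1}]\quad\text{and}\quad \bigcap_i\{a_i,b_i\}=\emptyset,
\]
and then to transport them by $\delta$.

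For the concrete choice, I would take $[a_1\to b_1]=[\infty\to 0]$, which contains both $v_0$ and $v_{-1}$. For the second geodesic I would take one that branches off the principal geodesic exactly at $v_0$ and at $v_{-1}$. Take $a_2=1$, whose geodesic to $\infty$ leaves $[\infty\to 0]$ at $v_0$, and $b_2=\pi$, of valuation $1$, whose geodesic leaves at $v_{-1}$. By the branching description used in the proof of Lemma \ref{lemmaLiftParameters}, $[\infty\to 0]\cap[1\to\pi]=[v_0\to v_{-1}]$, and the endpoint sets $\{\infty,0\}$ and $\{1,\pi\}$ are disjoint. I expect this to be the only point needing care: one must check that this intersection really is the closed edge and nothing more. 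This follows from the distance formula \eqref{eq:distance-geodesic}, since the branch points of $1$ and $\pi$ off the principal geodesic are $v_{-\ord(1)}=v_0$ and $v_{-\ord(\pi)}=v_{-1}$.

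Define
\[
P^{\bar{e}}(z)\coloneqq\sum_{\gamma\in\Gamma(\mathfrak{n})}\gamma^*\bigl(s(z;\delta\infty,\delta 0)^{k_1}s(z;\delta 1,\delta\pi)^{k_2}\bigr)
\]
with $k_1+k_2=k$ and $k_1,k_2\geqslant 1$. This is a Poincaré series of the form \eqref{equationPSforGamman}. Condition \textbf{(P)} holds by Theorem \ref{theorem(P)}(ii), because the two endpoint pairs are disjoint. Since $\bar{v}$ is stable, Lemma \ref{LemmaQuotientGraphGammaN} gives $\Gamma(\mathfrak{n})_v=\{I\}=H$. Theorem \ref{theoremIntersectionEdge} then yields zero valuation on the $\Gamma(\mathfrak{n})$-orbit of $\Omega_v\cup\Omega_e$, which is $U(\bar{v})\cup U(\bar{e})$. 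It also yields positive valuation off the orbit of $\Omega_v\cup\Omega_e\cup\Omega_w$, which is the complement of $U(\bar{v})\cup U(\bar{e})\cup U(\bar{w})$. In particular $P^{\bar{e}}\neq 0$.
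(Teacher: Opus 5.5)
Your argument is correct and is essentially the paper's proof. The paper likewise takes two geodesics meeting exactly in a closed edge: for the edge $\{v_0,v_1\}$ it uses $[\infty\to b_1]$ and $[a_2\to b_2]$ with $\ord(b_1)\geqslant 1$, $\ord(a_2)=-1$, $\ord(b_2)=0$, which is your configuration after applying $\begin{pmatrix}\pi&0\\0&1\end{pmatrix}$. It then moves this onto $e$ by an element of $\GL_2(F)$ and invokes Theorem \ref{theoremIntersectionEdge} using $\Gamma(\mathfrak{n})_v=\{I\}$.

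One justification needs repair. $G$ preserves types, and for $d\geqslant 2$ the stable vertex $\bar v$ may have any type $<d$, so transitivity of $G$ on type-$0$ vertices does not produce $\delta$ with $\delta v_0=v$. Instead, note that $v=\begin{pmatrix}\pi^n&u\\0&1\end{pmatrix}v_0$ with $u\in F$, so $\GL_2(F)$ (not $G$) is transitive on vertices. Then adjust by $\GL_2(\mathbb{F}_q)$ on the neighbours of $v_0$, as you do.
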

\begin{proof}
    Choose $r\geqslant 2$, $k_1,\ldots,k_r\geqslant 1$ with $\sum k_i=k$, lifts $e,v$ of $\bar{e},\bar{v}$, and $a_1,b_1,\ldots,a_r,b_r\in\mathbb{P}^1(F)$ such that $\bigcap_{i=1}^r[a_i\to b_i]$ consists of $e$ and its endpoints; such configurations exist, and there are infinitely many of them. For example, for the edge with endpoints $v_0,v_1$ one can take $r=2$ and the geodesics $[\infty\to b_1]$, $[a_2\to b_2]$ with $b_1,a_2,b_2\in F$ of valuations $\ord(b_1)\geqslant 1$, $\ord(a_2)=-1$, $\ord(b_2)=0$; the general case follows by applying an element of $\GL_2(F)$, which acts transitively on the set of non-oriented edges of $\mathcal{T}$.

\noindent The stabilizer of $v$ in $\Gamma(\mathfrak{n})$ is trivial because $\bar{v}$ is stable, so by Theorem \ref{theoremIntersectionEdge} the   Poincaré series 
    \begin{equation*}
        P^{\bar{e}}(z)\coloneqq \sum_{\gamma\in\Gamma(\mathfrak{n})}\prod_{i=1}^r \gamma^*s(z;a_i,b_i)^{k_i}
    \end{equation*}
    is nonzero and has the desired valuation properties.
\end{proof}

We are now ready to prove Theorem \ref{theoremPSspan}. 

\begin{proof}[Proof of Theorem \ref{theoremPSspan}] We have constructed in Lemmas \ref{lemmaHPSVertices} and \ref{lemmaHPSEdges} a family of Poincaré series
\begin{equation*}
    \mathcal{P}\coloneqq \{P_1^{\bar{v}},\ldots,P_N^{\bar{v}}\}_{\bar{v}}\cup\{P^{\bar{e}}\}_{\bar{e}}
\end{equation*}
    where $\bar{v}$ (resp. $\bar{e}$) runs over the vertices of $(\Gamma(\mathfrak{n})\backslash\mathcal{T})^0$ (resp. the stable edges of $\Gamma(\mathfrak{n})\backslash\mathcal{T}$). By \eqref{equationNumberEdges} and \eqref{equationNumberVertices}, one has $\#\mathcal{P}=V\cdot N+E+c=(2k-1)(g-1)+kc$. It remains to show that the elements of $\mathcal{P}$ are linearly independent. Suppose that they satisfy a nontrivial linear relation
    \begin{equation}\label{equationDependenceRelationPS}
        \sum_{\bar{v},j}\mu_j^{\bar{v}} P_j^{\bar{v}}(z)+\sum_{\bar{e}}\mu^{\bar{e}}P^{\bar{e}}(z)=0
    \end{equation}
    with coefficients $\mu_j^{\bar{v}},\mu^{\bar{e}}\in\mathbb{C}_\infty$. After multiplying both sides of \eqref{equationDependenceRelationPS} by an element of $\mathbb{C}_\infty^\times$ if needed, we can assume that all coefficients lie in $R$, and that one of them lies in $R^\times$.

    \noindent For $z\in\Omega$, let $p=\lambda(z)$, $\phi_p:\Omega_{v_0}\xrightarrow{\sim}\Omega_{p}$ and $\zeta=\phi_p^{-1}(z)$ as in  \S\ref{sectionTree}. 
    Let $g_j^{\bar{v}}(\zeta)\coloneqq \phi_p^*P_j^{\bar{v}}(\zeta)/(\mathrm{d}\zeta)^k$ and $g^{\bar{e}}(\zeta)\coloneqq \phi_p^*P^{\bar{e}}(\zeta)/(\mathrm{d}\zeta)^k$. Then \eqref{equationDependenceRelationPS} becomes
    \begin{equation}\label{equationDependenceRelationFunctionPS}
        \sum_{\bar{v},j}\mu_j^{\bar{v}} g_j^{\bar{v}}(\zeta)+\sum_{\bar{e}}\mu^{\bar{e}}g^{\bar{e}}(\zeta)=0.
    \end{equation}
    Suppose that $\ord \mu^{\overline{e_0}}=0$ for some $\overline{e_0}$. Then for $z\in U(\overline{e_0})$, one has $\ord g_j^{\bar{v}}(\zeta)>0$, $\ord g^{\overline{e_0}}(\zeta)=0$ and $\ord g^{\bar{e}}(\zeta)>0$ for $\overline{e}\neq \overline{e_0}$. Reducing both sides of \eqref{equationDependenceRelationFunctionPS} modulo $\mathfrak{m}$, we get $\mu^{\overline{e_0}}g^{\overline{e_0}}(\zeta)\in\mathfrak{m}$. 
    This is a contradiction, since both $\mu^{\overline{e_0}}$ and $g^{\overline{e_0}}(\zeta)$ have valuation $0$.

    \noindent Now suppose that $\ord \mu^{\bar{e}}>0$ for all $\bar{e}$. Then $\ord \mu_i^{\overline{v'}}=0$ for some $\overline{v'}$ and $i$. For $z\in U(\overline{v'})$, one has $\ord g_i^{\overline{v'}}(\zeta)=0$ and $\ord g_j^{\bar{v}}(\zeta)>0$ for $\bar{v}\neq\overline{v'}$ and all $1\leqslant j\leqslant N$. Reducing both sides of \eqref{equationDependenceRelationFunctionPS} modulo $\mathfrak{m}$, we get $\sum_{j}\mu_j^{\overline{v'}}\, g_j^{\overline{v'}}(\zeta)\equiv 0\mod\mathfrak{m}$. The reduced functions $g_i^{\overline{v'}}\mod\mathfrak{m}$ are $\overline{\mathbb{F}_q}$-linearly independent by  Lemma \ref{lemmaHPSVertices}, so $\mu_i^{\overline{v'}}\in\mathfrak{m}$, contradiction.
\end{proof}

\begin{remark}
    As a rigid analytic curve, $\Omega$ admits an \textit{analytic reduction} $\widetilde{\Omega}$. It is a "tree of projective lines" defined over $\mathbb{F}_q$: each irreducible component $E_v\cong \mathbb{P}^1_{\mathbb{F}_q}$ corresponds to a vertex $v\in X(\mathcal{T})$, and $E_v\cap E_w$ is nonempty if and only if $v$ and $w$ are adjacent \cite[(1.2)]{GekelerReversat1996}. In this case, the intersection point corresponds to the open edge connecting $v$ and $w$. The analytic reduction map $\Omega\to\widetilde{\Omega}$ is compatible with the action of $\Gamma(\mathfrak{n})$; in particular, the irreducible components of $\widetilde{\Gamma(\mathfrak{n})\backslash\Omega}$ are the $E_{\bar{v}}\cong \mathbb{P}^1_{\mathbb{F}_q}$ for $\bar{v}\in X(\Gamma(\mathfrak{n})\backslash\mathcal{T})$. The Poincaré series $P_1^{\bar{v}},\ldots,P_N^{\bar{v}}$ constructed in Lemma \ref{lemmaHPSVertices} reduce to Kurihara differentials on 
$E_{\bar{v}}\cong\mathbb{P}^1_{\mathbb{F}_q}$: each $P_j^{\bar{v}}$ has valuation zero on $U(\bar{v})$, and its reduction modulo $\mathfrak{m}$ is a nonzero $k$-form on $E_{\bar{v}}$.
\end{remark}

\begin{remark}\label{rmkAlgo}
    The proof of Theorem \ref{theoremPSspan} yields the following algorithm
    for constructing $(2k-1)(g-1)+kc$ linearly independent Poincaré series
    in $M_{2k}^1(\Gamma(\mathfrak{n}))$.
    \begin{enumerate}
        \item List the stable vertices and edges of $\Gamma(\mathfrak{n})\backslash\mathcal{T}$.
        \item For each stable vertex $\bar{v}$, choose a lift
            $v=\delta v_0$ with $\delta\in\GL_2(F)$.
        \item Choose a basis $\overline{s_1},\ldots,\overline{s_N}$ of
            $H^0(\mathbb{P}^1_{\mathbb{F}_q},
            \Omega^k((k-1)D))$
            consisting of Kurihara differentials
            $\overline{s_j}=\prod_i s(z;\alpha_i^{(j)},\beta_i^{(j)})^{k_i^{(j)}}$. In practice, one can check that  $\overline{s_1},\ldots,\overline{s_N}$
    are linearly independent by expressing each $\overline{s_j}$ in the basis $\{\omega_j\}$ constructed in the proof of
    Lemma \ref{lemmaKuriharaSpan}, and checking that the resulting
    $N\times N$ change of basis matrix is nonsingular.
        \item For each $\overline{s_j}$, use Lemma \ref{lemmaLiftParameters}
            to lift
            $\alpha_i^{(j)},\beta_i^{(j)}\in\mathbb{P}^1(\mathbb{F}_q)$
            to $a_i^{(j)},b_i^{(j)}\in\mathbb{P}^1(F)$ with
            $\bigcap_i[a_i^{(j)}\to b_i^{(j)}]=\{v_0\}$.
            The Poincaré series attached to $\bar{v}$ and $\overline{s_j}$ is
            \begin{equation*}
                P_j^{\bar{v}}(z)
                =\sum_{\gamma\in\Gamma(\mathfrak{n})}
                \prod_i\gamma^*s(z;\delta a_i^{(j)},\delta b_i^{(j)})
                ^{k_i^{(j)}}.
            \end{equation*}
        \item For each stable edge $\bar{e}$ with stable endpoint
            $\bar{v}$, choose a lift $e$ in $\mathcal{T}$ and
            $a_1,b_1,\ldots,a_r,b_r\in\mathbb{P}^1(F)$ such that
            $\bigcap_i[a_i\to b_i]$ consists of $e$ and its endpoints.
            The Poincaré series attached to $\bar{e}$ is
            \begin{equation*}
                P^{\bar{e}}(z)
                =\sum_{\gamma\in\Gamma(\mathfrak{n})}
                \prod_i\gamma^*s(z;a_i,b_i)^{k_i}.
            \end{equation*}
    \end{enumerate}
    The family
    $\mathcal{P}=\{P_j^{\bar{v}}\}_{1\leqslant j\leqslant N,\,
    \bar{v}\text{ stable}}\cup\{P^{\bar{e}}\}_{\bar{e}\text{ stable}}$
    is a linearly independent set of $(2k-1)(g-1)+kc$ Poincaré series. The corresponding cusp forms $f_j^{\bar{v}}$ and $f^{\bar{e}}$ thus span a space of the same dimension in $M_{2k}^1(\Gamma(\mathfrak{n}))$.
    The choices in steps (2)--(5) affect the individual Poincaré series, but the subspace spanned by all Poincaré series is independent of these choices.
\end{remark}

\noindent We illustrate this construction with an explicit example.

\begin{example}\label{exampleGenerationGamma(T)}
   Take $\mathfrak{n}=T$ and $k=2$. We have $g=0$ and $c=q+1$, and $(\Gamma(T)\backslash\mathcal{T})^0$ consists of a single vertex $\overline{v_0}$. The cusps of $\Gamma(T)$ are exactly the elements $\alpha$ of $\mathbb{P}^1(\mathbb{F}_q)$.
    The space $H^0(\mathbb{P}^1_{\mathbb{F}_q},\Omega^{k}((k-1)D))=H^0(\mathbb{P}^1_{\mathbb{F}_q},\Omega^{2}(D))$ has dimension $N=k(q-1)-q=q-2$ over $\overline{\mathbb{F}_q}$. For $\beta\in\mathbb{F}_q\setminus\{0,1\}$, define 
\begin{equation*}
    s_\beta\coloneqq s(z;\infty,0)\cdot s(z;1,\beta)=\frac{\beta-1}{z(z-1)(z-\beta)}(\mathrm{d}z)^2.
\end{equation*}
For $\beta\neq\beta'$, the form $\overline{s_{\beta'}}$ is regular at $\beta$; it follows that $\{\overline{s_\beta}\}_{\beta\in\mathbb{F}_q\backslash\{0,1\}}$ 
is a linearly independent set, hence a basis of 
$H^0(\mathbb{P}^1_{\mathbb{F}_q},\Omega^{2}(D))$. Each $\overline{s_\beta}$ lifts to a Poincaré series 
\begin{align*}
    P^{\overline{v_0}}_\beta(z)&\coloneqq \sum_{\gamma\in\Gamma(T)}\gamma^*
                s_\beta(z)\\
                &=\sum_{\gamma\in\Gamma(T)}               s(z;\gamma\infty,\gamma0)\cdot s(z;\gamma1,\gamma\beta)
\end{align*}
with $[\infty\to 0]\cap [1\to \beta]=\{v_0\}$.

\noindent The stable edges of $\Gamma(T)\backslash\mathcal{T}$ are the $q+1$ edges emanating from $\overline{v_0}$. They are exactly the $\overline{e_\alpha}$ for $\alpha\in\mathbb{P}^1(\mathbb{F}_q)$, where  $e_\alpha$ is the unique open edge emanating from $v_0$ toward the cusp $\alpha$. The Poincaré series corresponding to $\overline{e_\alpha}$ is
\begin{equation}\label{HPSatEdges}
    P^{\overline{e_\alpha}}(z)\coloneqq \begin{cases}
        \sum_{\gamma\in\Gamma(T)}
                s(z;\gamma \infty,\gamma 0)\cdot s(z;\gamma T,\gamma 1)
        &\text{if } \alpha=\infty,\\
        \sum_{\gamma\in\Gamma(T)}
                s(z;\gamma \infty,\gamma \alpha)\cdot s(z;\gamma (\alpha+1),\gamma (\alpha+\pi))
        &\text{if } \alpha\in\mathbb{F}_q.
    \end{cases}
\end{equation}
Note that in both cases, the intersection of the two geodesics consists of $e_\alpha$ and its endpoints. One has $\mathcal{P}=\{P^{\overline{v_0}}_\beta\}_{\beta\in\mathbb{F}_q\backslash\{0,1\}}\cup\{P^{\overline{e_\alpha}}\}_{\alpha\in\mathbb{P}^1(\mathbb{F}_q)}$.

\noindent The cusp form corresponding to $P^{\overline{v_0}}_\beta$ is
\begin{align*}
        f^{\overline{v_0}}_\beta(z)&\coloneqq \sum_{\gamma\in \Gamma(T)} u(z;\gamma\infty,\gamma0)\cdot u(z;\gamma1,\gamma\beta)\\
        &=\sum_{\gamma\in \Gamma(T)}\left(\frac{1}{z-b/d}-\frac{1}{z-a/c}\right)\left(\frac{1}{z-(a\beta+b)/(c\beta+d)}-\frac{1}{z-(a+b)/(c+d)}\right),
\end{align*}
while the cusp form corresponding to $P^{\overline{e_\alpha}}$ is
\begin{align*}
        f^{\overline{e_\infty}}(z)&\coloneqq \sum_{\gamma\in \Gamma(T)} u(z;\gamma \infty,\gamma 0)\cdot u(z;\gamma T,\gamma 1)\\
        &=\sum_{\gamma\in \Gamma(T)}\left(\frac{1}{z-b/d}-\frac{1}{z-a/c}\right)\left(\frac{1}{z-(a+b)/(c+d)}-\frac{1}{z-(aT+b)/(cT+d)}\right),
\end{align*}
if $\alpha=\infty$, and
\begin{align*}
        f^{\overline{e_\alpha}}(z)&=\sum_{\gamma\in \Gamma(T)} u(z;\gamma \infty,\gamma \alpha)\cdot u(z;\gamma (\alpha+1),\gamma (\alpha+\pi))\\
        &=\sum_{\gamma\in \Gamma(T)}\left(\frac{1}{z-(a\alpha+b)/(c\alpha+d)}-\frac{1}{z-a/c}\right)\\
        &\;\;\;\;\times\left(\frac{1}{z-(a(\alpha+\pi)+b)/(c(\alpha+\pi)+d)}-\frac{1}{z-(a(\alpha+1)+b)/(c(\alpha+1)+d)}\right)
\end{align*}
for $\alpha\in\mathbb{F}_q$. The family
    $\{f^{\overline{v_0}}_\beta\}_{\beta\in\mathbb{F}_q\backslash\{0,1\}}\cup \{f^{\overline{e_\alpha}}\}_{\alpha\in\mathbb{P}^1(\mathbb{F}_q)}$
    is a linearly independent set of $(q-2)+(q+1)=2q-1=(2k-1)(g-1)+kc$ cusp forms in $M_{4}^1(\Gamma(T))$.
\end{example}

Computations with SageMath suggest that all forms in the previous example vanish to order exactly $4 - p_\alpha$ at the cusp $\alpha\in\mathbb{P}^1(\mathbb{F}_q)$, where
    \[p_\alpha \coloneqq  \#\{i\in\{1,2\} : \overline{a_i}=\alpha\textup{ or }
    \overline{b_i}=\alpha\}.\] Assuming these vanishing orders and comparing the dimension with \eqref{eqdim}, we see that $\{f^{\overline{v_0}}_\beta\}_{\beta\in\mathbb{F}_q\backslash\{0,1\}}$ is a basis of $M_{4}^3(\Gamma(T))$, and that $\{f^{\overline{v_0}}_\beta\}_{\beta\in\mathbb{F}_q\backslash\{0,1\}}\cup \{f^{\overline{e_\alpha}}\}_{\alpha\in\mathbb{P}^1(\mathbb{F}_q)}$ is a basis of $M_{4}^2(\Gamma(T))$.
    
    \noindent More generally, we make the following two conjectures:

\begin{conjecture}\label{conjectureVanishingExact}
    Let $f$ be a Poincaré series
    \begin{equation*}
        f(z) = \sum_{\gamma\in\Gamma(\mathfrak{n})}
        \prod_{i=1}^r u(z;\gamma a_i,\gamma b_i)^{k_i}
    \end{equation*}
    with $a_i,b_i\in\mathbb{P}^1(\mathbb{F}_q)$ and
    $\bigcap_{i=1}^r [a_i\to b_i] = \{v_0\}$, and for
    $\alpha\in\mathbb{P}^1(\mathbb{F}_q)$ let
    \begin{equation*}
        p_\alpha \coloneqq \sum_{\substack{1\leqslant i\leqslant r \\
        \alpha\in\{a_i,b_i\}}} k_i
    \end{equation*}
    be the pole order at $\alpha$ of the Kurihara differential
    $\prod_{i=1}^r s(z;a_i,b_i)^{k_i}$. If $\mathfrak{c}$ is a cusp of $\Gamma(\mathfrak{n})$ represented by
    $\alpha\in\mathbb{P}^1(\mathbb{F}_q)$, then $f$ vanishes to order at least $2k-p_\alpha$ at $\mathfrak{c}$. If moreover $p_\alpha\leqslant q-1$ for all $\alpha$, then $f$ vanishes to order exactly $2k-p_\alpha$ at $\mathfrak{c}$.
\end{conjecture}

\begin{conjecture}\label{conjectureVanishingOrders}
    Let $\bar{v}$ be a stable vertex, $\bar{e}$ an edge in $(\Gamma(\mathfrak{n})\backslash\mathcal{T})^0$,
    and $\overline{e_\mathfrak{c}}$ the unique stable edge lying on a half-line toward the cusp $\mathfrak{c}$.
    \begin{enumerate}
        \item The vertex Poincaré series $f_1^{\bar{v}},\ldots,f_N^{\bar{v}}$
            vanish to order $\geqslant k+1$ at every cusp, and thus lie in
            $M_{2k}^{k+1}(\Gamma(\mathfrak{n}))$.
        \item The non-cuspidal edge Poincaré series $f^{\bar{e}}$
            vanishes to order $\geqslant k+1$ at every cusp, and thus lies in
            $M_{2k}^{k+1}(\Gamma(\mathfrak{n}))$.
        \item The cuspidal edge Poincaré series $f^{\overline{e_\mathfrak{c}}}$
            vanishes to order exactly $k$ at the cusp $\mathfrak{c}$, and to order
            $\geqslant k+1$ at every other cusp.
            In particular, $f^{\overline{e_\mathfrak{c}}}\in
            M_{2k}^{k}(\Gamma(\mathfrak{n}))\setminus
            M_{2k}^{k+1}(\Gamma(\mathfrak{n}))$.
    \end{enumerate}
\end{conjecture}

\noindent Numerical evidence with SageMath for $q\in\{3,5\}$,
    $\mathfrak{n}\in\{T,\, T^2,\, T(T+1)\}$ and various weights $k$ supports these conjectures. In
    \S\ref{subsectionFamily} we prove Conjecture
    \ref{conjectureVanishingExact}, for an
    explicit family of Poincaré series (Remark
    \ref{remarkConjectureCase}).

\begin{proposition}\label{propConditionalBasis}
    Assume Conjecture \ref{conjectureVanishingOrders}. Then the
    $V\cdot N+E$ vertex and non-cuspidal edge Poincaré series form
    a basis of $M_{2k}^{k+1}(\Gamma(\mathfrak{n}))$, and together
    with the $c$ cuspidal edge Poincaré series they form a basis
    of $M_{2k}^{k}(\Gamma(\mathfrak{n}))$.
\end{proposition}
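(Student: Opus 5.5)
Assuming Conjecture \ref{conjectureVanishingOrders}, the vertex series $P_j^{\bar v}$ and the non-cuspidal edge series $P^{\bar e}$ all lie in $M_{2k}^{k+1}(\Gamma(\mathfrak{n}))$, and the cuspidal edge series $P^{\overline{e_\mathfrak{c}}}$ lie in $M_{2k}^{k}(\Gamma(\mathfrak{n}))$. The proof of Theorem \ref{theoremPSspan} shows that the whole family $\mathcal{P}$ is linearly independent. It therefore suffices to match cardinalities against dimensions: we show that $\#\mathcal{P}=\dim M_{2k}^{k}$ and that the number of vertex and non-cuspidal edge series equals $\dim M_{2k}^{k+1}$.

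By \eqref{equationNumberEdges} and \eqref{equationNumberVertices} we have
\begin{equation*}
V\cdot N+E = V(k(q-1)-q)+\frac{V(q+1)-c}{2}.
\end{equation*}
Substituting $V(q-1)=2(g-1)+c$, this becomes $k(2g-2+c) - (g-1) - \frac{c}{2}\cdot 0 \ldots$; more carefully, $VN+E=kV(q-1)-V(q-1)+\tfrac{V(q-1)-c}{2}$ after writing $-qV+\tfrac{(q+1)V}{2}=-\tfrac{(q-1)V}{2}$. This gives
\begin{equation*}
VN+E=(k-1)(2g-2+c)+(g-1)=(2k-1)(g-1)+(k-1)c .
\end{equation*}
This is exactly the value of \eqref{eqdim} with $n=k+1$, namely $(2k-1)(g-1)+(2k-(k+1))c$. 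Adding the $c$ cuspidal edge series yields $(2k-1)(g-1)+kc$, which is \eqref{eqdim} with $n=k$.

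It remains to check that Proposition \ref{propDim} applies for $n=k$ and $n=k+1$, i.e.\ that $(k-1)(2g-2)+(2k-n)c>0$. For $d\geqslant 2$ this holds because $n<2k$ (using $k\geqslant 2$) and $g\geqslant 1$. For $d=1$ we have $g=0$ and $c=q+1$, so the condition becomes $(k-1)(q+1)>2(k-1)$ with $2k-n\geqslant k-1$. This holds since $q+1>2$ and $k\geqslant 2$.

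The two bases now follow. The $VN+E$ independent forms lie in $M_{2k}^{k+1}$, whose dimension is $VN+E$, so they form a basis of it. Likewise all of $\mathcal{P}$ lies in $M_{2k}^{k}$ and has cardinality equal to its dimension, so $\mathcal{P}$ is a basis of $M_{2k}^{k}$. The only delicate points are the arithmetic in the count above and the positivity hypothesis of Proposition \ref{propDim} in the case $d=1$; both are routine.
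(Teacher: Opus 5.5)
Your proof is correct and follows the paper's argument. Conjecture \ref{conjectureVanishingOrders} places the vertex and non-cuspidal edge series in $M_{2k}^{k+1}(\Gamma(\mathfrak{n}))$ and all of $\mathcal{P}$ in $M_{2k}^{k}(\Gamma(\mathfrak{n}))$, the proof of Theorem~\ref{theoremPSspan} gives linear independence, and \eqref{eqdim} together with \eqref{equationNumberEdges} and \eqref{equationNumberVertices} matches cardinalities to dimensions. Your explicit check of the positivity hypothesis of Proposition~\ref{propDim}, including the case $d=1$, is a useful detail the paper leaves implicit; just delete the garbled false start ``$k(2g-2+c)-(g-1)-\frac{c}{2}\cdot 0\ldots$'' that precedes the correct computation.
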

\begin{proof}
    Combining \eqref{eqdim} with \eqref{equationNumberEdges}
    and \eqref{equationNumberVertices} gives
    \begin{equation*}
        \dim M_{2k}^{k+1}(\Gamma(\mathfrak{n}))=V\cdot N+E,
    \end{equation*}
    and 
    \begin{equation*}
        \dim M_{2k}^{k}(\Gamma(\mathfrak{n}))=V\cdot N+E+c.
    \end{equation*}
    By parts (1) and (2) of
    Conjecture \ref{conjectureVanishingOrders}, the vertex and
    non-cuspidal edge Poincaré series lie in
    $M_{2k}^{k+1}(\Gamma(\mathfrak{n}))$; by part (3), the
    cuspidal edge Poincaré series lie in
    $M_{2k}^{k}(\Gamma(\mathfrak{n}))$. Since the whole family is
    linearly independent (Theorem \ref{theoremPSspan}), both
    claims follow by comparing cardinalities with the dimensions
    above.
\end{proof}

\subsection{The quaternionic case}\label{subsectionQuaternionic}

Let $B$ be a division quaternion algebra over $F$ that splits at $\infty$,
i.e.\ such that there exists an isomorphism of $F_\infty$-algebras
$\iota\colon B\otimes_F F_\infty\xrightarrow{\sim} M_2(F_\infty)$. We denote by
$\mathrm{Nr}$ the reduced norm of $B$; under $\iota$
it corresponds to the determinant on $M_2(F_\infty)$. Fix a
maximal $A$-order $\mathcal{O}$ in $B$, and let
\begin{equation*}
    \mathcal{O}^\times=\{x\in \mathcal{O}:\mathrm{Nr}(x)\in\mathbb{F}_q^\times\}
\end{equation*}
be its group of units. The group $\iota(\mathcal{O}^\times)$ is a discrete
cocompact subgroup of $\GL_2(F_\infty)$ satisfying
$\det(\iota(\mathcal{O}^\times))\subseteq\mathbb{F}_q^\times$, and
$\iota(\mathcal{O}^\times)\backslash\mathcal{T}$ is a finite graph \cite[Lemma 5.1]{Papikian2012}.
Let $\Gamma$ be a finite-index subgroup of $\iota(\mathcal{O}^\times)$. The
center $Z(\Gamma)$ consists of the scalar matrices in $\Gamma$, and
$Z(\Gamma)\subseteq\mathbb{F}_q^\times\cdot I$. Set
$\widetilde{\Gamma}\coloneqq \Gamma/Z(\Gamma)$.

Throughout, we fix $k\geqslant 2$ and an integer $l$ such that $\det(h)^l=1$ for
all $h\in Z(\Gamma)$, i.e.\ such that $2l\equiv 0\bmod\#Z(\Gamma)$, and we put
$m\coloneqq k+l$. We consider Poincaré series of the form
\begin{equation}\label{equationQuaternionicPS}
    f(z)=\sum_{[\gamma]\in \widetilde{\Gamma}}(\det\gamma)^{-l}
    \prod_{i=1}^r u(z;\gamma a_i,\gamma b_i)^{k_i}
\end{equation}
with $\sum k_i=k$, $r\geqslant 2$, $H=Z(\Gamma)$ and $\bigcap_{i=1}^r \{a_i,b_i\}=\emptyset$. Condition \textnormal{\textbf{(P)}} holds by Theorem \ref{theorem(P)}(ii), which applies with $H=Z(\Gamma)$ by the remark following it; by Corollary \ref{coroCuspForm}, $f\in M_{2k,m}(\Gamma)$. The condition on $l$ is the one of \S\ref{sectionModularForms} for weight $2k$ and type $m$, namely $2(k-m)\equiv 0\bmod\#Z(\Gamma)$; without it $M_{2k,m}(\Gamma)$ vanishes.

The aim of this subsection is to prove the following result.

\begin{theorem}\label{theoremPSspanQuat}
    The Poincaré series
    \eqref{equationQuaternionicPS} span the space $M_{2k,m}(\Gamma)$.
\end{theorem}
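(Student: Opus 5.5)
The plan mirrors the proof of Theorem \ref{theoremPSspan}: construct enough linearly independent Poincaré series of the form \eqref{equationQuaternionicPS}, and compare their number with $\dim M_{2k,m}(\Gamma)$. Since there are no cusps, no vanishing-order issues arise.

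\textbf{Step 1: dimension.} Proposition \ref{propDim} needs $\widetilde{\Gamma}$ to have no prime-to-$p$ torsion. Since $\Gamma$ may have nontrivial finite vertex stabilizers, I would first treat the case where $\widetilde{\Gamma}$ acts freely on $X(\mathcal{T})$, i.e.\ every vertex stabilizer $\Gamma_v$ equals $Z(\Gamma)$. In that case Proposition \ref{propDim} with $n=0$, $c=0$ and $g\geqslant 2$ gives
\[
\dim M_{2k,m}(\Gamma)=(2k-1)(g-1).
\]
The general case I would reduce to this one. Choose a normal finite-index subgroup $\Gamma'\subseteq\Gamma$ with $\widetilde{\Gamma'}$ torsion-free; such a subgroup exists because $\iota(\mathcal{O}^\times)$ is residually finite, and one can take a congruence subgroup. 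Then
\[
M_{2k,m}(\Gamma)=M_{2k,m}(\Gamma')^{\Gamma/\Gamma'}.
\]
Averaging over $\Gamma/\Gamma'$ maps $\Gamma'$-Poincaré series to $\Gamma$-Poincaré series, since $\sum_{\Gamma'}$ followed by $\sum_{\Gamma/\Gamma'}$ equals $\sum_{\Gamma}$, up to a multiplicative constant coming from $Z(\Gamma)$ versus $Z(\Gamma')$. The relevant twisted projector is
\[
f\mapsto \frac{1}{[\Gamma:\Gamma']}\sum_{\gamma\in\Gamma/\Gamma'} f\slashbar{2k}{m}\gamma .
\]
The index $[\Gamma:\Gamma']$ may be divisible by $p$, so I would avoid dividing by it. Instead I would use the unnormalized averaging map, which is surjective onto the invariants only if it is nonzero. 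This is the main obstacle. If it fails, I would prove spanning directly for $\Gamma$ using the same reduction argument, with a dimension computation via the orbifold Riemann--Roch for the graph of groups.

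\textbf{Step 2: the torsion-free case.} Let $V$ and $E$ be the numbers of vertices and edges of the finite graph $\Gamma\backslash\mathcal{T}$. Since $\Gamma$ acts freely, every vertex has $q+1$ neighbours, so $E=V(q+1)/2$ and
\[
g-1=E-V=V(q-1)/2.
\]
Hence
\[
V\cdot N+E=V\bigl(k(q-1)-q\bigr)+V(q+1)/2=(2k-1)V(q-1)/2=(2k-1)(g-1).
\]
For each vertex $\bar v$, I would take the $N$ series of Lemma \ref{lemmaHPSVertices}: lift a Kurihara basis via Lemma \ref{lemmaLiftParameters} and translate by $\delta\in\GL_2(F_\infty)$ with $v=\delta v_0$; the parameters need not be in $\mathbb{P}^1(F)$ here. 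For each edge, I would take one series as in Lemma \ref{lemmaHPSEdges}. Theorems \ref{theoremIntersectionVertex} and \ref{theoremIntersectionEdge} apply with $H=Z(\Gamma)=\Gamma_v$. The factor $(\det\gamma)^{-l}$ is a unit, so it does not affect the valuations and only multiplies the identity term by $1$.

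\textbf{Step 3: linear independence and conclusion.} The reduction-mod-$\mathfrak{m}$ argument of the proof of Theorem \ref{theoremPSspan} applies verbatim and gives linear independence. Therefore the span of these $V\cdot N+E$ series has dimension $(2k-1)(g-1)=\dim M_{2k,m}(\Gamma)$, which proves spanning.
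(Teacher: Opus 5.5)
Your Steps 2--3, the case where $\widetilde{\Gamma}$ is torsion-free, coincide with the paper's argument. The reduction of the general case in Step 1, however, is left open, and as stated it does not work.

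With $\Gamma'$ a congruence subgroup, the index $n'=[\Gamma:\Gamma']$ is in general divisible by $p$, as you note. Then for $f\in M_{2k,m}(\Gamma)\subseteq M_{2k,m}(\Gamma')$ the unnormalized trace $\sum_i(\det\gamma_i)^{l}\gamma_i^*\omega_f$ equals $n'\omega_f=0$. So the standard argument for surjectivity onto $M_{2k,m}(\Gamma)$ is lost. The remark ``surjective only if nonzero'' gives nothing, since a nonzero linear map need not be onto.

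Your fallback of arguing directly for $\Gamma$ fails for a different reason. At a vertex with $\Gamma_v\supsetneq Z(\Gamma)$, the hypothesis $\Gamma_v\subseteq H=Z(\Gamma)$ of Theorems \ref{theoremIntersectionVertex} and \ref{theoremIntersectionEdge} is violated. The identity term then no longer dominates on $\Omega_v$; the paper's example with $G=\GL_2(A)$ shows the series can even vanish identically. The reduction-mod-$\mathfrak{m}$ independence argument therefore collapses. Proposition \ref{propDim} is also unavailable because $\widetilde{\Gamma}$ has prime-to-$p$ torsion. Even with an orbifold dimension formula, you would still have to produce that many independent series, and the vertex/edge construction no longer does so.

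The missing idea is to pass to a subgroup whose index is \emph{prime to $p$}. The paper does this in Lemma \ref{lemmaExistsGamma0} by reducing at a place $\mathfrak{p}$ where $B$ ramifies.
\begin{itemize}
\item There $\mathcal{O}_\mathfrak{p}/\mathfrak{P}\cong\mathbb{F}_{q^{2d}}$ is a field, and $\Gamma_0$ is the kernel of $\Gamma\to\mathbb{F}_{q^{2d}}^\times/\mathbb{F}_q^\times$. Hence $[\Gamma:\Gamma_0]$ divides $(q^{2d}-1)/(q-1)$.
\item A torsion class of $\widetilde{\Gamma_0}$ comes from some $x$ of finite order. Then $\mathbb{F}_q[x]$ is a finite field on which the reduction is injective, so $\rho(x)\in\mathbb{F}_q^\times$ forces $x\in\mathbb{F}_q^\times$.
\item Moreover $Z(\Gamma_0)=Z(\Gamma)$.
\end{itemize}
Then $\omega_f=\mathrm{Tr}_{\Gamma/\Gamma_0}(n^{-1}\omega_f)$ gives surjectivity of the trace, and $\mathrm{Tr}_{\Gamma/\Gamma_0}(P_{\Gamma_0})=P_\Gamma$ (Lemma \ref{lemmaTrace}) finishes the proof.

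Your own $\Gamma'$ could also be repaired. Finite-order elements of $B^\times$ lie in finite fields $\mathbb{F}_q[x]$, so $\widetilde{\Gamma}$ has no $p$-torsion. Hence the preimage $\Gamma''$ of a Sylow $p$-subgroup of $\Gamma/\Gamma'$ has index prime to $p$, and $\widetilde{\Gamma''}$ is torsion-free. The trace does not need normality, and the extra factor $[Z(\Gamma):Z(\Gamma'')]$ divides $q-1$, so it is invertible.
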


\noindent The strategy of the proof is as follows. We first prove Theorem \ref{theoremPSspanQuat} when $\widetilde{\Gamma}$ is torsion-free. In that case the construction of Theorem \ref{theoremPSspan} applies and produces
$\dim M_{2k,m}(\Gamma)$ linearly independent Poincaré series. The general case follows by a trace argument, which
reduces the statement to the torsion-free case.

Assume that $\widetilde{\Gamma}$ is torsion-free; the stabilizer of any vertex $v\in X(\mathcal{T})$ satisfies
$\Gamma_v=Z(\Gamma)$, and the quotient
$\Gamma\backslash\mathcal{T}$ is a finite $(q+1)$-regular graph. Writing $V$ and $E$ for the number
of vertices and of non-oriented edges of $\Gamma\backslash\mathcal{T}$, we have
\begin{equation}\label{equationQuatVE}
    E=\frac{V(q+1)}{2},\qquad g-1=E-V=\frac{V(q-1)}{2},
\end{equation}
where $g$ is the genus of $X_\Gamma$.

\noindent  As in the proof of Theorem \ref{theoremPSspan}, we associate to each vertex $\bar{v}$ (resp. non-oriented edge $\bar{e}$) of $\Gamma\backslash\mathcal{T}$ Poincaré series    $f^{\bar{v}}_1,\ldots,f^{\bar{v}}_N$ (resp. $f^{\bar{e}}$) of weight $2k$ and type $m$ as in \eqref{equationQuaternionicPS}. Since $\Gamma_v=Z(\Gamma)$ for every vertex $v$, 
the hypothesis $\Gamma_v\subseteq H$ of Theorems
\ref{theoremIntersectionVertex} and \ref{theoremIntersectionEdge}
is satisfied with $H=Z(\Gamma)$, so these theorems apply. Lemmas
\ref{lemmaHPSVertices} and \ref{lemmaHPSEdges} carry over with
$\Gamma(\mathfrak{n})$ replaced by $\Gamma$ and the sum taken over
$Z(\Gamma)\backslash\Gamma$ with the factor $(\det\gamma)^{l}$ as
in \eqref{equationQuaternionicPS}: since $Z(\Gamma)$ acts
trivially on $\Omega$ and on $\mathbb{P}^1(F_\infty)$, and
$(\det h)^{l}=1$ for $h\in Z(\Gamma)$, their proofs go through
unchanged. The proof of Theorem \ref{theoremPSspan}
    then shows that these $V\cdot N+E$ forms are linearly independent. As $V\cdot N+E=(2k-1)(g-1)=\dim M_{2k,m}(\Gamma)$ by Proposition \ref{propDim}, these Poincaré series form a basis of $M_{2k,m}(\Gamma)$, which proves Theorem \ref{theoremPSspanQuat} in the case where $\widetilde{\Gamma}$ is torsion-free.

    We now turn to the general case: $\widetilde{\Gamma}$ is not necessarily torsion-free.

\begin{lemma}\label{lemmaExistsGamma0}
    There exists a normal subgroup $\Gamma_0\leqslant\Gamma$ of index prime to $p$ such
    that $Z(\Gamma_0)=Z(\Gamma)$ and $\widetilde{\Gamma_0}\coloneqq \Gamma_0/Z(\Gamma_0)$ is torsion-free.
\end{lemma}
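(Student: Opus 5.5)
Our plan is to take $\Gamma_0$ to be a principal congruence subgroup of the unit group: $\Gamma_0\coloneqq Z(\Gamma)\cdot\Gamma(\mathfrak{p})$, where $\mathfrak{p}\subset A$ is a prime at which $B$ splits and $\Gamma(\mathfrak{p})$ is the kernel of $\Gamma\to(\mathcal{O}/\mathfrak{p}\mathcal{O})^\times\cong\GL_2(\mathbb{F}_{\mathfrak{p}})$. We choose $\mathfrak{p}$ of degree $1$ if possible. If every degree-one prime ramifies in $B$, which is only possible for finitely many degrees, we instead choose $\mathfrak{p}$ of degree prime to $p$ and adjust as described below.

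Normality is clear, since $\Gamma(\mathfrak{p})$ is a kernel and $Z(\Gamma)$ is central. The equality $Z(\Gamma_0)=Z(\Gamma)$ follows from two observations. First, $Z(\Gamma)\subseteq\Gamma_0$ and $Z(\Gamma)$ is central. Second, a central element of $\Gamma_0$ commutes with a finite-index subgroup of $\Gamma$; since that subgroup is Zariski dense in $B^\times$, the element is scalar, so it lies in $Z(\Gamma)$.

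\emph{Torsion-freeness of $\widetilde{\Gamma_0}$.} Let $\gamma\in\Gamma_0$ have finite order modulo $Z(\Gamma)$. Since $\Gamma$ is cocompact, $B$ is a division algebra, so $\Gamma$ contains no nontrivial unipotents. Hence $\gamma$ is semisimple, and $\gamma^n$ is scalar for some $n$. Then $F[\gamma]$ is a field of degree $\leqslant 2$ over $F$, and the eigenvalues of $\gamma$ are roots of unity times a scalar, so they lie in an extension of $\mathbb{F}_q$ of degree $\leqslant 2$. In particular $\gamma$ is $\mathbb{F}_q^\times$-conjugate-scalar or generates $\mathbb{F}_{q^2}$ inside $B$. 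Write $\gamma=c\gamma'$ with $c\in Z(\Gamma)$ and $\gamma'\equiv 1\bmod\mathfrak{p}$. Then $\gamma'$ is an element of $\mathcal{O}$ that is integral over $\mathbb{F}_{q^2}$-constants and congruent to $1$ modulo $\mathfrak{p}$. Its minimal polynomial has constant coefficients in $\mathbb{F}_q$ and reduces to $(X-1)^2$, so $\gamma'$ is unipotent, and therefore $\gamma'=1$. Thus $\gamma$ is scalar.

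\emph{Index prime to $p$.} This is the main obstacle. The index $[\Gamma:\Gamma_0]$ divides $\#\GL_2(\mathbb{F}_{\mathfrak{p}})/(\text{scalars})$, which is divisible by $p$, so the naive choice fails. We fix this by taking $\Gamma_0$ to be the preimage of a subgroup of the image $\bar\Gamma\subseteq\GL_2(\mathbb{F}_{\mathfrak{p}})$ that contains a Sylow $p$-subgroup of $\bar\Gamma$ and still meets the torsion images trivially. Concretely, every torsion element of $\widetilde{\Gamma}$ has order prime to $p$ by the argument above, and by the congruence argument it injects into $\bar\Gamma/\bar Z$. We then let $\Gamma_0$ be the preimage of $\bar Z\cdot U$, where $U$ is a $p$-Sylow (unipotent) subgroup of $\bar\Gamma$, made normal by replacing it with the subgroup generated by all $p$-elements, $O^{p'}(\bar\Gamma)$. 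A prime-to-$p$ torsion element landing in $\bar Z\cdot O^{p'}(\bar\Gamma)$ must be trivial modulo scalars. This is where care is needed: if $\bar\Gamma\supseteq\SL_2$, then $O^{p'}$ is all of $\SL_2$. In that case we instead use a second auxiliary prime $\mathfrak{p}'$ and intersect the two preimages. A torsion element of $\widetilde{\Gamma}$ that survives has eigenvalues congruent to scalars modulo both primes, and choosing $\deg\mathfrak{p},\deg\mathfrak{p}'$ coprime forces it to be scalar. The index remains prime to $p$ because each quotient is a $p'$-group, namely the abelianization-type quotient $\bar\Gamma/\bar Z O^{p'}(\bar\Gamma)$.
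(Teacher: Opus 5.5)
\textbf{There is a genuine gap.} It sits in your last step, and it is fatal to any approach through split primes.

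Lying in the preimage of $\bar Z\cdot O^{p'}(\bar\Gamma)$ does not mean being congruent to a scalar. Once $\bar\Gamma\supseteq\SL_2(\mathbb{F}_{\mathfrak p})$, the condition only constrains the determinant: $\SL_2(\mathbb{F}_{\mathfrak p})$ is generated by transvections, which are $p$-elements, so it lies in $O^{p'}(\bar\Gamma)$.

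This case cannot be avoided. For $\Gamma=\iota(\mathcal O^\times)$, strong approximation for $B^1$ (valid because $B$ splits at $\infty$) gives $\bar\Gamma\supseteq\SL_2(\mathbb{F}_{\mathfrak p})$ at every split prime. Now suppose $\widetilde\Gamma$ has torsion at all, which is the only case where the lemma has content. Then some non-scalar $x\in\mathcal O^\times$ is algebraic over $\mathbb{F}_q$, so $\mathcal O$ contains a copy $\mathbb{F}_q[x]\cong\mathbb{F}_{q^2}$. This field contains non-scalar elements $y$ with $\mathrm{Nr}(y)=y^{q+1}=1$: there are $q+1\geqslant 4$ such elements and only $\pm1$ among them are scalar. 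Such a $y$ reduces into $\SL_2(\mathbb{F}_{\mathfrak p})$ at \emph{every} split prime. Hence $\iota(y)$ lies in your $\Gamma_0$ however many split primes you intersect over, and its class is nontrivial torsion in $\widetilde{\Gamma_0}$.

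Two further remarks on this step. If a torsion element really were congruent to a scalar modulo a single prime, your own congruence argument would already force it to be scalar, so the coprimality of $\deg\mathfrak p$ and $\deg\mathfrak p'$ plays no role. More structurally, a normal subgroup of $p'$-index contains every $p$-element. So no normal $p'$-index subgroup of the image of $\Gamma$ at split primes can exclude these $y$, and no congruence condition at split primes can produce the required $\Gamma_0$.

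\textbf{The missing idea} is to work at a prime $\mathfrak p$ where $B$ \emph{ramifies}; such a prime exists because $B$ is a division algebra.
\begin{itemize}
\item There $\mathcal O_{\mathfrak p}$ is the valuation ring of a local division algebra, its maximal ideal $\mathfrak P$ is two-sided, and $\mathcal O_{\mathfrak p}/\mathfrak P\cong\mathbb{F}_{q^{2d}}$ with $d=\deg\mathfrak p$. So reduction is a ring homomorphism $\rho\colon\mathcal O\to\mathbb{F}_{q^{2d}}$ into a \emph{field}.
\item Set $\Gamma_0\coloneqq\Gamma\cap\iota(\rho^{-1}(\mathbb{F}_q^\times))$, the kernel of $\Gamma\to\mathbb{F}_{q^{2d}}^\times/\mathbb{F}_q^\times$. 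It is normal, with index dividing $(q^{2d}-1)/(q-1)$, hence prime to $p$.
\item It contains every scalar of $\Gamma$, since $\rho(c)\in\mathbb{F}_q^\times$ for $c\in\mathbb{F}_q^\times$.
\item If $\iota(x)\in\Gamma_0$ is torsion modulo scalars, then $\mathbb{F}_q[x]$ is a finite field on which $\rho$ is injective. So $\rho(x)\in\mathbb{F}_q^\times$ forces $x\in\mathbb{F}_q^\times$.
\end{itemize}
The quotient $\Gamma/\Gamma_0$ is then an abelian $p'$-group rather than a subgroup of $\GL_2(\mathbb{F}_{\mathfrak p})$, so the $\SL_2$ obstruction never arises.

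Your arguments for $Z(\Gamma_0)=Z(\Gamma)$ and for the torsion-freeness of $Z(\Gamma)\Gamma(\mathfrak p)$ modulo scalars are fine. However, they concern a subgroup whose index is, as you note yourself, divisible by $p$.
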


\begin{proof}
    Since $B$ is a division algebra, it ramifies at some place $\mathfrak{p}$ of
    $F$; put $d\coloneqq\deg(\mathfrak{p})$.
    The completion $B_\mathfrak{p}$ is then a division algebra over
    $F_\mathfrak{p}$ that comes with a valuation $w\coloneqq\frac{1}{2}(\ord_\mathfrak{p}\circ\mathrm{Nr})$. Let $\mathcal{O}_\mathfrak{p}\coloneqq\mathcal{O}
    \otimes_A A_\mathfrak{p}=\{x\in B_\mathfrak{p}:w(x)\geqslant 0 \}$ denote its valuation ring, and let $\mathfrak{P}=\{x\in B_\mathfrak{p}:w(x)> 0 \}$ denote its unique maximal
    two-sided ideal; then $\mathcal{O}_\mathfrak{p}/\mathfrak{P}\cong\mathbb{F}_{q^{2d}}$
    \cite[\S 13.3]{Voight2021}. Let
    \begin{equation*}
        \rho\colon\mathcal{O}\longrightarrow
        \mathcal{O}_\mathfrak{p}/\mathfrak{P}\cong\mathbb{F}_{q^{2d}}
    \end{equation*}
    be the reduction map, and set $\Gamma_0\coloneqq\Gamma\cap \iota(\rho^{-1}(\mathbb{F}_q^\times))$. Note that $\Gamma_0$ is normal in $\Gamma$, since it is the kernel of the natural map $\Gamma\to \mathbb{F}_{q^{2d}}^\times/\mathbb{F}_{q}^\times$. Moreover $\Gamma/\Gamma_0$ injects in $\mathbb{F}_{q^{2d}}^\times/
    \mathbb{F}_q^\times$, so $[\Gamma:\Gamma_0]$ divides
    $(q^{2d}-1)/(q-1)$, which is prime to $p$.

    \noindent  Let $cI=\iota(c)\in Z(\Gamma)$ with $c\in\mathbb{F}_q^\times$. We have $\rho(c)^{q-1}=\rho(c^{q-1})=1$ so $\rho(c)\in \mathbb{F}_q^\times$, i.e., $cI\in\Gamma_0$. Therefore $\Gamma_0$ and $\Gamma$ have the same scalar matrices, i.e., $Z(\Gamma_0)=Z(\Gamma)$.

    \noindent  It remains to show that $\widetilde{\Gamma_0}$ is torsion-free. Let $\iota(x
    )\in\Gamma_0$ represent
    a torsion element of $\widetilde{\Gamma_0}$; then $x$ has finite order in $B^\times$. Consequently
    $\mathbb{F}_q[x]$ is a finite commutative subring of the division algebra
    $B$, hence a finite field. Now
    $\rho(x)\in\mathbb{F}_q^\times=\rho(\mathbb{F}_q^\times)$, so $\rho(x)=\rho(c)$ for some $c\in \mathbb{F}_q^\times$. The
    restriction of $\rho$ to $\mathbb{F}_q[x]$ is injective, so $x=c\in \mathbb{F}_q^\times$, and $\iota(x
    )=cI\in Z(\Gamma_0)$.
\end{proof}

Let $\Gamma_0\leqslant\Gamma$ be as in Lemma \ref{lemmaExistsGamma0}, and let $n\coloneqq [\Gamma:\Gamma_0]$. Fix
representatives $\gamma_1,\ldots,\gamma_n$ of the right cosets
$\Gamma_0\backslash\Gamma$. A
function $f\colon\Omega\to\mathbb{C}_\infty$ lies in $M_{2k,m}(\Gamma)$ if and only if the associated $k$-form $\omega_f\coloneqq f(z)(\mathrm{d}z)^k$
satisfies $\gamma^*\omega_f=(\det\gamma)^{-l}\omega_f$ for all $\gamma\in\Gamma$
(there is no condition at the cusps, as $\Gamma$ is cocompact). For a $k$-form
$\omega$ on $\Omega$ satisfying this identity for all $\gamma\in\Gamma_0$, set
\begin{equation}\label{equationTrace}
    \mathrm{Tr}_{\Gamma/\Gamma_0}(\omega)
    \coloneqq\sum_{i=1}^{n}(\det\gamma_i)^{l}\,\gamma_i^*\omega .
\end{equation}

\noindent  A direct computation shows that $\mathrm{Tr}_{\Gamma/\Gamma_0}(\omega)$ does not depend on the choice of the representatives $\gamma_i$, and that
            $\mathrm{Tr}_{\Gamma/\Gamma_0}(\omega)$ satisfies
            $\gamma^*\mathrm{Tr}_{\Gamma/\Gamma_0}(\omega)
            =(\det\gamma)^{-l}\mathrm{Tr}_{\Gamma/\Gamma_0}(\omega)$ for all
            $\gamma\in\Gamma$. Passing to modular forms, we see that $\mathrm{Tr}_{\Gamma/\Gamma_0}$ defines a
            $\mathbb{C}_\infty$-linear map
            $M_{2k,m}(\Gamma_0)\to M_{2k,m}(\Gamma)$.

\begin{lemma}\label{lemmaTrace}\hfill
    \begin{enumerate}
        \item[\textnormal{(i)}] The map
            $\mathrm{Tr}_{\Gamma/\Gamma_0}\colon M_{2k,m}(\Gamma_0)\to
            M_{2k,m}(\Gamma)$ is surjective.
        \item[\textnormal{(ii)}] Let $s(z)=\prod_{i=1}^r s(z;a_i,b_i)^{k_i}$, let
        \begin{equation*}
    P_\Gamma(z)=\sum_{[\gamma]\in \widetilde{\Gamma}}(\det\gamma)^{l}\cdot \gamma^*s(z)
\end{equation*}
and
\begin{equation*}
    P_{\Gamma_0}(z)=\sum_{[\gamma]\in \widetilde{\Gamma_0}}(\det\gamma)^{l}\cdot \gamma^*s(z)
\end{equation*}
        denote the associated Poincaré series for the groups
            $\Gamma$ and $\Gamma_0$. Then
            \begin{equation*}
                \mathrm{Tr}_{\Gamma/\Gamma_0}(P_{\Gamma_0})=P_{\Gamma}.
            \end{equation*}
    \end{enumerate}
\end{lemma}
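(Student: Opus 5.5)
For (i), the plan is to show that the composite of the inclusion $M_{2k,m}(\Gamma)\subseteq M_{2k,m}(\Gamma_0)$ with $\mathrm{Tr}_{\Gamma/\Gamma_0}$ is multiplication by $n$. If $f\in M_{2k,m}(\Gamma)$, then $\omega_f$ satisfies $\gamma_i^*\omega_f=(\det\gamma_i)^{-l}\omega_f$ for every $i$. Hence
\begin{equation*}
    \mathrm{Tr}_{\Gamma/\Gamma_0}(\omega_f)=\sum_{i=1}^n(\det\gamma_i)^l(\det\gamma_i)^{-l}\omega_f=n\,\omega_f .
\end{equation*}
By Lemma \ref{lemmaExistsGamma0}, $n$ is prime to $p$, so $n$ is invertible in $\mathbb{C}_\infty$. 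Therefore $f=\mathrm{Tr}_{\Gamma/\Gamma_0}(n^{-1}f)$, and the trace is surjective. This is the one place where the prime-to-$p$ index is used.

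For (ii), the plan is to unfold the double sum. First, $Z(\Gamma_0)=Z(\Gamma)$ by Lemma \ref{lemmaExistsGamma0}. Since $\Gamma=\coprod_i\Gamma_0\gamma_i$, we get a bijection
\begin{equation*}
    \widetilde{\Gamma_0}\times\{1,\dots,n\}\to\widetilde{\Gamma},\qquad ([\delta],i)\mapsto[\delta\gamma_i].
\end{equation*}
Next, pullback is contravariant, so $(\delta\gamma_i)^*=\gamma_i^*\delta^*$, and the determinant is multiplicative. This gives
\begin{equation*}
    \mathrm{Tr}_{\Gamma/\Gamma_0}(P_{\Gamma_0})=\sum_{i}\sum_{[\delta]}(\det\gamma_i)^l(\det\delta)^l\,\gamma_i^*\delta^*s=\sum_{[\gamma]\in\widetilde{\Gamma}}(\det\gamma)^l\gamma^*s=P_\Gamma .
\end{equation*}

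Two points need to be justified along the way. The first is that $P_{\Gamma_0}$ lies in the domain of the trace, i.e.\ it satisfies the $\Gamma_0$-transformation law; this follows from \eqref{equationS} as before. The second is that the rearrangement of the sums is legitimate. Both series converge locally uniformly: for $P_\Gamma$ this is condition \textbf{(P)}, and it implies \textbf{(P)} for $\Gamma_0$ because $S_N^{\Gamma_0}\subseteq S_N^{\Gamma}$. In the non-Archimedean setting such sums may be reordered freely. Only finitely many outer terms occur, and pulling back by $\gamma_i$ preserves local uniform convergence because $\gamma_i$ maps affinoids into affinoids. The main care needed is the bookkeeping of left versus right cosets and of the determinant signs, and that bookkeeping is handled by the displayed identity above.
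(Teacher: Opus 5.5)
Your proof is correct and follows the paper's argument essentially verbatim. For (i) you show that $\mathrm{Tr}_{\Gamma/\Gamma_0}$ restricted to $M_{2k,m}(\Gamma)$ is multiplication by $n$ and use $p\nmid n$. For (ii) you unfold the double sum, using that the products $\delta\gamma_i$ run over representatives of $Z(\Gamma)\backslash\Gamma$ (which relies on $Z(\Gamma_0)=Z(\Gamma)$) together with $(\delta\gamma_i)^*=\gamma_i^*\delta^*$. Your extra checks are correct and are left implicit in the paper: that $P_{\Gamma_0}$ satisfies the $\Gamma_0$-transformation law, that \textbf{(P)} passes to $\Gamma_0$, and that the convergent non-Archimedean sums may be rearranged.
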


\begin{proof} Let $Z\coloneqq Z(\Gamma)=Z(\Gamma_0)$. Any $f\in M_{2k,m}(\Gamma)$ lies in $M_{2k,m}(\Gamma_0)$, and
    $\omega_f=f(z)(\mathrm{d}z)^k$ satisfies
    $\gamma_i^*\omega_f=(\det\gamma_i)^{-l}\omega_f$ for every $i$, so
    $\mathrm{Tr}_{\Gamma/\Gamma_0}(\omega_f)=n\,\omega_f$. Since $p\nmid n$, the integer $n$ is invertible in $\mathbb{C}_\infty$ and $\omega_f=\mathrm{Tr}_{\Gamma/\Gamma_0}(n^{-1}\omega_f)$, which proves (i).
    
    \noindent For (ii), since $\Gamma=\coprod_{i=1}^n\Gamma_0\gamma_i$ and
    $\Gamma_0=\coprod_{[\gamma]\in Z\backslash\Gamma_0}Z\gamma$, the products
    $\gamma\gamma_i$ form a system of representatives of $Z\backslash\Gamma$.
    Using $\gamma_i^*\gamma^*s=(\gamma\gamma_i)^*s$ we obtain
    \begin{equation*}
        \mathrm{Tr}_{\Gamma/\Gamma_0}(P_{\Gamma_0})
        =\sum_{i=1}^n\sum_{[\gamma]\in Z\backslash\Gamma_0}
        \det(\gamma\gamma_i)^{l}(\gamma\gamma_i)^*s
        =\sum_{[\delta]\in Z\backslash\Gamma}(\det\delta)^{l}\,\delta^*s
        =P_\Gamma
    \end{equation*}
    as wanted.
\end{proof}
We can now complete the proof of Theorem \ref{theoremPSspanQuat}.

\begin{proof}[Proof of Theorem \ref{theoremPSspanQuat}]
    Since $\Gamma_0$ has finite index in $\iota(\mathcal{O}^\times)$ and $Z(\Gamma_0)=Z(\Gamma)$, the integer $l$ satisfies the same condition relative to $\Gamma_0$, and the case already treated applies to $\Gamma_0$: every form in $M_{2k,m}(\Gamma_0)$ is a linear combination of Poincaré series of the form \eqref{equationQuaternionicPS} for $\Gamma_0$. Lemma
    \ref{lemmaTrace} shows that
    $\mathrm{Tr}_{\Gamma/\Gamma_0}\colon M_{2k,m}(\Gamma_0)\to M_{2k,m}(\Gamma)$ is surjective, and maps linear combinations of Poincaré series for the group $\Gamma_0$ to linear combinations of Poincaré series for $\Gamma$. Hence $M_{2k,m}(\Gamma)$ is
    spanned by Poincaré series of the form \eqref{equationQuaternionicPS}.
\end{proof}

\begin{example}
   Assume that $B$ ramifies at exactly two places $\mathfrak{p},\mathfrak{q}$
   with $\deg(\mathfrak{p})=2$ and $\deg(\mathfrak{q})=1$, and set
   $\Gamma\coloneqq\iota(\mathcal{O}^\times)$; this is the only case where $X_\Gamma$ is hyperelliptic, see \cite[Theorem 4.1]{Papikian2009}. A nontrivial torsion element of
   $\widetilde{\Gamma}$ would generate a quadratic subfield of $B$ of the form
   $\mathbb{F}_{q^2}F$, in which $\mathfrak{p}$ splits,
   contradicting the fact that a ramified place of $B$ cannot split in a
   subfield; hence $\widetilde{\Gamma}$ is torsion-free. The quotient graph
   $\Gamma\backslash\mathcal{T}$ consists of $V=2$ vertices connected by
   $E=q+1$ edges \cite[Corollary 5.9]{Papikian2012}, so $g=1-2+(q+1)=q$ by
   \eqref{equationQuatVE}. The first vertex of $\Gamma\backslash\mathcal{T}$ is
   $\overline{v_0}$, while the second one is $\overline{v_1}$.

   \noindent Take $k=2$. Since $\det(\Gamma)=\mathrm{Nr}(\mathcal{O}^\times)
   =\mathbb{F}_q^\times$, the types are indexed by
   $\mathbb{Z}/(q-1)$, and the condition
   $2l\equiv 0\mod{q-1}$ of Theorem \ref{theoremPSspanQuat} has
   exactly two solutions, $l=0$ and $l=(q-1)/2$; cf.\
   \cite[Lemma 2.2]{FHP2025}, where the analogous two-type
   decomposition is proved for $\Gamma_0(\mathfrak{n})$. By
   Proposition \ref{propDim}, $\dim M_{4,2+l}(\Gamma)=(4-1)(q-1)=3(q-1)$
   for both types.
   
   \noindent Consider the basis $\{\overline{s_\beta}\}_{\beta\in\mathbb{F}_q
   \backslash\{0,1\}}$ of $H^0(\mathbb{P}^1_{\mathbb{F}_q},\Omega^{2}(D))$ as in
   Example \ref{exampleGenerationGamma(T)}. Each $\overline{s_\beta}$ lifts to a
   Poincaré series
\begin{align*}
    P^{\overline{v_0}}_\beta(z)&\coloneqq \sum_{[\gamma]\in \widetilde{\Gamma}}
    (\det\gamma)^{l}\cdot\gamma^* s_\beta(z)\\
    &=\sum_{[\gamma]\in \widetilde{\Gamma}}(\det\gamma)^{-l}\cdot
    s(z;\gamma\infty,\gamma0)\cdot
    s(z;\gamma1,\gamma\beta).
\end{align*}
The matrix $\delta \coloneqq\begin{pmatrix} T & 0 \\ 0 & 1 \end{pmatrix}$ satisfies
$\delta v_0=v_1$, so the Poincaré series associated to the vertex
$\overline{v_1}$ are
\begin{align*}
    P^{\overline{v_1}}_\beta(z)&\coloneqq \sum_{[\gamma]\in \widetilde{\Gamma}}(\det\gamma)^{-l}\cdot
    s(z;\gamma\delta\infty,\gamma\delta0)\cdot
    s(z;\gamma\delta1,\gamma\delta\beta).\\
    & =\sum_{[\gamma]\in \widetilde{\Gamma}}(\det\gamma)^{-l}\cdot
    s(z;\gamma\infty,\gamma0)\cdot
    s(z;\gamma T,\gamma(\beta T))
\end{align*}
for $\beta\in\mathbb{F}_q\backslash\{0,1\}$.

\noindent Recall that $e_\alpha$ denotes the unique open edge emanating from $v_0$ toward
the end $\alpha\in\mathbb{P}^1(\mathbb{F}_q)\subset \mathbb{P}^1(F_\infty)$. The
edges of $\Gamma\backslash\mathcal{T}$ are the $\overline{e_\alpha}$ for
$\alpha\in\mathbb{P}^1(\mathbb{F}_q)$. The Poincaré series corresponding to
$\overline{e_\alpha}$ is
\begin{equation}\label{HPSatEdgesQuat}
   P^{\overline{e_\alpha}}(z)\coloneqq \begin{cases}
        \sum_{[\gamma]\in \widetilde{\Gamma}}(\det\gamma)^{-l}\cdot
        s(z;\gamma\infty,\gamma0)\cdot s(z;\gamma T,\gamma1)
        &\text{if } \alpha=\infty,\\[2pt]
        \sum_{[\gamma]\in \widetilde{\Gamma}}(\det\gamma)^{-l}\cdot
        s(z;\gamma\infty,\gamma\alpha)\cdot
        s(z;\gamma(\alpha+1),\gamma(\alpha+\pi))
        &\text{if } \alpha\in\mathbb{F}_q.
    \end{cases}
\end{equation}
The set $\{P^{\overline{v_0}}_\beta\}_{\beta\in\mathbb{F}_q
   \backslash\{0,1\}}\cup
\{P^{\overline{v_1}}_\beta\}_{\beta\in\mathbb{F}_q
   \backslash\{0,1\}}\cup
\{P^{\overline{e_\alpha}}\}_{\alpha\in\mathbb{P}^1(\mathbb{F}_q)}$ has
cardinality $\allowbreak 2(q-2)+(q+1)=3(q-1)$, and the corresponding modular forms constitute a
basis of $M_{4,2+l}(\Gamma)$.
\end{example}

\section{Existing cusp forms as Poincaré series}\label{sectionExisting}

In this section, we show that some Drinfeld cusp forms in the literature are
actually Poincaré series.

\subsection{Goss polynomials}\label{subsectionGoss}

For an 
$\mathbb{F}_q$-lattice $\Lambda\subset\mathbb{C}_\infty$, set
\begin{equation*}
    e_\Lambda(z)\coloneqq z\sideset{}{'}\prod_{b\in\Lambda}
    \Big(1-\frac{z}{b}\Big)
\end{equation*}
and $t_\Lambda(z)\coloneqq e_\Lambda(z)^{-1}$.
The function $e_\Lambda$ is entire, $\mathbb{F}_q$-linear and surjective
with kernel $\Lambda$, and
$t_\Lambda(z)=\sum_{b\in\Lambda}(z-b)^{-1}$. For $\Lambda=\mathbb{F}_q$ one has $t_{\mathbb{F}_q}(z)=(z-z^{q})^{-1}$, and
for $\Lambda=\mathfrak{b}$ as in \S\ref{sectionModularForms} we recover
the parameter $t^\Gamma=t_{\mathfrak{b}}$ at the cusp $\infty$. Note that $t^{G}= t_A$ and $t^{\Gamma(\mathfrak{n})}=t_{\mathfrak{n}A}$.

\noindent For every $n\geqslant 1$
there is a unique polynomial
$G_{n,\Lambda}(X)\in\mathbb{C}_\infty[X]$, the \textit{$n$-th Goss
polynomial} of $\Lambda$, such that
\begin{equation}\label{equationGoss}
    \sum_{b\in\Lambda}\frac{1}{(z-b)^{n}}
    =G_{n,\Lambda}\big(t_\Lambda(z)\big)
\end{equation}
for all $z\notin\Lambda$; see \cite[(3.4)]{Gekeler1988}. It satisfies
$G_{n,\Lambda}(X)=X^n$ whenever $1\leqslant n\leqslant q$. 

\subsection{Drinfeld--Poincaré series}\label{subsectionDrinfeldPoincare}

Let $k,n\geqslant 1$, and let $m$ be the image of $n$ in $\mathbb{Z}/(q-1)\mathbb{Z}$. Consider the  \textit{Drinfeld--Poincaré series}
\begin{equation*}
    P_{k,n}(z)\coloneqq \sum_{[\gamma]\in K\backslash G}\frac{\det(\gamma)^m}{j_{\gamma}(z)^{k}}G_{n,A}(t^{G}(\gamma z))
\end{equation*}
where $K\coloneqq \begin{pmatrix} \mathbb{F}_q^\times & A \\ 0 & 1
\end{pmatrix}$ and $G_{n,A}$ is the $n$-th Goss polynomial for the
lattice $A$.

\begin{remark}\label{remarkNormalization}
    The Drinfeld--Poincaré series $P_{k,n}$ were defined by Petrov in
    \cite[Definition 4.1]{Petrov2015}, using the parameter
    $u=\widetilde{\pi}^{-1}t^{G}$ and the Goss polynomials
    $G_{n,\widetilde{\pi}A}$ for the
    lattice $\widetilde{\pi}A$, where $\widetilde{\pi}$ is a
    fundamental period of the Carlitz module. Since
    $G_{n,\widetilde{\pi}A}(\widetilde{\pi}^{-1}X)
    =\widetilde{\pi}^{-n}G_{n,A}(X)$,
    our $P_{k,n}$ equals $\widetilde{\pi}^n$ times Petrov's.
    Petrov shows that $P_{k,n}$ lies in $M_{k,m}(G)$, and is nonzero
    if $k\equiv 2n\bmod{q-1}$ and $n\leqslant k/(q+1)$.
    If $n\leqslant q$, then $G_{n,A}(X)=X^n$, and our $P_{k,n}$ is
    $\widetilde{\pi}^n$ times Gekeler's
    \cite[(5.11)]{Gekeler1988}.
\end{remark}

\noindent We now relate the Poincaré series $\sum_{\gamma\in G}\gamma^* s(z;\infty,0)^{k}$ to $P_{2k,k}$:
\begin{align*}
    \sum_{\gamma\in G}\gamma^* s(z;\infty,0)^{k}&=\sum_{\gamma\in G} s(z;\gamma^{-1}\infty,\gamma^{-1}0)^{k}\\
    &=\sum_{\gamma\in G} s(z;-d/c,-b/a)^{k}\\
    &=\varrho(z)\,(\mathrm{d}z)^k
\end{align*}
with 
\begin{equation*}
        \varrho(z)\coloneqq \sum_{\gamma\in G}\frac{\det(\gamma)^k}{(az+b)^k(cz+d)^k}
\end{equation*}
Condition \textnormal{\textbf{(P)}} holds by Theorem \ref{theorem(P)}(iii), so $\varrho\in M_{2k,l}^1(G)$ by Corollary \ref{coroCuspForm}, where $l$ is the image of $k$ in $\mathbb{Z}/(q-1)\mathbb{Z}$.

\begin{proposition}\label{propDrinfeldPoincare}
    We have $\varrho=-P_{2k,k}$ for $k\geqslant 1$.
\end{proposition}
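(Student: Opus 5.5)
Rather than comparing $t$-expansions, I will rearrange $\varrho$ directly as a sum over $K\backslash G$ and identify each coset sum with the corresponding term of $P_{2k,k}$. Write $\gamma=\begin{pmatrix}a&b\\c&d\end{pmatrix}$. The summand $\det(\gamma)^k(az+b)^{-k}(cz+d)^{-k}$ depends on $\gamma$ only through its two rows. Left multiplication by $\kappa=\begin{pmatrix}\epsilon&x\\0&1\end{pmatrix}\in K$ fixes the bottom row $(c,d)$, replaces the top row $(a,b)$ by $\epsilon(a,b)+x(c,d)$, and multiplies $\det\gamma$ by $\epsilon$. So I group $G$ into the cosets $K\gamma$, fixing for each a representative $\gamma$. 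The set of elements of $K\gamma$ is in bijection with $(\epsilon,x)\in\mathbb{F}_q^\times\times A$.

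The summand for $\kappa\gamma$ is
\[
\frac{\epsilon^k\det(\gamma)^k}{(\epsilon(az+b)+x(cz+d))^k(cz+d)^k}
=\frac{\det(\gamma)^k}{j_\gamma(z)^{2k}}\cdot\frac{\epsilon^k}{(\epsilon\gamma z+x)^k}.
\]
I pull out the common factor $\det(\gamma)^k j_\gamma(z)^{-2k}$ and sum over $x\in A$ first. Since $\epsilon^{-1}A=A$, the substitution $x\mapsto \epsilon x$ gives
\[
\sum_{x\in A}\frac{\epsilon^k}{(\epsilon w+\epsilon x)^k}=\sum_{x\in A}\frac{1}{(w+x)^k}=\sum_{x\in A}\frac{1}{(w-x)^k}=G_{k,A}(t_A(w)),
\]
where $w=\gamma z$ and the last equality is \eqref{equationGoss}. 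This inner sum does not depend on $\epsilon$. Summing over the $q-1$ values of $\epsilon$ therefore multiplies it by $q-1=-1$ in $\mathbb{C}_\infty$.

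Since $t_A=t^G$, the sum over the coset $K\gamma$ equals $-\det(\gamma)^k j_\gamma(z)^{-2k}G_{k,A}(t^G(\gamma z))$. This is exactly $-1$ times the $[\gamma]$-term of $P_{2k,k}$, because $\det(\gamma)^m=\det(\gamma)^k$ when $m\equiv k \bmod q-1$ and $\det\gamma\in\mathbb{F}_q^\times$. Summing over $K\backslash G$ gives $\varrho=-P_{2k,k}$.

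The main obstacle is justifying the rearrangement. I would handle it by local uniform convergence, so that in the non-Archimedean setting any grouping and reordering is allowed. For $\varrho$, this is Theorem \ref{thmConvergence} together with case (iii) of Theorem \ref{theorem(P)}. I would also check that the inner sums over $x$ converge; they do, since they are Eisenstein-type lattice sums with $w\notin F_\infty$. This argument does not need the convergence of $P_{2k,k}$ as an input: once $\varrho$ converges unconditionally, it is its value after grouping. One point needs care: the summand must be well-defined on all of $G$ with no zero denominators. This holds because $z\in\Omega$ forces $az+b\neq 0$ and $cz+d\neq0$.
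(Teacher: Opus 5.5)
Your proposal is correct and follows essentially the same route as the paper: decompose $G$ into cosets $K\gamma$, use $\det(\gamma'\gamma)=\epsilon\det\gamma$, $j_{\gamma'\gamma}=j_\gamma$ and $\gamma'\gamma z=\epsilon\gamma z+x$, absorb $\epsilon$ into the lattice sum over $A$, and use $q-1=-1$ in $\mathbb{C}_\infty$. Your explicit justification of the regrouping, via the unconditional convergence of $\varrho$ from Theorem \ref{thmConvergence} and case (iii) of Theorem \ref{theorem(P)}, is a point the paper leaves implicit.
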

\begin{proof}
     We have
\begin{align*}
    \varrho(z)
    &=\sum_{\gamma\in G}\left(\frac{cz+d}{az+b}\frac{\det(\gamma)}{(cz+d)^2}\right)^k\\
    &=\sum_{\gamma\in G}\frac{\det(\gamma)^k}{(\gamma z)^kj_\gamma(z)^{2k}}\\
    &=\sum_{[\gamma]\in K\backslash G}\sum_{\gamma'\in K}\frac{\det(\gamma'\gamma)^k}{(\gamma'\gamma z)^{k}j_{\gamma'\gamma}(z)^{2k}}.
\end{align*}
Writing $\gamma'=\begin{pmatrix}
\alpha  & a \\
0 & 1
\end{pmatrix}$, one computes $\det(\gamma'\gamma) = \alpha\det(\gamma)$, $j_{\gamma'\gamma}(z) = j_{\gamma'}(\gamma z) j_\gamma(z)=j_\gamma(z)$, and $\gamma'\gamma z = \alpha\gamma z + a$. Therefore
\begin{align*}
    \varrho(z)&=\sum_{[\gamma]\in K\backslash G}\sum_{\gamma'\in K}\frac{\alpha^k\det(\gamma)^k}{(\alpha\gamma z+a)^{k} j_{\gamma}(z)^{2k}}\\
    &=\sum_{[\gamma]\in K\backslash G}\frac{\det(\gamma)^k}{j_{\gamma}(z)^{2k}}\sum_{\alpha\in \mathbb{F}_q^\times}\sum_{a\in A}\frac{1}{(\gamma z+\alpha^{-1} a)^{k}}\\
    &=-\sum_{[\gamma]\in K\backslash G}\frac{\det(\gamma)^k}{j_{\gamma}(z)^{2k}}\sum_{a\in A}\frac{1}{(\gamma z+a)^{k}}\\
    &=-P_{2k,k}(z)
\end{align*}
as claimed.
\end{proof}

\begin{remark}
    The equality $\varrho=-P_{2k,k}$ does not guarantee that $\varrho$
    is nonzero. Indeed, Petrov's nonvanishing criterion
    \cite[Theorem 4.2]{Petrov2015} requires $n\leqslant k/(q+1)$,
    which for $P_{2k,k}$ amounts to $k\leqslant 2k/(q+1)$. This condition is never satisfied, so the
    criterion does not apply. In fact, $\varrho$ does vanish in some cases: if $k\equiv 0\mod{q-1}$ and $2k<q^2-1$, then
    $M_{2k,0}^1(G)=0$ so $\varrho=0$.
\end{remark}

\subsection{A family of Poincaré series for
$\Gamma(\mathfrak{n})$}\label{subsectionFamily}

Let $\mathfrak{n}\in A$ be monic with $d=\deg(\mathfrak{n})\geqslant 1$,
and set $r\coloneqq (q+1)/2$; since $q\geqslant 3$ is odd, $r\geqslant 2$
is an integer. We fix an exponent $\kappa$ with
$1\leqslant\kappa\leqslant q-1$ and a decomposition
\begin{equation*}
    \mathbb{P}^1(\mathbb{F}_q)=\coprod_{i=1}^{r}\{a_i,b_i\}
\end{equation*}
into $r$ pairs. For $\gamma\in\Gamma(\mathfrak{n})$ put
\begin{equation}\label{equationPsiGamma}
    \Psi_\gamma(z)\coloneqq
    \prod_{i=1}^{r}u(z;\gamma a_i,\gamma b_i)^{\kappa},
\end{equation}
and consider
\begin{equation*}
    f=f_\kappa^{\mathfrak{n}}
    \coloneqq\sum_{\gamma\in\Gamma(\mathfrak{n})}\Psi_\gamma .
\end{equation*}
This is a Poincaré series with
$r\geqslant 2$ pairs, all exponents $k_i=\kappa$ and $H=\{I\}$;
thus $k=\kappa r$ and the weight is $2k=\kappa(q+1)$. As the pairs
$\{a_i,b_i\}$ are pairwise disjoint we have
$\bigcap_{i=1}^{r}\{a_i,b_i\}=\emptyset$, so condition
\textnormal{\textbf{(P)}} holds by Theorem \ref{theorem(P)}(ii), and
$f\in M_{\kappa(q+1)}^{1}(\Gamma(\mathfrak{n}))$ by Corollary
\ref{coroCuspForm}. Moreover $\bigcap_{i=1}^{r}[a_i\to b_i]=\{v_0\}$
and the stabilizer of $v_0$ in $\Gamma(\mathfrak{n})$ is trivial
(Lemma \ref{LemmaQuotientGraphGammaN}), so $f\neq 0$ by
Theorem \ref{theoremIntersectionVertex}.

Let $\Gamma_\infty$ denote the stabilizer of $\infty$ in
$\Gamma(\mathfrak{n})$; it consists of the unipotent
matrices $\begin{psmallmatrix}1&b\\0&1\end{psmallmatrix}$ with
$b\in\mathfrak{n}A$. We abbreviate
$t\coloneqq t^{\Gamma(\mathfrak{n})}=t_{\mathfrak{n}A}$ and write
$f(z)=\sum_{n\geqslant 1}c_n(f)\,t(z)^{n}$ for the $t$-expansion of $f$
at $\infty$ as in \eqref{equationTExpansion}, valid for $|z|_i$ large
enough. Finally set
\begin{equation*}
    C_0\coloneqq \prod_{\substack{1\leqslant i\leqslant r\\
    \infty\not\in\{a_i,b_i\}}}(b_i-a_i)\in\mathbb{F}_{q}^\times,
\end{equation*}
and $\varepsilon\coloneqq e_{\mathfrak{n}A}(1)\in\mathbb{C}_\infty$; note that $\varepsilon\neq 0$ since
$1\not\in \mathfrak{n}A=\ker e_{\mathfrak{n}A}$.

In general, determining the order of vanishing at $\infty$ of a given Poincaré series is a difficult problem. However, it turns out that the order of vanishing at $\infty$ of $f=f_\kappa^{\mathfrak{n}}$ can be computed. The rest of this subsection is devoted to the proof of the following result.

\begin{theorem}\label{theoremFisC0h}
    For $1\leqslant n\leqslant \kappa( q+1)-1$, we have 
    \begin{equation*}
    c_n(f)=
    \begin{cases}
        C_0^\kappa\,\varepsilon^{\kappa(q-1)} &\text{if } n=\kappa q,\\
        0 &\text{otherwise}.
    \end{cases}
    \end{equation*}
    In other words, the $t$-expansion of $f$ at the cusp $\infty$ satisfies
    \begin{equation*}
        f(z)=C_0^\kappa\,\varepsilon^{\kappa(q-1)}\,t(z)^{\kappa q}+O(t(z)^{\kappa( q+1)}).
    \end{equation*}
    In particular, $f$ vanishes to order exactly $\kappa q$ at every cusp of $\Gamma(\mathfrak{n})$ represented by an element of $\mathbb{P}^1(\mathbb{F}_q)$.
\end{theorem}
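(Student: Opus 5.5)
Our plan is to compute the $t$-expansion of $f$ at $\infty$ directly, by organizing the sum over $\Gamma(\mathfrak{n})$ according to the double-coset-like structure $\Gamma_\infty\backslash\Gamma(\mathfrak{n})$. We write $f=\sum_{[\gamma]\in\Gamma_\infty\backslash\Gamma(\mathfrak{n})}\sum_{b\in\mathfrak{n}A}\Psi_\gamma(z+b)$. This uses $\Psi_{\tau\gamma}(z)=\Psi_\gamma(z-b)$ for $\tau=\begin{psmallmatrix}1&b\\0&1\end{psmallmatrix}$, since $u(z;a+b,c+b)=u(z-b;a,c)$. We separate the identity coset ($\gamma\in\Gamma_\infty$) from the rest. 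For the identity coset, $\Psi_I(z)=\prod_i u(z;a_i,b_i)^\kappa$, and exactly one pair contains $\infty$. We expand $\Psi_I$ in partial fractions in $z$: it is a rational function whose poles are at the finite points of $\mathbb{P}^1(\mathbb{F}_q)$, each of order at most $\kappa$. Writing $\Psi_I(z)=\sum_{\alpha\in\mathbb{F}_q}\sum_{j\le\kappa}c_{\alpha,j}(z-\alpha)^{-j}$, we obtain $\sum_{b\in\mathfrak{n}A}\Psi_I(z+b)=\sum_{\alpha,j}c_{\alpha,j}G_{j,\mathfrak{n}A}(t(z-\alpha))$. Since $j\le\kappa\le q-1$, each Goss polynomial is $X^j$. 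We also use $\mathbb{F}_q$-linearity: $t(z-\alpha)=t/(1-\alpha\varepsilon t)$ because $e_{\mathfrak{n}A}(z-\alpha)=e(z)-\alpha\varepsilon$. This turns the identity contribution into an explicit power series in $t$.

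The first key step is to show that this identity contribution equals $C_0^\kappa\varepsilon^{\kappa(q-1)}t^{\kappa q}+O(t^{\kappa(q+1)})$. The cleanest route is to avoid partial fractions and substitute instead. Put $w=e_{\mathfrak{n}A}(z)/\varepsilon$. Then $e(z-\alpha)=\varepsilon(w-\alpha)$, and the function $\sum_b\Psi_I(z+b)$ should equal $\varepsilon^{-2k}\cdot$(the same rational expression evaluated in $w$, suitably averaged). More precisely, we apply the identity $\sum_{b}\prod(\ldots)$ through $\sum_{\beta\in\Lambda}(z-\beta)^{-1}$ type relations. Concretely, the sum over $\mathbb{F}_q$-translates of a form with poles in $\mathbb{F}_q$ reduces to $t_{\mathbb{F}_q}$-expansions. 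We then use $t_{\mathbb{F}_q}(w)=1/(w-w^q)$, and the product $\prod_i(b_i-a_i)/((w-a_i)(w-b_i))$ over the pairs yields, up to the constant $C_0$, a multiple of $1/(w^q-w)$ raised to $\kappa$. The underlying fact is that the pairs partition $\mathbb{P}^1(\mathbb{F}_q)$, so the product of the denominators is $w^q-w$. Thus $\Psi_I(w)=\pm C_0^\kappa (w^q-w)^{-\kappa}$ (with $w$ the variable, scaled). Rewriting in $t=1/(\varepsilon w)$ gives $(w^q-w)^{-\kappa}=\varepsilon^{\kappa q}t^{\kappa q}(1-\varepsilon^{q-1}t^{q-1})^{-\kappa}$. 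Combined with the $\varepsilon^{-\ldots}$ normalization from $\mathrm{d}z$ versus $\mathrm{d}w$, this should produce $C_0^\kappa\varepsilon^{\kappa(q-1)}t^{\kappa q}+O(t^{\kappa q+q-1})$. The error term must then be sharpened to $O(t^{\kappa(q+1)})$, presumably because the next terms of the summed-over-$b$ identity vanish by the Goss-polynomial identity. The sign and the power of $\varepsilon$ will be fixed by a careful check at this stage.

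The second step, which we expect to be the main obstacle, is to show that the non-identity cosets contribute only $O(t^{\kappa(q+1)})=O(t^{2k})$. For $\gamma=\begin{psmallmatrix}a&b\\c&d\end{psmallmatrix}$ with $c\ne0$, all points $\gamma a_i,\gamma b_i$ are finite and lie near $a/c$. We would write $u(z;\gamma a_i,\gamma b_i)=\det\gamma\,(\ldots)/((cz+\ldots)(cz+\ldots))$. The product over $i$ is then a rational function of degree $-2k$ at infinity. Summing over $\Gamma_\infty$-translates gives a combination of $G_{j,\mathfrak{n}A}(t(z-x))$ with $j\ge$ the pole orders. Since the total degree is $-2k$, the expansion in $1/z$ starts at $z^{-2k}$, and we would argue that the $\mathfrak{n}A$-periodization of a rational function vanishing to order $2k$ at $\infty$ is $O(t^{2k})$. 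The key lemma here is that if $R(z)=\sum c_{x,j}(z-x)^{-j}$ with $R=O(z^{-m})$, then $\sum_b R(z+b)=O(t^m)$. We would prove this lemma via the generating identity for Goss polynomials, using that $G_{j}(X)$ has lowest-order term of degree related to $j$ and that the moment conditions $\sum_x c_{x,j}x^s=0$ kill the low-order coefficients. Uniformity over infinitely many $\gamma$ is handled by Theorem \ref{thmConvergence}, which gives the needed locally uniform convergence.

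Finally, we treat the other cusps. A cusp represented by $\alpha\in\mathbb{P}^1(\mathbb{F}_q)$ is $\nu\infty$ with $\nu\in\GL_2(\mathbb{F}_q)$. Since $\nu$ normalizes $\Gamma(\mathfrak{n})$ and permutes $\mathbb{P}^1(\mathbb{F}_q)$, the computation in the proof of Corollary \ref{coroCuspForm} shows that $f_\nu$ is a constant multiple of the same type of series for a new partition into pairs. The result at $\infty$ then gives order exactly $\kappa q$.
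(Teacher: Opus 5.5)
\textbf{Verdict.} There is a genuine gap, in your second step. Your treatment of the identity coset and of the other cusps is sound. In fact, with $w=e_{\mathfrak{n}A}(z)/\varepsilon=1/(\varepsilon t)$ one has exactly $\sum_{b\in\mathfrak{n}A}\Psi_I(z-b)=C_0^\kappa\varepsilon^{-\kappa}(w^q-w)^{-\kappa}=C_0^\kappa\varepsilon^{\kappa(q-1)}t^{\kappa q}(1-\varepsilon^{q-1}t^{q-1})^{-\kappa}$. The factor $\varepsilon^{-\kappa}$ comes from $t(z-x)=\varepsilon^{-1}(w-x)^{-1}$, not from any change of differential. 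No sharpening of the error term is needed, because $\kappa q+q-1\geqslant\kappa(q+1)$ when $\kappa\leqslant q-1$.

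\textbf{The key lemma is false.} You claim that if $R=\sum c_{x,j}(z-x)^{-j}=O(z^{-m})$, then $\sum_b R(z+b)=O(t^m)$. This fails, and so does the claim that each coset with $c\neq 0$ contributes $O(t^{\kappa(q+1)})$. The periodization of $(z-y)^{-j}$ is $t(z-y)^j$, and $t(z-y)=t/(1-e_{\mathfrak{n}A}(y)t)$. So the $t$-coefficients are governed by the sums $\sum_x c_{x,j}e_{\mathfrak{n}A}(y_x)^N$, not by the moments $\sum_x c_{x,j}y_x^s$. Since $e_{\mathfrak{n}A}(y)=y+\alpha_1y^q+\alpha_2y^{q^2}+\cdots$ is only $\mathbb{F}_q$-linear, the finitely many moment conditions encoded in $R=O(z^{-m})$ do not kill these sums.

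\textbf{A concrete counterexample.} Take $\kappa=1$, $\mathfrak{n}=T$ and $\gamma=\begin{psmallmatrix}1&0\\T&1\end{psmallmatrix}$.
\begin{itemize}
\item Then $\Psi_\gamma=K/\prod_{x\in\mathbb{P}^1(\mathbb{F}_q)}(z-\gamma x)=\sum_x c_x/(z-\gamma x)$, with $c_x=-C_0j_\gamma(x)^{q-1}$ and $K=C_0/(T^q-T)$.
\item Hence $\sum_x c_x(\gamma x)^s=0$ for $0\leqslant s\leqslant q-1$, while $\sum_x c_x(\gamma x)^q=K$.
\item The coefficient of $t^2$ in $\sum_b\Psi_\gamma(z-b)$ is $\sum_x c_x e_{TA}(\gamma x)=\alpha_1K+\sum_{i\geqslant 2}\alpha_i\sum_x c_x(\gamma x)^{q^i}$.
\item The terms with $i\geqslant 2$ have strictly smaller absolute value, because $|\gamma x|\leqslant q^{-1}$ and $|\alpha_i|$ decays very fast.
\end{itemize}
So this coefficient is nonzero, although $2<q+1$.

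\textbf{What is missing: cancellation between cosets.} Only sums over several cosets vanish, not individual cosets. So no improved lemma about a single periodized rational function can rescue a coset-by-coset argument. The paper proceeds as follows.
\begin{itemize}
\item First, $\Psi_\gamma$ has only top-order poles: $\Psi_\gamma=\sum_x(-1)^\kappa C_0^\kappa j_\gamma(x)^{\kappa(q-1)}(z-\gamma x)^{-\kappa}$. This comes from the identity $Q(z)^\kappa/(z^q-z)^\kappa=(-1)^\kappa\sum_{x\in\mathbb{F}_q}Q(x)^\kappa/(z-x)^\kappa+\lambda^\kappa$ for $Q=\lambda z^q+\mu$. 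It also yields $\sum_{y\in\mathbb{P}^1(\mathbb{F}_q)}j_\gamma(y)^{\kappa(q-1)}=0$. You did not establish this shape for $\gamma\neq I$.
\item Next, for fixed $c\neq 0$, the cosets $(c,d)$ are grouped into orbits $d\mapsto d+cx_0$, $x_0\in\mathbb{F}_q$. These are realized by conjugating $\gamma$ with $\begin{psmallmatrix}1&x_0\\0&1\end{psmallmatrix}$, which replaces $e_{\mathfrak{n}A}(\gamma x)$ by $e_{\mathfrak{n}A}(\gamma y)-x_0\varepsilon$.
\item Summing over $x_0$ with the power sums $\sum_{x_0\in\mathbb{F}_q}x_0^j$ leaves only terms carrying $\binom{n-1}{\kappa-1}\binom{n-\kappa}{\ell(q-1)}$. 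By Lucas' theorem these vanish mod $p$ unless $\ell(q-1)=n-\kappa$. That last term is killed by $\sum_y j_\gamma(y)^{\kappa(q-1)}=0$.
\end{itemize}

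\textbf{A smaller point.} Passing the coset sum through the $t$-expansion needs a coefficientwise bound. For example, a uniform bound on $\sup_{|z|_i\geqslant R}|S_\gamma|$ combined with the maximum modulus principle on $|t|=\tau$ would do. Locally uniform convergence on the affinoids $U_m$ from Theorem~\ref{thmConvergence} does not give this by itself.
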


\begin{remark}\label{remarkConjectureCase}
    Theorem \ref{theoremFisC0h} proves Conjecture
    \ref{conjectureVanishingExact} for the
    Poincaré series $f_\kappa^{\mathfrak{n}}$. Indeed the weight is
    $2k$ with $k=\kappa r$, and every
    $\alpha\in\mathbb{P}^1(\mathbb{F}_q)$ lies in exactly one pair
    $\{a_i,b_i\}$, so $p_\alpha=\kappa$ and  $2k-p_\alpha=\kappa(q+1)-\kappa=\kappa q$ for every $\alpha$, which is precisely the order of vanishing given
    by Theorem \ref{theoremFisC0h}.
\end{remark}

Recall that the map $\gamma=\begin{pmatrix}a&b\\c&d\end{pmatrix}\mapsto (c,d)$ induces a bijection between $\Gamma_\infty\backslash\Gamma(\mathfrak{n})$ and
    \begin{equation*}
        \{(c,d):c\in \mathfrak{n}A, d\in 1+\mathfrak{n}A,\gcd(c,d)=1\}.
    \end{equation*}
The trivial coset $\Gamma_\infty$ is represented by $(0,1)$, while the other cosets are represented by some pair $(c,d)$ with $c\neq 0$.

\noindent We will use repeatedly the following (easy) fact: for $j\geqslant 0$, 
\begin{equation}\label{equationPowerSums}
    \sum_{x\in\mathbb{F}_{q}}x^{\,j}=
    \begin{cases}
        -1 &\text{if } j\neq 0 \text{ and } (q-1)\mid j,\\
        0 &\text{otherwise}.
    \end{cases}
\end{equation}

\subsubsection{Binomial coefficients modulo $p$}

Recall the convention $\binom{a}{b}=0$ for $a<b$.

\begin{lemma}[Lucas' theorem in base $q$]\label{lemmaLucasBaseQ}
    Let $m,n$ be non-negative integers, and let
    \begin{equation*}
        m=\sum_{i\geqslant 0}m_iq^{i},\qquad n=\sum_{i\geqslant 0}n_iq^{i},
        \qquad 0\leqslant m_i,n_i<q,
    \end{equation*}
    be their base-$q$ expansions (with $m_i=n_i=0$ for $i$ large enough). Then
    \begin{equation}\label{equationLucasBaseQ}
        \binom{m}{n}\equiv\prod_{i\geqslant 0}\binom{m_i}{n_i} \bmod p.
    \end{equation}
    In particular if $n_i>m_i$ for some $i$, then $\binom{m}{n}\equiv 0 \bmod p$.
\end{lemma}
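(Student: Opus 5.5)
The plan is the standard proof of Lucas' theorem via the Frobenius identity in $\mathbb{F}_p[X]$, carried out with base $q=p^e$ in place of base $p$. Everything happens in the polynomial ring $\mathbb{F}_p[X]$; we compare coefficients of $X^n$ in two expansions of $(1+X)^m$.

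The first step is the identity $(1+X)^{q}=1+X^{q}$ in $\mathbb{F}_p[X]$. It holds because Frobenius $y\mapsto y^p$ is a ring endomorphism in characteristic $p$, and iterating it $e$ times gives the same for $y\mapsto y^q$. By induction on $i$ one gets $(1+X)^{q^i}=1+X^{q^i}$. Using the base-$q$ expansion of $m$, this gives
\begin{equation*}
    (1+X)^m=\prod_{i\geqslant 0}\big((1+X)^{q^i}\big)^{m_i}=\prod_{i\geqslant 0}(1+X^{q^i})^{m_i}=\prod_{i\geqslant 0}\sum_{j_i=0}^{m_i}\binom{m_i}{j_i}X^{j_iq^i}.
\end{equation*}

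The second step is to expand this product. A monomial in the expansion is indexed by a tuple $(j_i)$ with $0\leqslant j_i\leqslant m_i<q$, and it contributes to $X^{\sum j_iq^i}$. Since each $j_i<q$, the sum $\sum_i j_iq^i$ is a base-$q$ expansion. By uniqueness of base-$q$ expansions, $X^n$ arises from at most one tuple, namely $j_i=n_i$ for all $i$. This tuple occurs exactly when $n_i\leqslant m_i$ for every $i$.

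So the coefficient of $X^n$ is $\prod_i\binom{m_i}{n_i}$, with the convention that a factor is $0$ when $n_i>m_i$. On the other side, the coefficient of $X^n$ in $(1+X)^m$ is $\binom{m}{n}$ reduced modulo $p$, including the case $n>m$, where both sides vanish. Comparing the two coefficients gives \eqref{equationLucasBaseQ}, and the "in particular" statement follows at once.

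There is no real obstacle here. The only point needing care is the uniqueness-of-digits argument, since it is what guarantees that no two distinct tuples $(j_i)$ contribute to the same monomial $X^n$.
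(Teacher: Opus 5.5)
Your proof is correct. It differs from the paper's mainly in what it takes as given.

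The paper cites Lucas' theorem in base $p$ (Fine) and deduces the base-$q$ version by regrouping. Each base-$q$ digit $m_i$ (resp.\ $n_i$) is a block of $e$ consecutive base-$p$ digits of $m$ (resp.\ $n$). Applying the base-$p$ theorem a second time, to $\binom{m_i}{n_i}$ itself, shows that the product of the $e$ factors from that block is congruent to $\binom{m_i}{n_i}$ modulo $p$.

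You instead run the generating-function argument directly in base $q$. You use that the $q$-power Frobenius is a ring endomorphism of $\mathbb{F}_p[X]$, so $(1+X)^{q^i}=1+X^{q^i}$. You then use uniqueness of base-$q$ digits to isolate the coefficient of $X^n$.

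Your route is self-contained and avoids the recombination step, which the paper only sketches in one clause. It also handles the degenerate cases $n_i>m_i$ and $n>m$ transparently, through the convention $\binom{a}{b}=0$ for $a<b$ that the paper fixes just before the lemma. The paper's route is shorter on the page because it outsources the real content to the classical base-$p$ statement. That statement's standard proof is precisely your argument with $q=p$.
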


\begin{proof}
    For $q=p$, this is \cite[Theorem 1]{Fine1947}. The general case
    follows by writing $q=p^{e}$ and applying it in base $p$, the
    $e$ factors attached to a given $i$ recombining into
    $\binom{m_i}{n_i}$.
\end{proof}

\begin{corollary}\label{corLucas}
    Let $1\leqslant\kappa\leqslant q-1$ and let $n,\ell$ be integers with $\kappa\leqslant n\leqslant\kappa(q+1)-1$, $\ell\geqslant 1$ and $\ell(q-1)<n-\kappa$.
    Then
    \begin{equation*}
        \binom{n-1}{\kappa-1}\binom{n-\kappa}{\ell(q-1)}\equiv 0 \bmod p .
    \end{equation*}
\end{corollary}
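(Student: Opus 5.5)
The plan is to apply Lemma \ref{lemmaLucasBaseQ} to both binomial coefficients after writing every number involved in base $q$. The key point is that all of them are smaller than $q^2$, so each has at most two base-$q$ digits. We assume both factors are nonzero modulo $p$ and derive a contradiction with the strict inequality $\ell(q-1)<n-\kappa$.

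First we handle $\binom{n-1}{\kappa-1}$. Since $\kappa\leqslant q-1$, we have $n-1\leqslant \kappa(q+1)-2\leqslant q^2-3$. So we can write $n-1=aq+b$ with $0\leqslant a,b\leqslant q-1$. The number $\kappa-1<q$ is a single digit. By \eqref{equationLucasBaseQ}, $\binom{n-1}{\kappa-1}\equiv\binom{b}{\kappa-1}\bmod p$, and this is nonzero only if $b\geqslant \kappa-1$. We assume $b\geqslant\kappa-1$ from now on. Two consequences follow.
\begin{itemize}
\item Subtracting $\kappa-1$ causes no borrow, so $n-\kappa=aq+b'$ with $b'\coloneqq b-\kappa+1\in[0,q-\kappa]$.
\item From $aq+b\leqslant \kappa q+\kappa-2$ and $b\geqslant\kappa-1$ we get $a\leqslant\kappa-1$, since $a=\kappa$ would force $b\leqslant\kappa-2$.
\end{itemize}

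Next we handle $\binom{n-\kappa}{\ell(q-1)}$. From $\ell(q-1)<n-\kappa\leqslant\kappa q-1<q(q-1)$ we get $1\leqslant\ell\leqslant q-1$. The base-$q$ expansion is therefore
\begin{equation*}
\ell(q-1)=(\ell-1)q+(q-\ell),
\end{equation*}
with both digits in $[0,q-1]$. By Lucas again, the factor is nonzero modulo $p$ only if $\ell-1\leqslant a$ and $q-\ell\leqslant b'$. We combine these with the two consequences above. First, $\ell-1\leqslant a\leqslant\kappa-1$ gives $\ell\leqslant\kappa$. Second, $q-\ell\leqslant b'\leqslant q-\kappa$ gives $\ell\geqslant\kappa$. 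Hence $\ell=\kappa$, and then $a=\kappa-1$ and $b'=q-\kappa$. This means $n-\kappa=(\kappa-1)q+(q-\kappa)=\kappa(q-1)=\ell(q-1)$, which contradicts $\ell(q-1)<n-\kappa$. So at least one of the two factors vanishes modulo $p$, which proves the claim.

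The only delicate step is the bookkeeping of digits. We must confirm that no borrow occurs when computing $n-\kappa$, which is why the case $b\geqslant\kappa-1$ is split off first. We must also confirm that $\ell\leqslant q-1$, so that $\ell(q-1)$ has the two-digit form displayed above. Both of these follow from the stated bounds on $n$ and $\kappa$.
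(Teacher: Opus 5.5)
Your proposal is correct and follows essentially the same route as the paper: assume both factors are nonzero, apply Lucas in base $q$ to the two-digit expansions $n-1=aq+b$ and $\ell(q-1)=(\ell-1)q+(q-\ell)$, and squeeze out $\ell=\kappa$, $n-\kappa=\kappa(q-1)$, which contradicts the strict inequality. The only cosmetic differences are that you get $\ell\leqslant q-1$ directly from $n-\kappa\leqslant\kappa q-1$ (the paper obtains the weaker $\ell\leqslant q$), and you use $b'\leqslant q-\kappa$ where the paper first deduces $B=q-1$.
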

\begin{proof} 
    Assume both binomial coefficients $\binom{n-1}{\kappa-1}$ and $\binom{n-\kappa}{\ell(q-1)}$ are
    non-zero modulo $p$; we derive a contradiction.

    \noindent We have $n-1\leqslant \kappa(q+1)-2\leqslant q^2-3<q^{2}$, so we can write $n-1=Aq+B$ with $0\leqslant A,B<q$. We have $\binom{n-1}{\kappa-1}\equiv\binom{A}{0}\binom{B}{\kappa-1}$ by Lemma \ref{lemmaLucasBaseQ}, and $\binom{n-1}{\kappa-1}\not\equiv 0$ forces
    $\kappa-1\leqslant B$. In particular, the base-$q$ expansion of $n-\kappa=(n-1)-(\kappa-1)$ is $Aq+(B-\kappa+1)$.
    Next, $n-1\leqslant\kappa(q+1)-2$ excludes $A\geqslant \kappa$: for $A\geqslant \kappa+1$ this would give $n-1\geqslant(\kappa+1)q$, and for $A=\kappa$ it would give $B\leqslant \kappa-2$, against $\kappa-1\leqslant B$. Hence $A\leqslant \kappa-1\leqslant q-2$, and $n-\kappa=Aq+(B-\kappa+1)\leqslant(q-2)q+(q-1)$. The condition $\ell(q-1)<n-\kappa$ thus forces $\ell\leqslant q$. The base-$q$ expansion of $\ell(q-1)$ is therefore $(\ell-1)q+(q-\ell)$, and $\binom{n-\kappa}{\ell(q-1)}\equiv\binom{A}{\ell-1}\binom{B-\kappa+1}{q-\ell}$. Hence $\ell-1\leqslant A$ and $q-\ell\leqslant B-\kappa+1$.
    
    \noindent Combining, $q-(B-\kappa+1)\leqslant \ell\leqslant A+1\leqslant \kappa$, so $B=q-1$,
    $\ell=\kappa$ and $A=\kappa-1$; but then $\ell(q-1)=\kappa(q-1)=(n-1)-(\kappa-1)=n-\kappa$, contradicting $\ell(q-1)<n-\kappa$.
\end{proof}

\subsubsection{Proof of Theorem \ref{theoremFisC0h}}

Our first task is to determine the partial fraction decomposition of $\Psi_\gamma(z)$. We will need the following lemma.

\begin{lemma}\label{lemmaKey}
    Let $Q(z)=\lambda z^{q}+\mu$ with
    $\lambda,\mu\in\mathbb{C}_\infty$. For $1\leqslant\kappa\leqslant q-1$, we have
    \begin{equation}\label{eqKey}
        \frac{Q(z)^{\kappa}}{(z^{q}-z)^{\kappa}}
        =(-1)^{\kappa}\sum_{x\in\mathbb{F}_q}\frac{Q(x)^{\kappa}}{(z-x)^{\kappa}}
        +\lambda^{\kappa}.
    \end{equation}
\end{lemma}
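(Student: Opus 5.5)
This is a partial fraction decomposition of a rational function in $z$, so the plan is to compare principal parts at each pole and then handle the polynomial part at infinity.

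First, the poles. Since $z^q-z=\prod_{x\in\mathbb{F}_q}(z-x)$ has simple roots, the left-hand side $R(z)\coloneqq Q(z)^\kappa/(z^q-z)^\kappa$ has poles of order at most $\kappa$, located only at the points $x\in\mathbb{F}_q$. Its numerator has degree $\leqslant q\kappa$ and its denominator has degree exactly $q\kappa$, so $R$ is bounded at infinity. Hence $R=c+\sum_x(\text{principal part at }x)$, where the constant $c$ is the limit of $R$ at $z=\infty$. That limit is $\lambda^\kappa$, which gives the constant term in \eqref{eqKey}.

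The real content is showing that the principal part at $x$ is the single term $(-1)^\kappa Q(x)^\kappa/(z-x)^\kappa$, with no lower-order terms $(z-x)^{-j}$ for $j<\kappa$. Write $z=x+w$. Using $x^q=x$:
\begin{equation*}
z^q-z=w^q-w=-w(1-w^{q-1}),\qquad Q(z)=\lambda(x+w^q)+\mu=Q(x)+\lambda w^q .
\end{equation*}
Therefore
\begin{equation*}
R=(-1)^\kappa w^{-\kappa}\,\bigl(Q(x)+\lambda w^q\bigr)^\kappa\,(1-w^{q-1})^{-\kappa}.
\end{equation*}
The factor $\bigl(Q(x)+\lambda w^q\bigr)^\kappa\,(1-w^{q-1})^{-\kappa}$ is a power series in $w$ equal to $Q(x)^\kappa+O(w^{q-1})$. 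Because $\kappa\leqslant q-1$, the correction terms contribute only to $w^{-\kappa+q-1}$ and higher, and $-\kappa+q-1\geqslant 0$. So the only negative power of $w$ is $(-1)^\kappa Q(x)^\kappa w^{-\kappa}$, which is the principal part claimed. This bound is exactly where the hypothesis $\kappa\leqslant q-1$ enters, and it is the one point that needs care.

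Finally, summing the principal parts over $x\in\mathbb{F}_q$ and adding the constant $\lambda^\kappa$ gives \eqref{eqKey}.
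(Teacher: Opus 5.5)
Your proof is correct, but it follows a different route from the paper's.

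The paper argues globally. It sets $M\coloneqq Q(z)-\lambda(z^q-z)=\lambda z+\mu$ and observes that $Q(x)=M-\lambda(z-x)$ for every $x\in\mathbb{F}_q$. It then expands $\sum_{x}Q(x)^\kappa/(z-x)^\kappa$ by the binomial theorem and evaluates each inner sum via the Goss polynomial identity $G_{j,\mathbb{F}_q}(X)=X^j$, i.e.\ $\sum_{x\in\mathbb{F}_q}(z-x)^{-j}=(z-z^q)^{-j}$ for $1\leqslant j\leqslant\kappa$. Recombining with the binomial theorem gives $(-1)^\kappa\bigl(Q(z)^\kappa/(z^q-z)^\kappa-\lambda^\kappa\bigr)$.

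You argue locally instead. The constant comes from the behaviour at $\infty$, and the principal part at each $x\in\mathbb{F}_q$ comes from the Laurent expansion in $w=z-x$. There, additivity of Frobenius makes $z^q-z=w^q-w$ and $Q(z)=Q(x)+\lambda w^q$ deviate from their lowest-order terms only at relative order $w^{q-1}$.

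What your route buys:
\begin{itemize}
\item It is self-contained and uses nothing about Goss polynomials. In fact, the case $\lambda=0$, $\mu=1$ of your computation reproves the identity $\sum_x(z-x)^{-\kappa}=(z-z^q)^{-\kappa}$ that the paper imports.
\item It explains directly why each pole contributes only a single top-order term. That is exactly the form in which the lemma is used for the partial fraction decomposition of $\Psi_\gamma$.
\end{itemize}

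What the paper's route buys: it is a short closed-form computation that never invokes the partial-fraction uniqueness framework, at the cost of relying on the Goss polynomial identity for $\mathbb{F}_q$. The hypothesis $\kappa\leqslant q-1$ enters differently in the two proofs. In yours it is the inequality $q-1-\kappa\geqslant 0$; in the paper's it is the range of $j$ for which $G_{j,\mathbb{F}_q}(X)=X^j$ is used.
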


\begin{proof}
    Let $M\coloneqq Q(z)-\lambda(z^{q}-z)$; using $x^q=x$, one checks that $Q(x)=M-\lambda(z-x)$ for all $x\in\mathbb{F}_q$. Therefore
    \begin{equation*}
        \sum_{x\in\mathbb{F}_q}\frac{Q(x)^{\kappa}}{(z-x)^{\kappa}}
        =\sum_{\ell=0}^{\kappa}\binom{\kappa}{\ell}M^{\kappa-\ell}(-\lambda)^{\ell}
        \sum_{x\in\mathbb{F}_q}\frac{1}{(z-x)^{\kappa-\ell}} .
    \end{equation*}
    For $\ell=\kappa$ the inner sum is $0$. For $\ell<\kappa$ we have
    $1\leqslant\kappa-\ell\leqslant q-1$, so
    $G_{\kappa-\ell,\mathbb{F}_q}(X)=X^{\kappa-\ell}$ and the inner sum is
    \begin{equation*}
        \sum_{x\in\mathbb{F}_q}\frac{1}{(z-x)^{\kappa-\ell}}
        =t_{\mathbb{F}_q}(z)^{\kappa-\ell}
        =\frac{1}{(z-z^q)^{\kappa-\ell}}.
    \end{equation*}
    Hence
    \begin{align*}
        \sum_{x\in\mathbb{F}_q}\frac{Q(x)^{\kappa}}{(z-x)^{\kappa}}&=\frac{1}{(z-z^{q})^{\kappa}}
        \sum_{\ell=0}^{\kappa-1}\binom{\kappa}{\ell}M^{\kappa-\ell}\lambda^{\ell}(z^{q}-z)^{\ell}\\
        &=\frac{(-1)^\kappa}{(z^{q}-z)^{\kappa}}\left(\big(M+\lambda(z^{q}-z)\big)^\kappa-\lambda^\kappa(z^{q}-z)^{\kappa}\right)\\
        &=(-1)^\kappa\left(\frac{Q(z)^{\kappa}}{(z^{q}-z)^{\kappa}}-\lambda^\kappa\right),
    \end{align*}
    which is equivalent to \eqref{eqKey}.
\end{proof}

Recall that $j_\gamma(z)$ is defined as $cz+d$ for $\gamma=\begin{pmatrix}a&b\\c&d\end{pmatrix}\in\GL_2(F_\infty)$ and $z\in\mathbb{C}_\infty$. We extend\footnote{Using projective coordinates, $j_\gamma(z)=cz+d$ can be thought as the value at $(z:1)$ of the homogeneous form $j_\gamma(x:y)=cx+dy$. Note that $c$ is the value at $\infty=(1:0)$ of $j_\gamma(x:y)$.} $j_\gamma$ to a function $\mathbb{P}^1(\mathbb{C}_\infty)\to \mathbb{C}_\infty$ by setting $j_\gamma(\infty)\coloneqq c$. 

\begin{proposition}\label{lemmaPartialFractionDec}
    The partial fraction decomposition of $\Psi_\gamma(z)$ is
    \begin{equation}\label{eqLemmaPartial}
    \Psi_\gamma(z)=\sum_{\substack{x\in\mathbb{P}^1(\mathbb{F}_q)\\\gamma x\neq\infty}}\frac{\rho_x(\gamma)}{(z-\gamma x)^\kappa}
\end{equation}
with
\begin{equation*}
    \rho_x(\gamma)\coloneqq (-1)^\kappa C_0^\kappa\,j_\gamma(x)^{\,\kappa(q-1)}.
\end{equation*}
In particular, 
\begin{equation}\label{eqSumJGamma}
    \sum_{\substack{x\in\mathbb{P}^1(\mathbb{F}_q)\\\gamma x\neq\infty}}j_\gamma(x)^{\,\kappa(q-1)}=0.
\end{equation}
\end{proposition}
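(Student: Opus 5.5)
We plan to avoid expanding the product directly and instead use the pullback identity \eqref{equationS} to reduce to $\gamma=I$. Set $w\coloneqq \gamma^{-1}z$. Then \eqref{equationS} gives
\begin{equation*}
\Psi_\gamma(z)\,(\mathrm{d}z)^\kappa\cdots
\end{equation*}
or, more precisely, $\prod_i s(z;\gamma a_i,\gamma b_i)^{\kappa}=(\gamma^{-1})^*\prod_i s(\,\cdot\,;a_i,b_i)^{\kappa}$, so $\Psi_\gamma(z)(\mathrm{d}z)^{r\kappa}$ is the pullback under $z\mapsto w$ of $\Psi_I(w)(\mathrm{d}w)^{r\kappa}$.

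\textbf{Step 1: the case $\gamma=I$.} For a pair with $\infty\notin\{a_i,b_i\}$ we have $u(w;a_i,b_i)=(b_i-a_i)/((w-a_i)(w-b_i))$. For the unique pair containing $\infty$, $u$ equals $\pm 1/(w-x_0)$. Taking the product gives
\begin{equation*}
\Psi_I(w)=\pm C_0^\kappa\,(w^q-w)^{-\kappa}.
\end{equation*}
The sign depends on whether $\infty$ is labelled $a_i$ or $b_i$, and I will track it through the convention in $C_0$. Lemma \ref{lemmaKey} with $\lambda=0$, $\mu=1$ then gives
\begin{equation*}
(w^q-w)^{-\kappa}=(-1)^\kappa\sum_{x\in\mathbb{F}_q}(w-x)^{-\kappa}.
\end{equation*}
This is exactly \eqref{eqLemmaPartial} for $\gamma=I$, since $j_I(x)=1$ for $x\in\mathbb{F}_q$ and $I\infty=\infty$ is excluded.

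\textbf{Step 2: rewriting as a sum of powers of $1$-forms.} The key observation is that Step 1 expresses the $\kappa$-form $\Psi_I(w)(\mathrm{d}w)^\kappa$ up to the factor $\pm(-1)^\kappa C_0^\kappa$. Here the weight is $2k=\kappa(q+1)$, but I work with $\kappa$-th powers of single $1$-forms. The form becomes $\sum_{x\in\mathbb{F}_q}s(w;\infty,x)^\kappa$, because $s(w;\infty,x)=\mathrm{d}w/(w-x)$. Caveat: one must check that the total $k$-form is $(\mathrm{d}w)^{k}$ times this function. Since the identity is one of functions and pullback multiplies by $(\mathrm{d}w/\mathrm{d}z)^{k}$, I will instead pull back the equality $\Psi_I(w)=\pm(-1)^\kappa C_0^\kappa\sum_x u(w;\infty,x)^\kappa$ after multiplying by $(\mathrm{d}w)^{\kappa}$ and by the remaining factor $(\mathrm{d}w)^{k-\kappa}$. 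The latter is harmless once it is expressed via \eqref{equationS} for the other pairs. If this bookkeeping fails, the fallback is the direct substitution $w=\gamma^{-1}z$ combined with the identity
\begin{equation*}
\gamma^{-1}z-x=\frac{\det(\gamma^{-1})\,(z-\gamma x)\,j_\gamma(x)}{j_{\gamma^{-1}}(z)}.
\end{equation*}
In either form, \eqref{equationS} turns each $u(w;\infty,x)$ into $u(z;\gamma\infty,\gamma x)=\frac{1}{z-\gamma x}-\frac{1}{z-\gamma\infty}$.

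\textbf{Step 3: partial fractions.} When $c=0$ (so $\gamma\infty=\infty$), the second term vanishes, and $j_\gamma(x)=d$ with $d^{q-1}=1$ after reduction, which gives the claim at once. When $c\neq 0$, write
\begin{equation*}
\frac{1}{z-\gamma x}-\frac{1}{z-\gamma\infty}=\frac{\gamma x-\gamma\infty}{(z-\gamma x)(z-\gamma\infty)},\qquad \gamma x-\gamma\infty=-\frac{\det\gamma}{c\,j_\gamma(x)}.
\end{equation*}
I then expand each $\kappa$-th power in partial fractions at $\gamma x$ and $\gamma\infty$. The principal part at $\gamma x$ should have only the top term, with coefficient $(\gamma x-\gamma\infty)^\kappa\cdot(\gamma x-\gamma\infty)^{-\kappa}$ times the corrections, and all the contributions at the pole $\gamma\infty$ must cancel.

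This cancellation is the main obstacle. I would attack it by summing over $x\in\mathbb{F}_q$ with the power-sum identity \eqref{equationPowerSums}, using that $j_\gamma(x)=cx+d$ is affine in $x$ and that every exponent involved is at most $2\kappa\leqslant 2q-2$. A cleaner alternative I would try first is to compute the principal part at each pole $\gamma y$, for $y\in\mathbb{P}^1(\mathbb{F}_q)$, directly from the original product. There the other factors are regular, so the leading coefficient is $\lim (z-\gamma y)^\kappa\Psi_\gamma(z)$, which should evaluate to $\rho_y(\gamma)$ via the transformation of $\Psi_I$ near $y$, with $\gamma^{-1}$ absorbing $j_\gamma(y)^{-2\kappa}\cdot j_\gamma(y)^{\ldots}$. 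Vanishing of the lower-order terms would then follow from the symmetry that makes \eqref{eqKey} hold, namely the lower-order terms vanish for $\gamma=I$ and the property is transported by the Möbius change of variables.

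\textbf{Step 4: the identity \eqref{eqSumJGamma}.} This follows from \eqref{eqLemmaPartial} by looking at $z\to\infty$. Each factor $u(z;\gamma a_i,\gamma b_i)$ with both points finite is $O(z^{-2})$, so $\Psi_\gamma=O(z^{-2\kappa})$ or better, because at most one pair contains $\gamma^{-1}\infty$. Meanwhile the right-hand side of \eqref{eqLemmaPartial} is $z^{-\kappa}\sum_x\rho_x(\gamma)+O(z^{-\kappa-1})$. Comparing the $z^{-\kappa}$ coefficients gives $\sum_x\rho_x(\gamma)=0$, and dividing by $(-1)^\kappa C_0^\kappa\neq 0$ yields \eqref{eqSumJGamma}.
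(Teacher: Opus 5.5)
Your Steps~1 and~4 are fine. Step~4 is exactly the paper's argument for \eqref{eqSumJGamma}. You are also right that the sign of $\Psi_I$ depends on whether $\infty$ is $a_1$ or $b_1$; the paper fixes $a_1=\infty$. The gap is in Steps~2--3, which is where all the content lies.

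The main line of Step~2 is wrong: the leftover factor is not harmless. Use $u(\gamma^{-1}z;\infty,x)=j_{\gamma^{-1}}(z)^{2}\,u(z;\gamma\infty,\gamma x)$ and $\mathrm{d}(\gamma^{-1}z)=j_{\gamma^{-1}}(z)^{-2}\mathrm{d}z$, both valid since $\det\gamma=1$, together with $2k-2\kappa=\kappa(q-1)$. What you actually obtain is
\[
\Psi_\gamma(z)=(-1)^\kappa C_0^\kappa\, j_{\gamma^{-1}}(z)^{-\kappa(q-1)}\sum_{x\in\mathbb{F}_q}u(z;\gamma\infty,\gamma x)^{\kappa}.
\]
Here $j_{\gamma^{-1}}(z)=a-cz$ vanishes at $\gamma\infty$. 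When $c\neq0$ the extra factor therefore has a pole of order $\kappa(q-1)$ there, and no rewriting of $(\mathrm{d}w)^{k-\kappa}$ changes its pullback. Your fallback identity is correct, but it only gives
\[
\Psi_\gamma(z)=(-1)^\kappa C_0^\kappa\sum_{x\in\mathbb{F}_q} j_\gamma(x)^{-\kappa}\,j_{\gamma^{-1}}(z)^{-\kappa q}\,(z-\gamma x)^{-\kappa},
\]
which is not yet a partial fraction decomposition.

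Step~3 correctly names what must be shown: only top-order terms at each $\gamma x$, and the right principal part at $\gamma\infty$. But it does not show either. The proposed reason, that the property ``is transported by the M\"obius change of variables'', is not valid as a general principle. At $\gamma y$ the change of variables multiplies $(z-\gamma y)^{-\kappa}$ by the nonconstant unit $j_{\gamma^{-1}}(z)^{-\kappa q}$, whose Taylor coefficients would normally create terms $(z-\gamma y)^{-\kappa+m}$ with $1\leqslant m\leqslant\kappa-1$.

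The missing idea is Frobenius. Since $j_{\gamma^{-1}}(z)^{q}=a^q-c^qz^q$, the factor $j_{\gamma^{-1}}(z)^{-\kappa q}$ is a power series in $(z-\gamma y)^q$ with constant term $j_\gamma(y)^{\kappa q}$. Because $\kappa\leqslant q-1$, all the lower-order terms vanish and the coefficient comes out as $\rho_y(\gamma)$. The point $\gamma\infty$ needs a separate check in the coordinate $1/w$, again using $\kappa\leqslant q-1$.

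The paper does all of this at once by applying Lemma~\ref{lemmaKey} not with $\lambda=0$ but with $Q(w)=j_\gamma(w)^q=c^qw^q+d^q$. Write $\Psi_\gamma(\gamma w)=C_0^\kappa\, j_\gamma(w)^{\kappa}\cdot j_\gamma(w)^{\kappa q}/(w^q-w)^{\kappa}$. The lemma splits the last factor as $(-1)^\kappa\sum_{x}j_\gamma(x)^{\kappa q}/(w-x)^\kappa+c^{\kappa q}$. The identity $w-x=(z-\gamma x)\,j_\gamma(w)\,j_\gamma(x)$ turns the sum into the terms $\rho_x(\gamma)/(z-\gamma x)^\kappa$ for $x\in\mathbb{F}_q$. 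The identity $j_\gamma(w)=-1/(c(z-\gamma\infty))$ turns $C_0^\kappa c^{\kappa q}j_\gamma(w)^\kappa$ into $\rho_\infty(\gamma)/(z-\gamma\infty)^\kappa$. The general form of Lemma~\ref{lemmaKey} exists precisely for this step; using it only at $\lambda=0$ is why your argument stalls.
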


\begin{proof}
    Exactly one of the $2r=q+1$ parameters $a_1,b_1,\ldots,a_r,b_r$ equals
    $\infty$; say $a_1=\infty$. Therefore
    \begin{align*}
        \Psi_I(z)&=\frac{1}{(z-b_1)^\kappa}\prod_{i=2}^r\frac{(b_i-a_i)^\kappa}{(z-a_i)^\kappa(z-b_i)^\kappa}\\
        &=\frac{C_0^\kappa}{\prod_{x\in\mathbb{F}_{q}}(z-x)^\kappa}\\
        &=\frac{C_0^\kappa}{(z^q-z)^\kappa}.
    \end{align*}
Let $w=\gamma^{-1}z\in\Omega$. Applying the formula
\begin{equation}\label{eqnulle}
    u(\gamma w;\gamma a,\gamma b)=\frac{j_\gamma(w)^2}{\det\gamma}u(w;a,b)
\end{equation}
to each of the $r$ factors of $\Psi_\gamma$ and using $\det\gamma=1$ gives
    \begin{align*}
        \Psi_\gamma(z)&=\Psi_\gamma(\gamma w)\\
        &=j_\gamma(w)^{\,\kappa(q+1)}\Psi_I(w)\\
        &=C_0^{\kappa}\,j_\gamma(w)^{\kappa}\,\frac{j_\gamma(w)^{\kappa q}}{(w^q-w)^\kappa}.
    \end{align*}
    As $j_\gamma(w)^{q}=c^{q}w^{q}+d^{q}$, we can apply Lemma \ref{lemmaKey} with $\lambda=c^{q}$ and $\mu=d^q$:
    \begin{align*}
\Psi_\gamma(z)&=C_0^{\kappa}\,j_\gamma(w)^{\kappa}\,\left((-1)^{\kappa}\sum_{x\in\mathbb{F}_q}\frac{j_\gamma(x)^{\kappa q}}{(w-x)^{\kappa}}
        +c^{\kappa q}\right)\\
        &=(-1)^{\kappa}C_0^{\kappa}\sum_{x\in\mathbb{F}_q}
        \frac{j_\gamma(w)^{\kappa}j_\gamma(x)^{\kappa q}}{(w-x)^{\kappa}}
        +C_0^{\kappa}c^{\kappa q}j_\gamma(w)^{\kappa}.
    \end{align*}
    For $x\in\mathbb{F}_q$, we have $w-x=(z-\gamma x)\,j_\gamma(w)\,j_\gamma(x)$; thus
    \begin{align*}
        \Psi_\gamma(z)&=(-1)^{\kappa}C_0^{\kappa}\sum_{x\in\mathbb{F}_q}
        \,\frac{j_\gamma(x)^{\kappa (q-1)}}{(z-\gamma x)^{\kappa}}
        +C_0^{\kappa}c^{\kappa q}j_\gamma(w)^{\kappa}\\
        &=\sum_{x\in\mathbb{F}_q}
        \,\frac{\rho_x(\gamma)}{(z-\gamma x)^{\kappa}}
        +C_0^{\kappa}c^{\kappa q}j_\gamma(w)^{\kappa}.
    \end{align*}
    If $c=0$ the last term vanishes, and the term $x=\infty$ in \eqref{eqLemmaPartial} is excluded since $\gamma\infty=\infty$.
    If $c\neq 0$, then
    $z-\gamma\infty=-\big(c\,j_\gamma(w)\big)^{-1}$, so the last term equals
    \begin{equation*}
        C_0^{\kappa}c^{\kappa(q-1)}\frac{(-1)^{\kappa}}{(z-\gamma\infty)^{\kappa}}=\frac{\rho_\infty(\gamma)}{(z-\gamma\infty)^{\kappa}}.
    \end{equation*}
    In both cases, \eqref{eqLemmaPartial} is satisfied.

    \noindent As $|z|\to\infty$, a factor $u(z;a,b)^{\kappa}$ is $O(z^{-2\kappa})$ when
    $a$ and $b$ are both finite, and $O(z^{-\kappa})$ when one of them is
    $\infty$. At most one of the $q+1$ points $\gamma a_1,\gamma b_1,\ldots,\gamma a_r,\gamma b_r$ equals $\infty$, so at most one of
    the $r$ factors of \eqref{equationPsiGamma} is of the second kind, and
    \begin{equation*}
        \Psi_\gamma(z)=O\big(z^{-2\kappa(r-1)-\kappa}\big)
        =O\big(z^{-\kappa q}\big).
    \end{equation*}
    On the other hand, for $|z|$ large enough, by
    \eqref{eqLemmaPartial},
    \begin{align*}
        z^{\kappa}\Psi_\gamma(z)
        &=\sum_{\substack{x\in\mathbb{P}^1(\mathbb{F}_q)\\\gamma x\neq\infty}}
        \rho_x(\gamma)\left(\frac{z}{z-\gamma x}\right)^{\kappa}\\
        &=\sum_{\substack{x\in\mathbb{P}^1(\mathbb{F}_q)\\\gamma x\neq\infty}}
        \rho_x(\gamma)\sum_{n\geqslant 0}\binom{n+\kappa-1}{n}
        \left(\frac{\gamma x}{z}\right)^{n}\\
        &=\sum_{n\geqslant 0}
        \binom{n+\kappa-1}{n}\left(\sum_{\substack{x\in\mathbb{P}^1(\mathbb{F}_q)\\\gamma x\neq\infty}}
        \rho_x(\gamma)\,(\gamma x)^{n}\right)z^{-n}.
        \end{align*}
    Since $z^{\kappa}\Psi_\gamma(z)=O\big(z^{-\kappa(q-1)}\big)$, the coefficients of $z^{-n}$ vanish for
    $n<\kappa(q-1)$. In particular the coefficient of $z^{0}$ vanishes:
    \begin{equation*}
        \sum_{\substack{x\in\mathbb{P}^1(\mathbb{F}_q)\\\gamma x\neq\infty}}\rho_x(\gamma)=(-1)^\kappa C_0^\kappa\sum_{\substack{x\in\mathbb{P}^1(\mathbb{F}_q)\\\gamma x\neq\infty}}j_\gamma(x)^{\,\kappa(q-1)}=0,
    \end{equation*}
    which implies \eqref{eqSumJGamma}.
\end{proof}

We now express $f$ as a series over $\Gamma_\infty\backslash\Gamma(\mathfrak{n})$ of $\mathfrak{n}A$-periodic functions. We will need the following version of the maximum modulus principle.

\begin{lemma}\label{lemmaGaussNorm}
    Let $\tau\in q^\mathbb{Q}=|\mathbb{C}_\infty^\times|$. If a power series $S(t)=\sum_{n\geqslant 0}a_nt^{n}$ with $a_n\in \mathbb{C}_\infty$ converges on the disc $\{t\in \mathbb{C}_\infty:|t|\leqslant\tau\}$, then
    \begin{equation*}
        \sup_{|t|=\tau}|S(t)|=\sup_{|t|\leqslant\tau}|S(t)|=\max_{n\geqslant 0}|a_n|\tau^{n}.
    \end{equation*}
\end{lemma}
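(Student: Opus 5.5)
We need to prove that for $S(t)=\sum_{n\ge0}a_nt^n$ converging on the closed disc of radius $\tau\in|\mathbb{C}_\infty^\times|$,
\[
\sup_{|t|=\tau}|S(t)|=\sup_{|t|\le\tau}|S(t)|=\max_n|a_n|\tau^n .
\]
Set $M\coloneqq\max_n|a_n|\tau^n$. Since the series converges at a point $t_0$ with $|t_0|=\tau$ (such a point exists because $\tau\in|\mathbb{C}_\infty^\times|$), we have $|a_n|\tau^n\to0$, so the maximum exists and is attained for only finitely many $n$. The plan is to show
\[
\sup_{|t|=\tau}|S(t)|\;\le\;\sup_{|t|\le\tau}|S(t)|\;\le\; M\;\le\;\sup_{|t|=\tau}|S(t)| .
\]
The first inequality is trivial. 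The second follows from the ultrametric inequality termwise: $|a_nt^n|\le|a_n|\tau^n\le M$ for $|t|\le\tau$, and the ultrametric inequality passes to the limit of partial sums.

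The real content is the reverse inequality, namely that $|S(t)|=M$ for some $t$ with $|t|=\tau$. We may assume $M>0$, since otherwise $S=0$. First normalize: choose $c\in\mathbb{C}_\infty$ with $|c|=\tau$, and choose $a_{n_0}$ attaining the maximum. Replace $S(t)$ by $T(x)\coloneqq a_{n_0}^{-1}c^{-n_0}S(cx)$. Then $T(x)=\sum_n b_nx^n$ with $b_n=a_nc^n/(a_{n_0}c^{n_0})$, so $\max_n|b_n|=1$, every $b_n\in R$, and $b_n\to0$. This reduces the claim to $\tau=1$, $M=1$: we must find $x$ with $|x|=1$ and $|T(x)|=1$.

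Next reduce modulo $\mathfrak{m}$. Because $b_n\to0$, only finitely many $b_n$ are units, so the reduction $\overline{T}(X)=\sum_n\overline{b_n}X^n$ is a nonzero polynomial in $\overline{\mathbb{F}_q}[X]$. Since $\overline{\mathbb{F}_q}$ is infinite, pick $\xi\in\overline{\mathbb{F}_q}^\times$ with $\overline{T}(\xi)\ne0$, and lift it to $x\in R^\times$, for instance a root of unity of order prime to $p$. Then $T(x)\in R$, and its reduction is $\overline{T}(\xi)\ne0$; here reduction commutes with the convergent sum because the tail terms lie in $\mathfrak{m}$ and all partial sums lie in $R$. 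Hence $|T(x)|=1$ with $|x|=1$, which gives the claim after undoing the scaling.

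The main obstacle is this last step: justifying that the reduction of the infinite sum equals the polynomial $\overline{T}$ evaluated at $\xi$. It needs only the observation that all but finitely many terms have $|b_nx^n|<1$, together with the fact that $\mathfrak{m}$ is closed under convergent sums, since it is an open and closed ball.
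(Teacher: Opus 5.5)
Your proposal is correct and follows essentially the same route as the paper's proof. Both normalize by a point of absolute value $\tau$ and a coefficient attaining the maximum, reduce modulo $\mathfrak{m}$ to a nonzero polynomial over $\overline{\mathbb{F}_q}$, and evaluate at a unit whose reduction is not a root of that polynomial.
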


\begin{proof}
    The inequalities $\sup_{|t|=\tau}|S(t)|\leqslant \sup_{|t|\leqslant\tau}|S(t)|\leqslant \max_{n\geqslant 0}|a_n|\tau^{n}$ are clear. Let $\mu\coloneqq\max_n|a_n|\tau^{n}\geqslant 0$; we now show $\sup_{|t|=\tau}|S(t)|\geqslant \mu$. 
    We are going to exhibit $t_1\in \mathbb{C}_\infty$ such that $|S(t_1)|=\mu$ and $|t_1|=\tau$. If $\mu=0$, then $a_n=0$ for all $n\geqslant 0$ and there is nothing to prove. 
    Suppose $\mu>0$. Since $|a_n|\tau^n\to 0$, the set $\{n\geqslant 0: |a_n|\tau^n=\mu\}$ is nonempty and finite; write $\mu=|a_m|\tau^m$ for some $m\geqslant 0$. Fix $t_0\in \mathbb{C}_\infty^\times$ with $|t_0|=\tau$. 
    For $n\geqslant 0$, set
    \begin{equation*}
        \alpha_n\coloneqq\frac{a_nt_0^{\,n}}{a_mt_0^{\,m}}=a_na_m^{-1}t_0^{n-m}\in \mathbb{C}_\infty.
    \end{equation*}
    We have $|\alpha_n|\leqslant 1$, with equality if and only if $|a_n|\tau^{n}=\mu$. Therefore
    \begin{equation*}
        P(x)\coloneqq
        \sum_{\substack{n\geqslant 0\\|a_n|\tau^{n}=\mu}}\overline{\alpha_n}\,x^{n}
    \end{equation*}
    is a nonzero polynomial in $\overline{\mathbb{F}_q}[x]$. Let  $\zeta\in R^\times$ with $P(\overline{\zeta})\neq 0$.
    Taking $t_1\coloneqq\zeta t_0$, so that $|t_1|=\tau$, we get
    \begin{equation*}
        \frac{S(t_1)}{a_mt_0^{\,m}}=\sum_{n\geqslant 0}\alpha_n\zeta^{n}.
    \end{equation*}
    Note that $\alpha_n\zeta^{n}\in R$ for all $n\geqslant 0$; the reduction of $S(t_1)/(a_mt_0^{\,m})$ modulo $\mathfrak{m}$ is $P(\overline{\zeta})\neq 0$. Hence $|S(t_1)/(a_mt_0^{\,m})|=1$, that is $|S(t_1)|=\mu$. 
\end{proof}

\begin{proposition}\label{propC2Decomposition} Let $\gamma\in \Gamma(\mathfrak{n})$.
    \begin{enumerate}
        \item[\textnormal{(i)}] The function $S_\gamma(z)\coloneqq\sum_{b\in \mathfrak{n}A}\Psi_\gamma(z-b)$ only depends on the coset $[\gamma]\in \Gamma_\infty\backslash\Gamma(\mathfrak{n})$, and
            \begin{equation*}
                f(z)=\sum_{[\gamma]\in\Gamma_\infty\backslash\Gamma(\mathfrak{n})}
                S_\gamma(z).
            \end{equation*}
        \item[\textnormal{(ii)}] For $|z|_i$ large
            enough, we have
            \begin{equation*}
                S_\gamma(z)=\sum_{n\geqslant 1} c_n(\gamma)\,t(z)^{n}
            \end{equation*}
            with 
            \begin{equation}\label{eqCnGamma}
                c_n(\gamma)\coloneqq\binom{n-1}{\kappa-1}\sum_{\substack{x\in\mathbb{P}^1(\mathbb{F}_q)\\ \gamma x\neq\infty}}\rho_x(\gamma)\,e_{\mathfrak{n}A}(\gamma x)^{n-\kappa}.
            \end{equation}
            In particular, 
            \begin{equation}\label{eqCn(f)}
            c_n(f)=\sum_{[\gamma]\in\Gamma_\infty\backslash\Gamma(\mathfrak{n})}c_n(\gamma).
            \end{equation}
    \end{enumerate}
\end{proposition}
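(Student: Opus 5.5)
For (i), I would start from the right action of $\Gamma_\infty$. For $\beta=\begin{psmallmatrix}1&b\\0&1\end{psmallmatrix}\in\Gamma_\infty$ we have $\Psi_{\beta\gamma}(z)=\prod_i u(z;\gamma a_i+b,\gamma b_i+b)^\kappa=\Psi_\gamma(z-b)$, because $u(z;a+b,b'+b)=u(z-b;a,b')$. As $b$ runs over $\mathfrak{n}A$, summing $\Psi_{\beta\gamma}$ over $\beta\in\Gamma_\infty$ gives exactly $S_\gamma$. Replacing $\gamma$ by $\beta'\gamma$ only reindexes the sum, so $S_\gamma$ depends only on the coset $[\gamma]$. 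The identity $f=\sum_{[\gamma]}S_\gamma$ is then a regrouping of the convergent series $f=\sum_\gamma\Psi_\gamma$. This regrouping is allowed because the series converges locally uniformly in the non-Archimedean sense (Theorem \ref{thmConvergence}): the terms tend to $0$, so any partition into subseries is legitimate. Each $S_\gamma$ also converges, being a subseries.

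For (ii), I would plug in the partial fraction decomposition of Proposition \ref{lemmaPartialFractionDec} and sum over the translates:
\[
S_\gamma(z)=\sum_{\substack{x\in\mathbb{P}^1(\mathbb{F}_q)\\ \gamma x\neq\infty}}\rho_x(\gamma)\sum_{b\in\mathfrak{n}A}\frac{1}{(z-\gamma x-b)^\kappa}.
\]
This termwise rearrangement is justified because each individual $x$-sum converges for $\kappa\geqslant 1$. The case $\kappa=1$ needs care, since there $\sum_b (z-y-b)^{-1}$ converges only conditionally in the Archimedean sense; in the non-Archimedean setting, however, convergence only needs the terms to tend to $0$, which they do. By \eqref{equationGoss}, each inner sum equals $G_{\kappa,\mathfrak{n}A}(t(z-\gamma x))=t(z-\gamma x)^\kappa$, since $\kappa\leqslant q-1$. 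Next I would use $\mathbb{F}_q$-linearity of $e=e_{\mathfrak{n}A}$: with $y=\gamma x$,
\[
t(z-y)=\frac{1}{e(z)-e(y)}=\frac{t}{1-e(y)t},
\qquad\text{so}\qquad
t(z-y)^\kappa=\sum_{m\geqslant0}\binom{m+\kappa-1}{m}e(y)^m t^{m+\kappa},
\]
valid once $|t(z)|<|e(y)|^{-1}$, that is, once $|z|_i$ is large. Collecting the coefficient of $t^n$ with $n=m+\kappa$, and using $\binom{n-1}{n-\kappa}=\binom{n-1}{\kappa-1}$, gives \eqref{eqCnGamma}. Terms with $n<\kappa$ have coefficient zero, which is consistent with the binomial vanishing, so the sum starts at $n\geqslant1$.

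The main obstacle is \eqref{eqCn(f)}, which requires exchanging the sum over cosets with the $t$-expansion. The threshold on $|z|_i$ depends on $\gamma$ through $|e(\gamma x)|$, so no single region works a priori for all $\gamma$. I would instead use Lemma \ref{lemmaGaussNorm}. Fix $\tau$ small and a region where $|t(z)|$ takes the value $\tau$; this is a fixed annulus of $z$ with $|z|_i$ large. The bound $|\Psi_\gamma|\leqslant|z|_i^{-k}$ of Theorem \ref{thmConvergence}, together with the uniform decay of the terms, shows that $\sup_{|t|=\tau}|S_\gamma|\to0$ as $[\gamma]$ varies; this is uniform convergence of $f$ on the corresponding affinoid. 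One has to argue that each $S_\gamma$, viewed as a function of $t$, is given by its power series on the closed disc $|t|\leqslant\tau$. I would do this by analytic continuation in $t$: the $S_\gamma$ are analytic in $t$ on a punctured disc, bounded near $t=0$, and the expansion above shows there is no pole. Lemma \ref{lemmaGaussNorm} then gives $|c_n(\gamma)|\tau^n\leqslant\sup_{|t|=\tau}|S_\gamma|\to0$. Hence $\sum_{[\gamma]}c_n(\gamma)$ converges, and the double series $\sum_{[\gamma]}\sum_n c_n(\gamma)t^n$ can be rearranged. By uniqueness of the $t$-expansion, $c_n(f)=\sum_{[\gamma]}c_n(\gamma)$.
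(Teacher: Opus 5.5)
Your proof is correct. Part (i) and your formal derivation of \eqref{eqCnGamma} match the paper. Where you depart from it is in justifying the interchange behind \eqref{eqCn(f)}, and you do so in two ways.

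First, the uniform region you say is unavailable does exist. Every $y\in F_\infty$ is congruent modulo $\mathfrak{n}A$ to some $v$ with $|v|\leqslant q^{d-1}$. Nonzero elements of $\mathfrak{n}A$ have absolute value $\geqslant q^{d}$, so $|e_{\mathfrak{n}A}(v)|=|v|$. Since $\gamma x\in F$, this gives $|e_{\mathfrak{n}A}(\gamma x)|\leqslant q^{d-1}$ for every $\gamma$ and every $x$ with $\gamma x\neq\infty$. The paper also shows $|t(z)|\leqslant R^{-1}$ on $\{|z|_i\geqslant R\}$, so for $R>q^{d-1}$ the geometric expansion holds on one fixed region for all $\gamma$ at once. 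Your identity-principle argument for Laurent series on the punctured $t$-disc is a correct substitute, just unnecessary. It requires $S_\gamma$ to be a rigid function of $t$ there, i.e.\ \cite[(2.7.3)]{GekelerReversat1996}, which the paper invokes anyway.

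Second, to get $\sup_{|t|=\tau}|S_\gamma|\to 0$, the paper proves the explicit bound $B_\gamma\leqslant R^{-\kappa(r+1)}\max(|c|,|d|)^{-2\kappa}$. It uses Proposition~\ref{propositionValuationS}, Lemma~\ref{lemmaIntersectionConvex} and the $1$-Lipschitz property of $\log_q|\cdot|_i$. You instead appeal to the uniform convergence of Theorem~\ref{thmConvergence}. That works, but it needs a step you skip: the circle $|t|=\tau$ corresponds to $\{|z|_i=\mathrm{const}\}$, which is unbounded in $|z|$ and lies in no $U_m$. To fix this, use the periodicity of $S_\gamma$ and the reduction above. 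Each such $z$ is $\mathfrak{n}A$-equivalent to a point of absolute value at most $\max(|z|_i,q^{d-1})$, hence to a point of a fixed $U_m$. Therefore $\sup_{|t|=\tau}|S_\gamma|\leqslant\sup_{\beta\in\Gamma_\infty}\sup_{U_m}|\Psi_{\beta\gamma}|$, which is small once $[\gamma]$ avoids the finitely many cosets containing the exceptional terms. The bound $|\Psi_\gamma|\leqslant|z|_i^{-k}$ you cite gives no decay in $\gamma$ and plays no role here.

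From there, your use of Lemma~\ref{lemmaGaussNorm} and the summability argument coincide with the paper's. Your version is softer and avoids the tree-geometry estimate. The paper's gives an explicit common domain of validity and a quantitative decay rate in $(c,d)$.
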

\noindent Note that $c_n(\gamma)$ depends only on $[\gamma]$, since $e_{\mathfrak{n}A}(\gamma'\gamma x)=e_{\mathfrak{n}A}(\gamma x+b)=e_{\mathfrak{n}A}(\gamma x)$ and $\rho_x(\gamma'\gamma)=\rho_x(\gamma)$ for
    $\gamma'=\begin{pmatrix}1&b\\0&1\end{pmatrix}\in\Gamma_\infty$.

\begin{proof}
    Equation \eqref{eqnulle} implies $\Psi_{\gamma'\gamma}(z)= \Psi_\gamma(z-b)$ for $\gamma'=\begin{pmatrix}1&b\\0&1\end{pmatrix}\in \Gamma_\infty$.
    In particular, $S_\gamma(z)=\sum_{\gamma'\in \Gamma_\infty}\Psi_{\gamma'\gamma}(z)$ satisfies $S_{\delta\gamma}(z)=S_\gamma(z)$ for all $\delta\in\Gamma_\infty$. Moreover,
    \begin{equation*}
        \sum_{[\gamma]\in\Gamma_\infty\backslash\Gamma(\mathfrak{n})}
        S_\gamma(z)=\sum_{[\gamma]\in\Gamma_\infty\backslash\Gamma(\mathfrak{n})}\sum_{\gamma'\in \Gamma_\infty}\Psi_{\gamma'\gamma}(z)=f(z)
    \end{equation*}
    which proves (i).

    \noindent For (ii), since $\kappa\leqslant q-1$, we have
    $G_{\kappa,\mathfrak{n}A}(X)=X^{\kappa}$, i.e.,
    $\sum_{b\in\mathfrak{n}A}(z-b)^{-\kappa}=t(z)^{\kappa}$. Therefore
    \begin{equation*}
        S_\gamma(z)= \sum_{b\in \mathfrak{n}A} \sum_{\substack{x\in\mathbb{P}^1(\mathbb{F}_q)\\ \gamma x\neq\infty}} \frac{\rho_x(\gamma)}{(z-b-\gamma x)^\kappa}
        =\sum_{\substack{x\in\mathbb{P}^1(\mathbb{F}_q)\\ \gamma x\neq\infty}}\rho_x(\gamma)\,t(z-\gamma x)^\kappa.
    \end{equation*}

    \noindent Let
    $R\in q^{\mathbb{Q}}$ with $R> q^{\,d-1}$. We first bound $t$ on the region $\{|z|_i\geqslant R\}$. On the one hand,
    $|e_{\mathfrak{n}A}(y)|\leqslant q^{\,d-1}$ for every $y\in F_\infty$:
    writing $y/\mathfrak{n}=a+w$ with $a\in A$ and $|w|\leqslant q^{-1}$, we
    have $y=\mathfrak{n}a+v$ with $|v|=|\mathfrak{n}w|\leqslant q^{\,d-1}$
    and $e_{\mathfrak{n}A}(y)=e_{\mathfrak{n}A}(v)$ by periodicity, while
    every nonzero $b\in\mathfrak{n}A$ satisfies $|b|\geqslant q^{d}>|v|$,
    hence $|1-v/b|=1$ and
    $|e_{\mathfrak{n}A}(v)|=|v|\leqslant q^{\,d-1}$. On the other hand, let
    $z\in\Omega$ with
    $|z|_i\geqslant R$. Choose $y\in F_\infty$ with $|z-y|=|z|_i$ and set
    $u\coloneqq z-y$, so that $|u|=|u|_i=|z|_i$. For nonzero
    $b\in\mathfrak{n}A\subset F_\infty$ we have $|u-b|\geqslant|u|_i=|u|$,
    so $|1-u/b|=|b-u|/|b|\geqslant 1$ if $|b|\leqslant|u|$, while
    $|1-u/b|=1$ if $|b|>|u|$; hence
    $|e_{\mathfrak{n}A}(u)|\geqslant|u|\geqslant R$. As
    $|e_{\mathfrak{n}A}(y)|\leqslant q^{\,d-1}<R$ and
    $e_{\mathfrak{n}A}(z)=e_{\mathfrak{n}A}(u)+e_{\mathfrak{n}A}(y)$, we conclude $|e_{\mathfrak{n}A}(z)|=|e_{\mathfrak{n}A}(u)|\geqslant R$, that is $|t(z)|\leqslant R^{-1}$.

    \noindent Since
    $\gamma x\in F\subset F_\infty$ whenever $\gamma x\neq\infty$, we get
    $|e_{\mathfrak{n}A}(\gamma x)\,t(z)|\leqslant q^{\,d-1}/R<1$ for $|z|_i\geqslant R$, so the
    geometric series below converges and
    \begin{align*}
        t(z-\gamma x)^\kappa&= \frac{1}{(e_{\mathfrak{n}A}(z)-e_{\mathfrak{n}A}(\gamma x))^\kappa}\\
        &= \frac{t(z)^\kappa}{(1-e_{\mathfrak{n}A}(\gamma x)t(z))^\kappa}\\
        &= \sum_{n\geqslant 1}\binom{n-1}{\kappa-1}\,e_{\mathfrak{n}A}(\gamma x)^{n-\kappa}t(z)^n .
    \end{align*}
    Summing over $x$ gives
    $S_\gamma(z)=\sum_{n\geqslant 1}c_n(\gamma)\,t(z)^{n}$ on
    $\{|z|_i\geqslant R\}$, as wanted. Finally,
    \begin{align*}
        f(z)&=\sum_{[\gamma]\in\Gamma_\infty\backslash\Gamma(\mathfrak{n})}
                S_\gamma(z)
            =\sum_{n\geqslant 1}\left(\sum_{[\gamma]\in\Gamma_\infty\backslash\Gamma(\mathfrak{n})}
                c_n(\gamma)\right)t(z)^n,
    \end{align*}
    which proves \eqref{eqCn(f)}. It remains to justify the interchange of
    the two sums, for which we show that the family
    $\big(c_n(\gamma)\,t(z)^{n}\big)_{n,[\gamma]}$ is summable on
    $\{|z|_i\geqslant R\}$. As each $S_\gamma$ converges there, this amounts
    to showing that $\sup_{n\geqslant 1}|c_n(\gamma)\,t(z)^{n}|\to 0$ as
    $[\gamma]$ ranges over $\Gamma_\infty\backslash\Gamma(\mathfrak{n})$.

    \noindent Write $\gamma=\begin{pmatrix}a&b\\c&d\end{pmatrix}$ and let
    $|z|_i\geqslant R$. Applying Proposition \ref{propositionValuationS} to
    each factor of $\Psi_\gamma(z)(\mathrm{d}z)^{\kappa r}$ and using
    \eqref{equationOrdKForm},
    \begin{equation*}
        \ord\Psi_\gamma(z)
        =\kappa\sum_{i=1}^{r}
        d\big(\lambda(z),[\gamma a_i\to\gamma b_i]\big)
        +\kappa r\log_q|z|_i.
    \end{equation*}
    As $\bigcap_{i}[a_i\to b_i]=\{v_0\}$, we have
    $\bigcap_{i}[\gamma a_i\to\gamma b_i]=\{\gamma v_0\}$, so
    Lemma \ref{lemmaIntersectionConvex} gives
    \begin{equation*}
        \sum_{i=1}^{r}d\big(\lambda(z),[\gamma a_i\to\gamma b_i]\big)
        \geqslant\max_{1\leqslant i\leqslant r}
        d\big(\lambda(z),[\gamma a_i\to\gamma b_i]\big)
        =d\big(\lambda(z),\gamma v_0\big).
    \end{equation*}
    By Lemma \ref{lemmaPhiP}, $\log_q|\cdot|_i$ is affine of slope $\pm1$
    along the edges of $\mathcal{T}(\mathbb{R})$, hence $1$-Lipschitz, so
    $d(\lambda(z),\gamma v_0)\geqslant\log_q|z|_i-\log_q|z'|_i$ for any
    $z'\in\Omega_{\gamma v_0}$. For $\zeta\in\Omega_{v_0}$ one has
    $|\zeta|=|\zeta|_i=1$ and $|j_\gamma(\zeta)|=\max(|c|,|d|)$, so taking
    $z'=\gamma\zeta$ and using \eqref{equationImaginaryPart} gives
    $\log_q|z'|_i=-2\log_q\max(|c|,|d|)$. Combining these with
    $\log_q|z|_i\geqslant\log_q R$, we obtain
    \begin{equation*}
        \ord\Psi_\gamma(z)\geqslant
        2\kappa\log_q\max(|c|,|d|)+\kappa(r+1)\log_q R .
    \end{equation*}
    As $|z-b'|_i=|z|_i$ for $b'\in\mathfrak{n}A$, the same bound holds for
    each $\Psi_\gamma(z-b')$, so the ultrametric inequality applied to
    $S_\gamma(z)=\sum_{b'\in\mathfrak{n}A}\Psi_\gamma(z-b')$ gives
    \begin{equation*}
        B_\gamma\coloneqq\sup_{|z|_i\geqslant R}|S_\gamma(z)|
        \leqslant \frac{R^{-\kappa(r+1)}}{\max(|c|,|d|)^{2\kappa}}.
    \end{equation*}
    For any $M>0$ only finitely many pairs $(c,d)$ with $c\in\mathfrak{n}A$
    and $d\in 1+\mathfrak{n}A$ satisfy $\max(|c|,|d|)\leqslant M$, so
    $B_\gamma\to 0$ as $[\gamma]$ varies.

    \noindent Enlarging $R$ if necessary, we may assume by
    \cite[(2.7.3)]{GekelerReversat1996} that $t$ maps $\{|z|_i\geqslant R\}$
    onto a pointed disc $\{0<|t|\leqslant\tau\}$; by the above,
    $\tau\leqslant R^{-1}<q^{\,1-d}$. The power series
    $\sum_{n\geqslant 1}c_n(\gamma)t^{n}$ then converges on
    $\{|t|\leqslant\tau\}$, since $|c_n(\gamma)|\leqslant
    (\max_x|\rho_x(\gamma)|)\cdot \,q^{\,(d-1)(n-\kappa)}$ and $q^{\,d-1}\tau<1$,
    and its supremum on $\{|t|=\tau\}$ equals $B_\gamma$. Applying 
    Lemma \ref{lemmaGaussNorm} with $S=S_\gamma$ we get
    $|c_n(\gamma)|\,\tau^{n}\leqslant B_\gamma$ for every $n\geqslant 1$, hence
    $|c_n(\gamma)\,t(z)^{n}|\leqslant B_\gamma\big(|t(z)|/\tau\big)^{n}
    \leqslant B_\gamma$ for $|z|_i\geqslant R$. Therefore
    \begin{equation*}
        \sup_{n\geqslant 1}|c_n(\gamma)t(z)^{n}|\leqslant B_\gamma\to 0,
    \end{equation*}
    as wanted.
\end{proof}

The non-trivial cosets of $\Gamma_\infty\backslash\Gamma(\mathfrak{n})$ are parametrized by the pairs $(c,d)$ with $c\in \mathfrak{n}A\backslash\{0\}$, $d\in 1+\mathfrak{n}A$, and $\gcd(c,d)=1$. Equation \eqref{eqCn(f)} then becomes
    \begin{equation}\label{equationcnf2}
        c_n(f)=c_n(I)+
                \sum_{\substack{c\in \mathfrak{n}A\\ c\neq 0}}\sum_{d\in D_c}c_n(\gamma),
    \end{equation}
    where $\gamma$ is any representative of the coset $(c,d)$ and
    \begin{equation*}
        D_c\coloneqq\{d\in 1+\mathfrak{n}A:\gcd(c,d)=1\}.
    \end{equation*}

\begin{lemma}\label{lemmaTrivialCosetH}
    We have, for every $n\geqslant 1$,
    \begin{equation}\label{equationCnI}
        c_n(I)=
        \begin{cases}
            (-1)^{\kappa-1}\dbinom{n-1}{\kappa-1}C_0^{\kappa}\,
            \varepsilon^{\,n-\kappa}
            &\text{if } n>\kappa \text{ and } (q-1)\mid(n-\kappa),\\[6pt]
            0 &\text{otherwise}.
        \end{cases}
    \end{equation}
    In particular $c_n(I)=0$ for $1\leqslant n\leqslant\kappa(q+1)-1$ except for $n=\kappa q$, for which 
    \begin{equation*}
        c_{\kappa q}(I)
    =C_0^{\kappa}\varepsilon^{\kappa(q-1)}.
    \end{equation*}
\end{lemma}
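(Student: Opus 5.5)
For $\gamma=I$ the plan is to evaluate \eqref{eqCnGamma} directly. We have $j_I(x)=1$ for $x\in\mathbb{F}_q$ and $I\infty=\infty$, so $\rho_x(I)=(-1)^\kappa C_0^\kappa$ for $x\in\mathbb{F}_q$ and the term $x=\infty$ is excluded. Hence
\begin{equation*}
c_n(I)=(-1)^\kappa C_0^\kappa\binom{n-1}{\kappa-1}\sum_{x\in\mathbb{F}_q}e_{\mathfrak{n}A}(x)^{n-\kappa}.
\end{equation*}
The map $e_{\mathfrak{n}A}$ is $\mathbb{F}_q$-linear, so $e_{\mathfrak{n}A}(x)=x\,e_{\mathfrak{n}A}(1)=x\varepsilon$. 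This gives $\sum_{x}e_{\mathfrak{n}A}(x)^{n-\kappa}=\varepsilon^{n-\kappa}\sum_{x\in\mathbb{F}_q}x^{n-\kappa}$.

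If $n<\kappa$, the binomial coefficient $\binom{n-1}{\kappa-1}$ vanishes, so $c_n(I)=0$. If $n\geqslant\kappa$, I apply \eqref{equationPowerSums} with $j=n-\kappa$. The sum equals $-1$ when $n-\kappa>0$ and $(q-1)\mid(n-\kappa)$, and it is $0$ otherwise; in particular it is $0$ for $n=\kappa$, since the exponent is then $0$ and the sum is $q=0$. Multiplying by $(-1)^\kappa$ turns the factor $-1$ into $(-1)^{\kappa-1}$, which gives \eqref{equationCnI}.

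For the range $1\leqslant n\leqslant\kappa(q+1)-1$, write $n-\kappa=\ell(q-1)$ with $\ell\geqslant1$. Then $\ell(q-1)\leqslant\kappa q-1$, which forces $\ell\leqslant\kappa$. The case $\ell=\kappa$ is $n=\kappa q$, and there Lucas's theorem (Lemma \ref{lemmaLucasBaseQ}) gives
\begin{equation*}
\binom{\kappa q-1}{\kappa-1}=\binom{(\kappa-1)q+(q-1)}{\kappa-1}\equiv\binom{\kappa-1}{0}\binom{q-1}{\kappa-1}\bmod p.
\end{equation*}
Since $\binom{q-1}{j}\equiv(-1)^j\bmod p$, this equals $(-1)^{\kappa-1}$, and therefore $c_{\kappa q}(I)=C_0^\kappa\varepsilon^{\kappa(q-1)}$.

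For $1\leqslant\ell<\kappa$ we have $\ell(q-1)<n-\kappa$ fails. Instead I need $\binom{n-1}{\kappa-1}\equiv0$. Here $n-1=\kappa-1+\ell(q-1)=(\ell-1)q+(q-\ell+\kappa-1)$, and the last digit $q-\ell+\kappa-1$ exceeds $q-1$ when $\kappa>\ell$. After the carry, $n-1=\ell q+(\kappa-\ell-1)$ with last digit $\kappa-\ell-1<\kappa-1$. Lucas's theorem then gives $\binom{n-1}{\kappa-1}\equiv0$.

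The only delicate step is this base-$q$ digit bookkeeping; it is routine once written out.
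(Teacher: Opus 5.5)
Your proposal is correct and follows the paper's proof essentially step by step: you evaluate \eqref{eqCnGamma} at $\gamma=I$ using $j_I(x)=1$ and the $\mathbb{F}_q$-linearity of $e_{\mathfrak{n}A}$, apply \eqref{equationPowerSums}, bound $1\leqslant\ell\leqslant\kappa$, and use Lucas's theorem both to kill the cases $\ell<\kappa$ and to evaluate $\binom{\kappa q-1}{\kappa-1}\equiv(-1)^{\kappa-1}$. Two small points to tidy up: the bound $\ell\leqslant\kappa$ relies on $\kappa\leqslant q-1$ (since $(\kappa+1)(q-1)>\kappa q-1$ exactly when $q>\kappa$), which you should state, and the garbled sentence ``we have $\ell(q-1)<n-\kappa$ fails'' should simply say that Corollary \ref{corLucas} does not apply because $\ell(q-1)=n-\kappa$.
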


\begin{proof}
    By \eqref{eqCnGamma} and the $\mathbb{F}_q$-linearity of $e_{\mathfrak{n}A}$,
    \begin{equation*}
        c_n(I)=\binom{n-1}{\kappa-1}\sum_{x\in\mathbb{F}_{q}}
        \rho_x(I)\,e_{\mathfrak{n}A}(x)^{\,n-\kappa}
        =(-1)^{\kappa}\binom{n-1}{\kappa-1}C_0^{\kappa}\,
        \varepsilon^{\,n-\kappa}\sum_{x\in\mathbb{F}_{q}}x^{\,n-\kappa} .
    \end{equation*}
    By \eqref{equationPowerSums} the last sum vanishes unless $n-\kappa$ is a
    positive multiple of $q-1$, in which case it equals $-1$. This proves
    \eqref{equationCnI}.

    \noindent Now let $1\leqslant n\leqslant\kappa(q+1)-1$. By \eqref{equationCnI}, $c_n(I)=0$ if $n$ is not of the form $n=\kappa+\ell(q-1)$ with $\ell\geqslant 1$. If $n$ is of this form, then $\ell(q-1)=n-\kappa\leqslant\kappa(q-1)+\kappa-1<(\kappa+1)(q-1)$, the last inequality because $\kappa-1<q-1$. Therefore $1\leqslant \ell\leqslant\kappa$.

     \noindent For $\ell<\kappa$, the base-$q$ expansion of $n-1$ is $\ell q+(\kappa-1-\ell)$ since
    $0\leqslant\kappa-1-\ell<\kappa-1<q$, so Lemma \ref{lemmaLucasBaseQ} gives
    \begin{equation*}
        \binom{n-1}{\kappa-1}\equiv\binom{\ell}{0}\binom{\kappa-1-\ell}{\kappa-1}
        \equiv 0 \bmod p,
    \end{equation*}
    the second factor vanishing since $\kappa-1-\ell<\kappa-1$. 
    
     \noindent For $\ell=\kappa$, we have $n=\kappa q$. The base-$q$ expansion of
    $\kappa q-1$ is $(\kappa-1)q+(q-1)$, so Lemma \ref{lemmaLucasBaseQ} gives
    \begin{equation*}
        \binom{\kappa q-1}{\kappa-1}
        \equiv\binom{\kappa-1}{0}\binom{q-1}{\kappa-1}
        =\binom{q-1}{\kappa-1} \bmod p .
    \end{equation*}
    As $(1+X)^{q-1}=(1+X^{q})/(1+X)=\sum_{j=0}^{q-1}(-1)^{j}X^{j}$ in
    $\mathbb{F}_p[X]$, we have $\binom{q-1}{j}\equiv(-1)^{j}\bmod p$ for
    $0\leqslant j\leqslant q-1$, whence
    $\binom{\kappa q-1}{\kappa-1}\equiv(-1)^{\kappa-1}$. Substituting into
    \eqref{equationCnI} with $n=\kappa q$,
    \begin{equation*}
        c_{\kappa q}(I)=(-1)^{\kappa-1}(-1)^{\kappa-1}C_0^{\kappa}
        \varepsilon^{\,\kappa(q-1)}=C_0^{\kappa}\varepsilon^{\,\kappa(q-1)} .
        \qedhere
    \end{equation*}
\end{proof}

\begin{proposition}\label{propOrbitVanishing}
    Let $c\in\mathfrak{n}A\setminus\{0\}$. Then for every
    $1\leqslant n\leqslant\kappa(q+1)-1$, the inner sum in \eqref{equationcnf2} vanishes: we have
    \begin{equation*}
        \sum_{d\in D_c}c_n(\gamma)=0,
    \end{equation*}
    where $\gamma$ is any representative of the coset $(c,d)$.
\end{proposition}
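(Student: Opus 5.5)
The plan is to split $D_c$ into orbits of size $q$ on which the inner sum visibly cancels, using only the explicit formula \eqref{eqCnGamma}, the identity \eqref{eqSumJGamma}, power sums \eqref{equationPowerSums}, and Corollary \ref{corLucas}.

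\textbf{Orbits.} Fix $c\in\mathfrak{n}A\setminus\{0\}$. Since $c\in\mathfrak{n}A$, the map $d\mapsto d+cy$ with $y\in\mathbb{F}_q$ preserves $1+\mathfrak{n}A$ and $\gcd(c,\cdot)$. Because $c\neq 0$, the additive group $\mathbb{F}_q$ acts freely on $D_c$ in this way, so $D_c$ splits into orbits of size $q$. The family $(c_n(\gamma))_{[\gamma]}$ is summable (this was established in the proof of Proposition \ref{propC2Decomposition}), so we may regroup, and it suffices to show that each orbit sum $\sum_{y\in\mathbb{F}_q}c_n(\gamma_y)$ vanishes. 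Here $\gamma_y\in\Gamma(\mathfrak{n})$ denotes a representative of the coset $(c,d+cy)$.

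\textbf{Comparing $\gamma_y$ with $\gamma$.} Let $u_y=\begin{psmallmatrix}1&y\\0&1\end{psmallmatrix}$. The matrix $\gamma u_y$ has bottom row $(c,d+cy)$ but in general does not lie in $\Gamma(\mathfrak{n})$. Both $\gamma_y$ and $\gamma u_y$ lie in $\SL_2(A)$ with the same bottom row, so
\[
\gamma_y=\begin{pmatrix}1&s_y\\0&1\end{pmatrix}\gamma u_y\quad\text{for some } s_y\in A.
\]
Consequently $j_{\gamma_y}(x)=j_\gamma(x+y)$ and $\gamma_y x=\gamma(x+y)+s_y$. In particular $\gamma_y x\neq\infty$ for every $x\in\mathbb{P}^1(\mathbb{F}_q)$, because $cx+d\equiv 1\bmod\mathfrak{n}$ and $c\neq 0$. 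Writing $\gamma_y$ and $\gamma$ explicitly (their upper-left entries are $\equiv 1$ and upper-right entries $\equiv 0 \bmod\mathfrak{n}$), one should find that $s_y\equiv \sigma y\bmod \mathfrak{n}A$ for some fixed $\sigma\in A$. By $\mathbb{F}_q$-linearity and periodicity of $e_{\mathfrak{n}A}$ this gives $e_{\mathfrak{n}A}(s_y)=y\eta$ with $\eta=e_{\mathfrak{n}A}(\sigma)$. This is the step I expect to be the main obstacle: pinning down $s_y$ modulo $\mathfrak{n}A$ and checking linearity in $y$.

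\textbf{Expanding and cancelling.} Expanding $e_{\mathfrak{n}A}(\gamma_y x)^{n-\kappa}=\big(e_{\mathfrak{n}A}(\gamma(x+y))+y\eta\big)^{n-\kappa}$ binomially, and using that $x\mapsto x+y$ permutes $\mathbb{P}^1(\mathbb{F}_q)$ (fixing $\infty$), we get
\[
\sum_{y}c_n(\gamma_y)=(-1)^\kappa C_0^\kappa\binom{n-1}{\kappa-1}\sum_{m=0}^{n-\kappa}\binom{n-\kappa}{m}\eta^{\,n-\kappa-m}\Big(\sum_{y\in\mathbb{F}_q}y^{\,n-\kappa-m}\Big)T_m,
\]
where $T_m\coloneqq\sum_{x}j_\gamma(x)^{\kappa(q-1)}e_{\mathfrak{n}A}(\gamma x)^m$ no longer depends on $y$. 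By \eqref{equationPowerSums}, the $y$-sum vanishes unless $n-\kappa-m=\ell(q-1)$ with $\ell\geqslant 1$; if $\ell=0$ it equals $q=0$. Now split by $m$:
\begin{itemize}
\item If $m=0$, then $T_0=\sum_x j_\gamma(x)^{\kappa(q-1)}=0$ by \eqref{eqSumJGamma}.
\item If $m\geqslant 1$ and $\ell\geqslant 1$, then $\ell(q-1)<n-\kappa$. Note $n\geqslant\kappa$ here, since for $n<\kappa$ the factor $\binom{n-1}{\kappa-1}$ is already zero. Corollary \ref{corLucas} then gives $\binom{n-1}{\kappa-1}\binom{n-\kappa}{\ell(q-1)}\equiv 0\bmod p$.
\end{itemize}
Every term therefore vanishes for $1\leqslant n\leqslant\kappa(q+1)-1$. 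Hence each orbit sum is zero, and so is $\sum_{d\in D_c}c_n(\gamma)$.
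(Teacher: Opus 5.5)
Your proposal is correct and follows the paper's proof essentially step for step: the free $\mathbb{F}_q$-action $d\mapsto d+cy$ on $D_c$, the binomial expansion with power sums, Corollary \ref{corLucas} for the terms with $\ell(q-1)<n-\kappa$, and \eqref{eqSumJGamma} for the one remaining term. The step you flagged as the main obstacle is immediate: reducing modulo $\mathfrak{n}$ gives $\gamma u_y\equiv u_y$, hence $s_y\equiv -y \bmod \mathfrak{n}A$. Equivalently, one may take $\gamma_y=u_{-y}\gamma u_y$, which lies in $\Gamma(\mathfrak{n})$ by normality and is exactly the paper's choice, so that $\eta=e_{\mathfrak{n}A}(-1)=-\varepsilon$.
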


\begin{proof}
    For $n<\kappa$ the factor $\binom{n-1}{\kappa-1}$ of \eqref{eqCnGamma}
    vanishes, so $c_n(\gamma)=0$ for every coset and there is nothing to prove;
    assume $n\geqslant\kappa$.

     \noindent Consider the action of $\mathbb{F}_q$ on $D_c$ given by
    $x_0\cdot d=cx_0+d$; it is well defined since $cx_0\in\mathfrak{n}A$ and $\gcd(c,cx_0+d)=\gcd(c,d)=1$, and free because $c\neq 0$. Hence $D_c$ is the disjoint union of orbits of cardinality $q$. 
    Fix $d\in D_c$ and a representative $\gamma$ of the coset $(c,d)$. For $x_0\in\mathbb{F}_q$
    put $\gamma_{x_0}\coloneqq\begin{psmallmatrix}1&-x_0\\0&1\end{psmallmatrix}
    \gamma\begin{psmallmatrix}1&x_0\\0&1\end{psmallmatrix}$. It lies in
    $\Gamma(\mathfrak{n})$ (because $\Gamma(\mathfrak{n})$ is normal in $G$) and represents the coset $(c,cx_0+d)$. Moreover, with the convention $\infty+x_0=\infty$, we have $j_{\gamma_{x_0}}(x)=j_\gamma(x+x_0)$ and $\gamma_{x_0}x=\gamma(x+x_0)-x_0$ for all $x\in\mathbb{P}^1(\mathbb{F}_q)$.
    Note that
    \begin{equation}\label{eqSumOrbits}
        \sum_{d\in D_c}c_n(\gamma)=\sum_{[d]\in \mathbb{F}_q\backslash D_c}\sum_{x_0\in \mathbb{F}_q}c_n(\gamma_{x_0}).
    \end{equation}
    We are going to show that the inner sum in the right-hand side of \eqref{eqSumOrbits} vanishes.

     \noindent Since $c\neq 0$, we have $\gamma_{x_0} x\neq\infty$, and \eqref{eqCnGamma} gives
    \begin{align*}
        c_n(\gamma_{x_0})&= (-1)^\kappa C_0^\kappa\binom{n-1}{\kappa-1}\sum_{x\in\mathbb{P}^1(\mathbb{F}_q)}\,j_{\gamma_{x_0}}(x)^{\,\kappa(q-1)}\,e_{\mathfrak{n}A}(\gamma_{x_0} x)^{n-\kappa}\\
        &=(-1)^\kappa C_0^\kappa\binom{n-1}{\kappa-1}\sum_{x\in\mathbb{P}^1(\mathbb{F}_q)}\,j_\gamma(x+x_0)^{\,\kappa(q-1)}\,e_{\mathfrak{n}A}(\gamma(x+x_0)-x_0)^{n-\kappa} \\
        &=(-1)^\kappa C_0^\kappa\binom{n-1}{\kappa-1}\sum_{y\in\mathbb{P}^1(\mathbb{F}_q)}\,j_\gamma(y)^{\,\kappa(q-1)}\,(e_{\mathfrak{n}A}(\gamma y)-x_0\varepsilon)^{n-\kappa}.
    \end{align*}
    For all $y\in\mathbb{P}^1(\mathbb{F}_q)$,
    \begin{equation*}
        \sum_{x_0\in\mathbb{F}_q}(e_{\mathfrak{n}A}(\gamma y)-x_0\varepsilon)^{n-\kappa}=\sum_{j=0}^{n-\kappa}\binom{n-\kappa}{j}e_{\mathfrak{n}A}(\gamma y)^{n-\kappa-j}
        (-\varepsilon)^{j}\sum_{x_0\in\mathbb{F}_q}x_0^{\,j}.
    \end{equation*}
    By \eqref{equationPowerSums} only the terms with $j$ a positive multiple of $q-1$ survive, say $j=\ell(q-1)$ with $\ell\geqslant 1$. Note that since $q-1$ is even, $(-\varepsilon)^{j}=\varepsilon^{j}$. Thus
    \begin{equation*}
        \sum_{x_0\in\mathbb{F}_q}(e_{\mathfrak{n}A}(\gamma y)-x_0\varepsilon)^{n-\kappa}=-\sum_{\ell\geqslant 1}\binom{n-\kappa}{\ell(q-1)}
        \varepsilon^{\,\ell(q-1)}e_{\mathfrak{n}A}(\gamma y)^{\,n-\kappa-\ell(q-1)},
    \end{equation*}
    where the term indexed by $\ell$ vanishes if $\ell(q-1)>n-\kappa$. Hence the inner sum $\sum_{x_0\in\mathbb{F}_q} c_n(\gamma_{x_0})$ in the right-hand side of \eqref{eqSumOrbits} is
    \begin{equation*}
        (-1)^{\kappa+1} C_0^\kappa\sum_{y\in\mathbb{P}^1(\mathbb{F}_q)}\,j_\gamma(y)^{\,\kappa(q-1)}\sum_{\ell\geqslant 1}\binom{n-1}{\kappa-1}\binom{n-\kappa}{\ell(q-1)}
        \varepsilon^{\,\ell(q-1)}e_{\mathfrak{n}A}(\gamma y)^{\,n-\kappa-\ell(q-1)}.
    \end{equation*}
        The terms with $\ell(q-1)>n-\kappa$ vanish, and for $\ell(q-1)<n-\kappa$
    the product $\binom{n-1}{\kappa-1}\binom{n-\kappa}{\ell(q-1)}$ is
    divisible by $p$ by Corollary \ref{corLucas}. The only possibly nonzero term is thus the one with $\ell(q-1)=n-\kappa$, if it occurs. In this case,
    \begin{equation*}
        \sum_{x_0\in\mathbb{F}_q} c_n(\gamma_{x_0})=(-1)^{\kappa+1} C_0^\kappa\binom{n-1}{\kappa-1}
        \varepsilon^{n-\kappa}\sum_{y\in\mathbb{P}^1(\mathbb{F}_q)}\,j_\gamma(y)^{\,\kappa(q-1)}=0
    \end{equation*}
    by \eqref{eqSumJGamma}.
\end{proof}

We now prove Theorem \ref{theoremFisC0h}.

\begin{proof}[Proof of Theorem \ref{theoremFisC0h}]
    Combining \eqref{equationcnf2} and Proposition \ref{propOrbitVanishing}, we have $c_n(f)=c_n(I)$ for every $1\leqslant n\leqslant\kappa(q+1)-1$. The order of vanishing of $f$ at $\infty$ now follows from Lemma \ref{lemmaTrivialCosetH}.
    
    \noindent Let $\mathfrak{c}$ be a cusp of $\Gamma(\mathfrak{n})$ represented by
    $\alpha\in\mathbb{P}^1(\mathbb{F}_q)$, and write $\alpha=\nu\infty$ with
    $\nu\in\GL_2(\mathbb{F}_q)$, which is possible as
    $\GL_2(\mathbb{F}_q)$ acts
    transitively on $\mathbb{P}^1(\mathbb{F}_q)$. Since $\Gamma(\mathfrak{n})$
    is normal in $G=\GL_2(A)$, the map $\gamma\mapsto\nu^{-1}\gamma\nu$ is a
    bijection of $\Gamma(\mathfrak{n})$, and $f_\nu$ is, up to a nonzero
    constant, the same Poincaré series as $f$ but with $a_i,b_i$ replaced by $\nu^{-1}a_i,\nu^{-1}b_i$.    
    Since $\nu^{-1}$ permutes $\mathbb{P}^1(\mathbb{F}_q)$, the pairs
    $\{\nu^{-1}a_i,\nu^{-1}b_i\}$ form again a decomposition of
    $\mathbb{P}^1(\mathbb{F}_q)$. Hence the previous result applies to
    $f_\nu$, and $f$ vanishes to order exactly
    $\kappa q$ at $\mathfrak{c}$.
\end{proof}

\begin{remark}\label{remarkC0is1}
    One can always choose the parameters $a_1,b_1,\ldots,a_r,b_r\in
    \mathbb{P}^1(\mathbb{F}_q)$ so that $C_0=1$. First take $a_1=\infty$ and
    $b_1=0$. Then choose a generator $\theta$ of $\mathbb{F}_q^\times$, and for
    $2\leqslant i\leqslant r$ take $a_i=\theta^{2i-3}$, $b_i=\theta^{2i-2}$, so
    that $a_2,b_2,\ldots,a_r,b_r=\theta,\theta^2,\ldots,\theta^{q-2},\theta^{q-1}$.
    We have
    \begin{align*}
        C_0&=\prod_{i=2}^r (\theta^{2i-2}-\theta^{2i-3})
        =\prod_{i=2}^r \theta^{2i-3}(\theta-1)
        =\theta^{(r-1)^2} (\theta-1)^{r-1}.
    \end{align*}
    The order of $\theta^{r-1}=\theta^{(q-1)/2}$ in $\mathbb{F}_q^\times$ is
    exactly $2$, so $\theta^{r-1}=-1$. Similarly $(\theta-1)^{r-1}$ has order at
    most $2$, so $(\theta-1)^{r-1}=\pm 1$. Therefore
    $C_0= (-1)^{r-1} (\theta-1)^{r-1}=\pm 1$. Swapping $a_2$ and $b_2$ if
    needed, we get $C_0=1$.
\end{remark}

\begin{remark}\label{remarkFrobenius}
    The hypothesis $\kappa\leqslant q-1$ of Theorem \ref{theoremFisC0h} can be relaxed. Indeed, suppose that $\kappa=p^j\kappa'$ with $1\leqslant \kappa'\leqslant q-1$ (recall that $p$ is the characteristic of $\mathbb{F}_q$). Since the Frobenius map is additive and continuous, 
    \begin{align*}
        f_{\kappa}^{\mathfrak{n}}&=\sum_{\gamma\in\Gamma(\mathfrak{n})}\prod_{i=1}^{r}u(z;\gamma a_i,\gamma b_i)^{p^j\kappa'}\\
        &=\left(\sum_{\gamma\in\Gamma(\mathfrak{n})}\prod_{i=1}^{r}u(z;\gamma a_i,\gamma b_i)^{\kappa'}\right)^{p^j}\\
        &=\big(f_{\kappa'}^{\mathfrak{n}}\big)^{p^j}.
    \end{align*}
    Therefore
    \begin{align*}
        f_{\kappa}^{\mathfrak{n}}(z)&=\left(C_0^{\kappa'}\,\varepsilon^{\kappa'(q-1)}\,t(z)^{\kappa' q}
        +O\big(t(z)^{\kappa'(q+1)}\big)\right)^{p^j}\\
        &=C_0^{\kappa}\,\varepsilon^{\kappa(q-1)}\,t(z)^{\kappa q}
        +O\big(t(z)^{\kappa(q+1)}\big).
    \end{align*}
    It follows that Theorem \ref{theoremFisC0h} remains valid for every
    \begin{equation*}
        \kappa\in \big\{p^{j}\kappa'\ :\ j\geqslant 0,\ 1\leqslant \kappa'\leqslant q-1\big\}.
    \end{equation*}
\end{remark}

\subsection{$h$ and $\Delta$ as Poincaré series}\label{subsectionHDelta}

We now take $\mathfrak{n}=T$, so that $g=0$, $c=q+1$, and the cusps of
$\Gamma(T)$ are exactly the $q+1$ points of
$\mathbb{P}^1(\mathbb{F}_q)$ (Example
\ref{exampleGenerationGamma(T)}). We write $t=t^{\Gamma(T)}=t_{TA}$ and
$\varepsilon=e_{TA}(1)$, and we set
\begin{equation*}
    h\coloneqq P_{q+1,1}\in M_{q+1,1}(G),
\end{equation*}
which by Remark \ref{remarkNormalization} equals $\widetilde{\pi}$ times
Gekeler's in \cite[(5.13)]{Gekeler1988}. Recall that the graded algebra of Drinfeld modular forms
for $G$ is the polynomial ring $\mathbb{C}_\infty[g,h]$ with
$g\in M_{q-1,0}(G)$ \cite[(5.13)]{Gekeler1988}, and that
$\Delta\coloneqq-h^{q-1}\in M_{q^2-1,0}(G)$. We are going to compare $h$ and $\Delta$ with the Poincaré series $f_\kappa^T$ defined in \S\ref{subsectionFamily}.

\noindent Recall that the \textit{Carlitz module} is the Drinfeld module of rank
$1$ defined by $\phi_T(x)=Tx+x^q$; its exponential function is $e_L$,
where $L\coloneqq\widetilde{\pi}A$, and it satisfies the functional
equation $e_{L}(Tz)=\phi_T(e_{L}(z))$.

\begin{lemma}\label{lemmaEone}
    We have $e_{TA}(1)^{\,q-1}=-T^{q}\,\widetilde{\pi}^{\,1-q}$.
\end{lemma}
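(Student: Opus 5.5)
We want $e_{TA}(1)^{q-1}=-T^q\widetilde{\pi}^{1-q}$. The plan is to express $e_{TA}$ through the Carlitz exponential $e_L$, $L=\widetilde{\pi}A$, by rescaling the lattice, and then to use the Carlitz functional equation to locate $e_{TA}(1)$ among the roots of $\phi_T$.

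First, the product formula gives the rescaling rule $e_{c\Lambda}(cz)=c\,e_\Lambda(z)$ for every lattice $\Lambda$ and $c\in\mathbb{C}_\infty^\times$. With $c=T\widetilde{\pi}^{-1}$ and $\Lambda=L$ this becomes
\begin{equation*}
e_{TA}(z)=T\widetilde{\pi}^{-1}\,e_L\big(\widetilde{\pi}z/T\big),
\end{equation*}
so that
\begin{equation*}
e_{TA}(1)=T\widetilde{\pi}^{-1}\,e_L(\widetilde{\pi}/T).
\end{equation*}
Next set $\eta\coloneqq e_L(\widetilde{\pi}/T)$. The functional equation gives $\phi_T(\eta)=e_L(\widetilde{\pi})=0$, since $\widetilde{\pi}\in L$. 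Moreover $\widetilde{\pi}/T\notin L$, so $\eta\neq 0$. From $T\eta+\eta^q=0$ with $\eta\ne0$ we get $\eta^{q-1}=-T$.

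Finally, combining the two steps,
\begin{equation*}
e_{TA}(1)^{q-1}=T^{q-1}\widetilde{\pi}^{1-q}\eta^{q-1}=-T^{q}\widetilde{\pi}^{1-q}.
\end{equation*}

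The only point needing care is the rescaling identity, including that the primed product runs over $c\Lambda\setminus\{0\}$ correctly, together with the nonvanishing of $\eta$, which holds because $\ker e_L=L$. I expect no real obstacle beyond these.
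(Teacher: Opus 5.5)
Your proposal is correct and follows essentially the same argument as the paper: you rescale to get $e_{TA}(1)=T\widetilde{\pi}^{-1}e_L(\widetilde{\pi}/T)$, then use $\phi_T(e_L(\widetilde{\pi}/T))=e_L(\widetilde{\pi})=0$ together with $e_L(\widetilde{\pi}/T)\neq 0$ to obtain $e_L(\widetilde{\pi}/T)^{q-1}=-T$. The only difference is that you do the rescaling in a single step via $e_{c\Lambda}(cz)=c\,e_\Lambda(z)$, whereas the paper passes through $e_{TA}(z)=Te_A(z/T)$ and $e_A(z)=\widetilde{\pi}^{-1}e_L(\widetilde{\pi}z)$; this changes nothing of substance.
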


\begin{proof}
    Since
    $\widetilde{\pi}/T\notin L=\ker e_L$, we have
    $e_L(\widetilde{\pi}/T)\neq 0$.
     For $z=\widetilde{\pi}/T$, the functional equation of $e_L$ gives
    \begin{equation*}
        0=Te_{L}(\widetilde{\pi}/T)+e_{L}(\widetilde{\pi}/T)^q
    \end{equation*}
    i.e., $e_{L}(\widetilde{\pi}/T)^{q-1}=-T$. On the other hand $e_{TA}(z)=T\,e_A(z/T)$
    and $e_A(z)=\widetilde{\pi}^{-1}e_L(\widetilde{\pi}z)$, so that
    $e_{TA}(1)=T\widetilde{\pi}^{-1}e_L(\widetilde{\pi}/T)$ and
    \begin{equation*}
        e_{TA}(1)^{\,q-1}
        =T^{\,q-1}\widetilde{\pi}^{\,1-q}e_L(\widetilde{\pi}/T)^{\,q-1}
        =-T^{\,q}\widetilde{\pi}^{\,1-q}. \qedhere
    \end{equation*}
\end{proof}

We can now expand $h$ in the parameter $t=t^{\Gamma(T)}$.

\begin{lemma}\label{lemmaHorder}
    As a modular form for $\Gamma(T)$, we have
    \begin{equation*}
        h=\varepsilon^{\,q-1}\,t^{q}+O(t^{2q-1}).
    \end{equation*}
    In particular $h$ vanishes to order exactly $q$ at every cusp of
    $\Gamma(T)$, so that $h\in M_{q+1}^{q}(\Gamma(T))$.
\end{lemma}

\begin{proof}
    The functional equation $e_{L}(Tz)=\phi_T(e_{L}(z))$, together with
    $e_A(z)=\widetilde{\pi}^{-1}e_{L}(\widetilde{\pi}z)$, gives
    \begin{equation*}
        e_A(Tz)=Te_A(z)+\widetilde{\pi}^{\,q-1}e_A(z)^{q}.
    \end{equation*}
    Substituting $z/T$ for $z$ and using the identity
    $Te_A(z/T)=e_{TA}(z)$, we obtain
    \begin{equation*}
        e_A(z)=e_{TA}(z)+\widetilde{\pi}^{\,q-1}T^{-q}e_{TA}(z)^{q},
    \end{equation*}
    that is $(t^{G})^{-1}=t^{-1}+\widetilde{\pi}^{\,q-1}T^{-q}t^{-q}$.
    Solving for $t^{G}$ and using
    $T^{q}\widetilde{\pi}^{\,1-q}=-\varepsilon^{\,q-1}$
    (Lemma \ref{lemmaEone}), we get
    \begin{equation*}
        t^{G}=\frac{-\varepsilon^{\,q-1}t^{q}}
        {1-\varepsilon^{\,q-1}t^{\,q-1}}
        =-\varepsilon^{\,q-1}t^{q}+O\big(t^{2q-1}\big)
    \end{equation*}
    for $|z|_i\gg 0$. By \cite[(10.4)]{Gekeler1988} and
    Remark \ref{remarkNormalization} one has
    $h=-t^{G}+O\big((t^{G})^{2}\big)$, whence
    $h=\varepsilon^{\,q-1}t^{q}+O(t^{2q-1})$. Finally $\Gamma(T)$ is
    normal in $G$, so $h$ has the same
    order of vanishing at every cusp of $\Gamma(T)$.
\end{proof}

\begin{theorem}\label{theoremFisC0h2}
    Let $1\leqslant\kappa\leqslant q-1$. Then
    $f_\kappa^{T}\in M_{\kappa(q+1)}^{\kappa q}(\Gamma(T))$ and
    \begin{equation*}
        f_\kappa^{T}=C_0^{\kappa}\,h^{\kappa}.
    \end{equation*}
\end{theorem}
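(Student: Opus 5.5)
By Theorem \ref{theoremFisC0h} applied with $\mathfrak{n}=T$, $f_\kappa^T$ vanishes to order exactly $\kappa q$ at every cusp of $\Gamma(T)$. Since every cusp of $\Gamma(T)$ is represented by an element of $\mathbb{P}^1(\mathbb{F}_q)$ (Example \ref{exampleGenerationGamma(T)}), this already gives $f_\kappa^T\in M_{\kappa(q+1)}^{\kappa q}(\Gamma(T))$. On the other side, Lemma \ref{lemmaHorder} gives $h\in M_{q+1}^{q}(\Gamma(T))$. Viewed as a form for $\Gamma(T)$ (where $\det=1$, so types disappear), the power $h^\kappa$ therefore lies in $M_{\kappa(q+1)}^{\kappa q}(\Gamma(T))$. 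Its $t$-expansion at $\infty$ is
\begin{equation*}
h^\kappa=\varepsilon^{\kappa(q-1)}t^{\kappa q}+O\big(t^{\kappa q+q-1}\big).
\end{equation*}

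The plan is to show that this space is one-dimensional and then compare leading coefficients. We apply \eqref{eqdim} with $d=1$, $g=0$, $c=q+1$, weight $2k=\kappa(q+1)$ and $n=\kappa q$. Then $2k-n=\kappa$, and the hypothesis of Proposition \ref{propDim} reads $\kappa(q+1)>2(k-1)=\kappa(q+1)-2$, which always holds. Hence
\begin{equation*}
\dim M_{\kappa(q+1)}^{\kappa q}(\Gamma(T))=(2k-1)(g-1)+(2k-n)c=-\kappa(q+1)+1+\kappa(q+1)=1.
\end{equation*}
Note that $h^\kappa\neq 0$ spans this space.

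It follows that $f_\kappa^T=\lambda h^\kappa$ for some $\lambda\in\mathbb{C}_\infty$. We compare coefficients of $t^{\kappa q}$ at the cusp $\infty$. From Theorem \ref{theoremFisC0h}, $c_{\kappa q}(f_\kappa^T)=C_0^\kappa\varepsilon^{\kappa(q-1)}$, while the coefficient of $t^{\kappa q}$ in $h^\kappa$ is $\varepsilon^{\kappa(q-1)}$. Since $\varepsilon\neq0$, this forces $\lambda=C_0^\kappa$.

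The only delicate point is checking that Proposition \ref{propDim} genuinely applies in the case $d=1$, $n=\kappa q<2k$, and that its degree inequality holds. Both are immediate here. One must also confirm that $h$, being a form for $G$ of type $1$, restricts to a type-free modular form for $\Gamma(T)$; this holds because $\det$ is trivial on $\Gamma(T)$.
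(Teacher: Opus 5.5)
Your proposal is correct and follows the paper's proof: the vanishing orders come from Theorem~\ref{theoremFisC0h} and Lemma~\ref{lemmaHorder}, \eqref{eqdim} shows that $M_{\kappa(q+1)}^{\kappa q}(\Gamma(T))$ is one-dimensional, and comparing the $t^{\kappa q}$-coefficients gives the constant $C_0^\kappa$. Your explicit check of the hypothesis of Proposition~\ref{propDim} in the case $d=1$ is a useful detail that the paper leaves implicit.
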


\begin{proof}
    By Theorem \ref{theoremFisC0h}, $f_\kappa^{T}$ vanishes to order
    exactly $\kappa q$ at each of the $q+1$ cusps of $\Gamma(T)$, so
    $f_\kappa^{T}\in M_{\kappa(q+1)}^{\kappa q}(\Gamma(T))$; by
    Lemma \ref{lemmaHorder} the same holds for $h^{\kappa}$. Applying
    \eqref{eqdim} with $2k=\kappa(q+1)$ and $n=\kappa q$, we see that $\dim M_{\kappa(q+1)}^{\kappa q}(\Gamma(T))=1$. Thus $f_\kappa^{T}=C\,h^{\kappa}$ for some
    $C\in\mathbb{C}_\infty^\times$. It remains to compare the
    coefficients of $t^{\kappa q}$: these are
    $C_0^{\kappa}\varepsilon^{\kappa(q-1)}$ for $f_\kappa^{T}$ by
    Theorem \ref{theoremFisC0h}, and $\varepsilon^{\kappa(q-1)}$ for
    $h^{\kappa}$ by Lemma \ref{lemmaHorder}. Hence $C=C_0^{\kappa}$.
\end{proof}

\begin{remark}\label{remarkThmh}
    By Remark \ref{remarkFrobenius}, Theorem \ref{theoremFisC0h2}
    remains valid for every
    $\kappa\in\big\{p^{j}\kappa'\ :\ j\geqslant 0,\
    1\leqslant\kappa'\leqslant q-1\big\}$. If the parameters are normalized so that $C_0=1$ (Remark \ref{remarkC0is1}), then $f_\kappa^{T}=h^{\kappa}$, which shows that $h^{\kappa}$ is a Poincaré series for the group $\Gamma(T)$.
\end{remark}

\begin{corollary}
    For every $\kappa\in\big\{p^{j}\kappa'\ :\ j\geqslant 0,\
    1\leqslant\kappa'\leqslant q-1\big\}$ we have
    $f_\kappa^{T}=\big(f_1^{T}\big)^{\kappa}$. Moreover
    \begin{equation*}
        \Delta=-\big(f_1^{T}\big)^{q-1}=-f_{q-1}^{T}.
    \end{equation*}
\end{corollary}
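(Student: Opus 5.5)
The plan is to derive both claims from Theorem \ref{theoremFisC0h2} and Remark \ref{remarkThmh}, together with the Frobenius identity of Remark \ref{remarkFrobenius}.

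For the first claim, let $\kappa=p^j\kappa'$ with $1\leqslant\kappa'\leqslant q-1$. Theorem \ref{theoremFisC0h2}, extended by Remark \ref{remarkThmh}, gives $f_\kappa^{T}=C_0^{\kappa}h^{\kappa}$ and $f_1^{T}=C_0h$. Hence
\begin{equation*}
(f_1^{T})^{\kappa}=C_0^{\kappa}h^{\kappa}=f_\kappa^{T}.
\end{equation*}
The constant $C_0$ enters both sides through the same power, so no normalization of the pairs $\{a_i,b_i\}$ is needed. Both sides are built from one fixed decomposition of $\mathbb{P}^1(\mathbb{F}_q)$ into pairs, and $C_0$ is attached to that decomposition. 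Alternatively, one can bypass Remark \ref{remarkThmh}: for $\kappa\leqslant q-1$ apply Theorem \ref{theoremFisC0h2} directly, then use Remark \ref{remarkFrobenius} to get $f_{p^j\kappa'}^T=(f_{\kappa'}^T)^{p^j}=(f_1^T)^{\kappa}$.

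For the second claim, take $\kappa=q-1$, which satisfies $1\leqslant q-1\leqslant q-1$. Then $f_{q-1}^{T}=C_0^{q-1}h^{q-1}$. Since $C_0\in\mathbb{F}_q^\times$, we have $C_0^{q-1}=1$, so $f_{q-1}^{T}=h^{q-1}$. By definition $\Delta=-h^{q-1}$, which gives $\Delta=-f_{q-1}^{T}=-(f_1^{T})^{q-1}$ using the first claim.

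The only point requiring care is that all $f_\kappa^T$ are taken with respect to the same decomposition of $\mathbb{P}^1(\mathbb{F}_q)$, so that the same $C_0$ occurs throughout. Once that is fixed, the corollary follows without further computation.
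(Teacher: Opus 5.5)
Your proposal is correct and follows the paper's proof essentially verbatim. Both arguments use Theorem~\ref{theoremFisC0h2} together with Remark~\ref{remarkThmh} to write $f_\kappa^{T}=C_0^{\kappa}h^{\kappa}=(C_0h)^{\kappa}=(f_1^{T})^{\kappa}$, and then use $C_0^{q-1}=1$ and $\Delta=-h^{q-1}$ to get the second claim; your point that one fixed decomposition of $\mathbb{P}^1(\mathbb{F}_q)$, and hence one $C_0$, is used throughout is also implicit in the paper.
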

\begin{proof}
    By Theorem \ref{theoremFisC0h2} and Remark \ref{remarkThmh},
    $f_\kappa^{T}=C_0^{\kappa}h^{\kappa}=(C_0h)^{\kappa}
    =\big(f_1^{T}\big)^{\kappa}$. Since $C_0\in\mathbb{F}_q^\times$ we
    have $C_0^{q-1}=1$, so $f_{q-1}^{T}=h^{q-1}=-\Delta$ and likewise
    $\big(f_1^{T}\big)^{q-1}=(C_0h)^{q-1}=h^{q-1}=-\Delta$.
\end{proof}

\begin{remark}
    Petrov proved in \cite[Theorem 3.16]{Petrov2013} that $h^\kappa$ has an $A$-expansion for all $1\leqslant\kappa\leqslant q$, namely
    $h^\kappa=(-1)^\kappa\sum_{a\in A_+}a^{\kappa q}t_a^{\kappa}$ where
    $t_a(z)=t^{G}(az)$ and $A_+$ is the set of monic elements of $A$ (note that in \cite{Petrov2013}, $h$ is normalized so that the first non-zero coefficient of its $t$-expansion is $1$).
    Combined with Theorem \ref{theoremFisC0h2}, this gives for
    $1\leqslant\kappa\leqslant q$ two rather different expressions for the same modular form: assuming $C_0=1$,
    \begin{equation*}
        \sum_{\gamma\in\Gamma(T)}
        \prod_{i=1}^{r}u(z;\gamma a_i,\gamma b_i)^{\kappa}
        =(-1)^\kappa\sum_{a\in A_+}a^{\kappa q}t_a(z)^{\kappa}.
    \end{equation*}
    It would be interesting to prove this equality directly.
\end{remark}

\bibliographystyle{plain} 
\bibliography{references}
\end{document}